\newtheorem{thm}{Theorem}[section]
\newtheorem{prop}[thm]{Proposition}
\newtheorem{lem}[thm]{Lemma}
\newtheorem{lem-def}[thm]{Lemma-Definition}
\newtheorem{cor}[thm]{Corollary}
\theoremstyle{remark}
\newtheorem{ex}[thm]{Example}
\newtheorem{rmk}{Remark}[section]
\theoremstyle{definition}
\newtheorem{dfn}[thm]{Definition}
\numberwithin{equation}{section}
\newcommand{\quash}[1]{}  %%Anything in \quash is ignored
\newcommand{\nc}{\newcommand}
\nc{\on}{\operatorname}
\newcommand{\frakD}{{\mathfrak D}}
\newcommand{\frakX}{{\mathfrak X}}
\newcommand{\bA}{{\mathbb A}}
\newcommand{\bB}{{\mathbb B}}
\newcommand{\bC}{{\mathbb C}}
\newcommand{\bF}{{\mathbb F}}
\newcommand{\bG}{{\mathbb G}}
\newcommand{\bL}{{\mathbb L}}
\newcommand{\bN}{{\mathbb N}}
\newcommand{\bP}{{\mathbb P}}
\newcommand{\bQ}{{\mathbb Q}}
\newcommand{\bR}{{\mathbb R}}
\newcommand{\bS}{{\mathbb S}}
\newcommand{\bT}{{\mathbb T}}
\newcommand{\bZ}{{\mathbb Z}}
\newcommand{\mB}{{\mathcal B}}
\newcommand{\mE}{{\mathcal E}}
\newcommand{\mH}{{\mathcal H}}
\newcommand{\mM}{{\mathcal M}}
\newcommand{\mN}{{\mathcal N}}
\newcommand{\mO}{{\mathcal O}}
\newcommand{\mP}{{\mathcal P}}
\newcommand{\mR}{{\mathcal R}}
\newcommand{\mX}{{\mathcal X}}
\newcommand{\mY}{{\mathcal Y}}
\nc{\al}{{\alpha}} \nc{\be}{{\beta}} \nc{\ga}{{\gamma}}
\nc{\ve}{{\varepsilon}} \nc{\Ga}{{\Gamma}} \nc{\la}{{\lambda}}
\nc{\La}{{\Lambda}}
\nc{\ad}{{\on{ad}}}
\nc{\aff}{{\on{aff}}}
\nc{\Aff}{{\mathbf{Aff}}}
\nc{\Bun}{{\on{Bun}}}
\nc{\der}{{\on{der}}}
\nc{\diag}{{\on{diag}}}
\nc{\dR}{{\on{dR}}}
\newcommand{\End}{{\on{End}}}
\nc{\Fl}{{\calF\ell}}
\newcommand{\Gal}{{\on{Gal}}}
\nc{\gr}{{\on{gr}}}
\newcommand{\Gr}{{\on{Gr}}}
\newcommand{\Hom}{{\on{Hom}}}
\nc{\IC}{{\on{IC}}}
\newcommand{\id}{{\on{id}}}
\nc{\Id}{{\on{Id}}}
\nc{\Ind}{{\on{Ind}}}
\nc{\inv}{{\on{Inv}}}
\nc{\Iso}{{\on{Isom}}}
\nc{\Ql}{{\overline{\bQ}_\ell}}
\nc{\res}{{\on{res}}}
\newcommand{\Res}{{\on{Res}}}
\newcommand{\Spec}{{\on{Spec}}}
\nc{\Spa}{{\on{Spa}}}
\nc{\tr}{{\on{tr}}}
\nc{\mmu}{{\mu_\bullet}}
\newcommand{\GL}{{\on{GL}}}
\nc{\GSp}{{\on{GSp}}} \nc{\GU}{{\on{GU}}} \nc{\SL}{{\on{SL}}}
\nc{\SU}{{\on{SU}}} \nc{\SO}{{\on{SO}}}
\nc{\pf}{{\on{pf}}}
\nc{\wGr}{{\widetilde{\Gr}}}
\nc{\B}{{\mathrm{B}}}
\def\limproj{\mathop{\oalign{{\rm lim}\cr
\hidewidth$\longleftarrow$\hidewidth\cr}}}
\title{Rigidity and a Riemann-Hilbert correspondence for $p$-adic local systems}
\author{Ruochuan Liu}\thanks{R.L. is partially supported by NSFC-11571017 and the Recruitment Program of Global Experts of China}
\address{Ruochuan Liu, Beijing International Center for Mathematical Research, Peking University, 5 Yi He Yuan Road, Beijing, 100871, China.}
\email{liuruochuan@math.pku.edu.cn}
\author{Xinwen Zhu}\thanks{X.Z. is partially supported by NSF DMS-1303296/1535464 and a Sloan Fellowship.}
\address{Xinwen Zhu, Department of Mathematics, Caltech, 1200 East California Boulevard, Pasadena, CA 91125.}
\email{xzhu@caltech.edu}\date{February 19, 2016}
\begin{document}
\maketitle

\begin{abstract}
We construct a functor from the category of $p$-adic \'etale local systems on a smooth rigid analytic variety $X$ over a $p$-adic field to the category of vector bundles with an integrable connection on its ``base change to $\B_\dR$", which can be regarded as a first step towards the sought-after $p$-adic Riemann-Hilbert correspondence.
As a consequence, we obtain the following rigidity theorem for $p$-adic local systems on a connected rigid  analytic variety:  if the stalk of such a local system at one point, regarded as a $p$-adic Galois representation, is de Rham in the sense of Fontaine, then the stalk at every point is de Rham. 
Along the way, we also establish some basic properties of the $p$-adic Simpson correspondence. Finally, we give an application of our results to Shimura varieties.
\end{abstract}
\tableofcontents

\section{Introduction}
We begin with some applications of our theory. 

\begin{thm}\label{Main cor}
Let $X$ be a geometrically connected algebraic variety over a number field $E$ and let $\bL$ be a $p$-adic \'etale local system on $X$. Assume that there exists a closed point $x\in |X|$, such that the stalk $\bL_{\bar x}$ of $\bL$ at some geometric point $\bar x$ over $x$, regarded as a $p$-adic Galois representation of the residue field of $x$, is geometric in the sense of Fontaine-Mazur (i.e. it is unramified almost everywhere and is de Rham at $p$), then the stalk $\bL_{\bar y}$ at every closed point $y\in |X|$, regarded as the Galois representation of the residue field of $y$, is geometric.
\end{thm}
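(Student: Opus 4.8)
The plan is to split the conclusion ``$\bL_{\bar y}$ is geometric'' into its two constituents --- ``de Rham at $p$'' and ``unramified almost everywhere'' --- proving the former from our rigidity theorem and the latter by spreading $\bL$ out to an integral model; the hypothesis at $x$ feeds into both.

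\emph{Reductions.} Because ``de Rham'' coincides with ``potentially semistable'' (the $p$-adic monodromy theorem) and ``unramified almost everywhere'' is clearly insensitive to a finite extension of the coefficient field, the condition ``the stalk at a closed point is geometric'' is unchanged if the residue field of that point is replaced by a finite extension. Applying resolution of singularities to $X$ over $E$ and choosing closed points over $x$ and over $y$ on the resolution, we may thus assume $X$ is smooth and geometrically connected over $E$.

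\emph{De Rham at $p$.} Fix a closed point $y\in|X|$, a place $w\mid p$ of $k_y:=\kappa(y)$, and set $K:=k_{y,w}$, $v:=w|_E$. Via $E\hookrightarrow E_v\hookrightarrow K$ the variety $X_K:=X\otimes_EK$ is smooth and connected (as $X$ is geometrically connected), so $X_K^{\mathrm{an}}$ is a smooth connected rigid analytic variety over $K$; write $\bL_K$ for the pullback of $\bL$ along $X_K^{\mathrm{an}}\to X_K\to X$. Then $y$ determines a classical $K$-point $y_K$ of $X_K^{\mathrm{an}}$ whose stalk, as a $G_K$-representation, is $\bL_{\bar y}|_{G_K}$; and, after choosing a place $w'\mid v$ of $k_x:=\kappa(x)$ and an $E_v$-linear embedding $k_{x,w'}\hookrightarrow\overline{K}$, the point $x$ determines a classical point $x_K$ of $X_K^{\mathrm{an}}$ of residue field $K':=K\cdot k_{x,w'}$ and stalk $\bL_{\bar x}|_{G_{K'}}$. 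Since $w'\mid p$, the representation $\bL_{\bar x}|_{G_{k_{x,w'}}}$ is de Rham by hypothesis, hence so is its restriction to the open subgroup $G_{K'}$. Applying the rigidity theorem to $\bL_K$ on the connected space $X_K^{\mathrm{an}}$, and using that its stalk at $x_K$ is de Rham, we conclude that its stalk at every classical point is de Rham, in particular $\bL_{\bar y}|_{G_K}$. Letting $w$ run over the places of $k_y$ above $p$ shows $\bL_{\bar y}$ is de Rham at $p$.

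\emph{Unramified almost everywhere.} Here we may replace $E$ by $k_x$ (harmless, since the statement for $\bL_{\bar y}$ is equivalent to the statement, over $k_x$, for the points above $y$), so that $x\in X(E)$. Let $\rho\colon\pi_1(X_E)\to\GL_d(\bZ_p)$ be the monodromy of $\bL$; the rational point $x$ splits $1\to\pi_1(X_{\bar E})\to\pi_1(X_E)\to G_E\to1$ by a section $s$, and $\rho\circ s$ is the representation $\bL_{\bar x}$, unramified outside a finite set $S_0$ by hypothesis. Choose a smooth model $\mathcal{X}\to\Spec\mathcal{O}_{E,S}$ of $X$ with $S\supseteq S_0$, to which $x$ extends as a section. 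For $v\notin S$, the inertia $I_v\subseteq\pi_1(X_E)$ at the special fibre over $v$ is, up to conjugacy, topologically generated by $s(I_{E_v})$ and by the kernel $K_v^{\mathrm{geom}}$ of the specialization map $\pi_1(X_{\bar E})\to\pi_1(\mathcal{X}_{\overline{\bF}_v})$. Now $\rho(s(I_{E_v}))=1$ because $\bL_{\bar x}$ is unramified at $v$, while $K_v^{\mathrm{geom}}$ has trivial image in the maximal prime-to-$\ell$ quotient of $\pi_1(X_{\bar E})$ by Grothendieck's specialization theorem, where $\ell$ is the residue characteristic of $v$; hence $\rho(K_v^{\mathrm{geom}})$ lies in the kernel of $\Gamma\to\Gamma^{(\ell')}$, where $\Gamma:=\overline{\rho(\pi_1(X_{\bar E}))}$ and $\Gamma^{(\ell')}$ is its maximal prime-to-$\ell$ quotient. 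As $\Gamma$ is a compact $p$-adic Lie group, every finite quotient of $\Gamma$ has order $p^ad$ with $d$ dividing $|\Gamma/\Gamma_0|$ for a fixed open uniform pro-$p$ subgroup $\Gamma_0\trianglelefteq\Gamma$; so that kernel vanishes except when $\ell=p$ or $\ell\mid|\Gamma/\Gamma_0|$, i.e.\ for only finitely many $v$. Therefore $\rho(I_v)=1$ for all $v$ outside a finite set $S'$, so $\rho$ factors through $\pi_1(\mathcal{X}_{\mathcal{O}_{E,S'}})$: thus $\bL$ extends to a $\bZ_p$-local system on $\mathcal{X}_{\mathcal{O}_{E,S'}}$. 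Given any closed $y\in|X|$, the point $y$ extends to an $\mathcal{O}_{k_y,T}$-point of $\mathcal{X}_{\mathcal{O}_{E,S'}}$ for some finite set $T$ of places of $k_y$, and pulling this model back shows $\bL_{\bar y}$ is unramified outside $T$.

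The only serious ingredient is the rigidity theorem used in the de Rham step; the rest is standard manipulation of étale fundamental groups, compact $p$-adic Lie groups, and potential semistability. The points I would take care over are the robustness of the rigidity theorem when the chosen de Rham classical point $x_K$ has residue field strictly larger than the base field, and the spreading-out and base-change compatibilities (existence of $\mathcal{X}$, the extension of $\bL$, the passage from $E$ to $k_x$) --- routine but numerous --- and I do not expect a genuine obstacle once these are organized correctly.
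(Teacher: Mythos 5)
Your reduction to ``de Rham at $p$'' plus ``unramified almost everywhere'' is the paper's, and your de Rham step is the intended application of Theorem~\ref{Main thm}: the passage to a connected rigid analytic variety over a $p$-adic completion of $\kappa(y)$, and the observation that de Rham passes to open subgroups, is exactly the book-keeping the paper leaves implicit.

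The unramified step is a genuinely different route from the paper's Proposition~\ref{L: purity}, and it contains a gap. The paper compactifies $X$ to a smooth projective $\bar X$, spreads out to a smooth projective $\overline\frakX$ over $\mathcal{O}_E[1/N]$ with $|\GL_n(\bF_p)|\mid N$, and applies Zariski--Nagata purity on $\overline\frakX$ to the covers $\overline\frakX_m\to\overline\frakX$ attached to $\bL/p^m$; the divisibility condition forces tameness at $v\nmid N$, and the arithmetic curve through $x$ cuts out the finitely many bad vertical fibres. You never compactify: you work on a smooth, generally non-proper, model $\mathcal{X}$, split the inertia kernel using the section $s$ of $\pi_1$ supplied by the rational point $x$, and invoke ``Grothendieck's specialization theorem'' for a ``specialization map $\pi_1(X_{\bar E})\to\pi_1(\mathcal{X}_{\overline{\bF}_v})$''. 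For non-proper $\mathcal{X}$ that map does not exist and the theorem (SGA~1, Exp.~X, for proper smooth morphisms) does not apply --- and the non-properness is precisely what the paper's compactification is there to handle. The assertion you actually need, that $\ker\bigl(\pi_1(X_{\bar E_v})\to\pi_1(\mathcal{X}_{\mathcal{O}_{E_v}})\bigr)$ dies in the maximal prime-to-$\ell$ quotient, is true, but it is a Zariski--Nagata/Abhyankar statement on the regular model $\mathcal{X}_{\mathcal{O}_{E'_v}}$ (a prime-to-$\ell$ cover is tame along the closed-fibre divisor, Abhyankar's lemma removes the ramification after a Kummer base change of the DVR, and that base change is absorbed by $\bar E_v$), not a consequence of specialization of fundamental groups; and the decomposition of the inertia kernel into the normal closure of $s(I_{E_v})$ and this geometric piece requires compatibility of $s$ with the integral section of $\mathcal{X}$, which you assert without proof. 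Neither is routine spreading-out. Your substitute for the paper's $N$-divisibility device --- that a compact $p$-adic Lie group has nontrivial finite $\ell$-quotients for only finitely many $\ell$ --- is an attractive alternative, but it sits downstream of the missing purity input.
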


Before continuing, let us first make a few remarks.

\begin{rmk}\label{relative FM}
Recall that all $p$-adic representations coming from \'etale cohomology of algebraic varieties are geometric and the Fontaine--Mazur conjecture predicts the converse. In light of the above theorem, it seems reasonable to make the following relative version of the Fontaine--Mazur conjecture. 

\medskip

\noindent\bf Conjecture. \rm Let $\bL$ be an \'etale $\bQ_p$-local system over a geometrically connected algebraic variety $X$ over a number field $E$. Let $\eta$ be the generic point of $X$. If for some $x\in |X|$, $\bL_{\bar x}$ is geometric, then there exists some algebraic variety $Y$ over $\eta$ such that $\bL_{\bar \eta}$ appears as a subquotient of the \'etale cohomology of $Y_{\bar\eta}$ up to Tate twists. 

\medskip

Note that this conjecture holds if the monodromy of $\bL$ is abelian. Indeed, in this case, the geometric monodromy must be finite so after passing to a finite \'etale cover, $\bL$ becomes geometrically constant, i.e. a pullback of an abelian Galois representation (of a finite extension of $E$). As the Fontaine--Mazur conjecture is known in the abelian case (e.g. see \cite{He}), the above conjecture also holds in this case.
\end{rmk}

\begin{rmk}
One may compare the above theorem with the following Deligne's result \cite{De2} (called Principle B in \emph{loc. cit.}): Let $S$ be a smooth connected complex algebraic variety and let $f:X\to S$ be a proper smooth morphism.  Let $\{t_s\}_{s\in S}$ be a family of Hodge classes parametrized by $S$ (i.e. a global section of the relative cohomology $R^{2d}f_*\bQ(d)$ that restricts to a Hodge class in $H^{2d}(X_s,\bQ(d))$ at every $s\in S$). If it is absolute Hodge for one point, then $t_s$ is absolute Hodge for all $s\in S$.
\end{rmk}

Here is a concrete application of Theorem \ref{Main cor}, arising as a discussion with K.-W. Lan. Let $(G,X)$ be a Shimura datum. For a (sufficiently small) open compact subgroup $K\subset G(\mathbb A_f)$, let 
$$\mathrm{Sh}_K(G,X)=G(\mathbb Q)\backslash X\times G(\mathbb A_f)/K$$
be the corresponding Shimura variety. 
Let $V$ be a $\bQ$-rational representation of $G$, which is trivial on $Z_G^s$. Here $Z_G^s$ is the largest anisotropic subtorus in the center of $G$ that is $\bR$-split. Then it is known that $V$ induces a Betti local system $\bL_V$ on $\on{Sh}_K(G,X)$. In addition, the theory of canonical models gives a model of $\mathrm{Sh}_K(G,X)$ (still denoted by the same notation) defined over the reflex field $E\subset \mathbb C$, and for a choice of a prime $p$, a $p$-adic \'etale local system $\mathbb L_{V,p}$ over $\mathrm{Sh}_K(G,X)$ (cf. \cite[\S III.6]{M}). Applying Theorem \ref{Main cor} to special points on Shimura varieties, we obtain the following theorem.

\begin{thm}\label{C: application}
Keep notations and assumptions as above. Then for every closed point $x$ of $\on{Sh}_K(G,X)$, the stalk $(\bL_{V,p})_{\bar{x}}$, regarded as a Galois representation of $\Gal(\overline E/E(x))$, is geometric in the sense of Fontaine-Mazur.
\end{thm}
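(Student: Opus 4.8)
The plan is to deduce the statement from Theorem~\ref{Main cor} by exhibiting, in each geometric connected component of $\on{Sh}_K(G,X)$, a closed point at which the stalk of $\bL_{V,p}$ is geometric; the natural such point is a \emph{special point}.

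First I would reduce to the geometrically connected case. For $K$ sufficiently small, $\on{Sh}_K(G,X)$ is a smooth quasi-projective variety over $E$, but it is not geometrically connected; write $\on{Sh}_K(G,X)\times_E\overline E=\bigsqcup_i X_i^{\circ}$ for the decomposition into connected components and let $E_i\subset\overline E$ be the (finite) field of definition of $X_i^{\circ}$, so that each $X_i^{\circ}$ is a smooth geometrically connected variety over the number field $E_i$. Every closed point of $\on{Sh}_K(G,X)$ lies on some $X_i^{\circ}$, so it suffices to treat each pair $(X_i^{\circ},\bL_{V,p}|_{X_i^{\circ}})$ over $E_i$, and by Theorem~\ref{Main cor} it is then enough to produce \emph{one} closed point $x_i\in|X_i^{\circ}|$ at which the stalk is geometric. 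For this I take $x_i$ to be a special point: special points are dense in $\on{Sh}_K(G,X)(\bC)$ for the archimedean topology --- a classical fact, resting on the observation that any maximal torus of $G_\bR$ can be conjugated, by an element of $G(\bR)$ arbitrarily close to $1$, to the base change of a maximal $\bQ$-torus of $G$ --- and since, after fixing an embedding $\overline E\hookrightarrow\bC$, each $X_i^{\circ}$ is (the underlying set of) a connected component of $\on{Sh}_K(G,X)(\bC)$, it contains a special point, which is automatically defined over $\overline{\bQ}$ and hence is a closed point of $X_i^{\circ}$ with number-field residue field $E(x_i)$.

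The substantive step is to show that $(\bL_{V,p})_{\bar x_i}$, as a representation of $\Gal(\overline E/E(x_i))$, is geometric. A special point $x_i$ is the image of some $h\in X$ whose associated morphism $\bS\to G_\bR$ factors through $T_\bR$ for a maximal $\bQ$-torus $T\subset G$, giving a morphism of Shimura data $(T,\{h\})\to(G,X)$ and a Hodge cocharacter $\mu_h\in\xcoch(T)$ over the reflex field $E(T,\mu_h)$. By the theory of canonical models and Deligne's reciprocity law at special points (see \cite[\S III.6]{M}), the action of $\Gal(\overline E/E(x_i))$ on $(\bL_{V,p})_{\bar x_i}\cong V\otimes\bQ_p$ factors as $\Gal(\overline E/E(x_i))\to T(\mathbb A_f)/\overline{T(\bQ)}\to T(\bQ_p)\to\GL(V\otimes\bQ_p)$, where the first map is the reflex-norm reciprocity homomorphism and the last is induced by $V$. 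In particular the image lies in the torus $T$, so this representation is semisimple and abelian; over a finite extension of $E(x_i)$ that splits $T$, the representation $(\bL_{V,p})_{\bar x_i}$ decomposes as a direct sum of Galois characters, each of which --- being a weight of $V$ composed with the reflex-norm map --- is precisely the $p$-adic avatar of an algebraic Hecke character (equivalently, $(\bL_{V,p})_{\bar x_i}$ is the $p$-adic \'etale realization of the CM motive that $V$ cuts out at $x_i$). Such characters are unramified outside a finite set of primes and are de Rham (indeed potentially crystalline) at $p$, so $(\bL_{V,p})_{\bar x_i}$ is unramified almost everywhere and de Rham at $p$; as both properties descend along finite extensions of the base field (potentially de Rham implies de Rham), $(\bL_{V,p})_{\bar x_i}$ is geometric. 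When $(G,X)$ is of abelian type one sees this directly, since then $x_i$ is a CM point parametrizing an abelian variety with complex multiplication (possibly with extra structures) and $(\bL_{V,p})_{\bar x_i}$ is obtained by applying $V$ to its $p$-adic Tate module, which is potentially crystalline because CM abelian varieties have potentially good reduction.

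Finally, Theorem~\ref{Main cor} applied to $X_i^{\circ}$ and the closed point $x_i$ gives that $(\bL_{V,p})_{\bar y}$ is geometric for every closed point $y\in|X_i^{\circ}|$; since every closed point of $\on{Sh}_K(G,X)$ lies on some $X_i^{\circ}$, the theorem follows. I expect the main obstacle to be the second step --- pinning down Milne's construction of $\bL_{V,p}$ and Deligne's reciprocity law precisely enough to identify the stalk at a special point with the expected sum of algebraic Hecke characters (with the correct Hodge--Tate weights), rather than merely verifying that the Galois action is abelian; note that it is exactly the hypothesis that $V$ is trivial on $Z_G^s$ that makes $\bL_{V,p}$, and hence this stalk, well defined. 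The remaining ingredients --- density of special points, de Rham-ness of $p$-adic avatars of algebraic Hecke characters, and the invariance of ``geometric'' under finite base change --- are standard.
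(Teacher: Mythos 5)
Your proposal matches the paper's proof: reduce component by component so that Theorem~\ref{Main cor} applies to geometrically connected varieties over the fields of definition $E_i$, locate a special point in each component (density of special points), identify the stalk there with the reciprocity representation $r(\mu_h,\rho')_{K,p}$ of the sub-Shimura datum $(T,\{h\})$ (the paper's Lemma~\ref{special}), and check that such a representation is unramified almost everywhere and potentially crystalline at $p$ --- hence geometric --- because after a finite extension it becomes a product of Lubin--Tate characters (the paper's Lemma~\ref{cryrep}). You spell out two points the paper leaves implicit, namely the reduction to geometrically connected components and the descent of the property ``geometric'' along finite base extensions, but the approach is the same.
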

See Section \ref{S: Shimura} for the proof and related results.

\begin{rmk}
Theorem \ref{C: application} was known if $(G,X)$ is of abelian type (with a few additional assumptions), as in this case $\on{Sh}_K(G,X)$ parameterizes certain abelian motives and $\bL_{V, p}$ is nothing but their $p$-adic realizations. In the general case, it gives an evidence of Deligne's expectation that a Shimura variety (with a few additional assumptions) should be the moduli space of certain motives (particularly in light of the conjecture in Remark \ref{relative FM}) . 
\end{rmk}

Now we turn to our main theory. Let $k$ be a finite extension of $\bQ_p$ and let $\bar k$ be an algebraic closure of $k$ and $\hat{\bar k}$ be its completion.
The main ingredient of Theorem \ref{Main cor} is the following theorem.

\begin{thm}\label{Main thm}
Let $X$ be a geometrically connected  rigid analytic variety over $k$ and let $\bL$ be a $\bQ_p$-local system on the \'etale site $X_{\on{et}}$. If there exists a classical point $x$ of $X$ such that the stalk $\bL_{\bar x}$ of $\bL$ at some geometric point $\bar x$ over $x$, regarded as a $p$-adic representation of the residue field of $x$, is de Rham, then the stalk $\bL_{\bar y}$ is de Rham at every classical point $y$ of $X$. In addition, $\bL_{\bar y}$ has the Hodge-Tate weights (with multiplicity) as $\bL_{\bar x}$'s. 
\end{thm}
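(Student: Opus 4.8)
The plan is to deduce Theorem~\ref{Main thm} from the Riemann--Hilbert-type functor announced in the abstract (and constructed in the body of the paper): to a $\bQ_p$-local system $\bL$ on a smooth rigid analytic variety $X/k$ it attaches a coherent sheaf $\mathcal{RH}(\bL)$ on the base change $X_{\B_\dR}$ of $X$ to $\B_\dR$ (whose underlying space is $X$, with structure sheaf $\mathcal{O}_X\hat\otimes_k\B_\dR$), carrying an integrable connection $\nabla$ relative to $\B_\dR$ and a Hodge filtration $\mathrm{Fil}^\bullet$ satisfying Griffiths transversality. Assuming $X$ smooth, as we may, the two inputs I need are: \textbf{(i)} the formation of $\mathcal{RH}(\bL)$ commutes with base change, and in particular its restriction to a classical point $x$ recovers, up to a harmless base change, Fontaine's filtered invariant $D_\dR(\bL_{\bar x})$ of the stalk; and \textbf{(ii)} $\mathcal{RH}(\bL)$ is in fact locally free over $\mathcal{O}_{X_{\B_\dR}}$. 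Property~(ii) is where the connection is essential: a coherent module with integrable connection on a smooth space over a field of characteristic $0$ is automatically a vector bundle of locally constant rank. By contrast, in the Hodge--Tate/Sen picture alone the corresponding ``dimension'' genuinely jumps in families, which is exactly why one must use the de Rham period sheaf $\mathcal{O}\mathbb{B}_\dR$ together with its geometric connection rather than just a $\B_\dR$-local system.

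Granting this, the propagation of the de Rham property is formal. Recall that any $p$-adic representation $V$ of a $p$-adic field satisfies $\dim D_\dR(V)\le\dim_{\bQ_p}V$, with equality \emph{iff} $V$ is de Rham. Since $X$ is geometrically connected, the space $X_{\B_\dR}$ is connected, so by~(ii) the vector bundle $\mathcal{RH}(\bL)$ has a single rank $r$; and by~(i), for every classical point $y$ of $X$,
\[
\dim_{k(y)}D_\dR(\bL_{\bar y})\;=\;\operatorname{rank}\mathcal{RH}(\bL)\;=\;r\;\le\;\operatorname{rank}\bL .
\]
Hence $\bL_{\bar y}$ is de Rham if and only if $r=\operatorname{rank}\bL$, a condition that does not depend on $y$. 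As it holds at the given point $x$ by hypothesis, it holds at every classical point $y$, which proves the first assertion.

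For the Hodge--Tate weights, once $\bL$ is known to be de Rham I would invoke the structure theory of $\mathcal{RH}(\bL)$ developed in the paper (the interplay of $D_\dR$, the Hodge filtration, and the $p$-adic Simpson/Hodge--Tate side): it shows that the graded $\operatorname{gr}^\bullet_{\mathrm{Fil}}\mathcal{RH}(\bL)$ is again a vector bundle on $X_{\B_\dR}$ and that, via~(i) applied to the filtered object, $\operatorname{rank}\operatorname{gr}^i_{\mathrm{Fil}}\mathcal{RH}(\bL)$ equals the multiplicity of the Hodge--Tate weight $i$ in $\bL_{\bar y}$ for every classical $y$. Connectedness of $X_{\B_\dR}$ again forces each of these ranks to be constant, so $\bL_{\bar y}$ and $\bL_{\bar x}$ have the same Hodge--Tate weights with multiplicities. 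If one prefers to avoid the statement that the graded is a bundle, there is a cheaper argument: each function $y\mapsto\dim_{k(y)}\operatorname{gr}^iD_\dR(\bL_{\bar y})$ is upper semicontinuous, while their sum equals the constant $\dim_{k(y)}D_\dR(\bL_{\bar y})=\operatorname{rank}\bL$; a non-negative integer-valued, upper-semicontinuous family of functions with constant sum is itself constant, and constancy on the connected $X$ gives the claim.

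The genuine difficulty is not in this deduction but in establishing the functor $\mathcal{RH}$ and properties~(i) and~(ii): constructing $\mathcal{O}\mathbb{B}_\dR$ with its geometric connection on the pro-\'etale site of $X$, proving that (the pushforward/descent of) $\bL\otimes\mathcal{O}\mathbb{B}_\dR$ is coherent with integrable connection, computing it locally, and identifying its fibre at a classical point with Fontaine's $D_\dR$. Once that analytic input is in place, Theorem~\ref{Main thm} is the soft connectedness argument above, with geometric connectedness of $X$ used precisely to guarantee that $X_{\B_\dR}$ is connected.
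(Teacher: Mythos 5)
Your overall strategy is the one the paper uses: reduce to smooth $X$, produce a coherent sheaf with integrable connection whose fiber at a classical point is Fontaine's $D_\dR$, use the connection to upgrade coherence to local freeness, and read off both the de Rham property and the Hodge--Tate multiplicities from rank constancy on a connected space. But you have conflated two distinct objects, and as written your key display is false. The geometric correspondence $\mR\mH(\bL)=\nu'_*(\hat\bL\otimes\mO\bB_\dR)$, which is the thing living on the base change $\mX$ (and whose underlying space is $X_{\hat{\bar k}}$, not $X$), is \emph{always} a vector bundle of rank $\on{rank}\bL$ (Theorem~\ref{T: p-adic RH}(i)) --- its formation involves no Galois invariants, and its fiber at a classical point is essentially $\bL_{\bar y}\otimes\B_\dR$, not $D_\dR(\bL_{\bar y})$. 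So the equation $\dim_{k(y)}D_\dR(\bL_{\bar y})=\on{rank}\mR\mH(\bL)$ does not hold, and with $\mR\mH$ your argument would prove nothing. The object that does have fiber $D_\dR(\bL_{\bar y})$, and to which the coherent-plus-flat-connection-implies-locally-free trick is applied, is the \emph{arithmetic} $D_\dR^0(\bL)=\nu_*(\hat\bL\otimes\mO\bB_\dR)$ --- the $\Gal(\bar k/k)$-cohomology of $\varphi_*\mR\mH(\bL)$ --- and it lives on $X$ over $k$, not on the base change to $\B_\dR$. Once you substitute $D_\dR^0$ for your $\mR\mH$, your deduction is exactly the one the paper makes from Theorem~\ref{T: p-adic RH for de Rham}(i),(ii). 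Be aware, though, that coherence of $D_\dR^0(\bL)$ is a separate and genuinely harder input than coherence of $\mR\mH(\bL)$: it requires controlling the continuous Galois cohomology of the $\Gal(k_\infty/k)$-action (Tate--Sen decompletion estimates, Lemma~\ref{L: cohomology-descent}), which is where the arithmetic Riemann--Hilbert correspondence has its real content, and your ``the genuine difficulty is\ldots'' list does not separate this Galois-descent step from the construction of the period sheaf itself.

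On the Hodge--Tate multiplicities, your first argument (the filtration on $D_\dR^0(\bL)$ is by sub-bundles, so each graded piece has locally constant rank) is what the paper proves in Theorem~\ref{T: p-adic RH for de Rham}(iii), via degeneration of the spectral sequence; that part requires the standing hypothesis that $\bL_{\bar x}$ is de Rham at one point, so it is not ``for free''. Your alternative upper-semicontinuity argument has a gap: for a coherent filtration that is not known to be by sub-bundles, the fiber of $\gr^i_{\mathrm{Fil}}D_\dR^0(\bL)$ at a classical point need not equal $\gr^i D_\dR(\bL_{\bar y})$ (forming graded pieces does not commute with specialization in general), so you would still need essentially the same input as the paper to identify the two.
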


\begin{rmk}
It is not difficult to see that neither the crystalline nor the semi-stable version of the statement of Theorem \ref{Main thm} holds. In fact,  one may consider  a family of abelian varieties, parameterized by a smooth connected rigid analytic variety over $\bQ_p$, such that most of the fibers have good reduction at $p$ but some fibers do not have semi-stable reduction. Then the higher direct images of the trivial local system $\bQ_p$ on the family give rise to desired counterexamples on the base. On the other hand, one would expect that a potentially semi-stable version of Theorem \ref{Main thm} holds. That is, if the family is semi-stable at one classical point, then it becomes semi-stable at every classical point after a finite \'etale extension of $X$, or even after a finite extension of the base field.  
\end{rmk}

\begin{rmk}
A $\bQ_p$-local system on $X$ can be thought of as a geometric family of $p$-adic representations parameterized by $X$. 
One may compare Theorem \ref{Main thm} with the relevant results for arithmetic families of $p$-adic representations \cite{BC, Be}. It says that an arithmetic family of $p$-adic representations is de Rham provided it is de Rham at a Zariski-dense subset of classical points of the base, and the corresponding sets of Hodge-Tate weights are uniformly bounded. Moreover, as showed by families of $p$-adic representations arising on eigenvarieties, the uniformly bounded condition is necessary. Therefore, Theorem \ref{Main thm} exhibits a dichotomy between arithmetic families and geometric families. Namely, the latter are surprisingly rigid. 
\end{rmk}

\begin{rmk}
For a prime $\ell\neq p$, Kisin proved a rigidity theorem for $\bZ_\ell$-local systems in the sense that on a scheme of finite type over a non-archimedean field of residual characteristic $p$,  a $\bZ_\ell$-local system is locally constant in the $p$-adic topology \cite{Ki}. 
\end{rmk}

Using resolution of singularities for rigid analytic varieties (cf. \cite{BM}), it is enough to prove Theorem \ref{Main thm} for smooth varieties. In this case, we will deduce it from a version of $p$-adic Riemann-Hilbert correspondence, which now we explain.

Let $\mathrm{B}_{\dR}$ denote Fontaine's de Rham period ring. We consider a sheaf of $\bQ_p$-algebras $\mO_X\hat\otimes\B_\dR$ on $X_{\hat{\bar k}}$ (see \S \ref{S: p-adic RH theorem}
for the precise definition);
it inherits a filtration from the filtration on $\B_\dR$ and a $\B_\dR$-linear derivation from the derivation on $\mO_X$. The ringed space $(X_{\hat{\bar k}},\mO_X\hat\otimes\B_\dR)$ is denoted by $\mX$, which can be thought of as the ``generic fiber of a canonical lifting of $X_{\hat{\bar k}}$ to $\B_\dR^+$".  We have the following theorem, which can be regarded as a first step towards the long sought-after Riemann-Hilbert correspondence on $p$-adic varieties.

\begin{thm}\label{T: geom RH}(See Theorem \ref{T: p-adic RH} for the full and precise statements.)
Let $X$ be a smooth rigid analytic variety over $k$. Then there is a tensor functor $\mR\mH$ from the category of $\bQ_p$-local systems on $X_{\on{et}}$ to the category of vector bundles on $\mX$ (i.e. certain finite locally free $\mO_X\hat\otimes\B_\dR$-modules), equipped with a semi-linear action of $\Gal(\bar k/k)$, and with a filtration and an integrable connection that  satisfy Griffiths transversality. The functor $\mR\mH$ is compatible with pullback along arbitrary morphisms and (under certain conditions) is compatible with pushforward under smooth proper morphisms.
\end{thm}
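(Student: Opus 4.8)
The plan is to build $\mR\mH$ on Scholze's pro-\'etale site by means of the de Rham period sheaves, and then to read off all the extra structure from theirs. Since $X$ is smooth we argue directly. A $\bQ_p$-local system $\bL$ on $X_{\on{et}}$ corresponds, by Scholze's theory, to a $\widehat{\bQ}_p$-local system $\widehat\bL$ on the pro-\'etale site of $X$; pulling back along $\hat{\bar k}/k$ we view $\widehat\bL$ on the pro-\'etale site of $X_{\hat{\bar k}}$, where one has the period sheaves $\B_\dR^+\subset\B_\dR$ and $\mO\B_\dR^+\subset\mO\B_\dR$, the latter carrying a decreasing filtration $\on{Fil}^\bullet$ and a $\B_\dR$-linear integrable connection $\nabla$ with $(\mO\B_\dR)^{\nabla=0}=\B_\dR$ and $\nabla(\on{Fil}^i\mO\B_\dR)\subset\on{Fil}^{i-1}\mO\B_\dR\otimes_{\mO_X}\Omega^1_X$. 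Writing $\nu$ for the projection from the pro-\'etale site of $X_{\hat{\bar k}}$ to its analytic site (the underlying space of $\mX$), one sets
\begin{equation*}
\mR\mH(\bL):=\nu_*\bigl(\widehat\bL\otimes_{\widehat{\bQ}_p}\mO\B_\dR\bigr),
\end{equation*}
a module over $\nu_*\mO\B_\dR\cong\mO_X\hat\otimes\B_\dR$ (this identification, resting on Scholze's computation of $R\nu_*$ of the period sheaves, is essentially how the right-hand side is defined in \S\ref{S: p-adic RH theorem}). The integrable connection is $\nu_*(1\otimes\nabla)$, the filtration is $\nu_*(\widehat\bL\otimes_{\widehat{\bQ}_p}\on{Fil}^\bullet\mO\B_\dR)$ with $\widehat\bL$ in degree $0$, the semi-linear $\Gal(\bar k/k)$-action is induced by the Galois actions on $X_{\hat{\bar k}}$ and on the period sheaves, Griffiths transversality is inherited from that on $\mO\B_\dR$, and the tensor structure follows from $\otimes_{\widehat{\bQ}_p}$ being monoidal together with the projection formula for $\nu_*$ (granted the next step).

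\textbf{The vector-bundle property (the main obstacle).} One must show $\mR\mH(\bL)$ is finite locally free over $\mO_X\hat\otimes\B_\dR$ of rank $\on{rk}\bL$ and that $R^{>0}\nu_*(\widehat\bL\otimes\mO\B_\dR)=0$. This is local on $X$, so I may assume there is an \'etale ``toric chart'' $X\to\Spa k\langle T_1^{\pm1},\dots,T_n^{\pm1}\rangle$ and, shrinking further, that $\bL$ is trivialised on the pro-finite-\'etale perfectoid cover $\widetilde X$ obtained by adjoining all $p$-power roots of the $T_i$ and possibly a finite \'etale cover, with (profinite) Galois group $\Gamma$, an extension of a finite group by $\bZ_p(1)^n$. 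Over $\widetilde X$ one has $\widehat\bL\otimes\mO\B_\dR\cong(\mO\B_\dR)^{\oplus r}$, and Scholze's description of $\mO\B_\dR$ over a toric chart as a completed polynomial ring over $\widehat\mO_X\hat\otimes\B_\dR$ in variables $V_1,\dots,V_n$ with $\nabla V_i=-\,d\log T_i$ (and explicit filtration) reduces the computation of $\nu_*$ to descending the semi-linear $\Gamma$-action down to a finite level of $\widetilde X\to X$. One does this by a Sen-theoretic decompletion argument: the $\B_\dR^+$-Sen module attached to the $\Gamma$-representation is locally analytic, whence descent to finite level yields a finite projective $\mO_X\hat\otimes\B_\dR$-module with integrable connection whose appropriate base change recovers $\widehat\bL\otimes\mO\B_\dR$; counting ranks gives $\on{rk}\bL$ and the vanishing of the higher $\nu_*$. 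This decompletion, leaning on Scholze's primitive comparison theorem and the local structure of $\mO\B_\dR$, is where I expect most of the work to lie.

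\textbf{Compatibility with pullback.} For a morphism $g\colon X'\to X$ of smooth rigid analytic varieties (over $k$, or more generally allowing a finite extension of the base field), functoriality of the pro-\'etale sites and of the period sheaves supplies a natural base-change morphism $g^*\mR\mH(\bL)\to\mR\mH(g^*\bL)$ compatible with connection, filtration and Galois action. By the previous step both sides are vector bundles of rank $\on{rk}\bL$, so it is enough to check this map is an isomorphism pro-\'etale locally on $X'$, where $\bL$ becomes trivial and the claim reduces to $g^*\nu_*\mO\B_\dR\cong\nu'_*\mO\B_\dR$; the latter follows from base change for $R\nu_*$ of the period sheaves, equivalently from the explicit toric description, the variables $V_i$ and the connection pulling back correctly.

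\textbf{Compatibility with proper pushforward.} Let $f\colon X\to Y$ be smooth and proper and suppose each $R^if_{\on{et},*}\bL$ is again a $\bQ_p$-local system on $Y_{\on{et}}$ whose formation commutes with base change --- this is the ``certain conditions'', automatic for $\bL=\bQ_p$ and, more generally, guaranteed by Scholze's finiteness theorems when the relative cohomology is cohomologically constant. One then wants a canonical isomorphism $R^if_{\mX,*}\mR\mH(\bL)\cong\mR\mH(R^if_{\on{et},*}\bL)$ of filtered $\mO_Y\hat\otimes\B_\dR$-modules with connection and Galois action. The plan is to combine (i) a relative de Rham comparison isomorphism, on the pro-\'etale sites, $Rf_*(\widehat\bL\otimes\mO\B_\dR)\cong (Rf_*\widehat\bL)\otimes\mO\B_\dR$, which rests on the primitive comparison theorem applied to $Rf_*(\widehat\bL\otimes\widehat\mO_X)$, with (ii) the commutation of $\nu_*$ and $Rf_*$, the vanishing of the higher $\nu_*$ from the second step, and the degeneration of the Hodge--de Rham spectral sequence furnished by the filtration. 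The substantial point here is proving the relative comparison (i) with coefficients in a possibly non-de Rham $\bL$ and in families: one reduces it fibrewise, after a flat base change, to Scholze's absolute statements, but pinning down strictness of the filtration and integrality of the Hodge--Tate weights forces one to invoke the finiteness results and, in the end, the degeneration statement, whose proof passes through the rigidity Theorem \ref{Main thm}. Accordingly the logical order is: construct $\mR\mH$ and establish the vector-bundle and pullback-compatibility statements; deduce Theorem \ref{Main thm}; and only then bootstrap to the proper-pushforward compatibility.
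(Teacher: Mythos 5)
Your overall strategy is correct and matches the paper: define $\mR\mH(\bL)$ as the pushforward of $\hat\bL\otimes\mO\bB_\dR$ along the pro-\'etale$\to$\'etale projection over $X_{\hat{\bar k}}$, reduce the vector-bundle property to a local decompletion over a toric chart, and then use functoriality of the period sheaves for the pullback and pushforward statements. But there are two genuine gaps.

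\textbf{The decompletion is the crux, and your description of it would not succeed as stated.} You propose to descend the $\B_\dR^+$-semilinear $\Gamma$-module directly to ``finite level'', invoking a ``locally analytic'' property of the $\B_\dR^+$-Sen module. There are two problems. First, local analyticity at best allows one to extract a Sen operator; it does \emph{not} allow descent of the module to $\mO_X\hat\otimes\B_\dR$. A Kedlaya--Liu-type decompletion (which is what the paper actually uses, via the locally decompleting toric tower of \cite{KL2}) naturally lands in a finite projective module over $B_m\hat\otimes_k\B_\dR$ (or $B_{K,m}$), i.e.\ over a nontrivial finite extension of the base ring, not over $A\hat\otimes_k\B_\dR$ itself. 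Descending further requires a structural input that you do not supply. The paper's key observation, Lemma \ref{L: unip}, is that the geometric monodromy on the decompleted module is necessarily \emph{quasi-unipotent} --- proved by a Grothendieck-style argument using the conjugation action of $\Gal(K/k)$ on $\Gamma_{\on{geom}}\cong\bZ_p(1)^n$ via the cyclotomic character. Extracting the unipotent generalized eigenspace then gives the canonical descent $M_K(Y)$ over $B_K$. Without this, your construction only yields a module over some unspecified finite cyclotomic level, and neither the rank count nor the vanishing of $R^{>0}\nu'_*$ would follow. Second, the paper does not perform the decompletion at the level of $\mO\bB_\dR$ at all: it first establishes the $p$-adic Simpson correspondence (for $\mO\bC=\gr^0\mO\bB_\dR$) in Proposition \ref{P: local p-adic Simpson} and Theorem \ref{T: p-adic Simpson}, and then recovers the $\mO\bB_\dR$ statement by d\'evissage along the filtration, since $\gr^j\mO\bB_\dR\simeq\mO\bC(j)$ (see Lemma \ref{L: pushforward OBdR} and the proof of Theorem \ref{T: p-adic RH}(i)). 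This grading step is not cosmetic: the unipotence argument and the explicit calculation of $\Gamma_{\on{geom}}$-cohomology via Lemma \ref{L: elementary} are carried out at the graded level, where $\mO\bC$ has the concrete polynomial description $\hat\mO_X[V_1,\ldots,V_n]$.

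\textbf{The logical order you propose for the proper-pushforward compatibility is wrong.} You assert that the pushforward statement needs Hodge--de Rham degeneration and ``passes through the rigidity Theorem \ref{Main thm}'', concluding that it must be proved after rigidity. In fact Theorem \ref{T: p-adic RH}(v) is proved entirely within Section \ref{S: p-adic RH}, with no appeal to rigidity or to any degeneration statement: it is a direct consequence of the acyclicity of the complex $0\to f^*_{\on{proet}}\mO\bB_{\dR,Y}\to\mO\bB_{\dR,X}\to\mO\bB_{\dR,X}\otimes\Omega^1_{X/Y}\to\cdots$, the commutation of $Rf_{\on{proet},*}$ with $-\otimes f^*_{\on{proet}}\mO\bB_{\dR,Y}$ (a projection-formula lemma in the style of \cite[Lemma 8.6]{Sch2}), and the already-established vanishing of higher $\nu'_*$. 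The isomorphism is with the relative de Rham pushforward $R^if_{\dR,*}(\mR\mH(\bL),\nabla_\bL)$, i.e.\ the pushforward of the relative de Rham complex of $\mR\mH(\bL)$, not of $\mR\mH(\bL)$ alone, so no degeneration of a Hodge--de Rham spectral sequence is invoked or needed. Rigidity (Theorem \ref{Main thm}) is an application of the arithmetic Riemann--Hilbert functor $D_\dR^0$ in \S\ref{S: arith p-adic RH theorem}, logically downstream of Theorem \ref{T: p-adic RH}, not upstream of it.
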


\begin{rmk}
In \cite{Be}, the de Rham period sheaf for arithmetic family of Galois representations is defined as the completed tensor product of the structure sheaf of $X$ with $\B_\dR$ over the base field. However, due to the non-existence of the ``canonical embedding $\hat{\bar k}\to\B_\dR$",  the sheaf $\mO_X\hat\otimes\B_\dR$ on $X_{\hat{\bar k}}$ must be defined by a slightly roundabout way. Another crucial difference is that in the geometric situation, there is an additional structure on the vector bundle, namely an integrable connection. As we shall see below, this makes the passage from $\mO_X\hat\otimes\B_\dR$-modules to $\mO_X$-modules simpler and nicer as compared with arithmetic families.
\end{rmk}

We regard Theorem \ref{T: geom RH} as a geometric Riemann-Hilbert correspondence for $p$-adic \'etale local systems. Now let $\varphi: X_{\hat{\bar k}}\to X$ be the natural projection. Since for a local system $\bL$ on $X$ the vector bundle $\mR\mH(\bL)$ admits an action of $\Gal(\bar k/k)$, (informally) one can define 
\[D_{\dR}^i(\bL)=H^i(\Gal(\bar k/k),\varphi_*\mR\mH(\bL)),\quad i\geq 0.\]
See \S\ref{S: arith p-adic RH theorem} for the more precise definition.
Now Theorem \ref{Main thm} (for smooth $X$) follows from the following theorem, which can be regarded as an arithmetic Riemann-Hilbert correspondence.

\begin{thm}\label{main II} (See Theorem \ref{T: p-adic RH for de Rham} for the full and precise statements.)
Let $X$ be a smooth rigid analytic variety over $k$.  Then
\begin{enumerate}
\item[(i)] $D_{\dR}^i$ is a functor from the category of $\bQ_p$-local systems on $X_{\on{et}}$ to the category of vector bundles on $X$ with an integrable connection as above. If $X$ is a point, $D^0_\dR$ coincides with Fontaine's $D_\dR$ functor.
\item[(ii)] The functors $D_{\dR}^i$ commute with arbitrary pullbacks. 
\item[(iii)] If $X$ is connected and there exists a classical point $x$ such that $\bL_{\bar x}$ is de Rham, then there is a decreasing filtration $\on{Fil}$ on $D_\dR^0(\bL)$ by sub-bundles such that $(D_\dR^0(\bL),\nabla_\bL,\on{Fil})$ is the filtered $\mO_X$-module with an integrable connection associated to $\bL$ in the sense of \cite[Definition 7.4]{Sch2}.  In other words, $\bL$ is a de Rham local system in the sense of \cite[Definition 8.3]{Sch2}. 
\end{enumerate}
\end{thm}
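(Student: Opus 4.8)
The plan is to build $D_\dR^i$ directly out of the geometric functor $\mR\mH$ of Theorem~\ref{T: geom RH}. For a $\bQ_p$-local system $\bL$ on $X_{\on{et}}$, Theorem~\ref{T: p-adic RH} provides a vector bundle $\mR\mH(\bL)$ on $\mX=(X_{\hat{\bar k}},\mO_X\hat\otimes\B_\dR)$ with a semilinear $\Gal(\bar k/k)$-action, a filtration, and an integrable $\B_\dR$-linear connection $\nabla$ satisfying Griffiths transversality. Writing $\varphi\colon X_{\hat{\bar k}}\to X$ for the projection, I set $D^i_\dR(\bL):=H^i\bigl(\Gal(\bar k/k),\varphi_*\mR\mH(\bL)\bigr)$ (continuous cohomology, sheafified on $X_{\mathrm{an}}$). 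Since $\nabla$ differentiates only the $\mO_X$-factor it is $\Gal(\bar k/k)$-equivariant, hence descends to an integrable connection on each $D^i_\dR(\bL)$; functoriality in $\bL$ is immediate from that of $\mR\mH$, $\varphi_*$ and $H^i(\Gal(\bar k/k),-)$. When $X$ is a point, $\mO_X\hat\otimes\B_\dR=\B_\dR$ and, by the point case of Theorem~\ref{T: p-adic RH}, $\mR\mH$ is $V\mapsto V\otimes_{\bQ_p}\B_\dR$ with its tautological structures, so $D^0_\dR(V)=(V\otimes_{\bQ_p}\B_\dR)^{\Gal(\bar k/k)}=D_\dR(V)$ is Fontaine's functor.

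\textbf{Local freeness (the rest of (i)).} That each $D^i_\dR(\bL)$ is a vector bundle on $X$ is local, so I would pass to an affinoid $X=\Spa(R,R^+)$ equipped with a toric chart (an \'etale map to a torus that is a composite of rational localizations and finite \'etale maps), with associated Kummer pro-\'etale tower $X_\infty\to X$ of group $\Delta\cong\bZ_p(1)^{\dim X}$. Over such a chart the period sheaf $\mO_X\hat\otimes\B_\dR$ and the bundle $\mR\mH(\bL)$ admit the ``decompleted'' descriptions used in the construction of $\mR\mH$, and continuous $\Gal(\bar k/k)$-cohomology of $\varphi_*\mR\mH(\bL)$ is then computed by a bounded complex assembled from the Koszul complex for $\Delta$ and a complex for $\Gal(\bar k/k)$. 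Dévissage along the $t$-adic filtration reduces the computation to graded pieces, which are twists $\hat{\bar k}(j)$ of the Hodge bundle; Tate's theorem ($H^\bullet(\Gal(\bar k/k),\hat{\bar k}(j))$ vanishes for $j\neq0$ and equals $k$ in degrees $0,1$ for $j=0$) then forces each cohomology module to be finite projective over the invariant period sheaf, which is $\mO_X$. Controlling the contribution of the filtration --- and getting projectivity rather than mere coherence --- is where I expect to use the integrable connection, exactly as flagged in the Remark after Theorem~\ref{T: geom RH}: the connection trivializes $\mR\mH(\bL)$ in the $\mO_X$-direction up to the $p$-adic Simpson correspondence, which makes the dévissage uniform.

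\textbf{Part (ii).} Theorem~\ref{T: geom RH} gives, for $f\colon X'\to X$, a canonical isomorphism $\mR\mH_{X'}(f^*\bL)\cong\hat f^*\mR\mH_X(\bL)$ (where $\hat f\colon\mX'\to\mX$) compatible with all structures; pushing forward and taking $\Gal(\bar k/k)$-cohomology yields a natural base-change map $f^*D^i_\dR(\bL)\to D^i_\dR(f^*\bL)$. I would check it is an isomorphism locally on $X$: the complexes of Step~2 are built from $\mO_X\hat\otimes\B_\dR$-modules whose formation commutes with the flat base change $R\to R'$, and Tate's theorem is applied uniformly, so $R\Gamma(\Gal(\bar k/k),\varphi_*\mR\mH(\bL))$ is a perfect complex of $\mO_X$-modules whose formation commutes with arbitrary base change; its cohomology is the $D^i_\dR$, which is locally free by (i). Specializing $f$ to the inclusion of a classical point $y$, this together with the point case of (i) identifies the fibre $D^0_\dR(\bL)\otimes_{\mO_X}k(y)$ with $D_\dR(\bL_{\bar y})$.

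\textbf{Part (iii) and the main obstacle.} Assume $X$ connected with a classical point $x$ at which $\bL_{\bar x}$ is de Rham. By (i) the vector bundle $D^0_\dR(\bL)$ has constant rank $r$, and $r\le\operatorname{rank}\bL$ since $\dim_{k(y)}D_\dR(V)\le\dim_{\bQ_p}V$ always; the last sentence of the previous paragraph gives $\dim_{k(x)}D_\dR(\bL_{\bar x})=r$, and de Rham-ness at $x$ forces $r=\operatorname{rank}\bL$. The same identification at an arbitrary classical $y$ then gives $\dim_{k(y)}D_\dR(\bL_{\bar y})=r=\dim_{\bQ_p}\bL_{\bar y}$, i.e. $\bL_{\bar y}$ is de Rham --- this is Theorem~\ref{Main thm} for smooth $X$. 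To obtain the sharper statement I would compare $\varphi_*(\mO_X\hat\otimes\B_\dR)$ with Scholze's structural period sheaf $\mathcal{O}\B_\dR$ on the pro-\'etale site of $X$, producing a natural map $\nu^*D^0_\dR(\bL)\otimes\mathcal{O}\B_\dR\to\bL\otimes\mathcal{O}\B_\dR$; both sides are now vector bundles of the same rank $r=\operatorname{rank}\bL$, the map is an isomorphism at $x$ by Fontaine's theory, and it is horizontal, so the integrable connection propagates the isomorphism over the connected $X$. This exhibits $\bL$ as de Rham in the sense of \cite[Definition 8.3]{Sch2} and identifies $D^0_\dR(\bL)$ with the associated $\mO_X$-module with connection of \cite[Definition 7.4]{Sch2}; the decreasing filtration $\on{Fil}$ on $D^0_\dR(\bL)$ is induced from the filtration on $\mR\mH(\bL)$, Griffiths transversality is inherited, matching with Scholze's filtration is a chart computation, and the Hodge--Tate weights (read off from $\gr^\bullet$, equivalently from $\mR\mH(\bL)/t$ and its Higgs field) are then locally constant, hence constant. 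I expect the genuine difficulty to be the local-freeness and base-change statement in (i)--(ii): extracting an honest vector bundle with connection on $X$ from the $\B_\dR$-linear object $\varphi_*\mR\mH(\bL)$, which is precisely the point where one must use the integrable connection and the $p$-adic Simpson correspondence rather than formal properties of $\mR\mH$; once (i) and (ii) are available, (iii) is a soft consequence of connectedness and Fontaine's theory at $x$.
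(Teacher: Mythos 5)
Your overall architecture coincides with the paper's: define $D^i_\dR(\bL)$ as continuous Galois cohomology of $\varphi_*\mR\mH(\bL)$, prove coherence locally on toric charts, use the integrable connection to upgrade coherence to local freeness, establish base change, deduce pointwise de Rham-ness from constancy of rank, and then verify Scholze's condition. However, two steps as written would not go through. First, in (i)--(ii) your d\'evissage is misstated: the graded pieces of $\varphi_*\mR\mH(\bL)$ are the Tate twists $\mH(\bL)(j)$ of the Higgs bundle, whose sections over $X_K$ form a \emph{completed} module $M(X)\hat\otimes_{k_{m_0}}K$ carrying a nontrivial semilinear action --- they are not sums of copies of $\hat{\bar k}(j)$ --- so the classical Tate computation does not apply directly. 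What is actually needed is a relative Tate--Sen decompletion (the paper's Lemma \ref{L: cohomology-descent}, resting on operator-norm estimates): $H^i(\Gal(k_\infty/k), M\hat\otimes_{k_{m_0}}K)\simeq H^i(\Gal(k_\infty/k), M\otimes_{k_{m_0}}k_m)$ for $m\gg0$, where $M=M(X)$ is the finite projective decompleted model furnished by the local $p$-adic Simpson correspondence. The same point undercuts your ``perfect complex, hence arbitrary base change'' claim in (ii): the continuous cochain complex is not perfect, and only after decompletion does one obtain a two-term complex of finite projective $A$-modules; moreover for non-flat $f$ one must compare $M(X)\otimes_A B$ with the decompleted model of $\mH(f^*\bL)$, which is exactly where Theorem \ref{T: p-adic Simpson}(iii) enters.

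The more serious gap is in (iii). The assertion that $(D^0_\dR(\bL),\nabla,\on{Fil})$ is the filtered module of \cite[Definition 7.4]{Sch2} requires $\on{Fil}^\bullet D^0_\dR(\bL)$ to be a filtration by \emph{sub-bundles} with locally free graded pieces, i.e.\ the $E_1$-degeneration of the filtration spectral sequence; this is the bulk of the paper's argument, and you dispose of it by asserting the filtration is ``induced'' and that the comparison is ``a chart computation''. The paper proves degeneration by ascending induction on the filtration step: since $H^1(\Gal(k_\infty/k),-)$ commutes with specialization to classical points and each stalk $\bL_{\bar y}$ is already known to be de Rham (by the rank argument, which you do give and which matches the paper's), the connecting maps in the six-term exact sequences attached to $0\to\mO\bB_\dR^{[a+1,b]}\to\mO\bB_\dR^{[a,b]}\to\gr^a\mO\bB_\dR\to0$ vanish at every classical point, hence vanish, and flatness of all terms propagates. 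Your proposed alternative --- propagating a horizontal comparison isomorphism from the single point $x$ over connected $X$ --- would require a $\B_\dR$-relative version of ``coherent with integrable connection implies locally free'' for the cokernel, which is not available off the shelf; and in any case Scholze's Definition 8.3 demands the filtration data as input, so the degeneration step cannot be bypassed.
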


\begin{rmk}
(i) Although for most applications (e.g. Theorem \ref{Main thm}) it is enough to make use of $D_\dR^0$, we need to study all $D_\dR^i$ simultaneously in order to prove the above statements (for $D_\dR^0$).

(ii) The local version of $D_\dR^0$ essentially appeared in the work of Brinon \cite{Br2}. 
\end{rmk}

\begin{rmk}
(i) We emphasize that even $(X,\bL)$ is the analytification of an \'etale local system on an algebraic variety over $k$, the vector bundle $D_\dR^0(\bL)$ in the above theorem is a priori analytic. This is consistent with the classical story, where the Riemann-Hilbert correspondence is first established as an equivalence between Betti local systems and complex vector bundles with a flat connection. However, we expect that $D_\dR^0(\bL)$ extends to a logarithmic connection on some nice compactification $\bar X$ and therefore is an algebraic connection with regular singularities. 

(ii) Note that the vector bundle $D_\dR^i(\bL)$ may not have the correct rank in general. So arithmetic Riemann-Hilbert correspondence is only well-behaved for de Rham local systems. In fact, if $X$ is a (not necessarily smooth) algebraic variety, there exists a well-defined category $\on{P}_\dR(X)$ of de Rham perverse sheaves on $X$ and we hope that $D_\dR^0$ can be extended to a functor from $\on{P}_\dR(X)$ to the category of algebraic D-modules on $X$. 

We plan to investigate these extensions in a future work.
\end{rmk}

We explain the idea of the construction of the functor $\mR\mH$.  It is based on the recent progresses in relative $p$-adic Hodge theory \cite{Sch1, Sch2, KL1, KL2}. We will follow notations from \cite{Sch2}.

First recall that in the classical Riemann-Hilbert correspondence, the  functor from local systems to vector bundles with a connection on a complex analytic variety $X$ is given by tensoring with the sheaf of analytic functions
$$\bL\mapsto \bL\otimes_\bC\mO_X,\quad \nabla=1\otimes d.$$ 
In $p$-adic setting, our functor $\mR\mH$  is of the same nature, except that it is well-known that the analytic topology (or even the \'etale topology) in $p$-adic setting is not fine enough and the sheaf of analytic functions on $X$ cannot do the job. The idea then is to consider some finer topology on $X$ and certain period sheaf under this topology as the replacement of $\mO_X$. More precisely, Scholze \cite{Sch2} introduced the 
 the pro-\'etale site $X_{\on{proet}}$ of $X$ as a refinement of the usual \'etale topology on $X$ and therefore admits a natural projection $\nu: X_{\on{proet}}\to X_{\on{et}}$. Let $\nu': X_{\on{proet}}/X_{\hat{\bar k}}\to (X_{\hat{\bar k}})_{\on{et}}$ be the restriction of $\nu$. Every $\bQ_p$-local system $\bL$ then gives rise to (roughly speaking via pullback) a locally constant sheaf of $\bQ_p$-modules on $X_{\on{proet}}$, denoted by $\hat\bL$. In addition, on $X_{\on{proet}}$, there is the de Rham period sheaf $\mO\bB_{\dR}$, which is a module over the structure sheaf with an integrable connection and a decreasing filtration satisfying Griffiths transversality.
Then we define
\[\mR\mH(\bL):=R\nu'_*(\hat\bL\otimes_{\hat\bQ_p}\mO\bB_{\dR}).\]
It follows from the definition that the connection on $\mO\bB_{\dR}$ induces an integrable connection $\nabla_\bL$ on $\mR\mH(\bL)$.
The key point  is then to show that $\mR\mH(\bL)$, which a priori might be a complex of sheaves, is indeed a finitely generated $\mO_{X}\hat\otimes \B_\dR$-module. 

To prove this, we first replace $\mO\bB_{\dR}$ by its $0$th graded piece 
$\mO\bC=\gr^0\mO\bB_{\dR}$. 
Then we need to study 
\[\mH(\bL):= R\nu'_*(\hat\bL\otimes_{\hat\bQ_p} \mO\bC).\]
Note that taking the associated graded of the connection on $\mO\bB_{\dR}$ defines a Higgs field on $\mO\bC$ and therefore a Higgs field $\vartheta_\bL$ on $\mH(\bL)$. It turns out the functor
$$\mH:\bL\mapsto (\mH(\bL),\vartheta_\bL)$$ 
is nothing but (a special case of) the $p$-adic Simpson correspondence, which was first proposed by Faltings \cite{Fa} and recently systematically studied by Abbes-Gros and Tsuji \cite{AG, AGT}\footnote{It is pointed out by Abbes the subtlety to compare our construction with the construction of Abbes-Gros' in \cite{AGT}, although we believe that they are essentially the same.}. We note that these works studied a much more general and subtle theory than what we consider here. But in our special case, the situation is simpler and nicer. For example, we have the following statements.

\begin{thm}\label{main III} (See Theorem \ref{T: p-adic Simpson} for the full and precise statements.)
Let $X$ be a smooth rigid analytic variety over $k$. Then $\mH$ is a tensor functor from the category of $\bQ_p$-local systems on $X_{\on{et}}$ to the category of nilpotent Higgs bundles on $X_{\hat{\bar k}}$. The functor $\mH$ is compatible with pullback along arbitrary morphisms and (under certain conditions) is compatible with pushforward under smooth proper morphisms.
\end{thm}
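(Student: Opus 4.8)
The plan is to reduce all assertions to an explicit computation on a toric chart, where the heart of the matter is the local analysis of $R\nu'_*(\hat\bL\otimes_{\hat\bQ_p}\mO\bC)$; once that is in hand, the tensor and pullback statements are essentially formal and the pushforward statement follows from relative $p$-adic Hodge theory.

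\emph{Local reduction.} Every assertion is local on $X$, so I may take $X=\Spa(R,R^+)$ a smooth affinoid with an étale map $X\to\bT^n$ to the $n$-torus that is a composite of rational localizations and finite étale maps. Fix a pro-finite-étale Galois cover $\widetilde X\to X$ refining both the cover adjoining all $p$-power roots of the coordinates $T_1,\dots,T_n$ together with $\hat{\bar k}$, and a finite étale cover trivializing $\bL$; let $\Gamma$ be its Galois group and $\Gamma_{\mathrm{geom}}=\ker(\Gamma\to\Gal(\bar k/k))$, an extension of a finite group by $\bZ_p(1)^n$. Then $\widetilde X\to X_{\hat{\bar k}}$ is Galois with group $\Gamma_{\mathrm{geom}}$, and for a pro-étale sheaf $\mF$ on $X_{\on{proet}}/X_{\hat{\bar k}}$ one has a Cartan--Leray identification $R\nu'_*\mF\simeq R\Gamma_{\mathrm{cont}}(\Gamma_{\mathrm{geom}},\mF(\widetilde X))$ as complexes of sheaves on $X_{\hat{\bar k}}$, valid here by the acyclicity results of \cite{Sch2}. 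Moreover $\hat\bL(\widetilde X)=M\otimes_{\bQ_p}\hat\bQ_p(\widetilde X)$ for a finite free $\bQ_p$-module $M$ with continuous $\Gamma$-action, and by Scholze's local description of the de Rham period sheaf \cite{Sch2}, $\mO\bC|_{\widetilde X}\cong\widehat{\mO}_{\widetilde X}[V_1,\dots,V_n]$ as filtered $\widehat{\mO}$-algebras, with $\gamma_i(V_j)=V_j-\delta_{ij}T_i$ (up to normalization) for topological generators $\gamma_i$ of $\bZ_p(1)^n\subset\Gamma_{\mathrm{geom}}$, and Higgs field $\vartheta=-\sum_i\partial_{V_i}\otimes\frac{dT_i}{T_i}$.

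\emph{The local computation and gluing.} Putting these together, $R\nu'_*(\hat\bL\otimes\mO\bC)$ is computed by the continuous $\Gamma_{\mathrm{geom}}$-cohomology of $N[V_1,\dots,V_n]$, where $N=M\otimes_{\bQ_p}\widehat R_{\widetilde X}$ carries the diagonal action together with the coordinate shifts of the $V_i$. The crucial step is a \emph{decompletion}: using relative Sen theory --- equivalently Faltings' theory of small representations, which in the isogeny category of $\bQ_p$-local systems applies unconditionally, cf. \cite{KL2, AGT} --- I would show this cohomology is concentrated in degree $0$, that its $H^0$ (which is by definition $\mH(\bL)$) is a finite projective $\mO_{X_{\hat{\bar k}}}$-module of rank $\mathrm{rk}_{\bQ_p}M$, and that the natural map $\mH(\bL)\otimes_{\mO_{X_{\hat{\bar k}}}}\mO\bC\to\hat\bL\otimes\mO\bC$ is an isomorphism. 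The induced Higgs field $\vartheta_\bL$ is nilpotent: $\vartheta$ is locally nilpotent on $\widehat{\mO}_X[V_1,\dots,V_n]$ since each $\partial_{V_i}$ lowers polynomial degree, and a locally nilpotent Higgs field on a coherent sheaf over an affinoid is nilpotent. In particular $R^{>0}\nu'_*(\hat\bL\otimes\mO\bC)=0$, and this vanishing together with the displayed isomorphism shows that the formation of $\mH(\bL)$ is independent of the chosen chart; hence the local answers glue to a nilpotent Higgs bundle $\mH(\bL)$ on $X_{\hat{\bar k}}$, functorially in $\bL$. When $X$ is a point there are no variables $V_i$ and the construction reduces to (relative) Sen theory, which is why $\mH$ coincides in this restricted setting with the $p$-adic Simpson correspondence of Faltings \cite{Fa} and Abbes--Gros--Tsuji \cite{AG, AGT}.

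\emph{Tensor, pullback, pushforward, and the main obstacle.} Since $\mO\bC$ is a sheaf of $\hat\bQ_p$-algebras and $\widehat{\bL_1\otimes\bL_2}=\hat\bL_1\otimes_{\hat\bQ_p}\hat\bL_2$, the lax symmetric monoidal structure of $R\nu'_*$ gives a map $\mH(\bL_1)\otimes_{\mO_{X_{\hat{\bar k}}}}\mH(\bL_2)\to\mH(\bL_1\otimes\bL_2)$ of Higgs bundles (the derived tensor being underived because the factors are locally free and $R\nu'_*\mO\bC=\mO_{X_{\hat{\bar k}}}$), and it is an isomorphism because this may be checked after toric base change, where both sides are given by the explicit $\Gamma_{\mathrm{geom}}$-cohomology. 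Compatibility with pullback along a morphism $g:X'\to X$ of smooth rigid analytic varieties over $k$ is again local: for étale or rational $g$ it is immediate, and for general $g$ one uses the higher-direct-image vanishing to reduce to $\nu'_*$ and the flat base-change behaviour of $\mO\bC$. For a smooth proper $f:X\to Y$, under the hypothesis that the $R^if_{\on{et}\,*}\bL$ are again $\bQ_p$-local systems (the ``certain conditions''), I would combine Scholze's relative comparison theorems \cite{Sch1, Sch2} --- computing $Rf_{\on{proet}\,*}(\hat\bL\otimes\mO\bB_\dR)$, hence after $\gr^0$ also $Rf_{\on{proet}\,*}(\hat\bL\otimes\mO\bC)$, in terms of $R^if_{\on{et}\,*}\bL$ --- with base change for $\mO\bC$ and the Leray spectral sequence for $\nu'\circ f$ to obtain $R^if_{*}\mH(\bL)\cong\mH(R^if_{\on{et}\,*}\bL)$ as Higgs bundles. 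I expect the decompletion step to be the real obstacle: proving that the continuous $\Gamma_{\mathrm{geom}}$-cohomology of $N[V_1,\dots,V_n]$ is concentrated in degree $0$ and yields a finite projective module over the uncompleted rigid-analytic structure sheaf, uniformly enough to glue, is precisely the technical core of the $p$-adic Simpson correspondence in our setting; the remaining points are then formal or rest on already-established relative $p$-adic Hodge theory.
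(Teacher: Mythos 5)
Your architecture matches the paper's proof of Theorem \ref{T: p-adic Simpson}: localize to a toric chart, identify $R\nu'_*$ with continuous $\Gamma_{\mathrm{geom}}$-cohomology of $\mM(\tilde Y_{K,\infty})[V_1,\ldots,V_n]$ (the paper's Lemma \ref{L: comparison}), decomplete to a finite projective module over the honest rigid structure sheaf (the paper's Proposition \ref{P: local p-adic Simpson}, via Kedlaya--Liu's locally decompleting towers), and then glue (the paper's base $\mB$ and Corollary \ref{C: vb}). You are right that the decompletion is the technical core; you name it but do not supply it, which is the main gap. Your degree argument for nilpotence --- sections of $\mH(\bL)$ are honest polynomials in the $V_i$ and $\gr\nabla$ lowers degree, so local nilpotence on the coherent $\mH(\bL)$ upgrades to nilpotence --- is correct and genuinely slicker than the paper's route, which deduces nilpotence from unipotence of the $\Gamma_{\mathrm{geom}}$-action via the Grothendieck-style observation $\delta(\log\gamma)\delta^{-1}=\chi(\delta)\log\gamma$ (Lemma \ref{L: unip}). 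But note that the paper's unipotence/quasi-unipotence result is not merely an alternative proof of nilpotence: it is what cuts out the summand $M_K(Y)=M_{K,m}(Y)_1$ of the decompleted module, shows the other character summands are $\Gamma_{\mathrm{geom}}$-acyclic, and thereby yields the finite projectivity of $\mH(\bL)$ and the vanishing of $R^{>0}\nu'_*$. Your degree argument replaces the nilpotence conclusion, not the structural role; you would still need Lemma \ref{L: unip} (or an equivalent) to finish the decompletion.

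There is also a genuine error in the pushforward step. The correct statement (the paper's Part (v)) is
\[
(\mH(R^if_*\bL),\vartheta_{R^if_*\bL})\simeq R^if_{\on{Higgs},*}\bigl(\mH(\bL)\otimes\Omega_{X/Y}^\bullet,\bar\vartheta_\bL\bigr),
\]
the derived pushforward of the \emph{relative Higgs complex}, not $R^if_*\mH(\bL)\cong\mH(R^if_*\bL)$ as you wrote. The relative Higgs differential cannot be dropped: the key input is the acyclicity of $0\to f^*_{\on{proet}}\mO\bC_Y\to(\mO\bC_X\otimes\Omega_{X/Y}^\bullet,\gr\nabla)$ combined with a base-change lemma for $\mO\bC_Y$, and it is this complex that intertwines the two Leray directions; the informal ``take $\gr^0$ of $Rf_{\on{proet},*}(\hat\bL\otimes\mO\bB_\dR)$'' does not commute with $Rf_{\on{proet},*}$ without an argument. (Minor slips --- $\gamma_i(V_j)=V_j+\delta_{ij}$ rather than $V_j-\delta_{ij}T_i$, and the $t^{-1}$ Tate twist in $\gr\nabla$ --- do not affect the substance.)
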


\begin{rmk}
Our theorem, compared with the $p$-adic Simpson correspondence in \cite{Fa, AGT}, seems to contain the following additional results. 

(i) We show that the Higgs field on $\mH(\bL)$ is nilpotent. This reflects the fact that $\bL$ is defined over $X$ (rather than over $X_{\hat{\bar{k}}}$), and therefore acquires the extra symmetry by the action of the Galois group $\Gal(\bar k/k)$. It is analogous to the following result of Simpson \cite[Corollary 4.2]{Sim}: if $\bL$ is a Betti local system on a complex smooth projective variety that underlies a variation of Hodge structure, then the corresponding Higgs bundle $(M,\vartheta)$ under the (classical) Simpson correspondence is nilpotent.  In fact,  the Galois group of the cyclotomic tower of $k$ should be regarded as the $p$-adic counterpart of the Hodge torus $\bC^\times$ (see  the proof of Lemma \ref{L: unip}). 

(ii) We establish some functoriality of the functor, e.g. its compatibility with pullbacks (which is an important ingredient in the proof of Theorem \ref{Main thm}). 
\end{rmk}

Let us quickly describe the organization of the paper. Our paper is in fact written in the reverse order of the introduction.
We will first discuss the $p$-adic Simpson correspondence in Section \ref{S: p-adic Simpson} and then study $p$-adic Riemann-Hilbert correspondence in Section \ref{S: p-adic RH}. Finally in Section \ref{S: application}, we prove Theorem \ref{Main cor} and give the application to Shimura varieties.

\medskip

\noindent\bf Notations and conventions. \rm In the paper, $k$ denotes a $p$-adic field and $\hat{\bar k}$ a completed algebraic closure of $k$. Let $k_\infty=\cup k_m$ be the cyclotomic extension of $k$ in $\bar k$. Let $K\subset\hat{\bar{k}}$ denote a perfectoid field containing $k_\infty$. Let $K^\flat$ be the tilt of $K$. We fix a compatible system of $p$-power roots of unit $\{\zeta_{p^m}\}_{m\geq0}$ in $k_\infty$, which gives $\epsilon=(1,\zeta_p,\ldots)\in \mO_{K^\flat}$. 

We regard rigid analytic varieties over $L$ ($L=k$ or $K$) as adic spaces locally of finite type over $L$. In particular, affinoid spaces are written as $\Spa(A,A^+)$ with $A^+=A^\circ$. If $X=\Spa(A,A^+)$ is affinoid over $k$, we write $A_K=A\hat\otimes_kK$ and $X_K=\Spa(A_K,A_K^+)$ its base change to $K$.
An \'etale morphism between affinoid spaces is called \emph{standard \'etale} if it is a composition of rational localizations and finite \'etale morphisms.
A standard \'etale morphism from a rigid analytic variety $X$ over $L$ to the torus
$$\bT^n=\Spa(L\langle T_1^{\pm 1},\ldots, T_n^{\pm 1}\rangle,\mO_L\langle T_1^{\pm 1},\ldots, T_n^{\pm 1}\rangle)$$ 
is called a toric chart of $X$. 

Let $X$ be a rigid analytic variety over $k$. Following \cite{Sch2}, let $X_{\on{proet}}$ be the pro-\'etale site on $X$, and let 
$$\nu: X_{\on{proet}}\to X_{\on{et}}$$ 
be the natural projection. Let $\hat\bZ_p$ (resp. $\hat\bQ_p$) be the constant sheaf on $X_{\on{proet}}$ associated to $\bZ_p$ (resp. $\bQ_p$), and for a $\bZ_p$-local system $\bL$ (resp.  $\bQ_p$-local system) on $X_{\on{et}}$, let $\hat\bL$ be the $\hat\bZ_p$-module (resp. $\hat\bQ_p$-module) on $X_{\on{proet}}$ associated to $\bL$ (cf. \cite[\S 8.2]{Sch2} \cite[\S 9.1]{KL1}). 
\medskip

\noindent\bf Acknowledgement. \rm The authors thank A. Abbes, P. Colmez, K. S. Kedlaya, K.-W. Lan, P. Scholze and Y. Tian for valuable discussions, and Koji Shimizu for very careful reading of this paper and useful comments.
Parts of the work were done while the first author was staying at California Institute of Technology and the second author was
staying at Beijing International Center for Mathematical Research. The
authors would like to thank these institutions for their hospitality.

\section{The $p$-adic Simpson correspondence}\label{S: p-adic Simpson}
In this section, we prove Theorem \ref{main III}, which (we believe) is part of the $p$-adic Simpson correspondence as first proposed by Faltings (\cite{Fa}) and recently systematically developed by Abbes-Gros and Tsuji (cf. \cite{AG, AGT}). We use the framework systematically developed in \cite{Sch1,Sch2, KL1,KL2}, and therefore work in the world of rigid analytic geometry (in particular we make use of the pro-\'etale topology \`a la Scholze). We note that some results in this section have counterparts in the works of Faltings, Abbes-Gros, but in a different language and in greater generality. Namely, the works of \cite{Fa, AG, AGT} use formal schemes and Faltings topos, and therefore establish the correspondence at the more subtle integral level. We will try our best to make the translation between these two settings explicit in the sequel. 

\subsection{Statement of the theorem}
Let $X$ be an $n$-dimensional smooth rigid analytic variety over $\Spa(k,\mO_k)$. Faltings introduced the notion of generalized representations on $X_{\hat{\bar k}}$ (in the case when $X$ is the rigid generic fiber of a formal scheme over $\mO_k$), which is a generalization of $p$-adic representations of the geometric fundamental group of $X_{\hat{\bar k}}$. Then
the $p$-adic Simpson correspondence of \cite{Fa} is an equivalence of categories between small generalized representations on $X_{\hat{\bar k}}$ and small Higgs bundles on $X_{\hat{\bar k}}$. Here ``small" refers to those objects close to being trivial. Note that the construction of such a correspondence is not completely canonical. It depends on a choice of lifting $X_{\hat{\bar k}}$ to Fontaine's ring $A_2$, and therefore makes globalization of the construction not straightforward. 

We will consider a small part of this story. Namely, we consider those generalized representations coming from genuine \'etale local systems on $X$ (rather than on $X_{\hat{\bar k}}$), but (a priori) without any smallness assumption. It turns out that the situation is much nicer due to the existence of the extra action of $\Gal(\bar k/k)$. We will attach such an \'etale $\bQ_p$-local system $\bL$ on $X$ a nilpotent Higgs bundle on $X_{\hat{\bar k}}$ in a completely canonical way. In particular, it globalizes automatically. Note that we only have one direction functor from local systems on $X_{\on{et}}$ to Higgs bundles on $X_{\hat{\bar k}}$, which is not an equivalence of categories. Indeed, the functor loses information, as can be already seen in the case $X=\Spa(k, \mO_k)$ (see Remark \ref{R: point p-adic Simpson}).

Let us be more precise. 
We denote by $\mO_X=\nu^*\mO_{X_{\on{et}}}$ the structural sheaf on $X_{\on{proet}}$ and
$\hat\mO_X$ the completed version (\cite[\S 4]{Sch2}). Let $\Omega_X=\nu^*\Omega_{X_{\on{et}}}$. Let $\mO\bB_{\dR}$ be the de Rham period sheaf on $X_{\on{proet}}$ (\cite[\S 6]{Sch2} and \cite{Sch3}). This is a sheaf of $\mO_X$-algebras, which admits a decreasing filtration $\on{Fil}^\bullet\mO\bB_{\dR}$ and an integrable connection
\begin{equation}\label{E: connection}
\nabla: \mO\bB_{\dR}\to \mO\bB_{\dR}\otimes_{\mO_X}\Omega_X,
\end{equation}
satisfying Griffiths transversality. The main player of this section 
is the period sheaf 
$$\mO\bC=\gr^0\mO\bB_{\dR}.$$
Recall that by \cite[\S 6]{Sch2}, $\gr^j\mO\bB_{\dR}$ is isomorphic to the $j$th Tate twist $\mO\bC(j)$ of $\mO\bC$. Taking the associated graded connection, we get a Higgs field
\begin{equation}\label{E: Higgs field on A}
\gr(\nabla): \mO\bC\to \mO\bC\otimes\Omega_X(-1).
\end{equation}
\begin{rmk}
Alternatively, the period sheaf $\mO\bC$ can be constructed as follows. Let
\begin{equation}\label{E: faltings ext}
0\to \hat\mO_X\to \mE\to \hat\mO_X\otimes_{\mO_X}\Omega_X(-1)\to 0
\end{equation}
denote the Faltings' extension, which is an extension of locally free $\hat\mO_X$-modules on $X_{\on{proet}}$. Then 
$$\mO\bC=\on{Sym}_{\hat\mO_X}\mE/(1-1)=\underrightarrow\lim_n\on{Sym}^n_{\hat\mO_X}\mE,$$
where the first $1$ is the unit of the symmetric algebra $\on{Sym}_{\hat\mO_X}\mE$ and the second $1\in \hat\mO_X\subset \mE=\on{Sym}^1\mE$.

Note that the local version of $\mO\bC$ first appeared in the work of Hyodo \cite{Hy} (under the notation $S_\infty$). 
\end{rmk}

To simplify the exposition, we assume that the perfectoid field $K$ is the completion of a Galois extension of $k$ (in $\bar{k}$) that contains $k_\infty$, and let $\Gal(K/k)$ denote the corresponding Galois group. 
Let $$\nu': X_{\on{proet}}/X_K=(X_K)_{\on{proet}}\to (X_K)_{\on{et}}$$ denote the projection from the pro-\'etale site of $X_K$ to the \'etale site of $X_K$. 
The following theorem is a precise version of Theorem \ref{main III}.
\begin{thm}\label{T: p-adic Simpson}
\begin{enumerate}
\item[(i)] Let $\bL$ be a $\bQ_p$-local system on $X_{\on{et}}$ of rank $r$. Then $R^i\nu'_*(\hat\bL\otimes\mO\bC)=0$ for $i>0$ and
\[\mH(\bL):=\nu'_*(\hat{\bL}\otimes\mO\bC), \quad \vartheta_\bL=\nu'_*(\gr\nabla): \mH(\bL)\to \mH(\bL)\otimes\Omega_{X_K}(-1)\] 
is a rank $r$ vector bundle on $X_{K}$ together with a nilpotent Higgs field $\vartheta_{\bL}$ and a natural semi-linear $\Gal(K/k)$-action. 

\item[(ii)] There is a canonical isomorphism 
$${\nu'}^*\mH(\bL)\otimes_{\mO_{X_K}}\mO\bC|_{(X_K)_{\on{proet}}}\simeq (\hat\bL\otimes\mO\bC)|_{(X_K)_{\on{proet}}},$$ 
compatible with the Higgs fields on both sides.

\item[(iii)] Let $f: Y\to X$ be a morphism between smooth rigid analytic varieties over $k$ and let $\bL$ be a $\bQ_p$-local system on $X_{\on{et}}$. Then there is a canonical isomorphism 
$$f^*(\mH(\bL),\vartheta_\bL)\simeq (\mH(f^*\bL),\vartheta_{f^*\bL}).$$

\item[(iv)] We have $\mH(\bQ_p)=\mO_{X_K}$, where $\bQ_p$ denotes the constant local system.
There is a canonical isomorphism
\begin{equation}\label{E: tensor Higgs}
(\mH(\bL_1\otimes\bL_2),\vartheta_{\bL_1\otimes\bL_2})\simeq (\mH(\bL_1)\otimes_{\mO_{X_K}}\mH(\bL_2),\vartheta_{\bL_1}\otimes 1+1\otimes\vartheta_{\bL_2}),
\end{equation}
and 
\begin{equation}\label{E: dual Higgs}
(\mH(\bL^\vee),\vartheta_{\bL^\vee})\simeq (\mH(\bL)^\vee,\vartheta_\bL^\vee).
\end{equation}
In other words, $\mH$ is a tensor functor from the category of $\bQ_p$-local systems on $X$ to the tensor category of nilpotent Higgs bundles with a semi-linear $\Gal(K/k)$-action on $X_K$.

\item[(v)] Let $f:X\to Y$ be a smooth proper morphism of rigid analytic varieties over $k$ and $\bL$ be a $\bZ_p$-local system on $X_{\on{et}}$. Assume that $R^if_*\bL$ is a $\bZ_p$-local system on $Y_{\on{et}}$. Then there is a canonical isomorphism
\[(\mH(R^if_*\bL),\vartheta_{R^if_*\bL})\simeq R^if_{\on{Higgs},*}(\mH(\bL)\otimes\Omega_{X/Y}^\bullet,\vartheta_\bL).\]
\end{enumerate}
\end{thm}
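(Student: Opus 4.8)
The plan is to establish Theorem~\ref{T: p-adic Simpson} by a local-to-global argument, reducing each assertion to an explicit computation on a toric chart and then glueing. First I would treat part~(i). Working pro-\'etale-locally, choose a toric chart $X=\Spa(A,A^+)\to\bT^n$, and pass to the pro-\'etale cover $X_\infty$ obtained by adjoining all $p$-power roots of the coordinates $T_1,\dots,T_n$ together with $K$; this is a perfectoid affinoid, and $X_\infty\to X_K$ is a pro-(finite \'etale) Galois cover with group $\Gamma=\bZ_p(1)^n$ (plus $\Gal(K/k)$ on top). Over such a chart one has Scholze's explicit description of $\mO\bC$: it is a polynomial ring $\hat\mO_{X_\infty}[V_1,\dots,V_n]$ (the $V_i=$ ``$\log$'' of the coordinates, i.e.\ the images of $t_i\otimes1-1\otimes T_i$), on which $\gamma_i\in\Gamma$ acts by $V_j\mapsto V_j+\delta_{ij}$ (up to the cyclotomic character), and $\gr\nabla$ sends $V_i\mapsto \mathrm{dlog}\,T_i$. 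A $\bQ_p$-local system $\bL$ trivialized on $X_\infty$ is given by a finite free $B$-module $M_0$ with a continuous semilinear $\Gamma$-action, where $B=\hat\mO_{X_\infty}(X_\infty)$. The key computation is that $(M_0\otimes_B \mO\bC(X_\infty))^{\Gamma}$ is finite projective over $\hat\mO_{X_K}(X_K)$ of rank $r$ and that higher continuous group cohomology vanishes; this is where one inverts $p$ and uses the decompletion/Sen-theory machinery of \cite{KL1,KL2,Sch2} (the operator $\nabla_{\mathrm{Sen}}$ coming from the $\Gamma$-action on the $V_i$ is, after inverting $p$, ``locally analytic'', and one solves the relevant equations). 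The Higgs field is then $\nu'_*\gr\nabla$, and nilpotence is proved separately below.

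Next I would handle (ii): the canonical map ${\nu'}^*\mH(\bL)\otimes_{\mO_{X_K}}\mO\bC\to\hat\bL\otimes\mO\bC$ is an isomorphism because, by the local computation, $\hat\bL\otimes\mO\bC$ is generated over $\mO\bC$ by its $\Gamma$-invariants, i.e.\ one has a basis of $M_0\otimes\mO\bC$ consisting of $\Gamma$-fixed vectors — this is the ``$\mO\bC$ is a period sheaf with enough invariants'' statement, and it is compatible with $\gr\nabla$ by construction. Part~(iii), compatibility with pullback, follows formally once one knows $f^*\mO\bC_X\to\mO\bC_Y$ is compatible with connections and that formation of $\nu'_*$ of $\hat\bL\otimes\mO\bC$ commutes with the relevant base change — again checked on toric charts, where the coordinate-wise description makes the compatibility transparent, and the vanishing of higher $R\nu'_*$ from~(i) guarantees there are no correction terms. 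Part~(iv) is then immediate: $\mH(\bQ_p)=(\mO\bC)^{\Gamma}=\mO_{X_K}$ from Scholze's description of $\gr^0\mO\bB_\dR$, and the tensor/dual compatibilities follow from (ii) since $(\hat\bL_1\otimes\hat\bL_2)\otimes\mO\bC\simeq(\hat\bL_1\otimes\mO\bC)\otimes_{\mO\bC}(\hat\bL_2\otimes\mO\bC)$, and taking $\Gamma$-invariants of a tensor product of modules generated by invariants gives the tensor product of the invariants; the Higgs field on a tensor product is the Leibniz one because $\gr\nabla$ is a derivation.

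For the nilpotence of $\vartheta_\bL$ (the part of~(i) that is genuinely new compared to \cite{Fa,AGT}), the idea is that $\bL$ descends to $X$, hence $\mH(\bL)$ carries the semilinear $\Gal(K/k)$-action, and in particular the action of $\Ga=\Gal(k_\infty/k)$ through the cyclotomic quotient. The graded piece $\gr^j\mO\bB_\dR=\mO\bC(j)$ means the Higgs field shifts the cyclotomic weight by $-1$; concretely, on a toric chart the eigenvalues of the ``Hodge torus'' $\Ga$-action on $\mH(\bL)$ are bounded (Sen's theorem: the Sen operator has finitely many, integral-up-to-bounded-denominator generalized eigenvalues after decompletion), and $\vartheta_\bL$ decreases this weight, so a high enough power of $\vartheta_\bL$ must vanish on each weight space — this is precisely the $p$-adic analogue of Simpson's \cite[Cor.~4.2]{Sim}, as the remark indicates, and the ``log'' here is the passage from the multiplicative $\Ga$-action to the nilpotent operator. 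I would isolate this as a lemma (the analogue of Lemma~\ref{L: unip}). Finally part~(v): given $R^if_*\bL$ a local system, apply (ii) to rewrite $\hat\bL\otimes\mO\bC$ over $X$, use the projection formula and the fact that $Rf_{\mathrm{proet},*}$ of $\hat\bL\otimes\mO\bC$ computes (after $\nu'_*$) the Higgs cohomology $Rf_{\mathrm{Higgs},*}(\mH(\bL)\otimes\Omega^\bullet_{X/Y},\vartheta_\bL)$ — here one needs the relative Poincar\'e lemma for $\mO\bC$ over $Y$, i.e.\ that $\gr$ of the de Rham complex of $\mO\bB_\dR$ resolves $\hat\mO$ along the fibers, which is in \cite{Sch2} — combined with Scholze's primitive comparison theorem ensuring $R^if_{\mathrm{proet},*}(\hat\bL\otimes\mO\bC)$ is computed by $R^if_{\mathrm{et},*}$ and is finite.

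The main obstacle I expect is the finiteness and flatness in part~(i): showing that after inverting $p$ the naive $\Gamma$-invariants $(M_0\otimes\mO\bC)^{\Gamma}$ are a vector bundle of the \emph{correct} rank $r$ on $X_K$ with vanishing higher cohomology. This requires the decompletion theory (relative Sen theory over the Tate algebra) from \cite{KL1,KL2}, controlling the $\Gamma$-action on the polynomial variables $V_i$ of $\mO\bC$ uniformly over the base, and it is the step where the ``$p$-adic'' subtleties (non-Noetherianness of the perfectoid rings, continuity of cocycles, solving $(\gamma_i-1)x=y$ with estimates) all concentrate; everything else is comparatively formal glueing and bookkeeping once this is in hand.
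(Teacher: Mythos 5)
Your overall plan follows the paper's architecture quite closely: reduce to a toric chart, identify $\mO\bC$ with $\hat\mO_{X_\infty}[V_1,\dots,V_n]$ on which $\Gamma_{\on{geom}}$ acts by translations, invoke the decompletion theory of \cite{KL2} to control $\hat\bL\otimes\hat\mO_X$, then handle (ii)--(v) by what are essentially formal adjunction and projection-formula arguments. That part is fine and is what the paper does.

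The place where your sketch diverges (and where it has a genuine gap in the order of operations) is the interaction between finiteness over $A_K$ and nilpotence. You propose to first establish that $(M_0\otimes\mO\bC(X_\infty))^{\Gamma_{\on{geom}}}$ is a vector bundle on $X_K$ of the correct rank, and only afterwards prove nilpotence as a separate lemma by a Simpson-style argument (bounded arithmetic Sen weights, Tate twist in $\Omega_{X_K}(-1)$ shifting those weights, hence a high power of $\vartheta$ vanishes). But the decompletion machinery of \cite[\S 5--7]{KL2} does not directly produce an $A_K$-module: it produces a finite projective module $M_m$ over $A_{k_m}$ (equivalently $M_{K,m}$ over $A_{K,m}$, where $A_{K,m}$ is a strictly larger ring than $A_K$), stable under the full Galois group. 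To descend from $A_{K,m}$ to $A_K$ one has to kill the finite-order characters of $\Gamma_{\on{geom}}$ acting on $M_{K,m}$, and the mechanism for this in the paper is precisely the quasi-unipotence statement (the paper's Lemma \ref{L: unip}): conjugation of $\Gamma_{\on{geom}}$ by $\Gal(K/k)$ scales $\log\gamma$ by $\chi(\delta)$, so Grothendieck's $\ell$-adic-monodromy argument forces the characteristic polynomial of $\log\gamma$ to be $T^{\on{rk}}$. One then decomposes $M_{K,m}$ into generalized $\Gamma_{\on{geom}}$-eigenspaces, observes that the nontrivial characters contribute zero cohomology, and defines $M_K$ to be the unipotent summand. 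Nilpotence of $\vartheta_\bL$ is then not a separate lemma at all; it is built into the very definition of the vector bundle. Your plan treats these as independent steps, and as written does not supply the descent from $A_{K,m}$ to $A_K$.

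Your alternative nilpotence argument (weight-shift under the arithmetic Sen operator) is plausible in spirit and is the heuristic the paper itself offers in the remark you quoted, but it is not what the paper proves, and making it rigorous would require separately establishing that the arithmetic Sen operator on $\mH(\bL)$ has bounded eigenvalues (a nontrivial statement that itself goes through decompletion) and then arguing that a $\Gal(K/k)$-equivariant map of pure nonzero weight between two modules with the same bounded weight support must vanish. That can be made to work, but the Grothendieck conjugation trick is shorter, avoids any quantitative control of eigenvalues, and—crucially—is applied \emph{before} taking $\Gamma_{\on{geom}}$-invariants, which is exactly where it is needed to produce the $A_K$-module in the first place. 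So: same broad strategy, but you are missing the specific lemma that makes the finiteness step close, and you have the order of the nilpotence argument inverted relative to where it must actually be used.

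One smaller point: for part (i) one also needs a base-of-topology argument (the paper's Lemma \ref{bases top} and Corollary \ref{C: vb}) to pass from the local toric computation to an actual sheaf on $(X_K)_{\on{et}}$, because objects of $(X_K)_{\on{et}}$ do not obviously descend to finite level; your ``formal glueing and bookkeeping'' elides this, and it does take a Krasner/Ax--Sen--Tate argument. For (v) your outline (relative Poincar\'e lemma for $\mO\bC$ over $Y$ plus a projection-formula lemma) matches the paper.
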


A few notations need to be explained. A priori, $\mH(\bL)$ is a sheaf of $\mO_{(X_K)_{\on{et}}}$-modules on $X_K$. Then Part (i) of the theorem asserts that it is locally free of finite rank on $(X_K)_{\on{et}}$. By \cite[Lemma 7.3]{Sch2}, we may therefore regard it as a vector bundle on $X_K$ (equipped with the analytic topology). In Part (ii), the Higgs field on the right hand side is given by \eqref{E: Higgs field on A} and on the left hand side is the tensor product of the Higgs field on $\mH(\bL)$ and on $\mO\bC$ (see the formula in \eqref{E: tensor Higgs}). 
The pullback functor $f^*$ on the left hand side in Part (iii) is the usual pullback of coherent sheaves on rigid analytic spaces (whereas on the right hand side is the usual pullback of local systems). The Higgs field induces a complex of $\mO_{X_K}$-modules on $X_K$,
\[\mH(\bL)\stackrel{\bar\vartheta_\bL}{\to} \mH(\bL)\otimes\Omega_{X/Y}(-1)\stackrel{\bar\vartheta_\bL}{\to} \mH(\bL)\otimes\Omega^2_{X/Y}(-2)\to\cdots,\]
where $\bar\vartheta_\bL$ is the composition of $\vartheta_{\bL}$ followed by the natural projection 
\[
\mH(\bL)\otimes\Omega_X(-1)\to \mH(\bL)\otimes\Omega_{X/Y}(-1),
\]
and $R^if_{\on{Higgs},*}(\mH(\bL),\vartheta_\bL)$ in Part (v) is the $i$th derived pushforward of this complex with the induced Higgs field.

Before proving the theorem, let us make a few observations/remarks.
We have the corollary of Part (v).
\begin{cor}
If $X$ is smooth proper over $k$, then
\[H^i(X_{\hat{\bar k}}, \bL)\otimes \hat{\bar k}\simeq H^i(X_{\hat{\bar k}}, (\mH(\bL)\otimes\Omega_X^\bullet,\vartheta_\bL)).\]
\end{cor}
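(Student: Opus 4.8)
The plan is to deduce this corollary from Theorem~\ref{T: p-adic Simpson}(v) by specializing to the structure morphism $f: X\to Y=\Spa(k,\mO_k)$. First I would observe that for $Y$ a point, $\Omega_{X/Y}=\Omega_X$, so the relative Higgs complex $(\mH(\bL)\otimes\Omega_{X/Y}^\bullet,\bar\vartheta_\bL)$ appearing in (v) is literally the absolute Higgs complex $(\mH(\bL)\otimes\Omega_X^\bullet,\vartheta_\bL)$ on $X_K$, and $R^if_{\on{Higgs},*}$ is just $H^i(X_K,-)$ of this complex of coherent sheaves. On the other side, $R^if_*\bL$ is (by smooth properness and the comparison results of \cite{Sch1}) a $\bZ_p$-local system on the point $Y=\Spa(k,\mO_k)$, i.e.\ a $p$-adic Galois representation of $\Gal(\bar k/k)$, namely $H^i_{\on{et}}(X_{\hat{\bar k}},\bL)$ up to the usual issues; so Theorem~\ref{T: p-adic Simpson}(v) yields
\[
\mH\bigl(H^i_{\on{et}}(X_{\hat{\bar k}},\bL)\bigr)\;\simeq\;H^i\bigl(X_K,(\mH(\bL)\otimes\Omega_X^\bullet,\vartheta_\bL)\bigr).
\]

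Next I would identify the left-hand side with $H^i(X_{\hat{\bar k}},\bL)\otimes_k\hat{\bar k}$ (up to replacing $K$ by $\hat{\bar k}$). This is the content of Remark~\ref{R: point p-adic Simpson}: for $X=\Spa(k,\mO_k)$ a point, the functor $\mH$ sends a $\bQ_p$-representation $W$ of $\Gal(\bar k/k)$ to $(W\otimes_{\bQ_p}\hat{\bar k})^{\Gal}$-type invariants — more precisely, on a point the period sheaf $\mO\bC$ degenerates (the connection/Higgs field is trivial and $\mO\bB_\dR$ becomes just $\B_\dR$), so $\mH(W)=(W\otimes_{\bQ_p}\B_{\on{HT}})^0$-style construction gives $W\otimes_{\bQ_p}\hat{\bar k}$ exactly when one allows arbitrary (not necessarily Hodge--Tate) $W$, because $\gr^0\mO\bB_\dR$ on a point returns the whole $\hat{\bar k}$-vector space $W\otimes\hat{\bar k}$ as the $\nu'_*$ of $\hat W\otimes\hat{\bar k}$. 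I would spell out this base-case computation: $\nu'_*(\hat W\otimes\hat\mO)=W\otimes_{\bQ_p}\hat{\bar k}$ by the almost-purity / pro-\'etale descent for $\hat\mO$ on a point, and the graded pieces of $\mO\bB_\dR$ on a point are just Tate twists of $\hat{\bar k}$, so $\mH(W)\cong W\otimes_{\bQ_p}\hat{\bar k}$ with zero Higgs field. Feeding $W=H^i_{\on{et}}(X_{\hat{\bar k}},\bL)$ gives the left-hand side of the corollary.

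Finally I would assemble the chain of isomorphisms: the hypercohomology $H^i(X_{\hat{\bar k}},(\mH(\bL)\otimes\Omega_X^\bullet,\vartheta_\bL))$ is computed over $\hat{\bar k}$ (taking $K=\hat{\bar k}$, which is permitted since $\hat{\bar k}$ is a perfectoid field containing $k_\infty$; alternatively use flat base change from $X_K$ to $X_{\hat{\bar k}}$), so it matches the right-hand side of the displayed corollary, while the left-hand side is $\mH(H^i_{\on{et}}(X_{\hat{\bar k}},\bL))=H^i(X_{\hat{\bar k}},\bL)\otimes\hat{\bar k}$ as just computed. The main obstacle — really the only nontrivial point, since everything is a specialization of the already-granted Theorem~\ref{T: p-adic Simpson}(v) — is checking that the hypotheses of (v) are met for the structure morphism: one must know that $R^if_*\bL$ is indeed a $\bZ_p$- (or after inverting $p$, $\bQ_p$-) local system on the point, which for $f$ smooth proper is the finiteness/comparison theorem for \'etale cohomology of rigid varieties (\cite{Sch1}), and one must be slightly careful that the $\bQ_p$- versus $\bZ_p$-coefficient bookkeeping and the passage $K\rightsquigarrow\hat{\bar k}$ do not introduce any discrepancy in the Galois-semilinear structures; but these are routine once (v) is in hand.
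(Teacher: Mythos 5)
Your proposal is correct and follows exactly the route the paper intends (the paper itself only says ``We have the corollary of Part (v)'' and leaves the specialization implicit): you take $Y=\Spa(k,\mO_k)$ in Theorem~\ref{T: p-adic Simpson}(v), identify the relative Higgs complex with the absolute one since $\Omega_{X/Y}=\Omega_X$, and use Remark~\ref{R: point p-adic Simpson} with $K=\hat{\bar k}$ to compute $\mH$ of the pushforward on the point. The only nitpick is a citation slip (the finiteness and local constancy of $R^if_*$ for smooth proper $f$ is \cite{Sch2}, not \cite{Sch1}), and you could note that the Tate twists in the Higgs complex of Part~(v) are being silently trivialized over $\hat{\bar k}$, but neither affects the argument.
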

In particular, if $\bL=\bZ_p$ is constant, then $(\mH(\bL),\vartheta_\bL)=\mO_{X_{\hat{\bar k}}}$ with the trivial Higgs field, and we recover the classical Hodge-Tate decomposition (cf. \cite[Corollary 1.8]{Sch2}).

\begin{rmk}\label{R: point p-adic Simpson}
Note that if $X=\{x\}=\Spa(k,\mO_{k})$ is a point, then $\mH(\bL)=(\bL_{\bar x}\otimes\hat{\bar k})^{\Gal(\hat{\bar k}/K)}$ with the zero Higgs field.

In addition, Part (iii) implies that for a classical point $y$ of $X$, 
$$\mH(\bL)|_{y\times_k{K}}\simeq (\Ind^{\Gal(\bar{k}/k)}_{\Gal(\bar{k}/k(y))}\bL_{\bar y}\otimes\hat{\bar k})^{\Gal(\bar k/K)},$$ where $\bar y$ is a geometric point lying over $y$ and $k(y)$ is the residue field of $y$,
and the fiber of $\mH(\bL)$ at each point of $X_K$ lying over $y$ is isomorphic to $(\bL_{\bar y}\otimes\hat{\bar k})^{\Gal(\hat{\bar k}/Kk(y))}$.
\end{rmk}

\begin{rmk} It appears at the first glance that the assumptions in Theorem \ref{T: p-adic Simpson} are weaker than the standard assumptions needed for the $p$-adic Simpson correspondence as in \cite{Fa, AGT}. However, a closer inspection reveals that it is not the case. 

(i) In both approaches \cite{Fa, AGT}, the construction of the $p$-adic Simpson correspondence relies on a choice of a lifting of (an integral model of) $X_{\hat{\bar k}}$ to Fontaine's ring $A_2(\hat{\bar k})$. We also implicitly make such a choice. Namely, since the map $k\to \hat{\bar k}$ canonically factors through $k\to A_2(\hat{\bar k})[1/p]\to \hat{\bar k}$, the base change of $X$ to $A_2(\hat{\bar{k}})[1/p]$ provides such a canonical lifting (in fact the Faltings extension \eqref{E: faltings ext} is constructed out of this lifting). See also Remark \ref{R: can lifting}.

(ii) We do not make any smallness assumption on $\bL$. But since the corresponding Higgs field is nilpotent, it follows a posteriori that $\bL$ is always small (as a $\bQ_p$-local system).
\end{rmk}

\begin{rmk}
In order to establish the $p$-adic Simpson correspondence for \emph{all} small generalized representations,
Abbes-Gros constructed much more complicated period sheaves (various ``overconvergent" versions of $\mO\bC$) under the name of Higgs-Tate algebras (see  \cite[III.10]{AGT}).  Since in our situation the Higgs fields are always nilpotent, we do not need to make use of these more complicated sheaves (but it remains a question to compare our functor with Abbes-Gros').
\end{rmk}

\begin{rmk}\label{R: HT local system}
Let $\mO\bB_{\on{HT}}:=\gr\mO\bB_{\dR}=\bigoplus_i \mO\bC(i)$. In \cite{Hy}, Hyodo considered (the local version of) the functor 
$$\bL\mapsto \nu_*(\hat\bL\otimes\mO\bB_{\on{HT}}),$$ 
which reduces to the usual $D_{\on{HT}}$ functor when $X$ is a point. He defined Hodge-Tate local systems as those $\bL$ such that $\nu_*(\hat\bL\otimes\mO\bB_{\on{HT}})$ is a vector bundle on $X$ of the expected rank. It would be interesting to find a more explicit characterization of  Hodge-Tate local systems on $X$, similar to Theorem \ref{T: p-adic RH for de Rham} in the sequel.
\end{rmk}

\subsection{Preliminaries}
In this subsection, we establish some preliminary facts that will be used in the sequel.

Recall that an object $U\in X_{\on{proet}}$ is called affinoid perfectoid if it admits a presentation $ U=\limproj_{i\in I} U_i$ with $U_i=\Spa(A_i,A_i^+)$ such that the completed direct limit $(\hat A,\hat A^+)$ of 
$\{(A_i,A^+_i)\}_{i\in I}$ is perfectoid. In this case, we write $\hat{ U}$ for the affinoid perfectoid space $\Spa (\hat A,\hat A^+)$, which is independent of the choice of the presentation.  
\begin{prop}\label{P: coherent-vanishing}
Let $U$ be affinoid perfectoid in $X_{\on{proet}}$. Then for any finite locally free module $\mM$ over $\hat \mO_{X}|_{ U}$, $H^i(X_{\on{proet}}/ U,  \mM)=0$ for all $i>0$. 
\end{prop}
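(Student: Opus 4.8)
The plan is to reduce the statement to the known vanishing results for the structure sheaf $\hat\mO_X$ itself, and then bootstrap from the line-bundle (or trivial-module) case to the general finite locally free case by a standard local-to-global argument on the pro-\'etale site.

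First I would recall the basic input, which is essentially \cite[Lemma 4.10, Theorem 6.5]{Sch2} (or the corresponding statements in \cite{KL1}): for $U$ affinoid perfectoid with associated perfectoid space $\hat U=\Spa(\hat A,\hat A^+)$, one has $H^i(X_{\on{proet}}/U,\hat\mO_X)=0$ for $i>0$, and more precisely $H^0(X_{\on{proet}}/U,\hat\mO_X)=\hat A$, with the analogous acyclicity for $\hat\mO_X^+$ up to bounded torsion. Thus the claim is known for $\mM=\hat\mO_X|_U$, hence for any free module $\hat\mO_X|_U^{\oplus m}$. The content is to pass from free to locally free.

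Next, since $\mM$ is finite locally free over $\hat\mO_X|_U$, there is a pro-\'etale cover $\{U_j\to U\}$ (which we may take to consist of affinoid perfectoids, refining if necessary, using that affinoid perfectoids form a basis of $X_{\on{proet}}/U$) on which $\mM$ becomes free. Running the \v{C}ech-to-derived-functor spectral sequence for this cover, the $E_1$-terms involve $H^q$ of $\mM$ restricted to the (finite) fiber products $U_{j_0}\times_U\cdots\times_U U_{j_p}$, which are again affinoid perfectoid; over each such term $\mM$ is free, so by the previous paragraph $H^q$ vanishes for $q>0$ there. Hence the spectral sequence degenerates to the \v{C}ech complex of the sections of $\mM$, and it remains to show this \v{C}ech complex is exact in positive degrees. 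For this one uses that the restriction of $\mM$ to the cover, being a descent datum for a finite locally free module, is computed by the fact that finite projective modules over the relevant perfectoid (Tate) rings satisfy faithfully flat descent and the sheaf $\hat\mO_X$ has no higher cohomology on affinoid perfectoids; concretely, one can choose the cover so that $\mM$ is a direct summand of a free module $\hat\mO_X|_U^{\oplus m}$ (a finite locally free module over an affinoid perfectoid ring is a direct summand of a free one — this is where one invokes that $\hat A$ is a ring over which finite projective modules are summands of frees, together with the sheaf property to globalize the splitting), and then $H^i(X_{\on{proet}}/U,\mM)$ is a direct summand of $H^i(X_{\on{proet}}/U,\hat\mO_X|_U^{\oplus m})=0$ for $i>0$.

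The last approach actually bypasses the \v{C}ech argument: the cleanest route is to produce, directly over $U$, a split injection $\mM\hookrightarrow \hat\mO_X|_U^{\oplus m}$ of $\hat\mO_X|_U$-modules. I expect the main obstacle to be precisely justifying this splitting globally over $U$ rather than only locally: one needs that a finite locally free $\hat\mO_X|_U$-module is in fact globally a direct summand of a finite free one, which follows because $H^0(X_{\on{proet}}/U,\hat\mO_X)=\hat A$ is a ring with $\Spec$ connected behavior controlled as in \cite[Lemma 7.3]{Sch2}, finite projective $\hat A$-modules are summands of free $\hat A$-modules, and the functor $\mN\mapsto \mN\otimes_{\hat A}\hat\mO_X|_U$ from finite projective $\hat A$-modules to finite locally free $\hat\mO_X|_U$-modules is an equivalence (again by the acyclicity and the sheaf property). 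Granting that, the vanishing $H^i(X_{\on{proet}}/U,\mM)=0$ for $i>0$ drops out as a direct summand of the vanishing for a finite free module, completing the proof.
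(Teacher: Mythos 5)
Your proposal is correct and rests on the same underlying inputs as the paper's argument, but packages the bootstrap from the structure sheaf to a general finite locally free module differently. The paper invokes \cite[Theorem 9.2.15]{KL1} to realize $\mM$ as the pullback of a finite locally free $\mO_{\hat U}$-module $\hat\mM$ on the affinoid perfectoid space $\hat U=\Spa(\hat A,\hat A^+)$, picks a finite rational cover $\{\hat U_j\}$ of $\hat U$ trivializing $\hat\mM$, pulls it back to affinoid perfectoid objects $U_j\in X_{\on{proet}}/U$ (using \cite[Lemma 2.6.5(a)]{KL1}), runs the \v{C}ech-to-derived spectral sequence using \cite[Proposition 7.13]{Sch1} on the multi-fold fiber products, and then kills $\check H^i(\{\hat U_j\},\hat\mM)$ via the Tate sheaf property of the structure sheaf of a perfectoid space \cite[Theorem 6.3]{Sch1}, \cite[Theorem 2.7.7]{KL1}. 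Your ``cleanest route'' instead pushes the descent one step further, identifying $\mM$ with $N\otimes_{\hat A}\hat\mO_X|_U$ for a finite projective $\hat A$-module $N$, realizing $N$ as a direct summand of $\hat A^{\oplus m}$, and reading off the vanishing from additivity and the acyclicity of $\hat\mO_X|_U$. This is a genuine streamlining: it replaces the explicit \v{C}ech bookkeeping with a one-line direct-summand argument, at the cost of invoking a slightly stronger equivalence of categories. What you should make explicit in a final write-up is that this equivalence is not a formal consequence of ``acyclicity and the sheaf property'' alone: it is the conjunction of \cite[Theorem 9.2.15]{KL1} (matching finite locally free $\hat\mO_X|_U$-modules on the pro-\'etale site with finite locally free $\mO_{\hat U}$-modules on the adic space) and Kiehl-type glueing for the perfectoid ring $\hat A$, which is precisely where the Tate sheaf property enters. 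With that citation supplied, your argument is complete and, if anything, makes the role of the perfectoid glueing results more transparent than the paper's presentation, where the passage from the Tate sheaf property to $\check H^i(\{\hat U_j\},\hat\mM)=0$ for a non-free $\hat\mM$ is left tacit.
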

 \begin{rmk}Note that finite locally free $\hat\mO_X|_{X_{\hat{\bar k}}}$-modules correspond to Faltings' generalized representations in our setting.
 \end{rmk}
\begin{proof}
Let $\mO_{\hat{U}}$ be the adic structure sheaf of the affinoid perfectoid space $\hat{U}$ associated to $U$. By \cite[Theorem 9.2.15]{KL1}, $\mM$ is isomorphic to the pullback of some finite locally free $\mO_{\hat{U}}$-module $\hat \mM$. Namely, there exists an isomorphism
\[
\mM(V)\simeq \hat{\mM}(\hat U)\otimes_{\mO(\hat{U})}\hat{\mO}_X(V)
\]  
 for any object $V\in X_{\on{proet}}|_{U}$,  which is functorial in the obvious sense. 

We may choose a finite cover $\{\hat{U}_j\}_{j\in J}$ of $\hat{U}$ by rational subsets so that  $\hat \mM$ is free on each $\hat{U}_j$. By virtue of \cite[Lemma 2.6.5(a)]{KL1}, for any $j\in J$, $\hat{U}_j$ is obtained by pulling back a rational localization of some affinoid adic space appearing in a pro-\'etale presentation of 
$U$. Let $U_j\subset U$ be the pullback of this localization. Then $U_j$ is affinoid perfectoid and $\hat{U}_j$ is isomorphic to the associated affinoid perfectoid space.  In particular, $\mM$ is free on $X_{\on{proet}}/{U_j}$ for each $j\in J$. Using \cite[Proposition 7.13]{Sch1}, we have 
\[
H^i(X_{\on{proet}}/U_{j_0}\times_U\cdots\times_U U_{j_k}, \mM)=0
\] 
for any $\{j_0,\dots,j_k\}\subset J$ and $i>0$. Consequently, we obtain 
\[
H^i(X_{\on{proet}}/U, \mM)=\check{H}^i(\{U_j\}_{j\in J}, \mM),\quad i\geq 0
\]
by the Cech-to-derived spectral sequence. The right hand side in turn is the same as $\check{H}^i(\{\hat U_j\}_{j\in J},\hat \mM)$. On the other hand, by \cite[Theorem 6.3]{Sch1} and \cite[Theorem 2.7.7]{KL1}, the structure sheaf of a perfectoid space satisfies the Tate sheaf property in the sense of \cite[Definition 2.7.6]{KL1}. This implies  $\check{H}^i(\{\hat U_j\}_{j\in J},\hat \mM)=0$
 for all $i>0$. 
 \end{proof}

\begin{cor}\label{C: vanishing for OB}
Let $\bL$ be a $\bQ_p$-local system on $X_{\on{et}}$ and let $U$ be affinoid perfectoid in $X_{\on{proet}}$. 
For any $-\infty\leq a\leq b\leq \infty$, let $\mO\bB^{[a, b]}_{\dR}=\on{Fil}^a\mO\bB_{\dR}/\on{Fil}^{b+1}\mO\bB_{\dR}$. Then 
$$H^i(X_{\on{proet}}/U, \hat\bL\otimes\mO\bB_{\dR}^{[a,b]})=0,\quad i>0.$$ 
\end{cor}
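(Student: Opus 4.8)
The plan is to reduce the statement to Proposition~\ref{P: coherent-vanishing} by a dévissage along the de Rham filtration followed by a passage to (co)limits. First I would treat the case $a,b\in\bZ$. Here $\mO\bB_{\dR}^{[a,b]}=\on{Fil}^a\mO\bB_{\dR}/\on{Fil}^{b+1}\mO\bB_{\dR}$ carries a finite decreasing filtration with $b-a+1$ graded pieces, the $j$-th being $\gr^j\mO\bB_{\dR}\simeq\mO\bC(j)$; tensoring with the flat sheaf $\hat\bL$ and using the associated long exact cohomology sequences, it suffices to prove that $H^i(X_{\on{proet}}/U,\hat\bL\otimes\mO\bC(j))=0$ for $i>0$ and all $j\in\bZ$. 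Since the Tate twist $\hat\bQ_p(j)$ is an invertible $\hat\bQ_p$-local system and twisting commutes with the functor $\bM\mapsto\hat\bM$, one has $\hat\bL\otimes_{\hat\bQ_p}\mO\bC(j)\simeq\widehat{\bL(j)}\otimes_{\hat\bQ_p}\mO\bC$, where $\bL(j)$ is again a $\bQ_p$-local system on $X_{\on{et}}$; so it is enough to show $H^i(X_{\on{proet}}/U,\hat\bL\otimes\mO\bC)=0$ for $i>0$ and an arbitrary $\bQ_p$-local system $\bL$.

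For this I would use the Higgs--Tate filtration $\mO\bC=\varinjlim_n\on{Fil}_n\mO\bC$, where $\on{Fil}_n\mO\bC$ is the image of $\bigoplus_{m\le n}\on{Sym}^m_{\hat\mO_X}\mE$, built out of the Faltings extension~\eqref{E: faltings ext}: locally on $X_{\on{proet}}$ one has $\mO\bC\cong\hat\mO_X[V_1,\dots,V_n]$ with $\on{Fil}_n\mO\bC$ the $\hat\mO_X$-submodule of polynomials of degree $\le n$, so each $\on{Fil}_n\mO\bC$ is a finite locally free $\hat\mO_X$-module. Since $\hat\bL$ is pro-\'etale locally isomorphic to $\hat\bQ_p^{\,r}$, the sheaf $\hat\bL\otimes_{\hat\bQ_p}\on{Fil}_n\mO\bC$ restricts, on $X_{\on{proet}}/U$, to a finite locally free $\hat\mO_X|_U$-module, so Proposition~\ref{P: coherent-vanishing} gives $H^i(X_{\on{proet}}/U,\hat\bL\otimes\on{Fil}_n\mO\bC)=0$ for all $i>0$ and all $n$. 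Because $U$ is affinoid perfectoid, hence quasi-compact and quasi-separated in $X_{\on{proet}}$, cohomology on $X_{\on{proet}}/U$ commutes with filtered colimits of sheaves, and therefore $H^i(X_{\on{proet}}/U,\hat\bL\otimes\mO\bC)=\varinjlim_n H^i(X_{\on{proet}}/U,\hat\bL\otimes\on{Fil}_n\mO\bC)=0$ for $i>0$. This settles the case of finite $a,b$.

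To remove the boundedness hypothesis I would bootstrap. For $b=+\infty$ and $a\in\bZ$, writing $\on{Fil}^a\mO\bB_{\dR}=\varprojlim_b\mO\bB_{\dR}^{[a,b]}$ (completeness of $\mO\bB_{\dR}$ for its filtration) and using that $R\Gamma(X_{\on{proet}}/U,-)$ commutes with derived inverse limits, one gets $R\Gamma(X_{\on{proet}}/U,\hat\bL\otimes\on{Fil}^a\mO\bB_{\dR})=R\varprojlim_b R\Gamma(X_{\on{proet}}/U,\hat\bL\otimes\mO\bB_{\dR}^{[a,b]})$; each term on the right is concentrated in degree $0$ by the finite case, and the transition maps are surjective on $H^0$ since $H^1(X_{\on{proet}}/U,\hat\bL\otimes\gr^{b+1}\mO\bB_{\dR})=0$, so the derived limit is again concentrated in degree $0$. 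For $a=-\infty$ one uses instead $\mO\bB_{\dR}^{[-\infty,b]}=\varinjlim_{a}\mO\bB_{\dR}^{[a,b]}$ (valid because $\mO\bB_{\dR}=\mO\bB_{\dR}^+[1/t]$) together with the commutation of cohomology with filtered colimits; combining the two reductions covers all $-\infty\le a\le b\le\infty$.

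The input Proposition~\ref{P: coherent-vanishing} does the real work; past that, the step I expect to require the most care is the interchange of cohomology with the limits in the unbounded cases --- in particular verifying that $\mO\bB_{\dR}$ is complete for the de Rham filtration and that the Mittag--Leffler property holds so that $R^1\varprojlim$ contributes nothing to $R\Gamma(X_{\on{proet}}/U,\hat\bL\otimes\on{Fil}^a\mO\bB_{\dR})$ in positive degrees.
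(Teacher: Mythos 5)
Your proposal is correct and follows essentially the same route as the paper's proof: reduce by the finite filtration to the graded pieces $\mO\bC(j)$, write $\mO\bC$ as the filtered colimit of the finite locally free $\hat\mO_X$-modules arising from powers of the Faltings extension, invoke Proposition~\ref{P: coherent-vanishing} and the coherence of $X_{\on{proet}}$ (which is what justifies commuting cohomology with filtered colimits, rather than quasi-compactness of $U$ alone), and then pass to $a=-\infty$ or $b=+\infty$ via colimits and $R\varprojlim$ with Mittag--Leffler. The paper simply compresses the last step by citing \cite[Lemma 3.18]{Sch2} and handles the Tate twist by ignoring it rather than by your (equally valid) reindexing $\hat\bL\otimes\mO\bC(j)\simeq\widehat{\bL(j)}\otimes\mO\bC$.
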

 \begin{proof}
Note that for every $\bQ_p$-local system $\bL$ on $X$, $\hat\bL$ is locally trivial on $X_{\on{proet}}$, and therefore $\hat\bL\otimes\mM$ is a finite locally free $\hat\mO_X$-module if $\mM$ is so.  

First assume that $a=b$ so $\mO\bB^{[a, b]}_{\dR}=\mO\bC(a)$. We can ignore the Tate twist. Recall that we may write $\mO\bC=\underrightarrow\lim \on{Sym}_{\hat\mO_X}^n\mE$,
where $\mE$ is the Faltings' extension. Since $|X|$ is quasi-compact, the site $X_{\on{proet}}$ is coherent by virtue of \cite[Proposition 3.12(vii)]{Sch2}. Thus cohomology commutes with  direct limit of abelian sheaves over $X_{\on{proet}}$. 
We therefore deduce that  $H^i(X_{\on{proet}}/U,\hat\bL\otimes\mO\bC)=0$ by Proposition \ref{P: coherent-vanishing}. By induction on $b-a$, we deduce the corollary when $a,b\neq \infty$. Using \cite[Lemma 3.18]{Sch2} and the coherence of $X_{\on{proet}}$, we can also allow $a,b=\pm\infty$.
\end{proof}

Next, we introduce a base $\mB$ for $(X_K)_{\on{et}}$ that is useful for computations. 
 Namely we consider a subcategory $\mB$ of $(X_K)_{\on{et}}$, whose objects consist of those  \'etale maps to $X_K$ that are the base changes of standard \'etale morphisms $Y\to X_{k'}$ defined over some finite extension $k'$ of $k$ in $K$ where $Y$ is also required to admit a toric chart after some finite extension of $k'$, and whose morphisms are the base changes of  \'etale  morphisms over some finite extension of $k$ in $K$.
 Note that $\mB$ is in fact a small category and we equip $\mB$ with the induced topology from $(X_K)_{\on{et}}$ (\cite[3.1]{Vi}).

\begin{lem}\label{bases top}
The natural map of the associated topoi  $ (X_K)_{\on{et}}^{\sim}\to \mB^\sim$ is an equivalence.
\end{lem}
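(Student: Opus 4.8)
The plan is to invoke the standard comparison lemma for topoi (e.g. SGA 4, or \cite[\S 3]{Vi} in the form cited): if $\mB \subset (X_K)_{\on{et}}$ is a full subcategory equipped with the induced topology, and if (a) every object of $(X_K)_{\on{et}}$ admits a covering by objects of $\mB$, and (b) $\mB$ is stable under fibre products in $(X_K)_{\on{et}}$ (or at least the relevant covering sieves can be refined within $\mB$), then restriction of sheaves induces an equivalence $(X_K)_{\on{et}}^\sim \xrightarrow{\sim} \mB^\sim$. So the substance of the proof is to verify these two hypotheses for the specific $\mB$ defined just above: objects are base changes to $X_K$ of standard \'etale morphisms $Y \to X_{k'}$ over finite subextensions $k'/k$ in $K$, where moreover $Y$ acquires a toric chart after a further finite extension, and morphisms are base changes of \'etale morphisms defined over finite subextensions.

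First I would check the covering condition (a). Given an arbitrary \'etale $V \to X_K$, I want to cover $V$ by objects of $\mB$. Locally on the source, any \'etale morphism of affinoids factors as a composition of rational localizations and finite \'etale maps, hence is standard \'etale; and a smooth rigid variety is covered by affinoids admitting toric charts after a finite extension of the base field, by the arguments in \cite[\S 1]{Sch2} (existence of toric charts \'etale-locally). So $V$ is covered by affinoid \'etale $V_i \to X_K$ that are standard \'etale and admit a toric chart after a finite extension of $K$. The remaining point is a spreading-out / descent of the field of definition: since $X$ is defined over $k$ and $K = \widehat{\bigcup k'}$ is a completion of a union of finite subextensions, any such affinoid datum ($V_i$, the standard \'etale structure, the toric chart) involves only finitely many coefficients, hence is already defined over some finite $k' \subset K$, up to shrinking $V_i$. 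This uses that $A_K = A\hat\otimes_k K$ and that finitely presented data over $A_K$ descend to $A\hat\otimes_k k'$ for $k'$ large enough — a standard noetherian approximation argument for affinoid algebras. Hence each $V_i$ lies in $\mB$, giving the cover.

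Next, the fibre-product / refinement condition (b). Given two objects of $\mB$, say the base changes of $Y_1 \to X_{k_1'}$ and $Y_2 \to X_{k_2'}$, over $X_K$ together with a morphism between them, I want their fibre product (computed in $(X_K)_{\on{et}}$) to again be coverable by objects of $\mB$. Taking $k'$ to contain both $k_1'$ and $k_2'$ and the coefficients of the given morphism, the fibre product is the base change of $Y_1 \times_{X_{k'}} Y_2$, which is \'etale over $X_{k'}$; it is then covered by standard \'etale affinoids admitting toric charts after a further finite extension, again by the local structure theory plus the descent-of-definition-field argument, so it is covered by objects of $\mB$. Combined with (a), the hypotheses of the comparison lemma \cite[3.1]{Vi} hold, yielding the asserted equivalence of topoi.

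The main obstacle I anticipate is the descent-of-field-of-definition step: making precise that an affinoid \'etale $X_K$-space together with its \emph{standard} \'etale structure \emph{and} a toric chart defined over $K$ all descend, possibly after shrinking, to data over a finite subextension $k' \subset K$. This requires care because $K$ is only the completion of $\bigcup k'$, not the union itself, so one must approximate elements of $A_K$ by elements of $\bigcup_{k'} A_{k'}$ (using that the latter is dense), check that \'etaleness and the toric-chart condition are open/robust under such approximation (they are, being finitely presented smooth conditions), and conclude by a limit argument. Everything else is a routine application of the standard comparison lemma for topoi and the already-available local structure theory for smooth rigid varieties.
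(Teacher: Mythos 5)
There is a genuine gap, and it is precisely the hard part of the paper's argument. You invoke the comparison lemma in the form ``if $\mB\subset (X_K)_{\on{et}}$ is a \emph{full} subcategory satisfying (a) and (b), then restriction is an equivalence,'' and then spend your effort on (a) and (b). But $\mB$ as defined in the paper is \emph{not} a full subcategory by fiat: its morphisms are only those that arise as base changes of morphisms defined over a finite subextension $k'\subset K$. A priori, two objects $Y_K, Z_K$ of $\mB$ could admit a $K$-morphism $Y_K\to Z_K$ in $(X_K)_{\on{et}}$ that does not descend to any finite subextension, in which case $\mB$ is strictly smaller than the corresponding full subcategory and the comparison lemma (in the form you cite, and in the form the paper cites, namely \cite[Theorem 4.1]{Vi}) does not apply. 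Establishing that $\mB$ \emph{is} full --- i.e.\ that every $K$-morphism between such objects automatically descends --- is exactly what the paper proves, and it occupies the bulk of the argument: after reducing to the cases of a rational localization and a finite \'etale morphism, the paper handles the finite \'etale case via a Krasner-style estimate on the Shilov boundary to show that the image of a generator is Galois-invariant for a small enough group, then applies the Ax--Sen--Tate theorem to conclude descent.

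Your proposed remedy, the ``noetherian approximation / descent-of-field-of-definition'' step, is applied only to descend the \emph{objects} (the \'etale chart, the toric chart), which is indeed routine and is all that's needed for the covering condition (a). When you turn to (b) you write ``taking $k'$ to contain \ldots the coefficients of the given morphism,'' which silently presupposes that the morphism has finitely many algebraic ``coefficients'' lying in $\bigcup k'$; but since $K$ is a completion rather than a union, the morphism's coefficients live in $K$ and one cannot simply truncate to a finite subextension while preserving the property of being a ring homomorphism. Approximating a ring homomorphism $C\to B_K$ by a $k'$-linear map does not yield a ring homomorphism, and there is no general reason the approximation should land in the (discrete) set of algebra maps without a rigidity input. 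The paper supplies exactly such a rigidity input (Krasner's lemma at the Shilov boundary, plus Ax--Sen--Tate), and that is the idea missing from your proposal.
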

\begin{proof}According to \cite[Theorem 4.1]{Vi}, it is enough to show that $\mB$ is a full subcategory and every object in $(X_K)_{\on{et}}$ admits a covering by objects from $\mB$. For the second statement, recall that for any $Y\to X$ \'etale and $y\in Y$, there exists a rational subset $U\subset Y$ containing $y$ such that $U\to X$ is standard \'etale (cf. \cite[Proposition 3.1.4]{DV}). Moreover, by \cite[Lemma 2.1.3(i)]{Ber}, any \'etale morphism  $Y \to X_K$ is, locally on $Y$, a base
change of an \'etale morphism $Y'\to X_{k'}=X\times_kk'$ for some finite extension $k'$ of $k$ in $K$. In addition, we can further cover $Y'$ by rational open subsets, each of which admits a toric chart.

It remains to prove that $\mB$ is a full subcategory of $(X_K)_{\on{et}}$. I.e. we need to show that if $Y_K\to Z_K$ is a morphism in $(X_K)_{\on{et}}$ with $Y=\Spa(B, B^+)$, $Z=\Spa(C, C^+)\in X_{\on{et}}$, then it is the base change of a morphism $Y'\to Z'$ over some finite extension $k'$ of $k$ inside $K$. That is to say, the composite $Y_K\to Z_K\to Z_{k'}$ factors through $Y_{k'}$. Note that by the acyclicity of the structure sheaf of an affinoid space \cite[Theorem 8.2.22]{KL1}, it suffices to prove the assertion \'etale locally. Thus using \cite[Lemma 2.1.3(i)]{Ber} and \cite[Proposition 3.1.4]{DV}, we may reduce to the case that $X=\Spa(A, A^+)$ is an affinoid space, $Z\to X$ is a rational localization or a finite \'etale covering. The case of rational embedding is clear: $Y_K$ maps to the rational subset $Z_K\subset X_K$ implies that $Y$ maps to the rational subset $Z\subset X$.  

Now suppose $A\to C$ is faithfully flat and finite \'etale. It amounts to show that the morphism $f: C\to C_K\to B_K$ factors through $B_{k'}$ for some finite extension $k'$ over $k$ inside $K$. Now fix $c\in C$, and let $F(T)\in A[T]$ be the characteristic polynomial of $c$ over $A$.  Choose $\varepsilon>0$ so that for any $\alpha$ in the Shilov boundary of $B_K$ and any root $x\neq f(c)$ of $f(F(T))$ in the completed residue field $\mathcal{H}(\alpha)$, $|x-f(c)|>\varepsilon$. On the other hand, it is straightforward to see that there exists a finite extension $k'$ over $k$ inside $K$ and $b\in B_{k'}$ such that $|f(c)-b|<\varepsilon$. We claim that $f(c)$ is $\Gal(K/k')$-invariant. Granting the claim, we see that $f(c)\in B_K^{\Gal(K/k')}=B_{k'}$ by the Ax-Sen-Tate theorem. This yields the assertion by choosing a finite set of topological generators of $C$. To prove the claim, we apply the argument as per Krasner's lemma. Since $b$ is $\Gal(K/k')$-invariant, we get $|g(f(c))-f(c)|<\varepsilon$ for any $g\in \Gal(K/k')$. Note that $g(f(c))$ is also a root of $f(F(T))$. This forces $g(f(c))=f(c)$ at the Shilov boundary of $B_K$ by our assumption on $\varepsilon$; thus $g(f(c))=f(c)$.
\end{proof} 

\begin{cor}\label{C: vb}
Let $\mM$ be a rule to functorially assign every $(Y=\Spa(B,B^+)\to X_{k'})\in\mB$ a finite projective $B_K$-module $M(Y_K)$ such that for every standard \'etale map $Z=\Spa(C,C^+)\to Y$, the natural  morphism $M(Y_K)\otimes_{B_K}C_K\to M(Z_K)$ is an isomorphism (Note that $(Z\to Y\to X_{k'})\in\mB$). Then $\mM$ defines a vector bundle on $(X_K)_{\on{et}}$.
\end{cor}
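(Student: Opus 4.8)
The plan is to combine Lemma \ref{bases top} with descent for finite projective modules over affinoid algebras. By Lemma \ref{bases top}, the topos $(X_K)_{\on{et}}^{\sim}$ is equivalent to $\mB^\sim$, so to produce a sheaf of $\mO_{(X_K)_{\on{et}}}$-modules it suffices to produce a sheaf on the site $\mB$. First I would use the data of $\mM$ to define a presheaf $\widetilde\mM$ on $\mB$: for $(Y=\Spa(B,B^+)\to X_{k'})\in\mB$ set $\widetilde\mM(Y_K)=M(Y_K)$, with restriction maps along a morphism $Z_K\to Y_K$ in $\mB$ (which by the definition of $\mB$ is the base change of some $Z\to Y$ over a finite extension, and after enlarging $k'$ may be taken standard \'etale) given by $M(Y_K)\to M(Y_K)\otimes_{B_K}C_K\xrightarrow{\sim}M(Z_K)$, using the functoriality hypothesis and the isomorphism assertion. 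One should check this is well-defined independently of the chosen model $Z\to Y$ and compatible with composition; this is where the fullness part of Lemma \ref{bases top} (and its proof, via Krasner's lemma/Ax--Sen--Tate) is invoked, together with the observation that $M$ is assumed functorial in $\mB$.

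The substance is then to verify the sheaf axiom for $\widetilde\mM$ with respect to \'etale covers in $\mB$. Since every cover in $\mB$ is refined by a finite cover by objects of $\mB$, and using the Cech-to-derived-functor formalism, it is enough to show: for $Y=\Spa(B,B^+)\in\mB$ with a finite cover $\{Y_j\to Y\}$ (each $Y_j\in\mB$, maps standard \'etale over a finite extension), the sequence
\[
0\to M(Y_K)\to \prod_j M((Y_j)_K)\to \prod_{j,l}M((Y_j\times_Y Y_l)_K)
\]
is exact. By the compatibility hypothesis on $\mM$, $M((Y_j)_K)=M(Y_K)\otimes_{B_K}(B_j)_K$ and likewise for the double intersections, so this reduces to faithfully flat (rational localization / finite \'etale) descent for the finite projective $B_K$-module $M(Y_K)$, which holds by \cite[Theorem 8.2.22]{KL1} (acyclicity of the structure sheaf) applied to $M(Y_K)$ — a finite projective module is a direct summand of a free module, so its Cech complex for such a cover is a summand of copies of the Cech complex of $\mO$, hence acyclic. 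This gives that $\widetilde\mM$ is a sheaf on $\mB$, hence corresponds under Lemma \ref{bases top} to a sheaf of $\mO_{(X_K)_{\on{et}}}$-modules $\mM$ on $(X_K)_{\on{et}}$.

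Finally I would record that $\mM$ so obtained is finite locally free: by construction, pulling back to any $Y_K$ with $Y\in\mB$ gives $M(Y_K)$, a finite projective module, and since such $Y_K$ form a basis of $(X_K)_{\on{et}}$ (again Lemma \ref{bases top}, together with the covering statement in its proof), $\mM$ is locally on $(X_K)_{\on{et}}$ the sheaf associated to a finite projective module over the structure ring; by \cite[Lemma 7.3]{Sch2} this is the same as a vector bundle on $X_K$ in the analytic topology. I expect the main obstacle to be the bookkeeping in the first step: checking that the restriction maps of $\widetilde\mM$ are genuinely well-defined and functorial, i.e.\ independent of the auxiliary finite extension and the chosen descent of the \'etale map over $k'$, which requires using the fullness of $\mB$ inside $(X_K)_{\on{et}}$ (and hence the somewhat delicate Krasner-type argument in the proof of Lemma \ref{bases top}) to identify two a priori different descents of the same morphism, and then invoking the functoriality of $\mM$ to conclude the corresponding maps on modules agree.
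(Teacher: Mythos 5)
Your proof is correct and takes essentially the same route as the paper, which simply says "This follows from the above lemma and Tate's acyclic theorem and Kiehl's glueing theorem." You have unpacked exactly those three ingredients: Lemma \ref{bases top} to transport the problem to $\mB$, Tate's acyclicity (via the direct-summand trick for finite projective modules) to get the sheaf condition, and the Kiehl-type glueing (implicit in passing from finite projective modules to a locally free sheaf, or via \cite[Lemma 7.3]{Sch2}) to conclude. One small overcomplication: the hypothesis already says $\mM$ is a \emph{functorial} assignment on the category $\mB$, so the restriction maps are part of the data and the well-definedness/independence issue you worry about at the start is not really present here -- that Krasner-type argument is what was needed to prove Lemma \ref{bases top} in the first place (in particular that $\mB$ is a full subcategory), but having fixed $\mM$ as a presheaf on $\mB$ you do not need to re-descend morphisms.
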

\begin{proof}This follows from the above lemma and  Tate's acyclic theorem and Kiehl's glueing theorem.
\end{proof}

\subsection{Proof of Theorem \ref{T: p-adic Simpson}}

This subsection is devoted to the proof of this theorem modulo Proposition \ref{P: local p-adic Simpson}, which will be established in the next subsection.

First, by definition, $\mH^i(\bL):=R^i\nu'_*(\hat\bL\otimes\mO\bC)$ is the sheafification of the presheaf 
$$(Y\in (X_K)_{\on{et}})\mapsto  H^i(X_{\on{proet}}/Y, \hat\bL\otimes\mO\bC).$$
Therefore by Corollary \ref{C: vb}, to prove that $\mH(\bL)=\mH^0(\bL)$ is a vector bundle of rank $r=\on{rk}\bL$ and $\mH^i(\bL)=0$ for $i>0$, it is enough to show that
\begin{enumerate}
\item[(a)]
If $X=\Spa(A,A^+)$ admits a toric chart, $H^0(X_{\on{proet}}/X_K, \hat\bL\otimes\mO\bC)$ is a finite projective $A_K$-module of rank $r$;
\item[(b)]If in addition $Y=\Spa(B, B^+)\in X_{\on{et}}$ is standard \'etale, 
$H^i(X_{\on{proet}}/Y_K, \hat\bL\otimes\mO\bC)=0$ 
for $i>0$, and $$H^0(X_{\on{proet}}/Y_K, \hat\bL\otimes\mO\bC)=H^0(X_{\on{proet}}/X_K, \hat\bL\otimes\mO\bC)\otimes_{A_K}B_K.$$ 
\end{enumerate}
Since these two statements are of local natural, we can in addition assume that $\bL$ is a $\bZ_p$-local system.
We will keep these assumptions in the proof.

For $m\geq 0$, let
\[\bT_m^n=\Spa(k_m\langle T_1^{\pm1/p^m},\ldots, T_n^{\pm 1/p^m}\rangle,\mO_{k_m}\langle T_1^{\pm 1/p^m},\ldots, T_n^{\pm 1/p^m}\rangle),\]
and
\[
X_m=\Spa(A_m, A_m^+)=X\times_{\bT^n} \bT^n_m.
\] 
Let $\tilde\bT_\infty^n=\underleftarrow\lim_m \bT^n_m$, and
$\tilde{X}_\infty=X\times_{\bT^n}\tilde \bT^n_\infty$ be the affinoid perfectoid in $X_{\on{proet}}$ represented by the \emph{relative toric tower} (\cite[Definition 7.2.4]{KL2})
\begin{equation}\label{E: relative-toric-tower}
\cdots\to X_m\to\cdots\to X_1\to X_0=X.
\end{equation}
In particular, let $(\hat A_\infty, \hat{A}_{\infty}^+)$ be the perfectoid affinoid completed direct limit of the $(A_m, A_m^+)$'s, then $\hat{X}_\infty=\Spa(\hat{A}_{\infty}, \hat{A}_{\infty}^+)$ is the affinoid perfectoid space associated to $\tilde{X}_\infty$. 

Let $X_{K,m}=\Spa(A_{K,m}, A^+_{K,m})$ be the base change of $X_m$ to $K$ over $k_m$. Let $\tilde X_{K,\infty}$ be the affinoid perfectoid represented by the the \emph{toric tower} (\cite[Definition 7.1.4]{KL2})
\begin{equation}\label{E: toric-tower}
\cdots\to X_{K, m}\to\cdots\to X_{K,1}\to X_{K,0}=X_K,
\end{equation}
and $\hat{X}_{K,\infty}=\Spa(\hat A_{K,\infty}, \hat A^+_{K,\infty})$ be the associated affinoid perfectoid space. The Galois cover $\tilde{X}_{K,\infty}/X$ has Galois group $\Ga$, which fits into a splitting exact sequence
\[
1\to \Ga_{\on{geom}}\to\Ga\to \Gal(K/k)\to 1,
\]
where $\Ga_{\on{geom}}\simeq\bZ_p(1)^n$ is the Galois group of $\tilde X_{K,\infty}$ over $X_K$ such that the $i$th generator $\ga_i\in \Ga_{\on{geom}}$ acts on $T^{1/p^m}_i$ via multiplication by $\zeta_{p^m}$ and on $T_j^{1/p^m}$ as identity for $j\neq i$. 
Then $\Gal(K/k)$ acts on $\Ga_{\on{geom}}$ via the $p$-adic cyclotomic character $$\chi: \Gal(K/k)\to \bZ_p^\times.$$

Similarly, if $f: Y\to X$ is standard \'etale, we define $Y_m=\Spa(B_m, B_m^+)$, $Y_{K, m}=\Spa(B_{K,m}, B_{K,m}^+)$, $\tilde Y_\infty$, $\hat{Y}_{\infty}=\Spa(\hat{B}_{\infty}, \hat{B}_{\infty}^+)$,  $\tilde Y_{K,\infty}$ and $\hat{Y}_{K,\infty}=\Spa(\hat{B}_{K, \infty}, \hat{B}_{K, \infty}^+)$ by pulling back along $f$.

 To proceed, we first replace $H^i(X_{\on{proet}}/Y_K, \hat\bL\otimes\mO\bC)$ by a more computable expression. In the rest of the paper, Galois cohomology always means continuous Galois cohomology. 

\begin{lem}\label{L: comparison}
For $i\geq 0$, the natural map
\[
H^i(\Ga_{\on{geom}}, (\hat\bL\otimes\mO\bC)(\tilde Y_{K,\infty}))\to H^i(X_{\on{proet}}/Y_K, \hat\bL\otimes\mO\bC)
\] 
is an isomorphism.
\end{lem}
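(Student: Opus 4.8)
The plan is to compute the right-hand side by \v{C}ech theory for the pro-\'etale Galois cover $\tilde Y_{K,\infty}\to Y_K$ and to make it collapse using the perfectoid acyclicity of Corollary \ref{C: vanishing for OB}. Recall that $\tilde Y_{K,\infty}=\limproj_m Y_{K,m}$, where each $Y_{K,m}\to Y_K$ is a finite \'etale Galois (Kummer) cover of group $\Ga_m\simeq(\bZ/p^m\bZ)^n$, so that $\tilde Y_{K,\infty}\to Y_K$ is a torsor in $X_{\on{proet}}$ under the profinite group $\Ga_{\on{geom}}=\limproj_m\Ga_m\simeq\bZ_p(1)^n$, and $\tilde Y_{K,\infty}$ is affinoid perfectoid.

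First I would note that every term of the \v{C}ech nerve of $\{\tilde Y_{K,\infty}\to Y_K\}$ is affinoid perfectoid: by the standard torsor identification one has $\tilde Y_{K,\infty}^{\times_{Y_K}(p+1)}\simeq\tilde Y_{K,\infty}\times\underline{\Ga_{\on{geom}}}^{\,p}$, which is a profinite disjoint union of copies of the affinoid perfectoid $\tilde Y_{K,\infty}$ and hence again affinoid perfectoid. Since $\mO\bC=\gr^0\mO\bB_{\dR}=\mO\bB^{[0,0]}_{\dR}$, Corollary \ref{C: vanishing for OB} therefore gives $H^{j}\bigl(X_{\on{proet}}/\tilde Y_{K,\infty}^{\times_{Y_K}(p+1)},\hat\bL\otimes\mO\bC\bigr)=0$ for all $j>0$ and all $p$, so the \v{C}ech-to-derived spectral sequence of this covering collapses and $H^i(X_{\on{proet}}/Y_K,\hat\bL\otimes\mO\bC)$ is computed by the \v{C}ech complex $p\mapsto(\hat\bL\otimes\mO\bC)(\tilde Y_{K,\infty}^{\times_{Y_K}(p+1)})$. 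Using the torsor identification again, together with the fact that $\hat\bL\otimes\mO\bC$ — being locally on $X_{\on{proet}}$ a finite free $\mO\bC$-module, hence built out of $\hat\mO_X$ — sends the profinite disjoint union $\tilde Y_{K,\infty}\times\underline{\Ga_{\on{geom}}}^{\,p}$ to the module of \emph{continuous} maps $\mathcal{C}\bigl(\Ga_{\on{geom}}^{\,p},(\hat\bL\otimes\mO\bC)(\tilde Y_{K,\infty})\bigr)$ — the target taken with its natural topology as a module over the perfectoid ring $\hat\mO_X(\tilde Y_{K,\infty})$, on which $\Ga_{\on{geom}}$ acts continuously — the \v{C}ech differentials become the differentials of continuous group cohomology, and one concludes that the natural map
\[
H^i(\Ga_{\on{geom}},(\hat\bL\otimes\mO\bC)(\tilde Y_{K,\infty}))\ \longrightarrow\ H^i(X_{\on{proet}}/Y_K,\hat\bL\otimes\mO\bC)
\]
(continuous cohomology, as is our convention) is an isomorphism for every $i\geq0$; it is functorial in $\hat\bL\otimes\mO\bC$ and, via the compatibility of the toric towers under standard \'etale base change, in $Y$. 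This is essentially the mechanism by which $R\nu_*\hat\mO_X$ and $R\nu_*\mO\bB_{\dR}$ are computed from the toric tower in \cite[\S 5]{Sch2}.

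The only step here that is not purely formal — and the one I expect to require genuine care — is the last identification of the \v{C}ech complex with the complex of continuous $\Ga_{\on{geom}}$-cochains. Because $\hat\bL\otimes\mO\bC$ does \emph{not} commute with the cofiltered limit defining $\tilde Y_{K,\infty}$ (already $\hat\mO_X(\tilde Y_{K,\infty})$ is a $p$-adic completion of $\underrightarrow\lim_m\mO_X(Y_{K,m})$, not the colimit itself), one must correctly topologize the sections over $\tilde Y_{K,\infty}$ and verify continuity of the $\Ga_{\on{geom}}$-action; this is where one invokes the structure theory of Scholze's pro-\'etale site rather than a purely formal argument, though it is by now routine in this framework.
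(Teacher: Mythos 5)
Your proposal is correct and takes essentially the same approach as the paper: the paper also computes $H^i(X_{\on{proet}}/Y_K,\hat\bL\otimes\mO\bC)$ via the Cartan-Leray (\v{C}ech-to-derived) spectral sequence for the cover $\tilde Y_{K,\infty}\to Y_K$, using Corollary \ref{C: vanishing for OB} for the vanishing on the fiber products $\tilde Y_{K,\infty}^{k/Y_K}\simeq\tilde Y_{K,\infty}\times\Ga_{\on{geom}}^{k-1}$ and then matching the \v{C}ech complex with continuous cochains, explicitly following the template of \cite[Lemma 5.6]{Sch2}. The subtle step you flag at the end — identifying sections over $\tilde Y_{K,\infty}\times\underline{\Ga_{\on{geom}}}^{\,p}$ with continuous maps $\mathcal{C}(\Ga_{\on{geom}}^{\,p},(\hat\bL\otimes\mO\bC)(\tilde Y_{K,\infty}))$ — is exactly what the paper dispatches by citing \cite[Lemma 3.16]{Sch2}, so you were right to single it out as the place where one must invoke Scholze's structure theory rather than a purely formal limit argument.
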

\begin{proof}
Using Corollary \ref{C: vanishing for OB}, we may proceed as in the proof of \cite[Lemma 5.6]{Sch2}. Let $\tilde{Y}^{k/Y_K}_{K, \infty}$ be the $k$-fold fiber product of $\tilde{Y}_{K, \infty}$ over $Y_K$.  As $\tilde Y_{K, \infty}\to Y_K$ is a Galois cover with Galois group $\Ga_{\on{geom}}$, $\tilde{Y}^{k/Y_K}_{K,\infty}$ is isomorphic to $\tilde{Y}_{K,\infty}\times \Ga_{\on{geom}}^{k-1}$. By \cite[Lemma 3.16]{Sch2}, it follows that
\[
H^i(X_{\on{proet}}/\tilde{Y}^{k/Y}_{K,\infty}, \hat\bL\otimes\mO\bC)\simeq 
\Hom_{\mathrm{cont}}(\Gamma_{\on{geom}}^{k-1}, H^i(X_{\on{proet}}/\tilde Y_{K,\infty}, \hat\bL\otimes\mO\bC))
\]
for all $k\geq 1$ and $i\geq 0$. Now applying the Cartan-Leray spectral sequence to the Galois cover $\tilde{Y}_{K,\infty}\to Y_K$ and using the fact that $H^i(X_{\on{proet}}/\tilde Y_{K,\infty}, \hat\bL\otimes\mO\bC)=0$ for $i>0$,  we conclude that 
\[
H^i(X_{\on{proet}}/Y_K, \hat\bL\otimes\mO\bC)\simeq
H^i(\Gamma_{\on{geom}}, (\hat\bL\otimes\mO\bC)(\tilde Y_{K,\infty}))
\] 
for all $i\geq0$, yielding the desired isomorphism. 
\end{proof}
Let us write 
$$\mM=\hat\bL\otimes\hat\mO_X.$$ Then $\mM(\tilde Y_{K,\infty})$ is a finite projective $\hat{B}_{K,\infty}$-module equipped with a semi-linear $\Ga$-action. 
According to \cite[\S 6]{Sch2}, over $X_{\on{proet}}/\tilde{Y}_{K}$, there is an isomorphism
\begin{equation}\label{E: isom}
\mO\bC=\gr^0\mO\bB_{\dR}\simeq \hat\mO_{Y_K}[V_1,\ldots,V_d],
\end{equation}
where $V_i=t^{-1}\log([T_i^\flat]/T_i)$ and $t=\log([\epsilon])$, and the $s$th generator $\ga_s$ of $\Ga_{\on{geom}}$ acts on $V_t$ by $\ga_s(V_t)=V_t+\delta_{st}$ (\cite[lemma 6.17]{Sch2}). 
Therefore,
\[
(\hat\bL\otimes \mO\bC)(\tilde Y_{K,\infty})\simeq \mM(\tilde Y_{K,\infty})[V_1,\ldots,V_n].
\]  

Now we may replace $H^i(X_{\on{proet}}/Y_K, \hat\bL\otimes\mO\bC)$ by $H^i(\Ga_{\on{geom}}, \mM(\tilde Y_{K,\infty})[V_1,\ldots,V_n])$ in  (a) and (b). The key statements that allow one to calculate them are as follows. We set $B_{k_m}=B\otimes_k k_m$.
\begin{prop}\label{P: local p-adic Simpson}
There exists a unique finite projective $B_K$-submodule $M_K(Y)$ of $\mM(\tilde Y_{K,\infty})$, which is stable under $\Gamma$, such that
\begin{enumerate}
\item[(i)] $M_K(Y)\otimes_{B_K}\hat{B}_{K,\infty}=\mM(\tilde Y_{K,\infty})$;
\item[(ii)] The $B_K$-linear representation of $\Ga_{\on{geom}}$ on $M_K(Y)$ is unipotent;
\end{enumerate}
In addition, the module $M_K(Y)$ has the following properties:
\begin{enumerate}
\item[(P1)] There exist some positive integer $m_0$ and some finite projective $B_{k_{m_0}}$-submodule $M(Y)$ of $M_K(Y)$ stable under $\Ga$ such that $M(Y)\otimes_{B_{k_{m_0}}}B_K=M_K(Y)$. Moreover, the construction of $M(Y)$ is compatible with base change along standard \'etale morphisms.
\item[(P2)] For every $i\geq 0$, the natural map
$$H^i(\Ga_{\on{geom}}, M_K(Y)) \to H^i(\Gamma_{\on{geom}}, \mM(\tilde Y_{K,\infty}))$$
is an isomorphism.
 \end{enumerate}
\end{prop}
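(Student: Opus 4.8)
\textbf{Proof plan for Proposition \ref{P: local p-adic Simpson}.}

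The plan is to reduce the construction of $M_K(Y)$ to a Sen-theory / decompletion argument for the finite projective $\hat{B}_{K,\infty}$-module with semilinear $\Gamma$-action $\mM(\tilde Y_{K,\infty}) = (\hat\bL \otimes \hat\mO_X)(\tilde Y_{K,\infty})$. First I would recall, following \cite[\S 7]{KL2} (relative Sen theory for the toric tower), that passing from $\hat{B}_{K,\infty}$ down to the discretely-valued-coefficient level, one obtains a finite projective $B_{K,\infty} = \bigcup_m B_{K,m}$-module with a smooth (i.e.\ locally finite) semilinear action of $\Gamma$, which then descends to a finite projective $B_{K,m_0}$-module for $m_0 \gg 0$ on which $\Gamma_{\on{geom}}$ acts through a finite quotient twisted by the structure of the tower; tensoring back up with $B_K$ over $B_{K,m_0}$ gives a candidate submodule. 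The point is that this candidate is \emph{not} yet $\Gamma_{\on{geom}}$-unipotent — the action of $\ga_i$ is of the form ``unipotent times the $p^m$-th-root-of-unity scaling coming from the tower'' — so one must further untwist. Here I would use that $\Gal(K/k)$ acts on $\Gamma_{\on{geom}} \simeq \bZ_p(1)^n$ through the cyclotomic character $\chi$: by a standard averaging/twisting trick (the $p$-adic analogue of diagonalizing a torus action, cf.\ the cited analogy with the Hodge torus $\bC^\times$), one can find a new $B_K$-basis in which the $\Gamma_{\on{geom}}$-action becomes unipotent while preserving $\Gamma$-stability. This produces $M_K(Y)$ satisfying (i) and (ii); uniqueness follows because any two such submodules generate each other over $\hat{B}_{K,\infty}$ and a unipotent-versus-unipotent comparison forces them to coincide (the difference would be killed by a suitable $(\ga_i - 1)$-eigenvalue computation, as in Sen theory).

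For (P1), the module $M(Y)$ over $B_{k_{m_0}}$ is simply the descent datum already produced in the construction, enlarging $m_0$ if necessary so that the untwisting basis is defined over $k_{m_0}$; compatibility with base change along standard \'etale $Z \to Y$ is automatic from the uniqueness in the main assertion together with the fact that $\mM(\tilde Z_{K,\infty}) = \mM(\tilde Y_{K,\infty}) \otimes_{\hat B_{K,\infty}} \hat C_{K,\infty}$ (étale localization of the local system), since the base-changed module satisfies the same defining properties. For (P2), I would compute both sides via the Koszul complex for the commuting operators $\ga_1 - 1, \dots, \ga_n - 1$ (topological generators of $\Gamma_{\on{geom}}$): on $M_K(Y)$ the operators are unipotent, hence $\ga_i - 1$ is ``small'', and on the quotient $\mM(\tilde Y_{K,\infty})/M_K(Y)\otimes_{B_K}\hat B_{K,\infty}$ one shows — this is exactly the decompletion estimate of \cite[\S 7]{KL2}, or \cite[Proposition 5.7]{Sch1} in spirit — that $\ga_i - 1$ acts invertibly (or at least with quantitatively invertible image after passing to the relevant quotients of the tower), so the associated graded pieces of the Koszul filtration vanish; hence the inclusion $M_K(Y) \hookrightarrow \mM(\tilde Y_{K,\infty})$ is a continuous $\Gamma_{\on{geom}}$-cohomology isomorphism.

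The main obstacle I anticipate is the \emph{decompletion step}: showing that the naive $B_{K,\infty}$-model actually descends to finite level and, more delicately, that the residual $\Gamma_{\on{geom}}$-action can be untwisted to a unipotent one in a $\Gamma$-equivariant way. This is where the full force of relative $p$-adic Sen theory along the relative toric tower \cite[\S 7.2--7.3]{KL2} is needed, and where one must be careful that the estimates are uniform enough to also yield the cohomology comparison (P2) rather than just an abstract isomorphism of modules. The finite-projectivity (as opposed to mere finite generation) of $M_K(Y)$ and its compatibility with localization are comparatively formal once the Sen module is in hand, but I would double-check that the perfectoid base change $\hat B_{K,\infty}$ is faithfully flat over $B_K$ (it is, being a completed filtered colimit of finite \'etale — hence faithfully flat — extensions followed by completion, which is faithfully flat on finite modules) so that property (i) genuinely pins down $M_K(Y)$.
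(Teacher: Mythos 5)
Your plan follows the same route as the paper (decompletion via \cite[\S 7]{KL2} to a finite level $B_{K,m}$, then use the Galois action of $\Gal(K/k)$ on $\Ga_{\on{geom}}$ to reach the unipotent model over $B_K$, then compare cohomology), but two steps are stated too loosely to carry the weight they need.

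First, the passage to a unipotent action is not a ``change of $B_K$-basis by an averaging trick.'' The decompleted module $M_{K,m}(Y):=M_m(Y)\otimes_{B_{k_m}}B_K$ carries a $\Gamma_{\on{geom}}$-action that, a priori, need not be quasi-unipotent: that has to be \emph{proved}, and the proof is the heart of the matter. The paper's Lemma~\ref{L: unip} establishes quasi-unipotence by a Grothendieck-type argument: for $\gamma\in\Ga_{\on{geom}}$ close to $1$ and $\delta\in\Ga$ one has $\delta(\log\gamma)\delta^{-1}=\chi(\delta)\log\gamma$, which forces every coefficient of the characteristic polynomial of $\log\gamma$ (a $\delta$-invariant element of $B_K$) to be scaled by a nonzero power of $\chi(\delta)$ and hence to vanish, since $\chi(\Gal(K/k))$ has finite cokernel in $\bZ_p^\times$. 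Your appeal to ``the $p$-adic analogue of diagonalizing a torus action'' gestures at the same idea, but without this concrete calculation the quasi-unipotence has not been shown, and without it the subsequent decomposition is unjustified. Moreover, once quasi-unipotence is in hand, what one does is \emph{not} find a new basis in which the same module becomes unipotent (that is impossible when the semisimple part is nontrivial); one decomposes $M_{K,m}(Y)=\bigoplus_\tau M_{K,m}(Y)_\tau$ into finite-order $\tau$-eigenspaces and \emph{takes the summand} $M_K(Y):=M_{K,m}(Y)_1$. That this $B_K$-module recovers everything after base change is then shown by exhibiting, for each $\tau$ occurring, a monomial $T_1^{i_1/p^m}\cdots T_n^{i_n/p^m}\in B_{K,m}$ on which $\Ga_{\on{geom}}$ acts by $\tau$, so that $M_K(Y)\otimes_{B_K}B_{K,m}\to M_{K,m}(Y)$ is surjective; and $\Ga$-stability of $M_K(Y)$ uses that $\tau=1$ is fixed under the $\chi$-twisted conjugation, while the other $\tau$-eigenspaces are permuted. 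You should make these steps explicit rather than absorb them into an ``untwisting.''

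Second, your sketch of (P2) via a single Koszul comparison between $M_K(Y)$ and $\mM(\tilde Y_{K,\infty})$ has a wrinkle: the natural quotient to consider is not $\mM(\tilde Y_{K,\infty})/(M_K(Y)\otimes_{B_K}\hat B_{K,\infty})$, which is zero by (i), and invertibility of $\gamma_i-1$ on $\mM(\tilde Y_{K,\infty})/M_K(Y)$ is precisely a decompletion estimate that is not at all formal. The paper avoids this by a two-step comparison: (a) $H^i(\Ga_{\on{geom}}, M_K(Y))\to H^i(\Ga_{\on{geom}}, M_{K,m}(Y))$ is an isomorphism because for $\tau\neq 1$ some $\gamma_s-1$ is genuinely invertible on the finite projective $B_K$-module $M_{K,m}(Y)_\tau$; (b) $H^i(\Ga_{\on{geom}}, M_{K,m}(Y))\to H^i(\Ga_{\on{geom}}, \mM(\tilde Y_{K,\infty}))$ is an isomorphism by the locally decompleting property from \cite[\S 5.6, \S 7]{KL2} (the strict-exactness of the analytic-cohomology cochain complex for $\tilde M_{\psi'}^{[s,r]}/M_{\psi'}^{[s,r]}$, transported via $\theta$). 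You are right that \cite[\S 7]{KL2} is the essential input for (b), but you should separate (a) (an easy eigenvalue argument over $B_K$) from (b) (the hard estimate) rather than attempt them in one stroke.

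The remaining points — existence of the finite-level $\Ga$-stable $B_{k_m}$-model $M(Y)$ (take $M_m(Y)_1$), uniqueness of $M_K(Y)$, and compatibility with standard \'etale base change (which comes from the base change compatibility of the decompletion in \cite[Corollary 5.6.7]{KL2}, not just from uniqueness together with the identification of $\mM(\tilde Z_{K,\infty})$) — are in the right spirit but should be pinned down along the lines above.
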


The proof of Proposition \ref{P: local p-adic Simpson} will be given in the next subsection. Granting the proposition, we complete the proof of the theorem in the rest of this subsection. First, taking the logarithmic defines a Higgs field
\begin{equation}\label{E: local Simpson}
(M_K(X),\vartheta: M_K(X)\to M_K(X)\otimes_{A}\Omega^1_{A/k}(-1)).
\end{equation}  
Since the action of $\Gamma_{\on{geom}}$ on $M_K(X)$ is unipotent by Proposition \ref{P: local p-adic Simpson}(ii),  following \cite[I.3.3.2]{AGT}, we normalize $\vartheta$ such that each $\ga_i\in\Ga_{\on{geom}}$ acts on $M_K(X)$ as $\exp(t\vartheta(T_i\frac{d}{dT_i}))$.

One may therefore reformulate Proposition \ref{P: local p-adic Simpson}(P2) by saying that the Higgs cohomology 
\[0\to M_K(X)\stackrel{\vartheta}{\to} M_K(X)\otimes_{A} \Omega_{A/k}(-1)\stackrel{\vartheta\wedge\vartheta}{\to} M_K(X)\otimes_{A}\Omega^2_{A/k}(-2)\to \cdots\]
calculates $H^*(X_{\on{proet}}/X_K, \hat\bL\otimes\hat{\mO}_X)$.
\begin{rmk}\label{R: Sen theory}
The above proposition can be regarded (in our setting) as the local version of the $p$-adic Simpson correspondence for small affine schemes (see \cite{Fa, AGT} for details). It can also be regarded as a relative version of Sen's theory.
Our improvement here is that when $\bL$ is defined over $X$ (rather than $X_K$), the Higgs field is nilpotent (we learned from Abbes that a similar nilpotence statement already appeared in \cite[Proposition 5]{Br1} and \cite[Lemma 9.5]{Ts}).
Our approach is different from \emph{loc. cit.}
\end{rmk}

A priori, the $B_K$-module $M_K(Y)$ depends on the choice of the toric chart $Y\to X\to \bT^n$. But the following lemma says that indeed it does not.

\begin{lem}\label{L: cohomology-computation}
For $i\geq0$, there is a canonical isomorphism 
\begin{equation}\label{E: gamma-geom-cohomology}
H^i(X_{\on{proet}}/Y_K, \hat\bL\otimes\mO\bC)\simeq\begin{cases} M_K(Y)& i=0 \\ 0 & i>0,\end{cases}
\end{equation}
which is $\Gal(K/k)$-equivariant. 
\end{lem}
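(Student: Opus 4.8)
The plan is to assemble the isomorphism \eqref{E: gamma-geom-cohomology} from the pieces already in place and then check $\Gal(K/k)$-equivariance by tracking Galois actions through each step. First I would recall the chain of identifications established so far: by Lemma \ref{L: comparison}, $H^i(X_{\on{proet}}/Y_K,\hat\bL\otimes\mO\bC)\simeq H^i(\Ga_{\on{geom}},(\hat\bL\otimes\mO\bC)(\tilde Y_{K,\infty}))$; by the description \eqref{E: isom} of $\mO\bC$ on the toric tower, the right side is $H^i(\Ga_{\on{geom}},\mM(\tilde Y_{K,\infty})[V_1,\dots,V_n])$; and by Proposition \ref{P: local p-adic Simpson}(P2) (applied after identifying $\mM(\tilde Y_{K,\infty})[V_1,\dots,V_n]$-cohomology with $\mM(\tilde Y_{K,\infty})$-cohomology, which is exactly the content of comparing the Higgs/Koszul complex against the naive one — this reduction is the role of the polynomial variables $V_i$ and the fact that $\ga_s(V_t)=V_t+\delta_{st}$) this equals $H^i(\Ga_{\on{geom}},M_K(Y))$. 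Finally, since the $\Ga_{\on{geom}}$-action on $M_K(Y)$ is unipotent by Proposition \ref{P: local p-adic Simpson}(ii), its higher cohomology vanishes and $H^0=M_K(Y)$. Concatenating these gives the desired isomorphism; in degree $0$ it sends $M_K(Y)\hookrightarrow\mM(\tilde Y_{K,\infty})\hookrightarrow(\hat\bL\otimes\mO\bC)(\tilde Y_{K,\infty})=H^0(\Ga_{\on{geom}},-)$, which is manifestly independent of the toric chart once we know $M_K(Y)$ is (the latter being guaranteed by the uniqueness clause in Proposition \ref{P: local p-adic Simpson}, characterizing $M_K(Y)$ as the unique $\Ga$-stable finite projective $B_K$-submodule on which $\Ga_{\on{geom}}$ acts unipotently and which spans after $\otimes\hat B_{K,\infty}$).

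The one genuine subtlety — and the step I expect to be the main obstacle — is verifying that the polynomial variables $V_1,\dots,V_n$ contribute nothing to cohomology, i.e. that the natural inclusion $\mM(\tilde Y_{K,\infty})\hookrightarrow\mM(\tilde Y_{K,\infty})[V_1,\dots,V_n]$ induces an isomorphism on $\Ga_{\on{geom}}$-cohomology. This is a Poincaré-lemma-type statement: the Koszul-type complex computing $\Ga_{\on{geom}}$-cohomology of the polynomial ring, with $\ga_s$ acting by $V_t\mapsto V_t+\delta_{st}$, is a tensor product over $s$ of complexes $(\mathbb Z_p[V_s]\xrightarrow{\ga_s-1}\mathbb Z_p[V_s])$-type factors (after combining with the relative toric tower generators), and each such factor is quasi-isomorphic to the constant coefficients because $\ga_s-1$ is, up to the divided-power subtleties in $\mO\bC$, a "difference operator" whose kernel and cokernel on $\mathbb{Q}_p[V_s]$ are spanned by the constants. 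I would make this precise by filtering $\mM(\tilde Y_{K,\infty})[V_1,\dots,V_n]$ by total degree in the $V_i$, passing to associated graded, and noting that on each graded piece the operator reduces to an explicit combinatorial endomorphism of a free module over $\mM(\tilde Y_{K,\infty})$; alternatively one can cite the analogous computation in \cite[\S 6]{Sch2} or \cite{AGT} directly. Care is needed because $\Ga$ (not just $\Ga_{\on{geom}}$) acts and $\Gal(K/k)$ acts on the $V_i$ through the cyclotomic character (via $t$), so the comparison must be done $\Ga$-equivariantly, not merely $\Ga_{\on{geom}}$-equivariantly.

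For the $\Gal(K/k)$-equivariance of \eqref{E: gamma-geom-cohomology}, I would observe that every map in the chain above is the restriction of a construction carried out over $X_{\on{proet}}/Y_K$ with its residual $\Gal(K/k)$-action: Lemma \ref{L: comparison} is deduced from the Cartan–Leray spectral sequence for $\tilde Y_{K,\infty}\to Y_K$, and since $\tilde Y_{K,\infty}\to X$ is Galois with group $\Ga$ sitting in the split exact sequence $1\to\Ga_{\on{geom}}\to\Ga\to\Gal(K/k)\to1$, the group $\Ga_{\on{geom}}$-cohomology naturally carries the induced $\Gal(K/k)$-action and this matches the geometric $\Gal(K/k)$-action on $H^i(X_{\on{proet}}/Y_K,-)$; the isomorphism \eqref{E: isom} is $\Ga$-equivariant by \cite[Lemma 6.17]{Sch2}; and $M_K(Y)$ is $\Ga$-stable by construction in Proposition \ref{P: local p-adic Simpson}, so its formation and the identification $H^0(\Ga_{\on{geom}},M_K(Y))=M_K(Y)$ (the invariants being all of $M_K(Y)$ since $\Ga_{\on{geom}}$ acts unipotently — wait, unipotent does not give trivial, so rather: $H^0$ is the $\Ga_{\on{geom}}$-invariants, which need not be all of $M_K(Y)$) — here I must be slightly more careful: the correct statement is that the \emph{Higgs complex} of $M_K(Y)$ computes the cohomology, and in degree $0$ its kernel is $\ker\vartheta$; but what Lemma \ref{L: cohomology-computation} asserts is $H^0\simeq M_K(Y)$ itself, so the isomorphism is not literally "take invariants" but rather the edge map coming from the identification of the whole complex — concretely, after tensoring up to $\hat B_{K,\infty}$ the $\mO\bC$-coefficients absorb the Higgs differential (this is precisely part (ii) of Theorem \ref{T: p-adic Simpson}), so $H^0(\Ga_{\on{geom}},\mM(\tilde Y_{K,\infty})[V_\bullet])\simeq M_K(Y)$ via the "constant term in a suitable coordinate" projection, which is $\Ga$-equivariant because the change of coordinates $V_i\mapsto V_i + (\text{element encoding }\vartheta)$ is. I would therefore phrase the equivariance proof as: each identification is the restriction of a $\Gal(K/k)$-equivariant statement over $Y_K$, hence so is the composite. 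The reader should check that the $\Gal(K/k)$-action on $M_K(Y)$ used here is the one from Proposition \ref{P: local p-adic Simpson}(P1)–(ii), which by the uniqueness clause is the unique such action, so there is no ambiguity.
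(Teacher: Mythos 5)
Your proposal contains a genuine gap at the central computational step. You identify as ``the one genuine subtlety'' the claim that the inclusion $\mM(\tilde Y_{K,\infty})\hookrightarrow\mM(\tilde Y_{K,\infty})[V_1,\dots,V_n]$ induces an isomorphism on $\Ga_{\on{geom}}$-cohomology, a Poincar\'e-lemma-type statement. This is false, and it is precisely the point where the period sheaf $\mO\bC$ earns its keep. Already for the constant local system on $\bT^n$ one has $H^1(\Ga_{\on{geom}},\hat A_{K,\infty})\neq 0$ (Scholze's computation of the pro-\'etale pushforward of $\hat\mO$), whereas $H^1(\Ga_{\on{geom}},\hat A_{K,\infty}[V_1,\dots,V_n])=0$ --- this vanishing is exactly what the lemma asserts and is what makes $\mH(\bL)$ a vector bundle rather than a complex. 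The translation action $V\mapsto V+1$ changes the cohomology: it replaces ``invariants'' by ``generalized invariants'' in degree $0$ and kills higher degrees. Your subsequent claim that a unipotent $\Ga_{\on{geom}}$-action on $M_K(Y)$ forces higher cohomology to vanish is also false (the trivial action is unipotent and has the largest cohomology, $H^1(\bZ_p\ga,W)=W$). These two errors happen to cancel in the final formula, which is why you arrive at the right statement, but the mechanism is broken. You notice part of the problem midway through and attempt a repair by invoking Theorem \ref{T: p-adic Simpson}(ii), but that result is proved later in the same section using the present lemma, so the citation is circular.

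The paper's argument closes the gap by a different route: one never passes through $H^i(\Ga_{\on{geom}},\mM)$. Instead, Proposition \ref{P: local p-adic Simpson}(P2) together with the decompletion argument after \cite[Lemma 6.17]{Sch2} shows that the inclusion $M_K(Y)[V_\bullet]\hookrightarrow\mM(\tilde Y_{K,\infty})[V_\bullet]$ --- with the polynomial variables already present on \emph{both} sides --- is an isomorphism on $\Ga_{\on{geom}}$-cohomology. Then $H^*(\Ga_{\on{geom}},M_K(Y)[V_\bullet])$ is computed directly by the elementary Lemma \ref{L: elementary}: in the basis $\binom{V}{i}$ one shows $(\ga-1)$ has kernel the generalized invariants $W^{\on{gen. inv.}}$ (via evaluation $V\mapsto 0$) and zero cokernel when $W=W^{\on{gen. inv.}}$; then Hochschild--Serre for $\Ga_{\on{geom}}\cong\bZ_p^n$ peels off one variable at a time. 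The unipotence from Proposition \ref{P: local p-adic Simpson}(ii) is used not to kill cohomology outright but to guarantee $M_K(Y)[V_1,\dots,V_{s-1}]$ equals its own generalized invariants under each $\ga_s$ so that the inductive step applies. Your assembly of ingredients (Lemma \ref{L: comparison}, the isomorphism \eqref{E: isom}, Proposition \ref{P: local p-adic Simpson}) and your discussion of $\Gal(K/k)$-equivariance are fine; it is the core $\Ga_{\on{geom}}$-cohomology computation that needs the argument just described rather than the one you propose.
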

Note that this statement completes the proof of (a) and (b), and therefore implies that $\mH^i(\bL)=0$ for $i>0$ and $\mH(\bL)=\nu'_*(\hat\bL\otimes\mO\bC)$ is a vector bundle on $X_K$ of the expected rank. 
\begin{proof}
We first need an elementary lemma.
\begin{lem}\label{L: elementary}
Let $W$ be a $\bQ$-vector space with a linear automorphism $\ga:W\to W$. Put
$$W^{\on{gen. inv.}}=\{w\in W\mid (\ga-1)^Nw=0\hspace{1.5mm}\text{for some}\hspace{1.5mm}N\in\bN\},$$ 
i.e. the generalized eigenspace of $\ga$ on $W$ with eigenvalue $1$.
Define an action of $\ga $ on $W[V]$ as $\ga(wV^j)=\ga(w)(V+1)^j$. 
Then the natural map
\[
W[V]\to W,\quad V\mapsto 0
\] 
induces an isomorphism from $H^0(\ga,W[V])=W[V]^{\ga=1}$ to $W^{\on{gen. inv.}}$.
Moreover, if $W=W^{\on{gen. inv.}}$, then $H^1(\ga, W[V])=W[V]/(\ga-1) W[V]$ vanishes. 
\end{lem}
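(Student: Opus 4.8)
The plan is to reduce the whole statement to an explicit computation with one operator, the ``logarithm of the monodromy''. Write $\delta=\ga-1$ acting on $W$ and $W_0:=W^{\on{gen. inv.}}=\bigcup_N\ker\delta^N$; this is a $\ga$-stable subspace on which $\delta$, and hence $L:=\log(1+\delta)=\delta-\tfrac{\delta^2}2+\cdots$, is locally nilpotent. The $\ga$-action on $W[V]$ is $\ga\otimes N$, where $N\colon f(V)\mapsto f(V+1)$ is the shift operator on $\bQ[V]$; since $N=e^{d/dV}$ on polynomials and $L$ (acting on coefficients) commutes with $d/dV$, on $W_0[V]$ we have $\ga\otimes N=e^{\mL}$ with $\mL:=L+\tfrac{d}{dV}$. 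Every polynomial in $W_0[V]$ lies in a finite-dimensional $\ga$-stable subspace $W'[V]$ of bounded degree with $W'\subseteq W_0$ finite-dimensional and $\delta|_{W'}$ nilpotent, so on such a subspace $\ga\otimes N$ is unipotent and $\mL$ is nilpotent; this legitimizes all the exponentials, logarithms, and power series used below.

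\textbf{Step 1 (reduction to $W_0[V]$).} First I would show that every $p=\sum_{j=0}^d w_jV^j$ with $\ga(p)=p$ has all $w_j\in W_0$. Expanding $\ga(p)-p$ and reading off the coefficient of $V^j$ gives $\delta(w_j)=-\sum_{i>j}\binom ij\ga(w_i)$. Downward induction on $j$ finishes this: $\delta(w_d)=0$ so $w_d\in W_0$, and if $w_i\in W_0$ for all $i>j$ then the right-hand side lies in the $\ga$-stable space $W_0$, hence $\delta(w_j)\in\ker\delta^N$ for some $N$ and $w_j\in\ker\delta^{N+1}\subseteq W_0$. Consequently $H^0(\ga,W[V])=H^0(\ga,W_0[V])$, and I may work inside $W_0[V]$ from now on.

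\textbf{Step 2 (the isomorphism for $H^0$).} On $W_0[V]$ one has $\ga\otimes N-1=e^{\mL}-1=\mL\circ U$ with $U=1+\tfrac\mL2+\tfrac{\mL^2}6+\cdots$ a locally unipotent automorphism commuting with $\mL$; since $U$ fixes $\ker\mL$ pointwise (each higher term kills it), it preserves $\ker\mL$, and one deduces $\ker(\ga\otimes N-1)=\ker\mL$ and likewise $\on{coker}(\ga\otimes N-1)=\on{coker}\mL$. Now $\mL(\sum_j w_jV^j)=\sum_j\bigl(L(w_j)+(j+1)w_{j+1}\bigr)V^j$, so $\mL p=0$ forces $w_j=\tfrac{(-1)^j}{j!}L^j(w_0)$; since $L$ is locally nilpotent on $W_0$ this expression terminates for any $w_0\in W_0$, and the kernel is exactly $\{\,g(w_0):=\sum_j\tfrac{(-V)^j}{j!}L^j(w_0)\mid w_0\in W_0\,\}$. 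As $g(w_0)(0)=w_0$, the map ``$V\mapsto 0$'' is a two-sided inverse to $w_0\mapsto g(w_0)$, which is precisely the claimed isomorphism $H^0(\ga,W[V])\xrightarrow{\sim}W^{\on{gen. inv.}}$.

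\textbf{Step 3 ($H^1$ vanishes when $W=W_0$, and the main difficulty).} Here I must show $\mL$ is surjective on $W[V]=W_0[V]$. Given $q=\sum_{j=0}^d c_jV^j$, solve $L(w_j)+(j+1)w_{j+1}=c_j$ by the forward recursion $w_0:=0$, $w_{j+1}:=\tfrac1{j+1}\bigl(c_j-L(w_j)\bigr)$; past degree $d$ this reads $w_{d+1+m}=(-1)^m\bigl((d+2)\cdots(d+1+m)\bigr)^{-1}L^m(w_{d+1})$, which vanishes for $m\gg0$ because $w_{d+1}\in W_0$ and $L$ is locally nilpotent. Thus $p:=\sum_j w_jV^j$ is an honest polynomial with $\mL p=q$, so $\on{coker}\mL=0$ and $H^1(\ga,W[V])=W[V]/(\ga-1)W[V]=0$. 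The only genuine obstacle in all of this is the bookkeeping that makes $L=\log(1+\delta)$ and the identity $\ga\otimes N=e^{L+d/dV}$ meaningful — exhibiting each polynomial inside a finite-dimensional unipotent/nilpotent stable subspace — together with checking that the two recursions above terminate, which is exactly where the hypotheses ``$w_0\in W^{\on{gen. inv.}}$'' (in Step 2) and ``$W=W^{\on{gen. inv.}}$'' (in Step 3) are used; everything else is the routine computation indicated above.
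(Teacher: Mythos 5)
Your proof is correct, but it takes a genuinely different route from the paper's.

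The paper works directly in the binomial-coefficient basis $\binom{V}{i}=\frac{V(V-1)\cdots(V-i+1)}{i!}$, which diagonalizes the shift $V\mapsto V+1$ in the sense that Pascal's rule gives the one-line formula
\[
(\ga-1)\Bigl(\sum_{i} w_i \tbinom{V}{i}\Bigr)=\sum_{i}\bigl(\ga(w_{i+1})+\ga(w_i)-w_i\bigr)\tbinom{V}{i},
\]
from which a fixed vector is seen at once to be determined by $w_0$ via $w_{i+1}=\ga^{-1}(w_i-\ga(w_i))$, with termination equivalent to $w_0\in W^{\on{gen.inv.}}$; the $H^1$ vanishing is by exhibiting each $w\binom{V}{j}$ with $w\in W^{\on{gen.inv.}}$ as an explicit coboundary. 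Your argument instead passes to the logarithm $L=\log\ga$, writes the action on $W^{\on{gen.inv.}}[V]$ as $\exp(\mL)$ with $\mL=L+d/dV$, and reduces the statement to computing $\ker$ and $\on{coker}$ of the locally nilpotent operator $\mL$. This is cleaner conceptually and makes the recursion in both halves transparent, but it costs you the extra Step~1 (the paper gets the reduction to $W^{\on{gen.inv.}}[V]$ for free from its recursion) and requires the bookkeeping about local nilpotence of $\mL$ and invertibility of $U$ to legitimize the power-series manipulations, which you do correctly. The two arguments are in a precise sense dual: the paper uses the basis adapted to the difference operator $\Delta f=f(V+1)-f(V)$ and works with $\ga-1$, you use the monomial basis adapted to $d/dV$ and work with $\log\ga$; the translation between them is $\ga-1=\exp(\mL)-1$ versus $\mL$ and $\binom{V}{i}$ versus $V^i/i!$.
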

\begin{proof}We consider the basis of $W[V]$ given by $\begin{pmatrix}V \\ i\end{pmatrix}=\dfrac{V(V-1)\cdots(V-i+1)}{i!}$. It is straightforward to see that 
\begin{equation}\label{aux}
(\ga-1)(\sum_{i=0}^{r} w_i \begin{pmatrix}V \\ i\end{pmatrix})= \sum_{i=0}^{r} (\ga(w_{i+1})+\ga(w_i)-w_i) \begin{pmatrix}V \\ i\end{pmatrix},
\end{equation}
where $w_{r+1}=0$. Therefore, we see 
that $\sum_{i=0}^{r} w_i {V\choose i}\in H^0(\ga, W[V])$ if and only if 
\[
w_{i+1}=\gamma^{-1}(w_i-\gamma(w_i)), \qquad i\geq0.
\]
That is, all $w_i$ are uniquely determined by $w_0$ and $(\gamma-1)^{r+1}(w_0)=0$ for some $r$. This yields the first assertion of the lemma.

Now suppose $W=W^{\on{gen. inv.}}$. For $w\in W$, assume that $(\gamma-1)^{r+1}(w)=0$ for some $r\geq0$. A short computation shows that for any $j\geq0$,
\[
w\begin{pmatrix}V \\ j\end{pmatrix}=(\gamma-1)(\sum_{i=1}^r\gamma^{-i}(1-\gamma)^{i-1}(w)\begin{pmatrix}V \\ i+j\end{pmatrix}).
\]
This yields  the second assertion of the lemma. 
\end{proof}

Now we may proceed as in the proof of \cite[Proposition 6.16(i)]{Sch2}. By  Proposition \ref{P:  local p-adic Simpson}(P2) and an argument in \cite{Sch2} (given in the paragraph after Lemma 6.17 of \emph{ibid.}),
the natural map
\[H^i(\Gamma_{\on{geom}}, M_K(Y)[V_1,\dots,V_n])\to H^i(\Gamma_{\on{geom}}, \mM(\tilde Y_\infty)[V_1,\dots,V_n])\]
is an isomorphism. 
Note that for $1\leq s\leq n$, $H^{\bullet}(\bZ_p\gamma_s, M_K(Y)[V_1,\dots,V_s])$ is computed by the complex
\[
M_K(Y)[V_1,\dots,V_s]\stackrel{\gamma_s-1}{\to}M_K(Y)[V_1,\dots,V_s].
\]
By Proposition \ref{P: local p-adic Simpson}(ii), it is straightforward to see that for the action of $\ga_s$,
\[
M_K(Y)[V_1,\dots, V_{s-1}]^{\on{gen. inv.}}= M_K(Y)[V_1,\dots, V_{s-1}].
\] 
Therefore, using Lemma \ref{L: elementary}, we  deduce that 
\[
H^i(\bZ_p\gamma_s, M_K(Y)[V_1,\dots,V_s])\simeq \begin{cases}M_K(Y)[V_1,\dots,V_{s-1}]&i=0\\0&i>0.\end{cases}
\]
Using the Hochschild-Serre spectral sequence, we conclude the desired result by reverse induction. 
\end{proof}

To finish the proof of Part (i), we also need to analyze the Higgs field on $\mH(\bL)$.
\begin{lem}\label{L: two higgs}
Under the identification $\mH(\bL)(X_K)=M_K(X)$ in Lemma \ref{L: cohomology-computation}, $\vartheta_\bL=\vartheta$. In particular, it is nilpotent.
\end{lem}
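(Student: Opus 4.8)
The plan is to compare two a priori different Higgs fields on $\mH(\bL)(X_K) = M_K(X)$: the field $\vartheta_\bL = \nu'_*(\gr\nabla)$ arising from the associated graded of the Faltings connection on $\mO\bC$, and the field $\vartheta$ of \eqref{E: local Simpson} arising from the unipotent $\Ga_{\on{geom}}$-action on $M_K(X)$ via the normalization $\ga_i = \exp(t\vartheta(T_i\tfrac{d}{dT_i}))$. Both are $\mO_{X_K}$-linear maps $M_K(X) \to M_K(X)\otimes_A\Omega^1_{A/k}(-1)$, so it suffices to check the identity after the faithfully flat base change to $\hat A_{K,\infty}$, i.e. to work inside $(\hat\bL\otimes\mO\bC)(\tilde X_{K,\infty})$, where by \eqref{E: isom} and \eqref{E: isom}'s consequences we have the explicit description $\mM(\tilde X_{K,\infty})[V_1,\dots,V_n]$ with $\ga_s(V_t) = V_t + \delta_{st}$.

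The key steps, in order: First, recall from Scholze's construction (\cite[\S 6]{Sch2}) that on $X_{\on{proet}}/\tilde X_{K}$ the connection $\nabla$ on $\mO\bB_{\dR}$ restricted to $\mO\bC \simeq \hat\mO_{X_K}[V_1,\dots,V_n]$ satisfies $\nabla(V_i) = -\operatorname{dlog} T_i = -\tfrac{dT_i}{T_i}$ (up to the sign/twist conventions of the excerpt), so that $\gr(\nabla)$ sends $V_i \mapsto -\tfrac{dT_i}{T_i} \in \Omega_X(-1)$; hence on $(\hat\bL\otimes\mO\bC)(\tilde X_{K,\infty}) = \mM(\tilde X_{K,\infty})[V_1,\dots,V_n]$ the field $\gr\nabla$ acts as $\sum_i \partial_{V_i} \otimes (-\tfrac{dT_i}{T_i})$ on the polynomial variables, together with whatever it does on $\mM$. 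Second, identify $\mH(\bL)(X_K) = M_K(X)$ with $M_K(X)[V_1,\dots,V_n]^{\Ga_{\on{geom}} = 1}$ via the isomorphism of Lemma \ref{L: cohomology-computation} (Lemma \ref{L: elementary} with $V\mapsto 0$), and trace through how an element $w \in M_K(X)$ — which by unipotence is $\Ga_{\on{geom}}$-fixed only after correction — lifts to $\tilde w = \sum_i w_i {V \choose i}$ with $w_{i+1} = \ga^{-1}(w_i - \ga(w_i))$ as in the proof of Lemma \ref{L: elementary}. Third, apply $\gr\nabla$ to this lift $\tilde w$: the operator $\sum_i \partial_{V_i}$ differentiates the ${V\choose i}$ polynomials, and one reads off that the component of $\gr\nabla(\tilde w)$ landing in $M_K(X)$ (after setting $V = 0$ again, which is the isomorphism in the other direction) is exactly $\sum_i (\log\ga_i)(w) \otimes \operatorname{dlog} T_i$ up to the factor $t$, i.e. $\vartheta(w)$ in the normalization $\ga_i = \exp(t\vartheta(T_i\tfrac{d}{dT_i}))$. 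The bookkeeping here is: $\partial_{V_s}$ applied to ${V \choose i}$ and then $V\mapsto 0$ picks out precisely the first-order term, which by the recursion $w_1 = \ga^{-1}(w_0 - \ga(w_0)) = (\ga^{-1} - 1)(w_0)$ equals $-(1-\ga^{-1})(w)$, and $-(1-\ga^{-1})$ is $\log\ga$ to first order on a unipotent operator; collecting these over $s$ gives the Higgs field. Finally, since both $\vartheta_\bL$ and $\vartheta$ are defined over $X_K$ and agree after pullback to $\tilde X_{K,\infty}$, faithful flatness of $A_K \to \hat A_{K,\infty}$ forces $\vartheta_\bL = \vartheta$ on $M_K(X)$; nilpotence then follows from Proposition \ref{P: local p-adic Simpson}(ii) since $\vartheta = \tfrac{1}{t}\log\ga_i$ on each factor and $\ga_i$ is unipotent.

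The main obstacle I expect is not conceptual but a matter of pinning down conventions so that the identity is literally an equality and not merely an equality up to a nonzero scalar or a sign: one must be careful about (a) the normalization constant $t$ and the Tate twist $(-1)$ in $\Omega_X(-1)$, matching the $\exp(t\vartheta(\cdot))$ convention of \cite[I.3.3.2]{AGT} against Scholze's $t = \log[\epsilon]$ and the action $\ga_s(V_t) = V_t + \delta_{st}$; (b) the precise form of $\nabla(V_i)$ in \cite[\S 6]{Sch2}, since the connection on $\mO\bB_{\dR}$ is characterized by $\nabla(V_i)$ together with $\nabla = d$ on $\hat\mO_X$; and (c) the compatibility of the $\Gal(K/k)$-equivariance in Lemma \ref{L: cohomology-computation} with the cyclotomic character action on $\Ga_{\on{geom}}$, which enters if one wants the statement to be Galois-equivariant. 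Once these conventions are fixed consistently, the computation is the short first-order expansion sketched above, essentially the same bookkeeping already carried out in the proof of Lemma \ref{L: elementary}.
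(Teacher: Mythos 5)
Your approach is correct in outline but takes a genuinely different, and considerably more laborious, route than the paper's. The paper's proof is a short operator identity: endow $M_K(X)[V_1,\ldots,V_n]$ with the total Higgs field
\[
\Theta=\vartheta\otimes 1 + 1\otimes\sum_i\tfrac{d}{dV_i}\otimes t^{-1}\tfrac{dT_i}{T_i},
\]
characterized by $\ga_i=\exp(t\Theta(T_i\tfrac{d}{dT_i}))$, so that $\mH(\bL)(X_K)=\ker\Theta$ by unipotence. Since $\gr\nabla=-1\otimes t^{-1}\sum_i\tfrac{d}{dV_i}\otimes\tfrac{dT_i}{T_i}$, one has the identity $\Theta=\vartheta\otimes 1-\gr\nabla$, and on $\ker\Theta$ the equality $\vartheta\otimes 1=\gr\nabla$ is immediate; passing to $M_K(X)$ via $V\mapsto 0$ gives $\vartheta=\vartheta_\bL$. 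No element-tracing and no power-series summation is required.

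Your route instead lifts $w\in M_K(X)$ explicitly via the recursion of Lemma~\ref{L: elementary} and evaluates $(\gr\nabla)(\tilde w)|_{V=0}$. This can be made to work, but the claim that $\partial_{V_s}$ followed by $V\mapsto 0$ ``picks out precisely the first-order term'' is not correct: in the binomial basis $\partial_V\binom{V}{j}|_{V=0}=(-1)^{j-1}/j$, so \emph{every} $w_j$ contributes and one must sum the full series $\sum_{j\geq 1}\tfrac{(-1)^{j-1}}{j}w_j$. Using $w_j=(\ga^{-1}-1)^j(w)$ this sum equals $-\log\ga(w)$ (note the sign: $-(1-\ga^{-1})=\ga^{-1}-1$ is $-\log\ga$ to first order, not $+\log\ga$ as you wrote), and it is the minus sign in $\gr\nabla=-t^{-1}\sum_i\partial_{V_i}\otimes\tfrac{dT_i}{T_i}$ that restores the positive $t^{-1}\log\ga_i=\vartheta(T_i\tfrac{d}{dT_i})$ in the end. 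So your final answer $\vartheta_\bL=\vartheta$ is reached correctly, but only after a full $\log$-series summation and a sign cancellation that your sketch does not make explicit. Finally, the faithfully flat base change to $\hat A_{K,\infty}$ invoked at the start is unnecessary: the whole computation takes place inside $M_K(X)[V_1,\ldots,V_n]$, which is already an $A_K$-module, and there is nothing to descend.
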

\begin{proof}
First note that the action of $\Gamma_{\on{geom}}$ on $M_K(X)[V_1,\ldots,V_n]$ is unipotent. Therefore we may endow $M_K(X)[V_1,\ldots,V_n]$ with the Higgs field 
$$\Theta=\vartheta\otimes1+ 1\otimes \sum_i\frac{d}{dV_i}\otimes t^{-1}\frac{dT_i}{T_i}: M_K(X)[V_1,\ldots,V_n]\to M_K(X)[V_1,\ldots, V_n]\otimes \Omega_{X}(-1)$$ 
such that $\ga_i=\exp(t\Theta(T_i\frac{d}{dT_i}))$. Therefore $\nu'_*(\hat\bL\otimes \mO\bC)(X_K)$ is calculated by the Higgs cohomology of 
$(M_K(X)[V_1,\ldots,V_n],\Theta)$. On the other hand, since $V_i=t^{-1}\log\frac{[T_i^\flat]}{T_i}$,
\begin{equation}\label{E: grnabla}
\gr\nabla=-1\otimes t^{-1}\sum_i\frac{d}{dV_i}\otimes \frac{dT_i}{T_i}.
\end{equation} 
Therefore, on the space $\ker \Theta$, $\vartheta=\gr\nabla=\vartheta_\bL$.
\end{proof}

We have finished the proof of Part (i).
Part (ii) is also clear. Namely, the map is induced by the adjunction 
\[{\nu'}^*\nu'_*(\hat\bL\otimes\mO\bC)\to\hat\bL\otimes\mO\bC.\] 
By Proposition \ref{P: local p-adic Simpson}, it becomes an isomorphism after tensoring the source with $\mO\bC$. 

\medskip

Next, we prove Part (iii) of the theorem.  We start with a discussion of maps between period sheaves introduced in \cite[\S 4, \S 6]{Sch2} induced by a morphism of rigid analytic varieties.

Let $f:Z\to X$ be a morphism of smooth rigid analytic varieties over $k$ and let $f_{\on{proet}}:Z_{\on{proet}}\to X_{\on{proet}}$ denote the induced map of pro-\'etale sites\footnote{We use $Z$ instead of $Y$ as in the statement of  the theorem since $Y$ has another meaning according to our previous convention.}. Let $(f^{-1}_{\on{proet}},f_{\on{proet},*})$ denote the pair of adjoint functors (pullback and pushforward) between the corresponding topoi. Note that we reserve the notation $f_{\on{proet}}^*$ for another meaning given in \eqref{E: *pullback}.

Now we construct induced maps between period sheaves. First we have 
$\mO^+_X\to f_{\on{proet},*}\mO^+_Z$
and by completion obtain  
\begin{equation}\label{E: str map 0}
\hat{\mO}^+_X\to f_{\on{proet},*}\hat{\mO}^+_Z,
\end{equation}
 and then by tilting and taking ring of Witt vectors, obtain
$\bA_{\on{inf},X}\to f_{\on{proet},*}\bA_{\on{inf},Z}$. Recall from \cite{Sch2,Sch3} that $\mO\bB^+_{\dR}$ is the sheafification of the presheaf that assigns every affinoid perfectoid $U=\underleftarrow{\lim} U_j\in X_{\on{proet}}$ the direct limit over $j$ of the $\ker\theta$-adic completion of $(\mO^+_X(U_j)\hat\otimes_{W(\kappa)}\bA_{\on{inf}}(U))[\frac{1}{p}]$, where $\kappa$ is the residue field of $k$, the completed tensor product means the $p$-adic completion of the tensor product, and
$$\theta: \mO^+_X(U_j)\hat\otimes_{W(\kappa)}\bA_{\on{inf}}(U)\to \hat\mO^+_X(U_j)$$
is the usual map. Therefore, we have $\mO\bB^+_{\dR,X}\to f_{\on{proet},*}\mO\bB^+_{\dR,Z}$ and
by inverting $t$, 
\begin{equation}\label{E: str map I}
\mO\bB_{\dR,X}\to f_{\on{proet},*}\mO\bB_{\dR,Z},
\end{equation}
which is compatible with filtrations. Finally, taking the associated graded gives
\begin{equation}\label{E: str map II}
 \mO\bC_X\to f_{\on{proet},*}\mO\bC_Z.
\end{equation}

The map \eqref{E: str map 0} (after inverting $p$) allows one to define the pullback functor $f_{\on{proet}}^*$ from the category of $\hat\mO_X$-modules to the category of $\hat\mO_Z$-modules as
\begin{equation}\label{E: *pullback}
f^*_{\on{proet}}\mM:=f^{-1}_{\on{proet}}\mM\otimes_{f_{\on{proet}}^{-1}\hat\mO_X}\hat\mO_Z.
\end{equation}
As usual, $f^*_{\on{proet}}$ is the left adjoint of $f_{\on{proet},*}$, when regarded as a functor from the category $\hat\mO_Z$-modules to the category of $\hat\mO_X$-modules.
The following lemma immediately follows from \cite[Theorem 9.2.15]{KL1}.
\begin{lem}\label{L: pullback}
Assume that $\mM$ is a finite locally free $\hat\mO_X$-module. For every affinoid perfectoid $V\to U$ covering $Z\to X$, i.e. $U$ (resp. $V$)  is an affinoid perfectoid in $X_{\on{proet}}$ (resp. in $Z_{\on{proet}}$), and $V\to U\times_XZ$ is a morphism in $Z_{\on{proet}}$, there is a canonical isomorphism
$$(f^*_{\on{proet}}\mM)(V)= \mM(U)\otimes_{\hat\mO_X(U)}\hat\mO_Z(V).$$ 
\end{lem}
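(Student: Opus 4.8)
The plan is to show that $f^{*}_{\on{proet}}\mM$ can be computed ``naively'' on affinoid perfectoids, by reducing to the case of a free module via an idempotent and then invoking \cite[Theorem 9.2.15]{KL1} exactly as in the proof of Proposition \ref{P: coherent-vanishing}. First I would exhibit the canonical map to be inverted. From the morphism $V\to U\times_X Z$ and functoriality of inverse images one gets a homomorphism $\mM(U)\to\Gamma(V,f^{-1}_{\on{proet}}\mM)$; composing with the canonical $f^{-1}_{\on{proet}}\mM\to f^{-1}_{\on{proet}}\mM\otimes_{f^{-1}_{\on{proet}}\hat\mO_X}\hat\mO_Z=f^{*}_{\on{proet}}\mM$ (see \eqref{E: *pullback}) gives $\mM(U)\to\Gamma(V,f^{*}_{\on{proet}}\mM)$. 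This is semilinear over the ring map $\hat\mO_X(U)\to\hat\mO_Z(V)$ induced by \eqref{E: str map 0} together with $V\to U\times_X Z$, hence extends to a natural map $\mM(U)\otimes_{\hat\mO_X(U)}\hat\mO_Z(V)\to\Gamma(V,f^{*}_{\on{proet}}\mM)$, and it is this map that I claim is an isomorphism.

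To prove it I would restrict to slice sites: writing $U_Z:=U\times_X Z\in Z_{\on{proet}}$, the functor $f_{\on{proet}}$ induces $f_U\colon Z_{\on{proet}}/U_Z\to X_{\on{proet}}/U$, and since $f^{-1}_{\on{proet}}$, the tensor product of sheaves of modules, and sheafification all commute with restriction to slice topoi, one has $(f^{*}_{\on{proet}}\mM)|_{U_Z}\simeq f_U^{*}(\mM|_U)$. Now by \cite[Theorem 9.2.15]{KL1}, $M:=\mM(U)$ is a finite projective $\hat\mO_X(U)$-module with $\mM(W)\simeq M\otimes_{\hat\mO_X(U)}\hat\mO_X(W)$ functorially for all $W\to U$; since affinoid perfectoids form a basis of $X_{\on{proet}}$, this says $\mM|_U\simeq M\otimes_{\hat\mO_X(U)}(\hat\mO_X|_U)$ as sheaves. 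Choosing an idempotent $e\in M_n(\hat\mO_X(U))$ with $M=e\cdot\hat\mO_X(U)^{\oplus n}$ exhibits $\mM|_U$ as the image of the idempotent $e$ on $(\hat\mO_X|_U)^{\oplus n}$, i.e.\ as a direct summand in the category of sheaves of $\hat\mO_X|_U$-modules.

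Applying $f_U^{*}$ then involves no sheafification: the presheaf tensor product $f_U^{-1}(\hat\mO_X|_U)^{\oplus n}\otimes_{f_U^{-1}\hat\mO_X|_U}\hat\mO_Z|_{U_Z}$ is already the sheaf $(\hat\mO_Z|_{U_Z})^{\oplus n}$, and $f_U^{*}$ is additive, so it carries the splitting $\mM|_U\oplus\mathcal N=(\hat\mO_X|_U)^{\oplus n}$ to a splitting of $(\hat\mO_Z|_{U_Z})^{\oplus n}$ with $f_U^{*}(\mM|_U)$ the image of the idempotent $\bar e\in M_n(\hat\mO_Z(U_Z))$ obtained from $e$ via \eqref{E: str map 0}; in particular $f_U^{*}(\mM|_U)=\ker(1-\bar e)$ is an honest sheaf. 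Evaluating at the affinoid perfectoid $V$ and letting $e_V\in M_n(\hat\mO_Z(V))$ be the image of $\bar e$, we get $\Gamma(V,f_U^{*}(\mM|_U))=\ker(1-e_V)=e_V\cdot\hat\mO_Z(V)^{\oplus n}$; since $\hat\mO_X(U)\to\hat\mO_Z(V)$ sends $e\mapsto e_V$, this equals $M\otimes_{\hat\mO_X(U)}\hat\mO_Z(V)=\mM(U)\otimes_{\hat\mO_X(U)}\hat\mO_Z(V)$, compatibly with the comparison map above, and functoriality in $V$ is immediate. The only point needing care is the bookkeeping that forming $f^{*}_{\on{proet}}$ of a finite locally free module requires no sheafification — precisely what the reduction to a direct summand of a free sheaf achieves — so I expect no genuine obstacle beyond this.
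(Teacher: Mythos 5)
Your proof is correct and uses the same key input as the paper, namely \cite[Theorem 9.2.15]{KL1}; the paper simply asserts that the lemma "immediately follows" from that theorem, and your idempotent/slice-topos argument spells out exactly the bookkeeping that makes this immediate.
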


Now let $\bL$ be a $\bQ_p$-local system on $X_{\on{et}}$. Note that there is a canonical isomorphism
\[f_{\on{proet}}^*(\hat\bL\otimes\hat\mO_X)\simeq \widehat{f^*\bL}\otimes\hat\mO_Z.\]
In addition, since $\hat\bL$ is locally trivial on $X_{\on{proet}}$, \eqref{E: str map II} induces a map $
\hat\bL\otimes \mO\bC_X\to f_{\on{proet},*}(\widehat{f^*\bL}\otimes \mO\bC_Z)$,
and therefore the adjunction map
\begin{equation}\label{E: str map III}
f_{\on{proet}}^*(\hat\bL\otimes \mO\bC_X)\to \widehat{f^*\bL}\otimes \mO\bC_Z.
\end{equation}
On the other hand, there is a natural adjunction
\begin{equation}\label{E: str map IV}
\nu'_{X,*}(\hat\bL\otimes\mO\bC_X)\to \nu'_{X,*}f_{\on{proet},*}f^{*}_{\on{proet}}(\hat\bL\otimes\mO\bC_X)=  f_*\nu'_{Z,*}f_{\on{proet}}^{*}(\hat\bL\otimes \mO\bC_X).
\end{equation}
Composing them and by adjunction, we obtain the sought-after map
\begin{equation}\label{E: map1}
f^*\mH(\bL)=f^*\nu'_{X,*}(\hat\bL\otimes\mO\bC_X)\to \nu'_{Z,*}(\widehat{f^*\bL}\otimes \mO\bC_Z)=\mH(f^*\bL).
\end{equation}
It remains to show that it is an isomorphism.

The question is local (in \'etale topology) on the source and target so we can assume that both $X=\Spa(A, A^+)$ and $Z=\Spa(C, C^+)$ are affinoid spaces admitting toric charts, and $\bL$ is a $\bZ_p$-local system.
By writing a map of affinoid algebras $A\to C$ as the composition $A\to A\langle x_1,\ldots x_n\rangle\twoheadrightarrow C$, we may assume that $f:Z\to X$ is either a closed embedding or a smooth projection. In either case, we can (after further localization on the source and target) arrange their toric charts to fit into the following (not necessarily Cartesian) commutative diagram
\begin{equation}\label{E: compatible chart}
\begin{CD}
Z@>>> \bT^m\\
@VVV@VVV\\
X@>>> \bT^n,
\end{CD}
\end{equation}
where in the case of closed embedding, $\bT^m\to \bT^n$ is the embedding of the subtorus given by $T_{m+1}=\cdots=T_n=1$ and in the case of smooth projection $\bT^m\to\bT^n$ is the projection given by $T_i\mapsto T_i, 1\leq i\leq n$. In either case, we fix a map $\tilde\bT^m_\infty \to \tilde\bT^n_\infty$ lifting $\bT^m\to \bT^n$. Note that this naturally gives rise to a map $\Ga_Z\to \Ga_X$, where $\Ga_X$ and $\Ga_Z$ are Galois groups for the towers $\tilde{X}_{K,\infty}/X$ and $\tilde{Z}_{K,\infty}/Z$ respectively, and a map $\tilde Z_{K,\infty}\to Z\times_X\tilde X_{K,\infty}$.

Let us write $\mM=\hat\bL\otimes\hat\mO_X$ and $\mN=\widehat{f^*\bL}\otimes\hat\mO_Z$ so $\mN\simeq f^*_{\on{proet}}\mM$. Evaluating it at $\tilde Z_{K,\infty}$ and by Lemma \ref{L: pullback}, we have
\begin{equation}\label{E: Ohat-isomo}
\mM(\tilde X_{K,\infty})\otimes_{\hat A_{K,\infty}}\hat C_{K,\infty}\simeq \mN(\tilde Z_{K,\infty}).
\end{equation}
In addition, under our choice of toric charts and under the isomorphism \eqref{E: isom},
the adjoint of the map \eqref{E: str map III}, evaluated at $\tilde Z_{K,\infty}$ is given by
\[(\mM(\tilde X_{K,\infty})\otimes_{\hat A_{K,\infty}}\hat C_{K,\infty})[V_1,\ldots, V_n]\to \mN(\tilde Z_{K,\infty})[V_1,\ldots, V_m]\]
where $V_i\mapsto V_i$ for $i\leq n$, and $V_i\mapsto 0$ for $i>n$ if $Z\to X$ is a closed embedding. 

Now, it is straightforward to see that the evaluation of  (\ref{E: map1}) at $Z_K$ is the same as the composition 
\begin{equation}\label{E: cohomology-isom}
\begin{split}
H^0(\Ga_{X,\on{geom}}, \mM(\tilde X_{K,\infty})[V_1,\ldots, V_n])\otimes_{A_K}C_K&\to H^0(\Ga_{Z,\on{geom}},(\mM(\tilde X_{K,\infty})\otimes_{A_K} C_K)[V_1,\ldots,V_n]) \\
&\to  H^0(\Ga_{Z,\on{geom}}, (\mM(\tilde X_{K,\infty})\otimes_{\hat A_{K,\infty}}\hat C_{K,\infty})[V_1,\ldots, V_n])\\
&\to H^0(\Ga_{Z,\on{geom}}, \mN(\tilde Z_{K,\infty})[V_1,\ldots, V_m]).
\end{split}
\end{equation}
To see that (\ref{E: cohomology-isom}) is an isomorphism, let 
\[
M_K(X)\subset \mM(\tilde X_{K,\infty})\quad
\text{and}\quad N_K(Z)\subset \mN(\tilde Z_{K,\infty})
\] 
be as in Proposition \ref{P: local p-adic Simpson} for the local systems $\bL$ and $f^*\bL$ respectively.  It  follows  that
$$(M_K(X)\otimes_{A_K}C_K)\otimes_{C_K}\hat{C}_{K,\infty}=(M_K(X)\otimes_{A_K}\hat{A}_{K,\infty})\otimes_{\hat{A}_{K,\infty}}\hat{C}_{K,\infty}\simeq\mM(\tilde X_{K,\infty})\otimes_{\hat{A}_{K,\infty}}\hat{C}_{K,\infty}\simeq\mN(\tilde Z_{K,\infty}).$$
In addition, $\Ga_{\on{geom},Z}$ acts on $M_K(X)\otimes_{A_K}C_K$ through $\Ga_{\on{geom},Z}\to \Ga_{\on{geom},X}$ and therefore acts unipotently. By Proposition \ref{P: local p-adic Simpson}, we deduce that (\ref{E: Ohat-isomo}) induces an isomorphism $$M_K(X)\otimes_{A_K}C_K\to N_K(Z).$$ 
Together with Lemma \ref{L: cohomology-computation} (and its proof), it implies that (\ref{E: cohomology-isom}) can be identified with
\[
\begin{split}
H^0(\Ga_{X,\on{geom}}, M_K(X)[V_1,\ldots, V_n])\otimes_{A_K}C_K&\to H^0(\Ga_{Z,\on{geom}},(M_K(X)\otimes_{A_K} C_K)[V_1,\ldots,V_n]) \\
&\to H^0(\Ga_{Z,\on{geom}}, N_K(Z)[V_1,\ldots, V_m]),
\end{split}
\]
which is an isomorphism (again by Lemma \ref{L: cohomology-computation}). This proves (iii).

\medskip

Next, we prove Part (iv). The first statement follows from
\begin{equation}\label{E: unit pushforward}
\nu'_*\mO\bC\simeq\mO_{X_K},
\end{equation}
as can be easily seen from the above argument. 

Using Part (ii), we have
\[
{\nu'}^*(\mH(\bL_1)\otimes\mH(\bL_2))\otimes_{\mO_{X_K}}\mO\bC  \simeq {\nu'}^*\mH(\bL_1)\otimes_{\mO_{X_K}} \widehat{\bL_2}\otimes\mO\bC
\simeq \widehat{\bL_1\otimes\bL_2}\otimes\mO\bC  \simeq {\nu'}^*\mH(\bL_1\otimes\bL_2)\otimes \mO\bC.
\]
Pushing forward along $\nu'_*$ and using \eqref{E: unit pushforward}, we conclude \eqref{E: tensor Higgs}.

To prove \eqref{E: dual Higgs}, applying \eqref{E: tensor Higgs} to $\bL_1=\bL$ and $\bL_2=\bL^\vee$, we obtain a canonical map 
\begin{equation}\label{E: dual Higgs2}
\mH(\bL^\vee)\to \mH(\bL)^\vee.
\end{equation} 
It remains to prove that this is an isomorphism. It follows from the construction that this map \eqref{E: dual Higgs2} is compatible with the isomorphism in Part (iii). Therefore, to prove that it is an isomorphism, it is enough to assume that $X$ is a point, in which case this is clear since the map is injective (see Remark \ref{R: point p-adic Simpson}).

\begin{rmk}\label{L: external product}
By virtue of Part (iii), \eqref{E: tensor Higgs} is equivalent to the following K\"unneth type formula: 
Let $\bL_i$ be a local system on $X_i$ for $i=1,2$. Then there is a canonical isomorphism 
$$\mH(\bL_1\boxtimes\bL_2)\simeq\mH(\bL_1)\boxtimes\mH(\bL_2),$$
compatible with the Higgs fields on both sides.
Here, as usual $\bL_1\boxtimes\bL_2$ (resp. $\mH(\bL_1)\boxtimes\mH(\bL_2)$) denotes the external tensor product of local systems (resp. vector bundles) on $X_1\times_kX_2$ (resp. on $(X_1)_K\times_K(X_2)_K$). 
\end{rmk}

\medskip

Finally, we prove Part (v). We only give a sketch since granting Part (i) the other ingredients are already in \cite[\S 8]{Sch2}. First, we have an acyclic complex on $X_{\on{proet}}$
\[0\to f^{*}_{\on{proet}}\mO\bC_Y\to \mO\bC_X\stackrel{\gr(\nabla)}{\to} \mO\bC_X\otimes\Omega_{X/Y}^1(-1)\to \mO\bC_X\otimes \Omega_{X/Y}^2(-2)\to\cdots.\]
This can be deduced using \cite[Proposition 8.5]{Sch2} or a direct computation using the charts \eqref{E: compatible chart} and Formula \eqref{E: grnabla}.

Let us tensor this complex with $\hat\bL$ and push it forward to $(Y_K)_{\on{et}}$ in two ways appearing in the following commutative diagram 
\[\begin{CD}
(X_K)_{\on{proet}}@>f_{\on{proet}}>>(Y_K)_{\on{proet}}\\
@V\nu'_X VV@VV\nu'_{Y}V\\
(X_K)_{\on{et}}@>f_{\on{et}}>>(Y_K)_{\on{et}}.
\end{CD}\]
On the one hand, by Part (i), we have the quasi-isomorphism
\begin{equation}\label{E: Hp}
Rf_{\on{et},*}(\mH(\bL)\otimes\Omega^\bullet_{X/Y},\bar{\vartheta}_\bL)=Rf_{\on{et},*} R\nu'_{X,*}(\hat\bL\otimes\mO\bC_X\stackrel{\gr(\nabla)}{\to}\hat\bL\otimes\mO\bC_X\otimes\Omega_{X/Y}(-1)\to\cdots ).
\end{equation}

On the other hand, we have the following lemma.
\begin{lem}
Let $f: X\to Y$ be a smooth proper morphism of smooth rigid analytic varieties. Let $\mM$ be a locally free $\mO_X$-module on $X_{\on{proet}}$. There is a canonical isomorphism
\[ Rf_{\on{proet},*}\mM\otimes_{\mO_Y}\mO\bC_Y\simeq Rf_{\on{proet},*}(\mM\otimes_{\mO_X} f^*_{\on{proet}}\mO\bC_Y).\]
\end{lem}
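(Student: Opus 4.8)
The plan is to reduce the statement to a projection-formula computation for $\mO\bC_Y$ and its pullback, using the description of these period sheaves via the relative toric towers. First I would note that the claim is local on $Y$ in the \'etale topology, so I may assume $Y=\Spa(A,A^+)$ admits a toric chart and, after shrinking, that $f: X\to Y$ factors through a compatible toric chart as in \eqref{E: compatible chart}; this is where I use that $f$ is smooth proper to run the usual d\'evissage and finiteness arguments \`a la \cite[\S 8]{Sch2}. The key input is that over $X_{\on{proet}}/\tilde{Y}_{K,\infty}$ there is the identification $\mO\bC_Y\simeq \hat\mO_{Y_K}[V_1,\dots,V_n]$ from \eqref{E: isom}, so that $f^*_{\on{proet}}\mO\bC_Y$ is a polynomial algebra over $\hat\mO_X$ on the same variables, pulled back via the map $\Ga_Z\to\Ga_Y$ on Galois groups of towers.

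The main computation is then the following: over the pro-\'etale site, by the projection formula for the flat (in fact finite free, locally) morphism $\hat\mO_Y\to f_{\on{proet},*}\hat\mO_X$ — more precisely, because $\mM$ is locally free and $\mO\bC_Y$ is a filtered colimit of finite locally free $\hat\mO_Y$-modules $\on{Sym}^{\le N}_{\hat\mO_Y}\mE_Y$ — one gets
\begin{equation*}
Rf_{\on{proet},*}\mM\otimes_{\hat\mO_Y}\on{Sym}^{\le N}_{\hat\mO_Y}\mE_Y\simeq Rf_{\on{proet},*}(\mM\otimes_{\hat\mO_X} f^*_{\on{proet}}\on{Sym}^{\le N}_{\hat\mO_Y}\mE_Y),
\end{equation*}
functorially in $N$; here I use Proposition \ref{P: coherent-vanishing} (vanishing of higher cohomology of finite locally free $\hat\mO_X$-modules on affinoid perfectoids) to reduce both sides to an honest \v{C}ech computation on a perfectoid cover of $X$, where the projection formula for modules over a ring is immediate. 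Passing to the colimit over $N$ is harmless because $X_{\on{proet}}$ is coherent (\cite[Proposition 3.12(vii)]{Sch2}), so cohomology and $Rf_{\on{proet},*}$ commute with filtered colimits of abelian sheaves, exactly as in the proof of Corollary \ref{C: vanishing for OB}; this yields the desired isomorphism with $\on{Sym}^{\le N}$ replaced by $\mO\bC_Y$.

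The one genuinely delicate point is checking that the natural map being an isomorphism is \emph{canonical} and compatible with the further structures one needs downstream — namely the connection/Higgs field and, ultimately, the $\Gal(K/k)$-action — rather than merely abstractly existing. To pin it down I would construct the map directly: the adjunction $\mO\bC_Y\to f_{\on{proet},*}f^*_{\on{proet}}\mO\bC_Y$ together with the projection $f_{\on{proet},*}f^*_{\on{proet}}\mM\otimes(\cdots)$ gives $Rf_{\on{proet},*}\mM\otimes_{\mO_Y}\mO\bC_Y\to Rf_{\on{proet},*}(\mM\otimes f^*_{\on{proet}}\mO\bC_Y)$, and then I verify it is an isomorphism stalk-wise on affinoid perfectoids as above. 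The main obstacle is thus organizing the d\'evissage so that the (a priori non-finite) sheaf $\mO\bC_Y$ does not obstruct the cohomological base change/finiteness statements implicit in ``$Rf_{\on{proet},*}\mM$'' — handled by filtering by $\on{Sym}^{\le N}$ and invoking coherence — while keeping track of the compatible toric charts \eqref{E: compatible chart} that make the identification \eqref{E: isom} available simultaneously on $X$ and $Y$.
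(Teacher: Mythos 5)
Your proposal is essentially correct and is, in substance, the same argument the paper invokes: the paper's proof simply points at \cite[Lemma 8.6]{Sch2} and says one may replace $\mO\bC_X$ by $f^{*}_{\on{proet}}\mO\bC_Y$ in it, and the mechanism of Scholze's Lemma 8.6 is exactly the d\'evissage you describe --- write $\mO\bC_Y=\underrightarrow\lim_N \on{Sym}^N_{\hat\mO_Y}\mE_Y$, invoke the projection formula for each finite locally free stage, and commute $Rf_{\on{proet},*}$ with the filtered colimit using coherence of the pro-\'etale site of a quasi-compact space (with $X$ quasi-compact because $f$ is proper and we have localized on $Y$).

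A few points in your write-up are either overcomplicated or slightly off, though none is a genuine gap. You do not need compatible toric charts or the identification \eqref{E: isom} at all: the exhaustion of $\mO\bC_Y$ by the finite locally free subsheaves $\on{Sym}^{\le N}_{\hat\mO_Y}\mE_Y$ is canonical and global, so there is nothing to arrange locally, and trying to put a toric chart on $X$ is in any case impossible since $X$ is proper over $Y$, hence not affinoid. Likewise, the projection formula for a finite locally free $\hat\mO_Y$-module needs neither Proposition \ref{P: coherent-vanishing} nor a \v Cech computation on a perfectoid cover --- it is automatic in any ringed topos: the natural map is local on $Y$, and on opens where $\on{Sym}^{\le N}\mE_Y$ is free both sides are finite direct sums of copies of $Rf_{\on{proet},*}\mM$. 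The phrase ``the flat (in fact finite free, locally) morphism $\hat\mO_Y\to f_{\on{proet},*}\hat\mO_X$'' is not correct and not what you use; what matters is only that $\mO\bC_Y$ is a filtered colimit of finite locally free $\hat\mO_Y$-modules. Finally, the Galois-group notation should be $\Ga_X\to\Ga_Y$ for the morphism $f:X\to Y$, not $\Ga_Z\to\Ga_Y$, though this is cosmetic.
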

\begin{proof}By examining \cite[Lemma 8.6]{Sch2}, one finds that one can replace $\mO\bC_X$ by $f^*_{\on{proet}}\mO\bC_Y$ in the argument.
\end{proof}
In particular, since we assume that all $R^if_*\bL$ are local systems,
\begin{equation}\label{E: ai}
\nu'_{Y,*}(\widehat{Rf_*\bL}\otimes\mO\bC_Y)=R\nu'_{Y,*}Rf_{\on{proet},*}(\hat\bL\otimes f^*_{\on{proet}}\mO\bC_Y).
\end{equation}
Putting \eqref{E: Hp} and \eqref{E: ai} together, Part (v) follows.

\subsection{Proof of Proposition \ref{P: local p-adic Simpson}}
To prove Proposition \ref{P: local p-adic Simpson}, we will employ some key technical ingredients developed in \cite{KL2}. 
 By the main results of \cite[\S7]{KL2}, both toric towers and relative toric towers are \emph{locally decompleting} in the sense of \cite[Definition 5.6.2]{KL2}. Roughly speaking, this means that we have an analog of the classical theorem of Cherbonnier-Colmez for $(\varphi,\Gamma)$-modules arising from those towers. More precisely, we have the following result.

\begin{lem}\label{L: sen-tate-decompletion}
For sufficiently large $m$, there exists a finite projective $B_m$-submodule $M_m(Y)$ of $\mM(\tilde{Y}_\infty)$, which is stable under $\Gamma$,   such that 
\[
M_m(Y)\otimes_{B_m}\hat{B}_\infty=\mM(\tilde{Y}_\infty).
\]
Moreover, 
for $i\geq0$,  the natural maps
\[
H^i(\Ga_{\on{geom}}, M_{m}(Y)\otimes_{B_{k_m}}B_K)
\to H^i(\Gamma_{\on{geom}}, \mM(\tilde{Y}_{K,\infty})) 
\]
are isomorphisms.
In addition, the construction of $M_m(Y)$ is compatible with  base change along standard \'etale morphisms.
\end{lem}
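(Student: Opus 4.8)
The plan is to prove Lemma~\ref{L: sen-tate-decompletion} by invoking the machinery of \cite{KL2} for locally decompleting towers, and then bootstrapping from the geometric level to the arithmetic level. First I would recall that, by \cite[\S7]{KL2}, both the relative toric tower \eqref{E: relative-toric-tower} and the toric tower \eqref{E: toric-tower} are locally decompleting in the sense of \cite[Definition 5.6.2]{KL2}. The sheaf $\mM=\hat\bL\otimes\hat\mO_X$ is, by definition of a $\bZ_p$-local system, locally on $X_{\on{proet}}$ a finite free $\hat\mO_X$-module; concretely, $\mM(\tilde Y_\infty)$ is a finite projective $\hat B_\infty$-module carrying a continuous semi-linear action of $\Ga$. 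So $\mM(\tilde Y_\infty)$ is exactly the kind of object to which the decompletion results of \cite{KL2} apply: the relevant statement is that there is a descent datum to the ``decompleted'' level $B_m$ (the $m$-th layer of the tower, before $p$-adic completion), i.e. a finite projective $B_m$-submodule $M_m(Y)\subset\mM(\tilde Y_\infty)$, stable under $\Gamma$, with $M_m(Y)\otimes_{B_m}\hat B_\infty=\mM(\tilde Y_\infty)$, for all sufficiently large $m$. This is the analog of Cherbonnier--Colmez, and uniqueness of $M_m(Y)$ (once $m$ is large enough, up to enlarging $m$) is part of the package, which is what makes the construction functorial; compatibility with base change along a standard \'etale map $Z\to Y$ follows because such a map induces a compatible morphism of towers, and one checks $M_m(Y)\otimes_{B_m}C_m\to M_m(Z)$ is an isomorphism by faithfully flat base change of the decompletion isomorphism.

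The cohomological comparison is the second ingredient. Here I would use the ``decompletion of cohomology'' results of \cite[\S5,\S7]{KL2}: for a locally decompleting tower, the continuous cohomology of the geometric Galois group $\Ga_{\on{geom}}$ acting on the completed module agrees with that on the decompleted module. Concretely, one wants
\[
H^i(\Ga_{\on{geom}}, M_m(Y)\otimes_{B_{k_m}}B_K)\xrightarrow{\ \sim\ } H^i(\Ga_{\on{geom}}, \mM(\tilde Y_{K,\infty})),\quad i\ge 0.
\]
Note the subtlety: $M_m(Y)$ is defined over $B_m$ (using the relative toric tower over $k$), but we want to compare $\Ga_{\on{geom}}$-cohomology computed over the perfectoid base $K$; so we first base change $M_m(Y)$ from $B_{k_m}$ to $B_K$, and then $M_m(Y)\otimes_{B_{k_m}}B_K\otimes_{B_K}\hat B_{K,\infty}=\mM(\tilde Y_{K,\infty})$, which puts us in the situation of the toric tower \eqref{E: toric-tower} over $K$. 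The vanishing $H^i(\Ga_{\on{geom}},\hat B_{K,\infty}/M_m(Y)\otimes B_K)=0$ for the appropriate quotient, or rather the fact that the inclusion induces an isomorphism on cohomology, is precisely what the locally decompleting hypothesis buys us (cf. the proof of analogous statements in \cite[Theorem 5.4.5, \S7.3]{KL2}); one applies it after choosing a presentation of $\mM(\tilde Y_\infty)$ that trivializes it over a pro-\'etale cover, and uses that $X_{\on{proet}}$ is coherent so that the relevant almost-vanishing passes to the limit.

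The main obstacle I expect is bookkeeping rather than conceptual depth: one must be careful about \emph{which} tower and \emph{which} base field is in play at each stage. The module $M_m(Y)$ lives at the finite layer $B_m = B\otimes_{\bT^n}\bT^n_m$ coming from the relative toric tower over $k$ (so it is a module over $B_{k_m}$, with a $\Ga$-action whose $\Gal(K/k)$-part acts through the cyclotomic character on $\Ga_{\on{geom}}$), whereas the completed module $\mM(\tilde Y_{K,\infty})$ naturally sits over the toric tower over $K$; reconciling these requires the observation that the toric tower over $K$ is obtained from the relative toric tower by base change $k\to K$, so the decompletion of \cite{KL2} applied to the former is compatible with that applied to the latter. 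The other delicate point is establishing uniqueness of $M_m(Y)$ (needed to get functoriality, hence (P1) in Proposition~\ref{P: local p-adic Simpson}): this should follow from the decompletion statement together with the fact that $B_m\to\hat B_\infty$ is, after inverting $p$, faithfully flat in a suitable sense, or more precisely from the fact that any $\Gamma$-stable finite projective $B_m$-lattice with the stated property is forced to be the ``overconvergent'' one, which is a standard consequence of the Cherbonnier--Colmez type estimates in \cite[\S5]{KL2}. Once these identifications are pinned down, the statement of the lemma is a direct transcription of the relevant theorems of \cite{KL2}.
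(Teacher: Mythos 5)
Your overall plan — lean on the ``locally decompleting'' machinery of [KL2, \S 7] for the two towers, then compare cohomology — is the same strategy the paper uses. But the proposal treats the construction of $M_m(Y)$ and the cohomology comparison as black boxes, and in doing so it skips the actual mechanism and makes one claim that the paper explicitly contradicts.

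The concrete gap in the construction: the decompletion results of [KL2] are formulated for $(\varphi,\Gamma)$-modules over period rings, not directly for modules over $B_m$. Saying ``this is the analog of Cherbonnier--Colmez'' gives you an \'etale $(\varphi,\Gamma)$-module $M_\psi$ (resp.\ $M_{\psi'}$) over the imperfect ring $\mathbf{C}_\psi$ (resp.\ $\mathbf{C}_{\psi'}$), but not yet the $B_m$-submodule $M_m(Y)\subset\mM(\tilde Y_\infty)$ the lemma asserts. The paper does real work here: it restricts to $\varphi$-bundles $M_\psi^{[s,r]}$, applies $\varphi^{-m}$ to bring the interval to $[p^ms,p^mr]\ni 1$, then specializes along $\theta:\tilde{\mathbf{C}}^{[1,1]}_\psi\to\hat B_\infty$, uses the fact that the image of $\varphi^{-m}(\mathbf{C}_\psi^{[s,r]})$ is exactly $B_m$ and that the specialization squares are co-Cartesian, and only then sets $M_m(Y)=\varphi^{-m}(M_\psi^{[s,r]})\otimes_{\varphi^{-m}(\mathbf{C}_\psi^{[s,r]})}B_m$. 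The cohomology comparison likewise proceeds by citing [KL2, Corollary 5.6.5] for strict exactness of the cochain complex computing $H_{\mathrm{an}}^i(\Ga_{\on{geom}},\tilde M_{\psi'}^{[s,r]}/M_{\psi'}^{[s,r]})$, transporting via $\varphi^{-m}$ and $\theta$, and then invoking [KL2, Theorem 1.3.8] to pass from analytic to continuous cohomology; the paragraph about ``choosing a presentation over a pro-\'etale cover and using coherence of $X_{\on{proet}}$'' is not part of this argument and doesn't obviously substitute for it.

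The incorrect claim: you assert that uniqueness of $M_m(Y)$ is ``part of the package'' and is ``what makes the construction functorial.'' The paper explicitly remarks, right after the lemma, that $M_m(Y)$ is \emph{not} unique (one may always replace it by $M_m(Y)\otimes_{B_m}B_{m'}$). Base-change compatibility is therefore not extracted from uniqueness: for rational localizations it is built into the definition of a locally decompleting tower, and for finite \'etale morphisms it is [KL2, Corollary 5.6.7]. The object that \emph{is} unique, and whose uniqueness is actually needed for Proposition~\ref{P: local p-adic Simpson}(P1), is the unipotent piece $M_K(Y)$, which is extracted later by the eigenspace decomposition under $\Gamma_{\on{geom}}$; you are conflating the two. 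So while the route is aligned in spirit, you would need to replace the appeal to uniqueness by the direct functoriality statements in [KL2], and flesh out the $\varphi$-bundle/$\theta$-specialization step to actually produce $M_m(Y)$ over $B_m$.
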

Note that the module $M_m(Y)$ in the lemma is not unique. E. g. we may replace $M_m(Y)$ by $M_{m'}(Y)=M_m(Y)\otimes_{B_m}B_{m'}$ for any $m'\geq m$.
\begin{proof}
Let $\psi, \psi'$ denote the base changes of the relative toric tower (\ref{E: relative-toric-tower}) and toric tower (\ref{E: toric-tower}) along the standard \'etale morphism $f: Y\to X$ respectively. By the main results of \cite[\S7]{KL2}, we have that $\psi$ and $\psi'$ are locally decompleting. In particular, one may descend the \'etale $(\varphi,\Gamma)$-module $\tilde{M}_\psi$ (resp. $\tilde{M}_{\psi'}$) over the perfect period ring $\tilde{\textbf{C}}_\psi$ (resp. $\tilde{\textbf{C}}_{\psi'}$) associated to $f^*\hat{\bL}$ (resp. the pullback of $f^*\hat{\bL}$ to the pro-\'etale site of $Y_K$) to an \'etale $(\varphi, \Gamma)$-module $M_\psi$ (resp. $M_{\psi'}$) over the imperfect period ring $\textbf{C}_\psi$ (resp. $\textbf{C}_{\psi'}$)\footnote{In relative $p$-adic Hodge theory, the $\tilde{\textbf{C}}$ and $\textbf{C}$ types of rings refer to relative version of the perfect and imperfect Robba rings in classical $p$-adic Hodge theory respectively.}. 
By their constructions, $\tilde{M}_{\psi'}$ (resp. $M_{\psi'}$) is isomorphic to the base change of $\tilde{M}_\psi$ (resp. $M_{\psi}$) to $\tilde{\textbf{C}}_{\psi'}$ (resp. $\textbf{C}_{\psi'}$).

Moreover, by \cite[Corollary 5.6.5]{KL2},  there exists some $r_0>0$ such that for $0<s\leq r\leq r_0$ and $i\geq0$, the cochain complex computing 
the analytic cohomology group (\cite[Definition 1.3.7]{KL2}) 
\[
H_{\mathrm{an}}^i(\Ga_{\on{geom}}, \tilde{M}_{\psi'}^{[s, r]}/M_{\psi'}^{[s, r]})
\]
is strict exact, where  $\tilde{M}_{\psi'}^{[s, r]}$ and $M_{\psi'}^{[s, r]}$ are associated $\varphi$-bundles over $\tilde{\textbf{C}}^{[s, r]}_{\psi'}$ and $\textbf{C}^{[s, r]}_{\psi'}$ respectively. We fix such an interval $[s, r]$ with $0<s\leq r/p$, and choose some nonnegative integer $m$ so that $p^ms\leq 1\leq p^mr$. Since $\varphi^{-m}(\tilde{M}_{\psi'}^{[s, r]}))=\tilde{M}_{\psi'}^{[p^ms, p^mr]}$, it follows that the cochain complex computing 
\[
H_{\mathrm{an}}^i(\Ga_{\on{geom}}, \tilde{M}_{\psi'}^{[p^ms, p^mr]}/\varphi^{-m}(M_{\psi'}^{[s, r]}))
\]
is strict exact. 

To apply the above results to our context, consider the composition  
\[
\tilde{\textbf{C}}_{\psi'}^{[p^ms, p^mr]}\hookrightarrow\tilde{\textbf{C}}_{\psi'}^{[1,1]}\stackrel{\theta}\to \hat B_{K,\infty}
\quad(\text{resp. $\tilde{\textbf{C}}_\psi^{[p^ms, p^mr]}\hookrightarrow\tilde{\textbf{C}}_\psi^{[1,1]}\stackrel{\theta}\to \hat B_\infty $}).
\] 
By the construction given in \cite{KL2}, the image of $\varphi^{-m}(\textbf{C}_{\psi'}^{[s, r]})$ (resp. $\varphi^{-m}(\textbf{C}_{\psi}^{[s, r]})$) in $\hat{B}_{K,\infty}$  (resp. $\hat{B}_{\infty}$) is $B_{K,m}$ (resp. $B_{m}$), and the base change of $\tilde{M}_{\psi'}^{[p^ms, p^mr]}$ (resp. $\tilde{M}_\psi^{[p^ms, p^mr]}$) to $\hat{B}_{K,\infty}$ (resp. $\hat{B}_\infty$) via $\theta$
is isomorphic to  $(\bL\otimes \hat\mO_Y)(\tilde{Y}_{K,\infty})$ (resp. $(\bL\otimes \hat\mO_Y)(\tilde{Y}_\infty)$).  Moreover, the commutative diagrams
\begin{equation}\label{E: diagram}
\xymatrix{
\varphi^{-m}(\textbf{C}_{\psi'}^{[s, r]})\ar[d]\ar[r]& B_{K, m}\ar[d]\\
\tilde{\textbf{C}}_{\psi'}^{[p^ms, p^mr]}\ar[r]& \hat{B}_{K,\infty},
}
\qquad
\xymatrix{
\varphi^{-m}(\textbf{C}_{\psi}^{[s, r]})\ar[d]\ar[r]& B_m\ar[d]\\
\tilde{\textbf{C}}_{\psi}^{[p^ms, p^mr]}\ar[r]& \hat{B}_\infty
}
\end{equation}
are co-Cartesian.  We set 
\[
M_m(Y)=\varphi^{-m}(M_{\psi}^{[s, r]})\otimes_{\varphi^{-m}(\textbf{C}_{\psi}^{[s, r]})} B_m.
\]
Clearly,  the base change of $M_m(Y)$ to $\hat{B}_\infty$ is isomorphic to $(\hat\bL\otimes \hat\mO_Y)(\tilde{Y}_\infty)$; this yields the first assertion of the proposition. Moreover, it follows that
\[
M_m(Y){\otimes}_{B_{k_m}}B_K=\varphi^{-m}(M_{\psi'}^{[s, r]})\otimes_{\varphi^{-m}(\textbf{C}_{\psi'}^{[s, r]})} B_{K,m}.
\]
We therefore deduce that the cochain complex computing 
\[
H_{\mathrm{an}}^i(\Ga_{\on{geom}}, (\hat\bL\otimes \hat\mO_Y)(\tilde{Y}_{K,\infty})/(M_m(Y){\otimes}_{B_{k_m}}B_K))
\]
is strict exact. Note that by \cite[Theorem 1.3.8]{KL2}, all the analytic cohomolgy groups are isomorphic to the corresponding continuous cohomology groups in our context. This yields the second assertion of the lemma. It remains to show the base change property for $M_m(Y)$. For rational localizations, this is part of the definition of locally decompleting towers. For finite \'etale extensions, this follows from
\cite[Corollary 5.6.7]{KL2}.
\end{proof}

We denote $M_{m}(Y)\otimes_{B_{k_m}}B_K=M_{m}(Y)\hat\otimes_{{k_m}}K$ by $M_{K,m}(Y)$. This is a $B_{K,m}$-module and in particular a $B_K$-module. It follows that 
\[
M_{K,m}(Y)\otimes_{B_{K,m}}\hat B_{K,\infty}=\mM(\tilde Y_\infty)\otimes_{\hat B_\infty}\hat B_{K,\infty}=\mM(\tilde Y_{K,\infty}).
\]
The following lemma is a crucial observation. The argument is a variation of Grothendieck's proof of $\ell$-adic local monodromy theorem.
\begin{lem}\label{L: unip}
The $B_K$-linear representation of $\Ga_{\on{geom}}$ on $M_{K,m}(Y)$ is quasi-unipotent, i.e. there is a finite index subgroup $\Ga'_{\on{geom}}\subset \Ga_{\on{geom}}$ that acts on $M_{K,m}(Y)$ unipotently.
\end{lem}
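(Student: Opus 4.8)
The plan is to adapt Grothendieck's argument for $\ell$-adic quasi-unipotence, with the role of ``$\ell\neq p$'' replaced by the fact that the cyclotomic character $\chi:\Gal(K/k)\to\bZ_p^\times$ has open image, so that $\Ga_{\on{geom}}\simeq\bZ_p(1)^n$ carries a genuine $\Gal(K/k)$-action scaling by $\chi$. First I would reduce to the rank-one situation along each coordinate subgroup: since $\Ga_{\on{geom}}$ is abelian, it suffices to show that for each generator $\ga_i$ (or rather each $\bZ_p$-factor), some open subgroup acts unipotently; commuting unipotent actions can then be simultaneously handled, and passing to a common finite-index subgroup gives quasi-unipotence of the whole action. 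So fix $i$ and consider the single topological generator $\ga=\ga_i$ acting $B_K$-linearly on the finite projective module $N:=M_{K,m}(Y)$.

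Next I would exploit the conjugation relation: choose $g\in\Gal(K/k)$ lifting to $\tilde g\in\Ga$ with $\tilde g\ga\tilde g^{-1}=\ga^{c}$ for $c=\chi(g)\in\bZ_p^\times$, which we may take to be a non-root-of-unity $p$-adic unit (e.g. congruent to $1$ modulo a high power of $p$, hence of infinite order). The action of $\ga$ on $N$, after inverting $p$ and passing to a suitable finite extension where the characteristic polynomial splits, has a finite set of generalized eigenvalues; the semilinear operator $\tilde g$ intertwines the $\ga$-action with the $\ga^c$-action, and hence permutes the generalized eigenspaces, sending the eigenspace for $\la$ to the one for $\la^c$. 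Since $N$ is finite-dimensional, the set of eigenvalues is finite and stable under $\la\mapsto\la^c$; as $c$ is not a root of unity, the orbit of any $\la$ under $\la\mapsto\la^{c^j}$ is finite only if $\la$ is a root of unity. Thus all generalized eigenvalues of $\ga$ are roots of unity, so a suitable power $\ga^{p^N}$ (killing the finite group of eigenvalue-roots-of-unity) acts unipotently. This is exactly the $p$-adic incarnation of Grothendieck's trick, with ``$\ga$ of finite order mod unipotent'' replacing ``$\rho(\ga)$ quasi-unipotent.''

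There are two technical points to be careful about, and I expect the second to be the main obstacle. The first is making sense of ``generalized eigenspaces'' over the not-necessarily-algebraically-closed base $B_K$: one resolves this by working with the Jordan–Chevalley-type decomposition over $B_K\otimes_{\bQ_p}\overline{\bQ_p}$, or more robustly by noting that $\ga$ satisfies its characteristic polynomial $P(T)\in B_K[T]$ (Cayley–Hamilton for finite projective modules), and the condition extracted above is that all roots of $P$ in any residue field are roots of unity, which forces $P(T)$ to divide $(T^{p^M}-1)^{\dim}$ for large $M$ — a clean polynomial condition insensitive to base points. The second, more serious, point is that $N=M_{K,m}(Y)$ is merely finite projective over the ring $B_K$, not a vector space over a field, so a priori the ``finite set of eigenvalues'' argument must be run fiberwise and then globalized; the obstruction is showing the bound on $M$ is uniform over $\Spec B_K$. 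I would handle this by spreading out: the coefficients of $P(T)$ lie in $B_{k_m}$ (by Lemma \ref{L: sen-tate-decompletion} and the compatibility of $M_m(Y)$ with the $\Ga$-action, the characteristic polynomial is already defined over a finite level), $B_{k_m}$ is an affinoid algebra hence Noetherian and Jacobson, the ``is a root of unity'' locus is closed and — by the conjugation argument applied at a dense set of classical points, or directly at the generic points of $\Spec B_{k_m}$ — is everything; quasi-compactness of $\Spa(B_{k_m},B_{k_m}^+)$ then yields a single $M$ that works, and hence a single finite-index $\Ga'_{\on{geom}}\subset\Ga_{\on{geom}}$ acting unipotently on all of $M_{K,m}(Y)$.
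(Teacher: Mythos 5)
Your approach is the multiplicative form of Grothendieck's $\ell$-adic trick, analyzing the eigenvalues of $\gamma$ directly, whereas the paper uses the additive (logarithmic) form: for $\gamma\in\Ga_{\on{geom}}$ sufficiently close to $1$ one takes $\log\gamma$, and the conjugation relation becomes the clean scaling identity $\delta(\log\gamma)\delta^{-1}=\chi(\delta)\log\gamma$ for $\delta\in\Ga$. From this the paper reads off that each coefficient $c_i$ of the characteristic polynomial of $\log\gamma$, which lies in $B_{k_m}$ and is hence fixed by $\Gal(K/k_m)$, satisfies $\delta(c_i)=\chi(\delta)^ic_i=c_i$; choosing $\delta$ with $\chi(\delta)$ not a root of unity and using that $B_{k_m}$ has no $\bZ_p$-torsion, this forces $c_i=0$ for $i>0$, so $\log\gamma$ is nilpotent globally by Cayley--Hamilton with no fiberwise step at all.

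The one genuine gap in your version is the globalization. Your fiberwise argument shows that at each classical point the eigenvalues of $\gamma$ are $p$-power roots of unity, so $(\gamma^{p^{M_x}}-1)^{d}=0$ at that point for some $M_x$; but you then need a \emph{uniform} $M$, and the quasi-compactness argument you invoke does not deliver it. The loci $Z_M=\{x:(\gamma^{p^M}-1)^d=0\}$ form an increasing chain of \emph{closed} subsets whose union is the whole space; quasi-compactness controls open covers, and a Noetherian space need not satisfy the ascending chain condition on closed subsets, so one cannot conclude $Z_M$ is everything for some finite $M$. (Your alternative of deducing $P(T)\mid(T^{p^M}-1)^d$ in $B_{k_m}[T]$ from the fiberwise divisibility faces the same issue: one first needs the uniform $M$.) Taking logs removes the problem entirely: nilpotence of $\log\gamma$ at every point means the characteristic polynomial of $\log\gamma$ is $T^d$ at every point, and since $B_{k_m}$ is reduced (smooth affinoid) this forces the coefficients to vanish identically -- but at that point you might as well run the paper's direct scaling argument on the coefficients, which never appeals to fibers. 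A secondary issue, which you flagged, is that $\lambda\mapsto\lambda^c$ for $c\in\bZ_p^\times$ only makes sense after you know $\lambda$ lies in $\mu_{p^\infty}\cdot(1+\frakm)$; this is true here because the $\Ga_{\on{geom}}$-action is continuous and preserves an integral lattice, but it is another reason the additive version is smoother. Your ``reduce to one generator at a time'' reduction is fine but also unnecessary: once $\log\gamma$ is nilpotent for all $\gamma$ in an open subgroup, quasi-unipotence of the whole abelian group follows at once.
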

\begin{proof}
Recall that $M_{K,m}(Y)$ is the base change to $K$ of the $\Ga$-module $M_m(Y)$ and that $\Gal(K/k)$ acts on $\Ga_{\on{geom}}\simeq\bZ_p(1)^d$ via $\chi$. Thus for any $\gamma\in\Ga_{\on{geom}}$ which is sufficiently close to $1$ and $\delta\in \Ga$, a short computation shows that $\delta(\log\gamma)\delta^{-1}=\chi(\delta)\log\gamma$. This implies that all the coefficients of the characteristic polynomial of $\log\gamma$ must vanish because $\chi(\Gal(K/k))$ has finite cokernel in $\bZ_p^\times$. Therefore, $\log \gamma$ is nilpotent.  This clearly implies the lemma.
\end{proof}

According to Lemma \ref{L: unip}, we have a decomposition
\begin{equation}\label{E: decomp}
M_{K,m}(Y)=\bigoplus M_{K,m}(Y)_{\tau},
\end{equation}
where $\tau$ are characters of $\Ga_{\on{geom}}$ of finite order, and 
$$M_{K,m}(Y)_{\tau}=\{m\in M_{K,m}(Y)\mid (\ga-\tau(\ga))^Nm=0 \mbox{ for } N\gg 0\},$$
is the corresponding generalized eigenspace. 
Then being a direct sum decomposition of finite projective $B_K$-modules, each summand is a finite projective $B_K$-module stable under the action of $\Ga_{\on{geom}}$. Possibly replacing $M_m(Y)$ by $M_m(Y)\otimes_{B_m}B_{m'}$ for some large  $m'$, we may assume that the order of every $\tau$ appearing in the above decomposition is less than or equal to $p^m$. So
 for every $\tau$, there is a monomial $T_1^{i_1}\dots T_n^{i_n}$ in $B_{K,m}$ on which $\Ga_{\on{geom}}$ acts via the character $\tau$. Let 
$$M_K(Y)=M_{K,m}(Y)_1$$ 
be the unipotent part, i.e. the summand corresponding to the trivial character. Then we see that $M_K(Y)\otimes_{B_K}B_{K,m}\to M_{K,m}(Y)$ is surjective and therefore is an isomorphism. It follows that 
$$M_K(Y)\otimes_{B_K}\hat{B}_{K,\infty}=(M_K(Y)\otimes_{B_K}B_{K,m})\otimes_{B_{K,m}}\hat{B}_{K,\infty}=M_{K,m}(Y)\otimes_{B_{K,m}}\hat{B}_{K,\infty}=\mM(\tilde Y_{K,\infty}).$$ 
Note that $M_K(Y)$ is in fact stable under the action of $\Gamma$. This gives the construction of $M_K(Y)$ as in the proposition. The uniqueness is clear since if 
\[
M'_K(Y)\subset \mM(\tilde Y_{K,\infty})=M_K(Y)\otimes_{B_K}\hat{B}_{K,\infty}
\] 
is another such $B_K$-module, by considering the action of $\Ga_{\on{geom}}$, it must be contained in $M_K(Y)$ and therefore must coincide with $M_K(Y)$. 

Similarly, we have a decomposition 
\[
M_m(Y)=\bigoplus M_m(Y)_{\tau}.
\]
Note that 
\[
M(Y):=M_m(Y)_1
\] 
is a $\Ga$-stable $B_{k_m}$-submodule of $M_K(Y)$ such that $M(Y)\otimes_{B_{k_m}} B_K=M_K(Y)$. The compatibility with standard \'etale base change follows from Lemma \ref{L: sen-tate-decompletion}. This proves (P1).
 For any $\tau\neq 1$, there exists some $1\leq s\leq n$ such that $\gamma_s-1$ is invertible on $M_{K,m}(Y)_\tau$; this implies that 
\[
H^i(\Ga_{\on{geom}}, M_{K,m}(Y)_\tau)=0
\] 
for all $i\geq0$ by Hochschild-Serre spectral sequence. Thus the natural maps
$$H^i(\Ga_{\on{geom}}, M_K(Y))\to H^i(\Ga_{\on{geom}}, M_{K,m}(Y))$$ 
are isomorphisms. By Lemma \ref{L: sen-tate-decompletion}, (P2) also follows.

\begin{ex}\label{EX: calculation}
We illustrate Proposition \ref{P: local p-adic Simpson} and its proof by the following two examples. Let $X=\bT^1=\Spa(k\langle T^{\pm 1}\rangle,\mO_k\langle T^{\pm 1}\rangle)$ with the toric chart given by the identity map.

(i) Assume that $\bL=\bZ_p$ is the rank one constant local system. Then $\mM(\tilde X_{K,\infty})=\hat{A}_{K,\infty}$ with the natural $\Ga$-action and $M_K(X)=A_K$.

(ii) Assume that $\zeta_{p^m}\in k$. We have a $\bZ/p^m$-torsor $\pi: \bT^1_m\to \bT^1$. Let $\bL=\pi_*\bZ_p$. This is not a small local system in the sense of \cite{Fa}. As a $\Ga_{\on{geom}}$-module,
$$\mM(\tilde X_{K,\infty})\simeq \bigoplus_{\tau:\Ga_{\on{geom}}/p^m\to k^\times}\hat{A}_{K,\infty}\otimes \tau.$$
If we write $\tau(\ga)=\zeta_{p^m}^{a(\ga)}$ for $a(\ga)\in \bZ/p^m$, then
\[M_K(X)=\bigoplus_\tau A_K\cdot T^{-a(\ga)/p^m}\otimes\tau.\]
Note that although $\bL$ is non-trivial, the action of $\Ga_{\on{geom}}$ on $M_K(X)$ is still trivial.
\end{ex}

\section{A $p$-adic Riemann-Hilbert correspondence}\label{S: p-adic RH}
In this section, we discuss several functors from the category of $p$-adic local systems to the category of vector bundles with an integrable connection, which can be regarded as a first step towards the $p$-adic Riemann-Hilbert correspondence. 

\subsection{A geometric Riemann-Hilbert correspondence}\label{S: p-adic RH theorem}
We continue with the notations as in the previous section. So $X$ is a smooth rigid analytic variety over $k$ and $K\subset \hat{\bar k}$ is a perfectoid field containing $k_\infty$. 
We will first deduce a geometric version of the Riemann-Hilbert correspondence from Theorem \ref{T: p-adic Simpson} after introducing some period sheaves on $(X_K)_{\on{et}}$.
By abuse of notations, we use $\B_\dR^+$ and $\B_\dR$ to denote $\bB^+_\dR(K, \mO_K)$ and $\bB_\dR(K, \mO_K)$ respectively (so they are the $\Gal(\hat{\bar k}/K)$-invariants of the classical Fontaine's rings). 
Recall that if $k'/k$ is a finite extension in $K$, there is a canonical map $k'\to \B_{\dR}^+$.

Recall by Lemma \ref{bases top}, giving a sheaf on $(X_K)_{\on{et}}$ is the same as giving a sheaf on $\mB$. Then we define $\mO_X\hat \otimes (\B^+_\dR/t^i)$ by assigning
\begin{equation}\label{E: assignment}
 (Y=\Spa(B,B^+)\to X_{k'})\in \mB \mapsto B\hat\otimes_{k'}(\B^+_{\dR}/t^i),
\end{equation}
and set 
\[
\mO_X\hat{\otimes}\B^+_\dR=\underleftarrow\lim_i \mO_X\hat{\otimes} (\B^+_\dR/t^i)
\] 
and 
\[
\mO_X\hat \otimes \B_\dR=(\mO_X\hat \otimes \B^+_\dR)[t^{-1}].
\]
We have Tate's acyclicity theorem for $\mO_X\hat \otimes (\B^+_\dR/t^i)$. In particular, this verifies that $\mO_X\hat \otimes (\B^+_\dR/t^i)$ is indeed a sheaf. 
\begin{lem}\label{L: acyclic}
If $X=\Spa(A,A^+)$ is affinoid, then 
\[H^i((X_K)_{\on{et}}, \mO_X\hat \otimes (\B^+_\dR/t^i))=\left\{\begin{array}{cc} A\hat\otimes_k(\B^+_{\dR}/t^i)& i=0\\ 0 & i>0\end{array}\right.\]  
\end{lem}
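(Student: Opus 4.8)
The plan is to reduce the statement to the corresponding Tate acyclicity result for the ring $\B^+_\dR/t^i$ together with the usual Tate acyclicity on the affinoid $X$, by exhibiting both sides as completed tensor products and comparing \v Cech complexes. First I would recall that by Lemma \ref{bases top} it suffices to check the sheaf property and compute cohomology on the base $\mB$, and that over $\mB$ the sheaf $\mO_X\hat\otimes(\B^+_\dR/t^i)$ is defined by the assignment \eqref{E: assignment}. So for a rational cover $\{U_j=\Spa(B_j,B_j^+)\}$ of $X=\Spa(A,A^+)$ (after base change to some finite $k'$, but since everything is a limit over finite subextensions and the final answer involves $\B^+_\dR$ which already absorbs all of $k_\infty$, this causes no trouble), I would need to show that the \v Cech complex
\[
0\to A\hat\otimes_{k'}(\B^+_\dR/t^i)\to \prod_j B_j\hat\otimes_{k'}(\B^+_\dR/t^i)\to \prod_{j,l} B_{jl}\hat\otimes_{k'}(\B^+_\dR/t^i)\to\cdots
\]
is exact. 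The key point is that $\B^+_\dR/t^i$ is a Banach $k'$-algebra which is an iterated extension of copies of the Banach space $\hat{\bar k}^{\Gal(\hat{\bar k}/K)}=K$ (or, more precisely, $\B^+_\dR/t^i$ is a successive extension of copies of $\widehat{\bar k}$ twisted by $t$; taking $\Gal(\hat{\bar k}/K)$-invariants one gets a successive extension of copies of $C:=\hat{\bar k}$ — in any case a finite-dimensional filtration with Banach-space graded pieces), and so by dévissage along the $t$-adic filtration it is enough to handle the case $i=1$, i.e. the coefficient ring $C=\hat{\bar k}$ (or the relevant perfectoid field).

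Next, for the $i=1$ case I would invoke the flatness/completeness properties recorded in the references already cited in the excerpt: Tate's acyclicity theorem for affinoid algebras (\cite[Theorem 8.2.22]{KL1}) says the ordinary \v Cech complex of $\{B_j\}$ over $A$ is exact, and remains so (strictly, i.e. as complexes of Banach spaces) after completed tensoring with any Banach space over $k'$, in particular with $\hat{\bar k}$. This is where I would use that the \v Cech complex of affinoid algebras is not merely exact but \emph{strict exact} (admits continuous splittings in each degree, or at least has closed images with the subspace topology), so that $-\hat\otimes_{k'}\hat{\bar k}$ preserves exactness. Concretely, $A\hat\otimes_{k'}\hat{\bar k}$ is the completion of $A\otimes_{k'}\hat{\bar k}$, and the strict exactness passes through completion of tensor products of Banach spaces. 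Feeding this back up the $t$-adic filtration by the five-lemma (working with the finite filtration $t^i\supset t^{i-1}\supset\cdots$ on $\B^+_\dR/t^i$, or equivalently the short exact sequences $0\to t^{j}/t^{j+1}\to \B^+_\dR/t^{j+1}\to\B^+_\dR/t^j\to 0$ tensored with the \v Cech complex) gives the statement for all $i$.

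The main obstacle I anticipate is making the dévissage rigorous at the level of topological modules: one must be careful that $-\hat\otimes_{k'}(\B^+_\dR/t^i)$ is exact on the relevant short exact sequences of Banach spaces, which requires strictness of the maps involved and the fact that closed subspaces of Banach spaces over a nonarchimedean field split up to completion. An alternative, perhaps cleaner, route is to observe that $\B^+_\dR/t^i$ is an \emph{orthonormalizable} (potentially free, in the appropriate pro-sense) Banach $k'$-module — it has a topological basis — so that $B\hat\otimes_{k'}(\B^+_\dR/t^i)$ is just a direct-sum/product completion of copies of $B$, and the \v Cech complex of $\{B_j\hat\otimes_{k'}(\B^+_\dR/t^i)\}$ is obtained from the (strictly exact) \v Cech complex of $\{B_j\}$ by applying such a completed-free functor, which manifestly preserves strict exactness. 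I would also remark, as the excerpt suggests, that taking the inverse limit over $i$ then yields that $\mO_X\hat\otimes\B^+_\dR$ is a sheaf with vanishing higher cohomology on affinoids (using that the transition maps are surjective so $R^1\varprojlim$ vanishes), and inverting $t$ gives the same for $\mO_X\hat\otimes\B_\dR$; but the displayed lemma as stated only asks for the $\B^+_\dR/t^i$ version, so the dévissage above suffices.
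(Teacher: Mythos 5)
Your proposal is correct and matches the paper's proof in all essentials: the paper likewise reduces along the $t$-adic filtration on $\B^+_\dR/t^i$, uses a Schauder (orthonormal) basis argument to show $B\hat\otimes_{k'}(-)$ is exact on the relevant short exact sequences of Banach spaces over $k'$, identifies the graded piece as $B_K(j)$, and concludes by induction from the acyclicity of $\mO_{X_K}$. The only thing worth tightening in your write-up is the momentary slip identifying the graded pieces with $\hat{\bar k}$ rather than $K$ (recall that in this section $\B_\dR^+$ denotes $\bB_\dR^+(K,\mO_K)$, so $t^j\B_\dR^+/t^{j+1}\simeq K(j)$), but your parenthetical "(or the relevant perfectoid field)" shows you already noticed this.
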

\begin{proof}
In fact, using a Schauder basis of $B$, we have the following exact sequence
\[
0\to B\hat\otimes_{k'}(t^i\B^+_{\dR}/t^{i+1})\to B\hat\otimes_{k'}(\B^+_{\dR}/t^{i+1})\to B\hat\otimes_{k'}(\B^+_{\dR}/t^i)\to0, 
\] 
where $B\hat\otimes_{k'}(t^i\B^+_{\dR}/t^{i+1})\simeq B\hat\otimes_{k'}(\B^+_{\dR}/t)(i)=B_K(i)$. We deduce the claim by the acyclicity of the structure sheaf $\mO_{X_K}$ and induction on $i$.
\end{proof}

We equip $\mO_X\hat \otimes \B^+_\dR$ with the decreasing filtration 
\[
\mathrm{Fil}^i(\mO_X\hat \otimes \B^+_\dR)=t^i\mO_X\hat \otimes \B^+_\dR,
\]
and $\mO_X\hat \otimes \B_\dR$ with the decreasing filtration
\[
\mathrm{Fil}^i(\mO_X\hat \otimes \B_\dR)=\sum_{j\in\mathbb{Z}}t^{-j}\mathrm{Fil}^{i+j}(\mO_X\hat \otimes\B^+_\dR)=t^{-j}\mathrm{Fil}^{i+j}(\mO_X\hat \otimes\B^+_\dR)
\]
for $j\geq -i$. Using Schauder basis of affinoid algebras, it is straightforward to see the following lemma.
\begin{lem}\label{L: graded}
For $i\in\bZ$, we have 
$\gr^i(\mO_X\hat \otimes \B_\dR)\simeq\mO_{(X_K)_{\on{et}}}(i)$. 
\end{lem}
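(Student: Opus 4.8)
The plan is to reduce to the single value $i=0$ and then compute on the base $\mB$. First I would observe that multiplication by $t$ is injective on $\mO_X\hat\otimes\B_\dR$ and, by the very definition of the filtration, carries $\mathrm{Fil}^i(\mO_X\hat\otimes\B_\dR)=t^{-j}\mathrm{Fil}^{i+j}(\mO_X\hat\otimes\B^+_\dR)$ isomorphically onto $\mathrm{Fil}^{i+1}(\mO_X\hat\otimes\B_\dR)$ for every $i\in\bZ$; since $g(t)=\chi(g)t$ for $g\in\Gal(K/k)$, this gives a $\Gal(K/k)$-equivariant isomorphism $\gr^{i+1}(\mO_X\hat\otimes\B_\dR)\simeq\gr^i(\mO_X\hat\otimes\B_\dR)(1)$. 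By induction in both directions it therefore suffices to prove $\gr^0(\mO_X\hat\otimes\B_\dR)\simeq\mO_{(X_K)_{\on{et}}}$; the twist $(i)$ in the general statement is then exactly accounted for by the fact that the distinguished generator of $\gr^i$ over $\gr^0$ is $t^i$, on which $g$ acts by $\chi(g)^i$. Unwinding the definition once more, $\mathrm{Fil}^0(\mO_X\hat\otimes\B_\dR)=\mO_X\hat\otimes\B^+_\dR$ and $\mathrm{Fil}^1(\mO_X\hat\otimes\B_\dR)=t\,(\mO_X\hat\otimes\B^+_\dR)$, so we must identify the quotient sheaf $(\mO_X\hat\otimes\B^+_\dR)/t\,(\mO_X\hat\otimes\B^+_\dR)$ with $\mO_{(X_K)_{\on{et}}}$.

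Next I would pass to the base $\mB$ via Lemma \ref{bases top}. As in Lemma \ref{L: acyclic}, both $\mO_X\hat\otimes\B^+_\dR\simeq\varprojlim_j(\mO_X\hat\otimes(\B^+_\dR/t^j))$ and its multiple $t\,(\mO_X\hat\otimes\B^+_\dR)$ are acyclic on objects of $\mB$: each $\mO_X\hat\otimes(\B^+_\dR/t^j)$ is acyclic there by Lemma \ref{L: acyclic}, and $R^1\varprojlim$ vanishes because the transition maps are surjective. Consequently, for $Y=\Spa(B,B^+)\to X_{k'}$ in $\mB$ the sequence $0\to t\,(\mO_X\hat\otimes\B^+_\dR)(Y)\to(\mO_X\hat\otimes\B^+_\dR)(Y)\to\gr^0(\mO_X\hat\otimes\B_\dR)(Y)\to0$ is exact, i.e. the presheaf quotient is already a sheaf on $\mB$ and computes $\gr^0$. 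So it remains to show that $(B\hat\otimes_{k'}\B^+_\dR)/t\,(B\hat\otimes_{k'}\B^+_\dR)\simeq B\hat\otimes_{k'}(\B^+_\dR/t)=B\hat\otimes_{k'}K=B_K$, functorially in $Y$; the right-hand side is the section over $Y_K$ of $\mO_{(X_K)_{\on{et}}}$, so this finishes the proof.

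For that last identification I would repeat the Schauder-basis argument of Lemma \ref{L: acyclic}: choosing an orthonormal basis of $B$ over $k'$, the functor $B\hat\otimes_{k'}(-)$ is exact on the sequences in play, and since $t^j\B^+_\dR/t^{j+1}\B^+_\dR\simeq(\B^+_\dR/t)(1)$ (using $g(t)=\chi(g)t$ again) it turns $0\to(\B^+_\dR/t^j)(1)\xrightarrow{\ t\ }\B^+_\dR/t^{j+1}\to\B^+_\dR/t\to0$ into $0\to(B\hat\otimes_{k'}(\B^+_\dR/t^j))(1)\xrightarrow{\ t\ }B\hat\otimes_{k'}(\B^+_\dR/t^{j+1})\to B_K\to0$. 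Passing to the inverse limit over $j$ (again using surjectivity of the transition maps to kill $\varprojlim^1$) yields $0\to t\,(B\hat\otimes_{k'}\B^+_\dR)\to B\hat\otimes_{k'}\B^+_\dR\to B_K\to0$, as desired; naturality in $Y$, hence compatibility with the restriction maps of $\mB$ and with standard \'etale base change as in Corollary \ref{C: vb}, is immediate from the construction. The only genuine content, and the step I expect to require the most care, is precisely this interchange of the graded-piece operation with the completed tensor product $B\hat\otimes_{k'}(-)$ and with the inverse limit defining $\mO_X\hat\otimes\B^+_\dR$; everything else is formal once Lemmas \ref{bases top} and \ref{L: acyclic} are in hand.
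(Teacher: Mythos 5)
Your proof is correct and follows the same Schauder-basis route the paper alludes to: the essential step, that $B\hat\otimes_{k'}(-)$ is exact on the strict short exact sequences $0\to\B^+_\dR/t^j\xrightarrow{t}\B^+_\dR/t^{j+1}\to K\to 0$ so that the graded pieces come out as $B_K(i)$, is exactly the argument already used in the paper's Lemma \ref{L: acyclic}. The reduction to $i=0$ by $t$-multiplication, the passage to the base $\mB$, the use of acyclicity to identify the quotient sheaf with the presheaf quotient, and the Mittag--Leffler passage to the inverse limit are the natural bookkeeping behind the paper's ``straightforward,'' so your write-up and the paper's intended proof coincide.
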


Let $\la: X_{\on{et}}\to X_{\on{an}}$ be the natural projection from the \'etale site to the analytic site of $X$. 
By abuse of notations, we again denote the sheaves $\la_*(\mO_X\hat \otimes (\B_\dR/t^i))$, $\la_*(\mO_X\hat \otimes \B^+_\dR)$ and $\la_*(\mO_X\hat \otimes \B_\dR)$ by $\mO_X\hat \otimes (\B_\dR/t^i)$, $\mO_X\hat \otimes \B^+_\dR$ and $\mO_X\hat \otimes \B_\dR$ respectively.

\begin{prop}\label{P: equivalence}
Let $X=\Spa(A, A^+)$ be an affinoid space over $k$. Then the following categories are naturally equivalent. 
\begin{enumerate}
\item[(i)] The category of finite projective $A\hat\otimes_k(\B_\dR^+/t^i)$-modules (resp. $A\hat\otimes_k\B_\dR^+$-modules).

\item[(ii)] The category of finite locally free $\mO_X\hat\otimes(\B^+_\dR/t^i)$-modules (resp. $\mO_X\hat\otimes\B^+_\dR$-modules) on $(X_K)_{\on{et}}$. 

\item[(iii)] The category of finite locally free $\mO_X\hat\otimes(\B^+_\dR/t^i)$-modules (resp. $\mO_X\hat\otimes\B^+_\dR$-modules) on $(X_K)_{\on{an}}$. 
\end{enumerate}
\end{prop}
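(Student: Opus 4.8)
The plan is to reduce the two $\B^+_\dR$-statements to the corresponding truncated statements over $\B^+_\dR/t^i$, and then to prove the latter by induction on $i$, propagating Tate's acyclicity theorem and Kiehl's gluing theorem through the $t$-adic filtration on $\mO_X\hat\otimes\B^+_\dR$. For the passage to the limit, observe that $\mO_X\hat\otimes\B^+_\dR=\varprojlim_i\mO_X\hat\otimes(\B^+_\dR/t^i)$ and $A\hat\otimes_k\B^+_\dR=\varprojlim_i A\hat\otimes_k(\B^+_\dR/t^i)$ are $t$-adically complete. I would check that, in each of the three settings, a finite locally free module over the completed ring is the same datum as a compatible system of finite locally free modules over the truncations with surjective transition maps: given such a system, lift a local trivialization from level $1$ to all higher levels compatibly using Nakayama applied to the nilpotent ideal $(t)$, so that the system is locally the $t$-adic tower of a free module, and then use $t$-adic completeness to glue; the vanishing of $\varprojlim^1$ of the relevant $H^0$'s (Mittag--Leffler, via the acyclicity below) shows that no cohomology is lost in the limit. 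This reduces the proposition to the assertion, uniform in $i$, that finite projective $A\hat\otimes_k(\B^+_\dR/t^i)$-modules, finite locally free $\mO_X\hat\otimes(\B^+_\dR/t^i)$-modules on $(X_K)_{\on{et}}$, and finite locally free $\mO_X\hat\otimes(\B^+_\dR/t^i)$-modules on $(X_K)_{\on{an}}$ form three naturally equivalent categories, the equivalences being given by taking global sections, by sheafification over the basis $\mB$ of Lemma \ref{bases top}, and by $\la_*$ and $\la^*$.

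The case $i=1$ is exactly the equivalence of finite projective $A_K$-modules with finite locally free $\mO_{X_K}$-modules on $(X_K)_{\on{et}}$ and with finite locally free $\mO_{X_K}$-modules on $(X_K)_{\on{an}}$, which is \cite[Lemma 7.3]{Sch2} (Tate's acyclicity plus Kiehl's gluing), precisely as in the proof of Corollary \ref{C: vb}. For the inductive step I would use the short exact sequence
\[
0\to \mO_{X_K}(i)\to \mO_X\hat\otimes(\B^+_\dR/t^{i+1})\to \mO_X\hat\otimes(\B^+_\dR/t^i)\to 0
\]
on $(X_K)_{\on{et}}$ extracted from the proof of Lemma \ref{L: acyclic}, in which the kernel is a square-zero ideal, together with its pushforward to $(X_K)_{\on{an}}$ under $\la_*$ (the higher direct images of the graded pieces vanish, as they do for $\mO_{X_K}$). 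For a finite locally free $\mO_X\hat\otimes(\B^+_\dR/t^{i+1})$-module $\mF$, the reduction $\mF/t^i\mF$ is finite locally free over $\mO_X\hat\otimes(\B^+_\dR/t^i)$ and $t^i\mF\cong(\mF/t\mF)(i)$ is finite locally free over $\mO_{X_K}$; by the inductive hypothesis and the base case these two pieces come, compatibly and in all three settings, from modules over the respective rings, and $\mF$ is recovered from them together with an extension class. Using Lemma \ref{L: acyclic}, its analytic analogue, and the vanishing of the sheaf $\on{Ext}$'s between finite locally free modules, I would identify the groups $\on{Ext}^1$ classifying such extensions in the affinoid, the \'etale, and the analytic pictures; this makes the three functors mutually quasi-inverse at level $i+1$. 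Finite local freeness is preserved at every stage by Nakayama applied to $(t)$, and the five lemma promotes the natural comparison morphisms to isomorphisms.

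The main obstacle I expect is not a single hard input but the bookkeeping in this d\'evissage: one must verify that the natural comparison morphisms --- from the re-sheafification of the global sections of $\mF$ back to $\mF$ in the (i)--(ii) comparison, and $\la^*\la_*\mF\to\mF$ and $\mG\to\la_*\la^*\mG$ in the (ii)--(iii) comparison --- are isomorphisms, and that they are filtered so that the five lemma can be applied one graded step at a time; this rests entirely on Tate's acyclicity and Kiehl's theorem for $\mO_{X_K}$ fed through the square-zero extensions via the acyclicity of the graded pieces (Lemma \ref{L: acyclic} and its analytic version). The secondary delicate point is the limit step for the $\B^+_\dR$-statements, namely confirming that a compatible system of finite locally free modules over the truncations assembles into a genuine (not merely formal) finite locally free $\mO_X\hat\otimes\B^+_\dR$-module; here $t$-adic completeness of the coefficient rings and the $\varprojlim^1$-vanishing noted above are the essential ingredients.
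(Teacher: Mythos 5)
Your plan diverges from the paper's proof at both main steps, and the inductive step as formulated has a genuine gap in the Ext-group bookkeeping.

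For the passage from the truncations $\B^+_\dR/t^i$ to $\B^+_\dR$, the paper avoids inverse limits and Mittag--Leffler entirely: it observes that a finite projective $A\hat\otimes_k(\B^+_\dR/t^i)$-module (or $A\hat\otimes_k\B^+_\dR$-module) is free if and only if its reduction mod $t$ is free over $A_K$, and then declares that all three statements therefore reduce to the truncated case. Your route via $\varprojlim^1$ control is not wrong in spirit, but it is heavier than needed, and it ultimately rests on the same observation (freeness mod $t$ implies freeness) without naming it.

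For the truncated case, the paper does not induct on $i$. It proves the Kiehl gluing property for the sheaf of rings $\mO_X\hat\otimes(\B^+_\dR/t^i)$ directly, using the formalism of glueing squares from \cite[Definition 2.7.3, Propositions 2.4.20, 2.7.5]{KL1}; the crucial input is that the relevant Berkovich spectra $\mM(B_j\hat\otimes(\B^+_\dR/t^i))$ collapse to $\mM((B_j)_K)$ because $t$ is nilpotent, so surjectivity of restriction to simple Laurent pieces is inherited from $\mO_{X_K}$. The \'etale statement $(\mathrm{ii})\Rightarrow(\mathrm{i})$ is then handled by stability of the basis $\mB$ under rational localization and finite \'etale covers and faithfully flat descent for finite \'etale morphisms \cite[Lemma 2.2.12, Propositions 2.6.8, 8.2.20]{KL1}; your proposal addresses only the rational/analytic side and is silent on the finite \'etale descent.

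The concrete gap in your inductive step is the claimed ``vanishing of the sheaf $\on{Ext}$'s between finite locally free modules.'' In the extension
$0\to t^i\mF\to\mF\to\mF/t^i\mF\to 0$
the terms $t^i\mF$ and $\mF/t^i\mF$ are locally free over $\mO_{X_K}$ and over $\mO_X\hat\otimes(\B^+_\dR/t^i)$ respectively, but not over the ambient ring $\mO_X\hat\otimes(\B^+_\dR/t^{i+1})$ (they are torsion there). Consequently the relevant sheaf Ext's do not vanish: already for $i=1$ one has, with $R=\mO_X\hat\otimes(\B^+_\dR/t^2)$, the periodic resolution $\cdots\to R\xrightarrow{\,t\,}R\xrightarrow{\,t\,}R\to R/tR\to 0$, giving $\mathscr{Ext}^1_R(R/tR,R/tR)\cong R/tR\ne 0$. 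So the local-to-global spectral sequence does not degenerate as you assume, and the Ext group you would compute classifies all $R$-module extensions, not only the locally free ones (the split extension already fails to be locally free). The correct d\'evissage is deformation-theoretic --- lifts of a finite locally free module along the square-zero thickening form a torsor (after fixing a base lift) under $H^1$ of $\mathscr{End}$ over the residue sheaf $\mO_{X_K}$, which does vanish by Tate acyclicity --- but that is a different statement from the one you wrote, and you would also need to produce and match base lifts in the affinoid and sheaf pictures. As written, the proposal does not close this step.
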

\begin{proof}
First observe that for a finite projective $A\hat\otimes_k(\B_\dR^+/t^i)$-module $M$, it is free if and only if $M/tM$ is free over $A\hat\otimes_kK$. In fact, since $t$ is nilpotent, any lift of a basis of $M/tM$ to $M$ is a basis of $M$ over $A\hat\otimes_k(\B_\dR^+/t^i)$. It follows that if $M$ is a finite projective $A\hat\otimes_k\B_\dR^+$-module, then it is free if and only if $M/tM$ is free over $A\hat\otimes_kK$. Consequently, to prove the lemma, it suffices to treat the case of finite projective $A\hat\otimes_k(\B_\dR^+/t^i)$-modules.

$\mathrm{(i)}\Rightarrow\mathrm{(ii), (iii)}$. If $M$ is a finite projective $A\hat\otimes_k(\B_\dR^+/t^i)$-module, by Lemma \ref{L: acyclic}, the presheaf 
\[
\tilde M(U)=M\otimes_{A\hat\otimes_k(\B_\dR^+/t^i)}\mO_X\hat\otimes(\B^+_\dR/t^i)(U)
\]
is an acyclic sheaf on both $(X_K)_{\on{et}}$ and $(X_K)_{\on{an}}$. Moreover, by the above observation, it is locally free. 

$\mathrm{(iii)}\Rightarrow \mathrm{(i)}$. This amounts to show that the sheaf of algebras $\mO_X\hat\otimes(\B^+_\dR/t^i)$ on $(X_K)_{\on{an}}$ satisfies the Kiehl glueing property \cite[Definition 2.7.6]{KL1}. To this end, we may apply \cite[Proposition 2.4.20]{KL1} to our set up. Then it suffices to show that one can glue for any simple Laurent covering $\mM(B_K)=\mM((B_1)_K)\cup \mM((B_2)_K)$ by Berkovich spectrums. Now we employ the formalism of \emph{glueing square} \cite[Definition 2.7.3]{KL1} and \cite[Proposition 2.7.5]{KL1} to conclude. The only nontrivial part is to verify that the map
\[
\mM(B_1\hat\otimes (\B^+_\dR/t^i))\oplus\mM(B_2\hat\otimes (\B^+_\dR/t^i))\to\mM(B\hat\otimes (\B^+_\dR/t^i))
\]
is surjective. But as $t$ is nilpotent, we have $\mM(B_j\hat\otimes (\B^+_\dR/t^i))=\mM((B_j)_K)$, $j=1,2$, and 
$\mM(B\hat\otimes (\B^+_\dR/t^i))=\mM(B_K)$. 

$\mathrm{(ii)}\Rightarrow\mathrm{(i)}$. First note that by \cite[Lemma 2.6.5(a), Proposition 2.6.8]{KL1}, the basis $\mathcal{B}$ is stable in the sense of \cite[Definition 8.2.19]{KL1}. That is, it is closed under rational localizations and finite \'etale extensions. Thus by \cite[Proposition 8.2.20]{KL1}, it reduces to show that one can glue for any rational covering and any morphism which is faithfully flat and finite \'etale. The case of rational coverings is already proved in the previous paragraph. Now suppose $\Spa(B, B^+)\to\Spa(A, A^+)$ is a faithful finite \'etale morphism. By \cite[Lemma 2.2.12]{KL1}, $B\otimes_A(A\hat\otimes(\B^+_{\dR}/t^i))$ is a complete Banach algebra and thus naturally isomorphic to $B\hat\otimes(\B^+_{\dR}/t^i)$. We therefore use faithfully flat descent to conclude. 
\end{proof}

\begin{cor}The pushforward $\la_*$ induces an equivalence of categories between sheaves of $\mO_X\hat \otimes \B^+_\dR$-modules which are locally free of finite rank on $(X_K)_{\on{et}}$ and on $(X_K)_{\on{an}}$.
\end{cor}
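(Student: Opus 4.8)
The plan is to reduce the assertion to the affinoid case, where it is already contained in Proposition~\ref{P: equivalence}, and then to globalize by descent for the analytic topology. So first I would verify that, when $X=\Spa(A,A^+)$ is affinoid, the equivalence between categories $\mathrm{(ii)}$ and $\mathrm{(iii)}$ of Proposition~\ref{P: equivalence} is precisely the one induced by $\la_*$. Indeed, for a finite projective $A\hat\otimes_k\B^+_\dR$-module $M$, the sheaf $\tilde M$ constructed in the proof of Proposition~\ref{P: equivalence} is given by the same formula on $(X_K)_{\on{et}}$ and on $(X_K)_{\on{an}}$, and $\la_*$ carries the former to the latter because $(\la_*\tilde M)(U)=\tilde M(U)$ for every analytic open $U$. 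Hence the triangle formed by the functors $\mathrm{(i)}\to\mathrm{(ii)}$, $\mathrm{(i)}\to\mathrm{(iii)}$ and $\la_*$ commutes up to canonical isomorphism; since the first two functors are equivalences, $\la_*$ is an equivalence between $\mathrm{(ii)}$ and $\mathrm{(iii)}$ in the affinoid case. In particular $\la_*$ is then fully faithful, sends finite locally free $\mO_X\hat\otimes\B^+_\dR$-modules to finite locally free ones, and is essentially surjective onto the finite locally free $\mO_X\hat\otimes\B^+_\dR$-modules on $(X_K)_{\on{an}}$.

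Next I would treat a general $X$ by choosing an affinoid open cover $\{X_j\}_{j\in J}$. For full faithfulness, given finite locally free $\mO_X\hat\otimes\B^+_\dR$-modules $\mG,\mG'$ on $(X_K)_{\on{et}}$, one has the internal Hom-sheaves $\mathcal{H}om(\mG,\mG')$ on $(X_K)_{\on{et}}$ and $\mathcal{H}om(\la_*\mG,\la_*\mG')$ on $(X_K)_{\on{an}}$, together with a natural map $\mathcal{H}om(\la_*\mG,\la_*\mG')\to\la_*\mathcal{H}om(\mG,\mG')$; by the affinoid case this map is an isomorphism after restriction to each $X_{j,K}$ and to the members of an affinoid cover of each overlap $X_{j,K}\cap X_{j',K}$, hence it is an isomorphism, and taking global sections yields $\Hom(\la_*\mG,\la_*\mG')=\Hom(\mG,\mG')$. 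For essential surjectivity, let $\mF$ be finite locally free on $(X_K)_{\on{an}}$; by the affinoid case each restriction $\mF|_{(X_{j,K})_{\on{an}}}$ is isomorphic to $\la_*\mG_j$ for a finite locally free $\mG_j$ on $(X_{j,K})_{\on{et}}$, unique up to unique isomorphism, and the glueing isomorphisms of $\mF$ on the overlaps lift, by full faithfulness applied over affinoid covers of the overlaps, to glueing isomorphisms of the $\mG_j$ satisfying the cocycle condition. Since finite locally free $\mO_X\hat\otimes\B^+_\dR$-modules satisfy descent for the analytic topology on $(X_K)_{\on{et}}$ — precisely the Kiehl-type glueing property established inside the proof of Proposition~\ref{P: equivalence} via the stability of the basis $\mathcal{B}$ and \cite[Proposition~2.7.5, Proposition~8.2.20]{KL1} — the $\mG_j$ glue to a finite locally free $\mO_X\hat\otimes\B^+_\dR$-module $\mG$ on $(X_K)_{\on{et}}$ with $\la_*\mG\simeq\mF$.

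The only step I expect to need genuine care is this last glueing, i.e. knowing that finite locally free $\mO_X\hat\otimes\B^+_\dR$-modules form a stack over the analytic site, so that the local objects $\mG_j$ actually assemble into a global sheaf. But this is exactly the sheaf/Kiehl property already used in the proof of Proposition~\ref{P: equivalence}, so there is no genuinely new obstacle, and everything else is a formal consequence of the affinoid case together with the commuting triangle above.
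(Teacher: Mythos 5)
Your proposal is correct, and it fills in precisely the globalization argument that the paper treats as immediate from Proposition~\ref{P: equivalence}. One tiny slip: in the full-faithfulness step the natural map between internal Homs goes the other way, $\la_*\mathcal{H}om(\mG,\mG')\to\mathcal{H}om(\la_*\mG,\la_*\mG')$, obtained by applying $\la_*$ to local homomorphisms; your local verification (via the affinoid case on a cover and on covers of the overlaps) then shows this map is an isomorphism, and taking global sections gives full faithfulness exactly as you intend.
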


By this corollary, we will not specify the topology in the following definition.

\begin{dfn}
We denote the ringed space $(X_{K}, \mO_X\hat \otimes \B^+_\dR)$ by $\mX^+$ and $(X_{K}, \mO_X\hat \otimes \B_\dR)$ by $\mX$. Thus we may regard $\mO_X\hat \otimes \B^+_\dR$ as the structure sheaf $\mO_{\mX^+}$ on $\mX^+$. Similarly we have $\mO_\mX$. By a vector bundle on $\mX^+$ we mean a locally free 
$\mO_{\mX^+}$-module of finite rank, and by a vector bundle on $\mX$ we mean a sheaf of $\mO_\mX$-modules obtained from a vector bundle on $\mX^+$ by extension of scalars. 
 By a filtered vector bundle on $\mX$ we mean a vector bundle $\mE$ on $\mX$ equipped with a decreasing filtration $\mathrm{Fil}^\bullet\mE$ such that  $t^i\mathrm{Fil}^j\mE=\mathrm{Fil}^{i+j}\mE$.
\end{dfn}

\begin{rmk}\label{R: can lifting} 
One may think $\mX^+$ as the base change of $X$ along the canonical embedding $k\to \B^+_\dR$, which provides a canonical lifting of $X_K$ to $\B_\dR^+$. We hope to elaborate the geometric meaning of this construction in the future.
\end{rmk}

By Proposition \ref{P: equivalence}, there is a natural ``base change" functor $\mE\mapsto \mE\hat\otimes_k\B_{\dR}$ from the category of vector bundles on $X$ to the category of vector bundles on $\mX$. This is an exact functor.
In particular, we denote
$$\Omega^j_{\mX^+/\B_\dR^+}:=\Omega^j_{X_{\on{et}}}\hat\otimes_k\B_\dR^+,\quad \Omega^j_{\mX/\B_\dR}:=\Omega^j_{X_{\on{et}}}\hat\otimes_k\B_\dR.$$
One may regard them as sheaves of relative differentials. 
It is straightforward to see that  $\mO_{\mX^+}$ admits a unique continuous $\B^+_\dR$-linear derivation $\mO_{\mX^+}\to\Omega^1_{\mX^+/\B_\dR^+}$ extending the one on $\mO_{X_{\on{et}}}$; it extends to a $\B_\dR$-linear derivation $\mO_{\mX}\to\Omega^1_{\mX/\B_\dR}$ by inverting $t$. 

\begin{dfn}
Let $\mE$ be a vector bundle on $\mX$. By a connection on $\mE$ we mean a $\B_\dR$-linear map
\[
\nabla: \mE\to \mE\otimes_{\mO_{\mX}}\Omega_{\mX/\B_{\dR}}
\]
of sheaves, which satisfies the Leibniz rule with respect to the derivation on $\mO_{\mX}$. The connection $\nabla$ is called integrable if $\nabla^2=0$.  In this case, we have the de Rham complex of $\mE$ defined in the usual way
\[\on{DR}(\mE,\nabla): \mE\stackrel{\nabla}{\to} \mE\otimes_{\mO_{\mX}}\Omega_{\mX/\B_\dR}\stackrel{\nabla}{\to} \mE\otimes_{\mO_{\mX}}\Omega^2_{\mX/\B_\dR}\to\cdots.\]
If $\mE$ is in addition a filtered vector bundle on $\mX$, we say the connection satisfies the Griffiths transversality if
\[\nabla(\on{Fil}^j\mE)\subset \on{Fil}^{j-1}\mE\otimes_{\mO_{\mX^+}}\Omega_{\mX^+/\B_\dR^+}^1.\]
\end{dfn}

Note that if $f:X\to Y$ is a morphism of smooth rigid analytic varieties over $k$, there is a natural map $f^{-1}\mO_{\mY^+}\to \mO_{\mX^+}$ and therefore there is a well-defined pullback functor $f^*$ of vector bundles from $\mY$ to $\mX$. In addition, if $\mE$ is a vector bundle on $\mY$ with an integrable connection $\nabla$, $f^*\mE$ admits a pullback connection in the usual way. Assume that $f$ is smooth. Then we have a short exact sequence 
\[0\to f^*\Omega_{\mY/\B_\dR}\to \Omega_{\mX/\B_\dR}\to \Omega_{X/Y}\hat\otimes_k\B_\dR\to 0.\] Note that the last term can be regarded as the sheaf of relative differentials $\Omega_{\mX/\mY}$.
If $(\mE,\nabla)$ is a vector bundle with an integrable connection on $\mX$, one can form the relative de Rham complex $\on{DR}_{\mX/\mY}(\mE,\nabla)$ as usual.
Then we define
$$Rf_{\dR,*}(\mE,\nabla)=Rf_*(\on{DR}_{\mX/\mY}(\mE,\nabla)).$$

\begin{lem}\label{L: pushforward OBdR} 

For any $-\infty\leq a\leq b\leq \infty$, let $\mO_{\mX}^{[a, b]}=\on{Fil}^a\mO_{\mX}/\on{Fil}^{b+1}\mO_\mX$. 

\begin{enumerate}
\item[(i)] We have $R\nu'_*\mO\bB^{[a, b]}_{\dR}\simeq \mO^{[a,b]}_{\mX}$ compatible with the natural filtrations. 

\item[(ii)] We have $R\nu'_*(\mO\bB_{\dR}\otimes_{\mO_X}\Omega^j_X)\simeq\Omega^j_{\mX/\B_\dR}$ as $\mO_{\mX}$-modules. 
\end{enumerate}
\end{lem}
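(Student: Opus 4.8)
The plan is to prove (i) by d\'evissage along the Hodge filtration, bootstrapping from the case $\mO\bB_\dR^{[0,0]}=\gr^0\mO\bB_\dR=\mO\bC$ that is already covered by Theorem \ref{T: p-adic Simpson}, and then to deduce (ii) from (i) by a projection formula. For (i) with $a,b$ finite I would induct on $b-a$. The base case $a=b$ reads $\mO\bB_\dR^{[a,a]}=\gr^a\mO\bB_\dR\simeq\mO\bC(a)$; since Tate twisting is exact and commutes with $R\nu'_*$ (being the tensor with a rank-one local system pulled back from $(X_K)_{\on{et}}$), Theorem \ref{T: p-adic Simpson}(i) and \eqref{E: unit pushforward} give $R^{>0}\nu'_*\mO\bB_\dR^{[a,a]}=0$ and $\nu'_*\mO\bB_\dR^{[a,a]}\simeq\mO_{X_K}(a)$, which matches $\mO_\mX^{[a,a]}=\gr^a\mO_\mX\simeq\mO_{X_K}(a)$ by Lemma \ref{L: graded}. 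For the inductive step I would apply $R\nu'_*$ to
\[0\to\gr^b\mO\bB_\dR\to\mO\bB_\dR^{[a,b]}\to\mO\bB_\dR^{[a,b-1]}\to 0;\]
the base case and the inductive hypothesis force $R^{>0}\nu'_*\mO\bB_\dR^{[a,b]}=0$ and produce an exact sequence $0\to\mO_{X_K}(b)\to\nu'_*\mO\bB_\dR^{[a,b]}\to\mO_\mX^{[a,b-1]}\to 0$ on $(X_K)_{\on{et}}$, which I compare via the five lemma with the tautological sequence $0\to\gr^b\mO_\mX\to\mO_\mX^{[a,b]}\to\mO_\mX^{[a,b-1]}\to 0$ using the natural map $\mO_\mX\to\nu'_*\mO\bB_\dR$, obtaining $\mO_\mX^{[a,b]}\xrightarrow{\sim}\nu'_*\mO\bB_\dR^{[a,b]}$ compatibly with filtrations (the comparison map being $\B_\dR$-linear). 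This comparison map is itself built, via Lemma \ref{bases top}, on an object $Y=\Spa(B,B^+)\to X_{k'}$ of $\mB$ from the structure map $B\to\mO\bB^+_{\dR}(Y_K)$ and the canonical map $\B^+_\dR\to\mO\bB^+_{\dR}(Y_K)$, which glue because they agree on $k'$, both being the unique continuous ring lift of $k'\to K$ along $\theta$ (Hensel's lemma for the complete discrete valuation ring $\B^+_\dR$).

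To cover $a=-\infty$ and/or $b=+\infty$ I would pass to limits. Since $|X|$ is quasi-compact, $X_{\on{proet}}$ is coherent by \cite[Proposition 3.12(vii)]{Sch2}, so $R\nu'_*$ commutes with filtered colimits, and writing $\mO\bB_\dR/\on{Fil}^{b+1}=\varinjlim_a\mO\bB_\dR^{[-a,b]}$ and $\mO_\mX/\on{Fil}^{b+1}=\varinjlim_a\mO_\mX^{[-a,b]}$ settles $a=-\infty$. For $b=+\infty$ one uses that $\on{Fil}^a\mO\bB_\dR=t^a\mO\bB_\dR^+$ is complete for the Hodge filtration while $\on{Fil}^a\mO_\mX=\varprojlim_b\mO_\mX^{[a,b]}$ by construction; the transition maps are surjective and the higher direct images of the terms vanish, so $R^1\varprojlim$ vanishes and $R\nu'_*$ commutes with these inverse limits, yielding the remaining cases (and, writing $\mO\bB_\dR=\varinjlim_a\on{Fil}^a\mO\bB_\dR$, in particular $R\nu'_*\mO\bB_\dR\simeq\mO_\mX$).

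For (ii): because $\Omega^j_X=\nu^*\Omega^j_{X_{\on{et}}}$ is the pro-\'etale pullback of a finite locally free $\mO_{X_{\on{et}}}$-module, one has $\mO\bB_\dR\otimes_{\mO_X}\Omega^j_X\simeq\mO\bB_\dR\otimes_{{\nu'}^{-1}\mO_{(X_K)_{\on{et}}}}{\nu'}^{-1}\Omega^j$ over $(X_K)_{\on{proet}}$, so the projection formula (verified \'etale-locally, where $\Omega^j$ is free) gives $R\nu'_*(\mO\bB_\dR\otimes_{\mO_X}\Omega^j_X)\simeq(R\nu'_*\mO\bB_\dR)\otimes_{\mO_{X_K}}\Omega^j$. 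By part (i) with $[a,b]=[-\infty,+\infty]$ this equals $\mO_\mX\otimes_{\mO_{X_{\on{et}}}}\Omega^j_{X_{\on{et}}}$, and since $\Omega^j_{X_{\on{et}}}$ is coherent the ordinary and completed tensor products agree, identifying it with $\Omega^j_{X_{\on{et}}}\hat\otimes_k\B_\dR=\Omega^j_{\mX/\B_\dR}$ (using $\Omega^1_{B/k'}=\Omega^1_{B/k}$, as $k'/k$ is finite separable).

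I expect the main obstacle to be the bookkeeping for the infinite cases in (i) --- checking that $R\nu'_*$ truly interchanges with the pertinent limits, which rests on the completeness of $\mO\bB_\dR^+$ along its Hodge filtration together with a Mittag-Leffler argument --- rather than the finite d\'evissage, which is essentially formal given Theorem \ref{T: p-adic Simpson}. A secondary point is the compatibility of the two maps out of $k'$ (from the structure sheaf and from the period ring) needed to make the comparison morphism $\mO_\mX\to R\nu'_*\mO\bB_\dR$ well defined.
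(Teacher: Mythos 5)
Your overall strategy mirrors the paper's: build a comparison map, prove (i) by d\'evissage along the Hodge filtration using the graded pieces handled by Theorem~\ref{T: p-adic Simpson}, pass to limits via coherence of $X_{\on{proet}}$ and a Mittag-Leffler argument, and deduce (ii) by exploiting local freeness of $\Omega^j$. The construction of the comparison map is presented differently --- you glue the structure map $B\to\mO\bB^+_{\dR}(Y_K)$ with the canonical map $\B^+_\dR\to\mO\bB^+_{\dR}(Y_K)$ via Hensel's lemma, whereas the paper writes down $\mO_{\mX^+}(Y_K)=\underleftarrow\lim_i (B^{+}\hat\otimes W(\mO_{K^\flat}))[1/p]/\xi^i$ and maps it directly into the $\bA_{\inf}$-presentation of $\mO\bB^+_\dR$ --- but these are two ways of saying the same thing.

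There is, however, a genuine slip in your proof of (ii): you write the projection formula as
\[
R\nu'_*(\mO\bB_\dR\otimes_{\mO_X}\Omega^j_X)\simeq(R\nu'_*\mO\bB_\dR)\otimes_{\mO_{X_K}}\Omega^j,
\]
i.e.\ a tensor over $\mO_{(X_K)_{\on{et}}}$, and you also reformulate the source as $\mO\bB_\dR\otimes_{{\nu'}^{-1}\mO_{(X_K)_{\on{et}}}}{\nu'}^{-1}\Omega^j$. Neither expression is well defined: $\mO\bB_\dR$ is only a sheaf of $\mO_X$-algebras (not of $\hat\mO_X$- or ${\nu'}^{-1}\mO_{(X_K)_{\on{et}}}$-algebras), and $\mO_{\mX}=\nu'_*\mO\bB_\dR$ is not an $\mO_{(X_K)_{\on{et}}}$-algebra, precisely because there is no canonical ring map $K\to\B_\dR$. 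This is exactly the subtlety the paper flags at the start of its proof of (ii): ``the restriction of $\mO_X$ on $X_K$ is `smaller' than the structure sheaf $\mO_{(X_K)_{\on{et}}}$.'' The correct bookkeeping is to keep the tensor over the uncompleted $\mO_X|_{(X_K)_{\on{proet}}}$ throughout, which the paper implements by computing on sections over $Y=\Spa(B,B^+)\to X_{k'}$ in $\mB$ and taking the tensor over $B$ (not over $B_K$): $\nu'_*(\mO\bB_{\dR}\otimes_{\mO_X}\Omega_X^j)(Y_K)=\nu'_*\mO\bB_{\dR}(Y_K)\otimes_B\Omega^j_B$. Your final formula $\mO_\mX\otimes_{\mO_{X_{\on{et}}}}\Omega^j_{X_{\on{et}}}\simeq\Omega^j_{\mX/\B_\dR}$ is correct and is what the section-level computation produces, so the error is in the intermediate step rather than the conclusion --- but as stated your middle equalities would not typecheck.
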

\begin{proof}

For Part (i), we first construct a morphism $\mO_{\mX^+}\to \nu'_*\mO\bB^+_{\dR}$ of filtered sheaves. For any $(Y=\Spa(B, B^+)\to X_{k'})\in \mB$,  let $U=\underleftarrow\lim_{j\in J}U_j$ be an affinoid perfectoid over $Y_K$. By construction, $\mO\bB^+_{\dR}(U)$ is the direct limit over $j$ of the $\ker\theta$-adic completion of $(\mO_X^+(U_j)\hat{\otimes}_{W(\kappa)}\bA_{\mathrm{inf}}(U))[1/p]$, where $\kappa$ is the residue field of $k$. It is clear to see that 
\[
\mO_{\mX^+}(Y_K)=\underleftarrow\lim_i B\hat\otimes_{k'}(\B_{\dR}^+/t^i)=\underleftarrow\lim_i (B^{+}\hat\otimes W(\mO_{K^\flat}))[1/p]/\xi^i
\]
naturally maps to $\mO\bB^+_{\dR}(U)$. Moreover, for any two affinoid perfectoids $U_1, U_2$ over $Y_K$, 
it is clear to see that the maps $\mO_{\mX^+}(Y_K)\to\mO\bB^+_{\dR}(U_i)$, $i=1,2$, coincide on the overlap $U_1\times_{Y_K} U_2$. Thus we obtain a morphism $\mO_{\mX^+}(Y)\to\mO\bB^+_{\dR}(Y_K)$. It is straightforward to check that these morphisms on sections give rise to a natural morphism 
$\mu: \mO_{\mX^+}\to \nu'_*\mO\bB^+_{\dR}$ on $\mB$, and thus on $(X_{K})_{\on{et}}$, which respects filtrations on both sides. 

Thus $\mu$ induces a morphism $\mO^{[a,b]}_{\mX}\to \nu'_*\mO\bB^{[a, b]}_{\dR}$. We will show that it is an isomorphism and $R^i\nu'_*\mO\bB^{[a, b]}_{\dR}=0$ for $i>0$.  In fact, using results in the previous section, Lemma \ref{L: graded} and by induction on $b-a$, we deduce that for $Y\in\mB$ and $a,b\in\bZ$, $H^i(X_{\on{proet}}/Y_K, \mO\bB^{[a, b]}_{\dR})=0$ for $i>0$, and  
\[
\mu: \mO^{[a,b]}_{\mX}(Y_K)\to \nu'_*\mO\bB^{[a, b]}_{\dR}(Y_K)
\] 
is an isomorphism. This proves (i) when $a,b$ are finite. We conclude the general case by using \cite[Lemma 3.18]{Sch2}  and the coherence of $X_{\on{proet}}$ \cite[Proposition 3.12(vii)]{Sch2}.  

For Part (ii), we may apply the adjunction formula to this situation. The subtlety is that the restriction of $\mO_X$ on $X_K$ is ``smaller"  than the structure sheaf $\mO_{(X_K)_{\on{et}}}$ due to the fact that $X_K$ is actually the ``completion" of the corresponding object in $X_{\on{proet}}$. But this is already enough to deduce that $R^i\nu'_*(\mO\bB_{\dR}\otimes_{\mO_X}\Omega^j_X)=0$ for $i>0$ by using (i). Moreover, for $(Y=\Spa(B, B^+)\to X_{k'})\in \mB$, by adjunction, we have 
\[
\nu'_*(\mO\bB_{\dR}\otimes_{\mO_X}\Omega_X^j)(Y_K)=\nu'_*\mO\bB_{\dR}(Y_K)\otimes_B\Omega^j_B\simeq \Omega^j_{\mX/\B_\dR}(Y_K).
\]
by (i). One easily checks that these isomorphisms on sections give rise to an isomorphism on $\mB$, and thus on $(X_K)_{\on{et}}$, as sheaves of $\mO_{\mX}$-modules.
 \end{proof}

Now we can state a geometric version of the $p$-adic Riemann-Hilbert correspondence.
\begin{thm}\label{T: p-adic RH}
\begin{enumerate}
\item[(i)] Let $\bL$ be a $\bQ_p$-local system on $X_{\on{et}}$. Then 
$R^i\nu'_*(\hat\bL\otimes\mO\bB^{[a,b]}_\dR)=0$
for $i>0$,  and the functor $\mR\mH(\bL):= \nu'_*(\hat\bL\otimes\mO\bB_\dR)$ is a tensor functor from the category of $\bQ_p$-\'etale local system on $X$ to the category of filtered vector bundles on $\mX$, equipped with a semi-linear $\Gal(K/k)$-action, and with an integrable connection
\[\nabla_\bL: \mR\mH(\bL)\to \mR\mH(\bL)\otimes_{\mO_\mX}\Omega_{\mX/\B_\dR}\]
that satisfy the Griffiths transversality.

\item[(ii)] There is a canonical isomorphism 
\[
(\gr^0\mR\mH(\bL),\gr^0(\nabla_\bL))\simeq(\mH(\bL),\vartheta_\bL),
\]
compatible with Higgs fields on both sides.

\item[(iii)] There is a canonical isomorphism 
$${\nu'}^*\mR\mH(\bL)\otimes_{{\nu'}^*\mO_{\mX}}\mO\bB_{\dR}|_{(X_K)_{\on{proet}}}\simeq (\hat\bL\otimes\mO\bB_{\dR})|_{(X_K)_{\on{proet}}},$$ 
compatible with the filtrations and connections on both sides.

\item[(iv)] If $f:X\to Y$ is a morphism of smooth rigid analytic varieties over $k$, then there is a natural isomorphism $f^*(\mR\mH(\bL), \nabla_\bL)\simeq (\mR\mH(f^*\bL), \nabla_{f^*\bL})$.

\item[(v)] Let $f:X\to Y$ be a smooth proper morphism of smooth rigid analytic varieties over $k$, and $\bL$ be a $\bZ_p$-local system on $X_{\on{et}}$. Assume that $R^if_*\bL$ is a $\bZ_p$-local system on $Y$. Then there is a natural isomorphism
\[
(\mR\mH(R^if_*\bL), \nabla_{R^if_*\bL})\simeq R^if_{\dR,*}(\mR\mH(\bL), \nabla_{\bL}).\]
\end{enumerate}
\end{thm}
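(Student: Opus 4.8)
The plan is to deduce everything from Theorem \ref{T: p-adic Simpson} together with the structure results on $\mO\bB_\dR$ recorded in Lemma \ref{L: pushforward OBdR}. I begin with the vanishing statement and the finiteness of $\mR\mH(\bL)$ in (i). By Corollary \ref{C: vanishing for OB} we already know $H^i(X_{\on{proet}}/U,\hat\bL\otimes\mO\bB_\dR^{[a,b]})=0$ for $i>0$ on affinoid perfectoids $U$; the point is to upgrade this to $R^i\nu'_*(\hat\bL\otimes\mO\bB_\dR^{[a,b]})=0$ and to identify $\nu'_*$. For the graded pieces $\mO\bB_\dR^{[a,a]}\cong\mO\bC(a)$ this is precisely Theorem \ref{T: p-adic Simpson}(i) (up to a Tate twist), which gives a vector bundle $\mH(\bL)(a)$. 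I then induct on $b-a$, using the short exact sequences $0\to\mO\bB_\dR^{[a+1,b]}\to\mO\bB_\dR^{[a,b]}\to\mO\bC(a)\to 0$ tensored with the locally free sheaf $\hat\bL$: the associated long exact sequence of $R\nu'_*$ kills the higher direct images and realizes $\nu'_*(\hat\bL\otimes\mO\bB_\dR^{[a,b]})$ as a successive extension of vector bundles on $(X_K)_{\on{et}}$, hence a vector bundle by Corollary \ref{C: vb}. Passing to the limit in $b$ and inverting $t$ (using coherence of $X_{\on{proet}}$ and \cite[Lemma 3.18]{Sch2}, exactly as in Lemma \ref{L: pushforward OBdR}) yields that $\mR\mH(\bL)=\nu'_*(\hat\bL\otimes\mO\bB_\dR)$ is a filtered vector bundle on $\mX$; the filtration $\on{Fil}^i\mR\mH(\bL)=\nu'_*(\hat\bL\otimes\on{Fil}^i\mO\bB_\dR)$ satisfies $t^i\on{Fil}^j=\on{Fil}^{i+j}$ because it does on $\mO\bB_\dR$. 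The integrable connection $\nabla_\bL$ is obtained by applying $\nu'_*$ to $\id_{\hat\bL}\otimes\nabla$ on $\hat\bL\otimes\mO\bB_\dR$ and using part (ii) of Lemma \ref{L: pushforward OBdR} to identify $\nu'_*(\hat\bL\otimes\mO\bB_\dR\otimes_{\mO_X}\Omega^1_X)$ with $\mR\mH(\bL)\otimes_{\mO_\mX}\Omega^1_{\mX/\B_\dR}$; Griffiths transversality and integrability are inherited from $\mO\bB_\dR$. The tensor-functor property follows from the projection formula once (iii) is known, arguing exactly as for \eqref{E: tensor Higgs}, \eqref{E: dual Higgs} in the proof of Theorem \ref{T: p-adic Simpson}(iv), with the reduction of the dual statement to the case of a point.

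Part (ii) is immediate: $\gr^0$ of the filtration on $\mO\bB_\dR$ is $\mO\bC$, $\gr^0$ commutes with $\nu'_*$ because the higher direct images of all graded and filtered pieces vanish (so the spectral sequence degenerates), and $\gr^0$ of $\nabla$ is the Higgs field $\gr(\nabla)$ of \eqref{E: Higgs field on A}; thus $(\gr^0\mR\mH(\bL),\gr^0\nabla_\bL)=(\nu'_*(\hat\bL\otimes\mO\bC),\nu'_*\gr\nabla)=(\mH(\bL),\vartheta_\bL)$. Part (iii) is the $\mO\bB_\dR$-analogue of Theorem \ref{T: p-adic Simpson}(ii): the natural map ${\nu'}^*\mR\mH(\bL)\otimes_{{\nu'}^*\mO_\mX}\mO\bB_\dR\to\hat\bL\otimes\mO\bB_\dR$ comes from adjunction, and to check it is an isomorphism one passes to graded pieces, where it becomes the isomorphism of Theorem \ref{T: p-adic Simpson}(ii) tensored appropriately; compatibility with filtrations and connections is then automatic. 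Part (iv) follows the same template as Theorem \ref{T: p-adic Simpson}(iii): one constructs the comparison map from the maps between period sheaves \eqref{E: str map 0}--\eqref{E: str map II} (now keeping track of the filtration and connection), reduces to the affinoid case with compatible toric charts \eqref{E: compatible chart}, and checks the map is an isomorphism on graded pieces by invoking Theorem \ref{T: p-adic Simpson}(iii) together with Lemma \ref{L: pushforward OBdR}; alternatively one can simply note $\mR\mH$ is built from $\mH$ by the filtered/connection package and deduce (iv) formally from Theorem \ref{T: p-adic Simpson}(iii).

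Part (v) is where the real work lies. The strategy is the one sketched for Theorem \ref{T: p-adic Simpson}(v), now at the level of $\mO\bB_\dR$ rather than $\mO\bC$. First one establishes the Poincaré-type acyclic complex on $X_{\on{proet}}$,
\[0\to f^*_{\on{proet}}\mO\bB_{\dR,Y}\to \mO\bB_{\dR,X}\xrightarrow{\nabla_{X/Y}}\mO\bB_{\dR,X}\otimes\Omega^1_{X/Y}\xrightarrow{\nabla_{X/Y}}\cdots,\]
relative to a smooth morphism $f$, which can be checked on a toric chart using the explicit description $\mO\bB_\dR\cong\mO\bB_\dR^+[t^{-1}]$ with $\mO\bB^+_\dR|_{\tilde X_\infty}$ a power series ring over $\B^+_\dR$ in the variables $V_i=t^{-1}\log([T_i^\flat]/T_i)$, exactly as \cite[Proposition 8.5]{Sch2}. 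Tensoring with $\hat\bL$ and pushing forward along $\nu'_X$, the left-hand side of the desired isomorphism becomes $Rf_{\dR,*}$ of the de Rham complex of $(\mR\mH(\bL),\nabla_\bL)$ by part (i) applied to $f$ smooth (so that $R^i\nu'_{X,*}$ of each term of the relative de Rham complex vanishes). For the other side one needs the $\mO\bB_\dR$-version of the projection formula \cite[Lemma 8.6]{Sch2}: for a smooth proper $f$ and a locally free $\mO_X$-module $\mM$ on $X_{\on{proet}}$, $Rf_{\on{proet},*}\mM\otimes_{\mO_Y}\mO\bB_{\dR,Y}\cong Rf_{\on{proet},*}(\mM\otimes_{\mO_X}f^*_{\on{proet}}\mO\bB_{\dR,Y})$; since $\mO\bB_\dR$ is an increasing union of the $\mO\bB_\dR^{[a,b]}$ and each $\mO\bB_\dR^{[a,b]}$ is a successive extension of twists of $\hat\mO_X$, this reduces to Scholze's lemma by dévissage and commuting cohomology with the relevant limits (coherence of the sites again). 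Combining with the hypothesis that $R^if_*\bL$ is a $\bZ_p$-local system — so that $\widehat{R^if_*\bL}\otimes\mO\bB_{\dR,Y}$ computes $R^i$ of $Rf_{\on{proet},*}(\hat\bL\otimes f^*_{\on{proet}}\mO\bB_{\dR,Y})$ after applying $\nu'_{Y,*}$ and using \cite[Theorem 8.6]{Sch2}-type base change for the local system $\bL$ — gives $(\mR\mH(R^if_*\bL),\nabla_{R^if_*\bL})\cong R^if_{\dR,*}(\mR\mH(\bL),\nabla_\bL)$. I expect the main obstacle to be this last step: carefully matching the filtration and the connection across the projection formula and the Leray spectral sequences, and in particular verifying that the identification is compatible with $\nabla$ (not just with the underlying $\mO_\mX$-modules), which requires tracking the Gauss–Manin construction through the comparison; the filtration-compatibility on $R^if_{\dR,*}$ then follows by strictness of the relevant spectral sequences, exactly as in the proof of the classical Hodge–Tate decomposition.
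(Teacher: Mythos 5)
Your proposal follows essentially the same route as the paper: parts (i)–(ii) via Theorem \ref{T: p-adic Simpson} together with the dévissage in Lemma \ref{L: pushforward OBdR}, (iii)–(iv) by adjunction/the period-sheaf comparison maps and reduction to graded pieces, and (v) by the relative Poincaré lemma and the $\mO\bB_\dR$-analogue of the projection formula. The paper treats (v) by simply pointing back to the $\mO\bC$ argument, whereas you spell out the dévissage and flag the connection-compatibility issue explicitly; this is appropriate care rather than a gap (note the statement of (v) makes no claim about filtrations, so the worry in your last sentence is moot).
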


\begin{proof} 
First, applying Theorem \ref{T: p-adic Simpson} we may proceed as in the proof of Lemma \ref{L: pushforward OBdR} to conclude that 
$R\nu'_*(\hat\bL\otimes\mO\bB_\dR^{[a,b]})=\nu'_*(\hat\bL\otimes\mO\bB_\dR^{[a,b]})$, that
$\nu'_*(\hat\bL\otimes\on{Fil}^0\mO\bB_\dR)$ is a locally free $\mO_{\mX^+}$-module of finite rank, and that $R\nu'_*(\hat\bL\otimes\mO\bB_\dR)=\nu'_*(\hat\bL\otimes\mO\bB_\dR)$ is the extension of scalars of $\nu'_*(\hat\bL\otimes\on{Fil}^0\mO\bB_\dR)$.  
In particular, the filtration on $\mR\mH(\bL)$ defined by $\on{Fil}^a\mR\mH(\bL):=\nu'_*(\hat\bL\otimes\on{Fil}^a\mO\bB_\dR)$ makes $\mR\mH(\bL)$ a filtered vector bundle on $\mX$.

Note that there is an integrable connection
\begin{equation}\label{E: proet connection}
\nabla:\hat\bL\otimes \mO\bB_{\dR}\to \hat\bL\otimes \mO\bB_{\dR}\otimes\Omega^1_X,
\end{equation}
by tensoring the natural integrable connection on $\mO\bB_{\dR}$ \eqref{E: connection}.
Pushing forward via $\nu'$, and by Lemma  \ref{L: pushforward OBdR}, we have
$$\nu'_*(\hat\bL\otimes \mO\bB_{\dR}\otimes \Omega^1_X)=\nu'_*(\hat\bL\otimes \mO\bB_{\dR})\otimes_{\mO_\mX} \Omega^1_{\mX/\B_\dR},$$
and therefore a connection
\[\nabla_\bL: \mR\mH(\bL)\to \mR\mH(\bL)\otimes_{\mO_\mX}\Omega_{\mX/\B_\dR}.\]
That it is integrable and satisfies the Griffiths transversality follows from  the corresponding statements for the connection \eqref{E: proet connection}.  In addition, since $\vartheta_\bL$ is defined as $\nu'_*(\gr(\nabla)) $, Part (ii) also follows. Then arguing as in Theorem \ref{T: p-adic Simpson} (iv), $\mR\mH$ is a tensor functor. The semi-linear action by $\Gal(K/k)$ is clear. We have established Part (i).
 
The map in Part (iii) comes from the adjunction ${\nu'}^*\nu'_*\to \id$ and that it is an isomorphism follows from Part (ii) and Theorem \ref{T: p-adic Simpson} (ii).

To prove Part (iv), first note that the map of period sheaves \eqref{E: str map I} induces a natural map
\begin{equation}\label{E: dR pullback}
f^*\mR\mH(\bL)=f^*\nu'_{X,*}(\hat\bL\otimes\mO\bB_{\dR,X})\to \mR\mH(f^*\bL)=\nu'_{Y,*}(\widehat{f^*\bL}\otimes\mO\bB_{\dR,Y})
\end{equation}
by a similar procedure as before. It remains to prove that it is an isomorphism. But this follows from Part (ii) and Theorem \ref{T: p-adic Simpson} (iii).

Finally  Part (v) follows from the same argument for Theorem \ref{T: p-adic Simpson} (v), with $\mO\bC$ replaced by $\mO\bB_{\dR}$.
\end{proof}

\begin{rmk}
One can reformulate the above theorem using Deligne's notion of $t$-connections. Namely, let
\[\mR\mH^+(\bL)=R\nu'_*(\hat\bL\otimes\mO\bB_{\dR}^{[0,\infty]}).\]
This is a vector bundle on $\mX^+$, equipped with a $\mathrm{B}_\dR^+$-linear connection
$$\nabla^+=t\nabla: \mR\mH^+(\bL)\to \mR\mH^+(\bL)\otimes_{\mO_\mX^+}\Omega_{\mX^+/\mathrm{B}_\dR^+}.$$ Then $\nabla^+$ is a $t$-connection of $\mR\mH^+(\bL)$ in the sense that 
$$\nabla^+(fm)=m\otimes tdf+f\nabla^+(m),\quad (\nabla^+)^2=0.$$ 
Note that its base change along $\mathrm{B}_\dR^+\to\mathrm{B}_\dR$ recovers $(\mR\mH(\bL),\nabla)$ and its base change  along $\mathrm{B}_\dR^+\to K,\ t\mapsto 0$ recovers
 $(\mH(\bL),\vartheta_\bL)$.
 
In fact, this is just part of the full picture. Namely, one should be able to attach a local system $\bL$ on $X$ a variation of $p$-adic twistors\footnote{We learned that L. Fargues independently observed this.}, which roughly speaking is a vector bundle on $X\times \mathrm{FF}$, where $\mathrm{FF}$ is the Fargues--Fontaine curve, equipped with a $t$-connection along $X$ direction. Restricting to the formal neighborhood of the $\infty$-point of $\mathrm{FF}$ then should recover the above theorem.
\end{rmk}

\subsection{An arithmetic Riemann-Hilbert correspondence}\label{S: arith p-adic RH theorem}
We continue with the notations as in the previous subsection. But instead of studying $R\nu'_*(\hat\bL\otimes\mO\bB_\dR)$, we consider 
$$D_\dR^i(\bL):=R^i\nu_*(\hat\bL\otimes\mO\bB_{\dR}).$$
By the Cartan-Leray spectral sequence,  
$$D_\dR^i(\bL)=H^i(\Gal(K/k),\varphi_*\mR\mH(\bL)),$$ 
where $\varphi: X_K\to X$ is the natural projection.
Similar to (and even simpler than) the previous subsection, by pushing forward \eqref{E: proet connection}, we obtain an integrable connection
\[\nabla_\bL: D_\dR^i(\bL)\to D_\dR^i(\bL)\otimes\Omega_{X_{\on{et}}}.\]

\begin{thm}\label{T: p-adic RH for de Rham}
\begin{enumerate}
\item[(i)] The pair $(D_\dR^i(\bL),\nabla_\bL)$ is a vector bundle with an integrable connection on $X$. It vanishes if $i\geq 2$. 

\item[(ii)] If $f: Y\to X$ is a morphism of smooth rigid analytic varieties over $k$, then there is a canonical isomorphism 
$$f^*(D_\dR^i(\bL),\nabla_{\bL})\simeq (D_\dR^i(f^*\bL),\nabla_{f^*\bL}),$$ 
where the pullback on the left hand side is understood as the usual pullback of vector bundles with a connection whereas on the right hand side as the pullback of \'etale local systems. \\

We further assume that $X$ is connected and there exists a classical point $x$ of $X$ such that $\bL_{\bar x}$ is de Rham.

\item[(iii)] The spectral sequence computing $D^i_\dR(\bL)$ associated to the natural filtration of $\mO\bB_{\dR}$ degenerates at $E_1$-term, which induces a decreasing filtration $\on{Fil}$ on $D_\dR^i(\bL)$ by sub-bundles such that the connection satisfies Griffiths transversality with respect to this filtration. In this case the isomorphism in $\mathrm{(ii)}$ respects this filtration.

\item[(iv)] The local system $\bL$ is a de Rham local system in the sense of \cite[Definition 8.3]{Sch2} such that $(D_\dR^0(\bL),\nabla_\bL, \on{Fil})$ is the associated filtered $\mO_X$-module with an integrable connection in the sense of \cite[Definition 7.4]{Sch2}. In addition, $D_{\dR}^1(\bL)\simeq D_{\dR}^0(\bL)$.

\item[(v)] The functor $D_{\dR}=D_\dR^0$ is a tensor functor from the category of de Rham local systems to the category of filtered $\mO_X$-modules with an integrable connection satisfying the Griffiths transversality.
\end{enumerate}
\end{thm}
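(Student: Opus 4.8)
The plan is to obtain everything by taking $\Gal(K/k)$-cohomology of the geometric Riemann--Hilbert correspondence of Theorem \ref{T: p-adic RH}. Make the harmless choice $K=\widehat{k_\infty}$, so that $\Gamma_k:=\Gal(K/k)=\Gal(k_\infty/k)$ is an open subgroup of $\bZ_p^\times$, a one-dimensional $p$-adic Lie group with $\on{cd}\Gamma_k\le 1$ on $\bQ_p$-vector spaces. By Cartan--Leray and the vanishing $R^{>0}\nu'_*(\hat\bL\otimes\mO\bB^{[a,b]}_\dR)=0$ from Theorem \ref{T: p-adic RH}(i), one has $D^i_\dR(\bL)=H^i(\Gamma_k,\varphi_*\mR\mH(\bL))$; on an affinoid $\Spa(A,A^+)\subset X$ admitting a toric chart, $\mR\mH(\bL)$ has global sections a finite projective $A\hat\otimes_k\B_\dR$-module $N$ with continuous semilinear $\Gamma_k$-action, with lattice $N^+=(\on{Fil}^0\mR\mH(\bL))(X_K)$ over $A\hat\otimes_k\B_\dR^+$, and by Theorem \ref{T: p-adic RH}(ii) we have $N^+/tN^+=M_K(X)$ as in Proposition \ref{P: local p-adic Simpson}, which descends to the finite projective $A_{k_{m_0}}$-module $M(X)$ carrying the $\Gamma_k$-action.

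The technical core is Part (i): $D^i_\dR(\bL)|_{\Spa(A,A^+)}=H^i(\Gamma_k,N)$ should be a coherent $A$-module, zero for $i\ge 2$, with formation compatible with \'etale base change. Vanishing for $i\ge 2$ is immediate from $\on{cd}\Gamma_k\le 1$ via Hochschild--Serre for the finite-by-$\bZ_p$ group $\Gamma_k$ acting on a $\bQ_p$-vector space. For coherence one runs a relative form of Sen's theory of $\B_\dR^+$-representations (local model: Brinon \cite{Br2}, with the decompletion inputs of \S\ref{S: p-adic Simpson}): on $M(X)$ the subgroup $\Gal(K/k_{m_0})\cong\bZ_p$ acts $A_{k_{m_0}}$-linearly and analytically, giving a Sen operator, and $H^\bullet(\Gal(K/k_{m_0}),M(X)\hat\otimes_{k_{m_0}}K)$ is computed by the two-term complex $[\,\cdot\xrightarrow{\gamma-1}\cdot\,]$; using Tate's computations $H^\bullet(\Gamma_k,\widehat{k_\infty}(j))=k$ for $j=0$ and $0$ otherwise, this identifies the answer with $A$-coherent modules built from the kernel and cokernel of the Sen operator, and one then lifts through the $t$-adic filtration via the sequences $0\to(N^+/tN^+)(j)\to N^+/t^{j+1}N^+\to N^+/t^jN^+\to 0$, passes to the limit, and inverts $t$. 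Each step being compatible with \'etale base change (as in \S\ref{S: p-adic Simpson}, together with flatness of $A\hat\otimes\B_\dR\to B\hat\otimes\B_\dR$) yields the base-change statement, and these local pieces glue to a coherent sheaf on $X$ by a Corollary \ref{C: vb}-type argument. The $\Gamma_k$-equivariant integrable connection on $\mR\mH(\bL)$ (Theorem \ref{T: p-adic RH}(i)) descends to an integrable connection $\nabla_\bL$; since a coherent sheaf with integrable connection on a smooth rigid variety over a field of characteristic $0$ is locally free, $D^i_\dR(\bL)$ is a vector bundle. Part (ii) then follows from the $\Gamma_k$-equivariant pullback isomorphism of Theorem \ref{T: p-adic RH}(iv): for flat $f$ it is the base-change statement, and a general $f$ is factored into a closed immersion and a smooth projection as in the proof of Theorem \ref{T: p-adic Simpson}(iii). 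The case $X=\{x\}$ of (i) is the classical identification $(V\otimes_{\bQ_p}\B_\dR)^{\Gal(\bar k/k)}=D_\dR(V)$.

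Now assume $X$ connected with $\bL_{\bar x}$ de Rham at a classical point $x$. By Part (ii), $x^*D^0_\dR(\bL)=D_\dR(\bL_{\bar x})$ has dimension $\on{rk}\bL$; since $D^0_\dR(\bL)$ is a vector bundle on the connected $X$, its rank is constant and hence equal to $\on{rk}\bL$, so $\bL_{\bar y}$ is de Rham at every classical $y$ (this rigidity, combined with resolution of singularities, gives Theorem \ref{Main thm}). The natural horizontal map $D^0_\dR(\bL)\hat\otimes_k\B_\dR\to\varphi_*\mR\mH(\bL)$ of vector bundles with connection on $\mX$ is an isomorphism at $x$ (this is the de Rham condition for $\bL_{\bar x}$), hence an isomorphism over all of $X$, because its kernel and cokernel are coherent with integrable connection, so locally free of locally constant rank, which must be $0$ on connected $X$. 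Thus $\mR\mH(\bL)\cong D^0_\dR(\bL)\hat\otimes_k\B_\dR$ compatibly with connection and $\Gal(K/k)$-action, and---by the relative Fontaine description of the Hodge filtration on $D_\dR$ of a de Rham $\B_\dR$-representation, using that $\gr^0\mR\mH(\bL)=\mH(\bL)$ is locally free---compatibly with a filtration $\on{Fil}^\bullet D^0_\dR(\bL)$ by subbundles. For Part (iii), apply $R\Gamma(\Gamma_k,-)$ to $\mR\mH(\bL)=D^0_\dR(\bL)\hat\otimes_k\B_\dR$ with its convolved filtration: each $\gr^a\mR\mH(\bL)=\mH(\bL)(a)$ equals $\bigoplus_j\gr^{a-j}D^0_\dR(\bL)\otimes_k\widehat{k_\infty}(j)$ with $\Gamma_k$ acting only on the twists, so $H^i(\Gamma_k,\gr^a\mR\mH(\bL))=\gr^a D^0_\dR(\bL)$ for $i\in\{0,1\}$ and $0$ otherwise, whence the $E_1$-page is supported on two antidiagonals, each isomorphic to $\gr^\bullet D^0_\dR(\bL)$. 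A nonzero differential would force $\on{rk}D^0_\dR(\bL)<\sum_a\on{rk}\gr^aD^0_\dR(\bL)=\on{rk}\bL$, contradicting the full-rank-ness just shown; hence the spectral sequence degenerates at $E_1$, producing $\on{Fil}^\bullet D^i_\dR(\bL)$ with $\gr^a D^i_\dR(\bL)\cong\gr^a D^0_\dR(\bL)$ for $i\in\{0,1\}$, in particular $D^1_\dR(\bL)\cong D^0_\dR(\bL)$; Griffiths transversality, and compatibility of (ii) with filtrations, are inherited from $\mR\mH(\bL)$.

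Part (iv) is then a matter of unwinding \cite[Definition 7.4]{Sch2} and \cite[Definition 8.3]{Sch2}: the triple $(D^0_\dR(\bL),\nabla_\bL,\on{Fil})$ together with $\mR\mH(\bL)\cong D^0_\dR(\bL)\hat\otimes_k\B_\dR$ and Theorem \ref{T: p-adic RH}(iii) gives, after $\Gal(K/k)$-descent from $(X_K)_{\on{proet}}$ to $X_{\on{proet}}$, a filtered and $\nabla$-compatible isomorphism $D^0_\dR(\bL)\otimes_{\mO_X}\mO\bB_\dR\cong\hat\bL\otimes\mO\bB_\dR$ of the required rank. Part (v) follows since $\mR\mH$ is a tensor functor and, for de Rham $\bL_1,\bL_2$, the tensor product $\bL_1\otimes\bL_2$ is again de Rham (its $D^0_\dR$ has the right rank), so taking $\Gal(K/k)$-invariants of $\mR\mH(\bL_1\otimes\bL_2)\cong\mR\mH(\bL_1)\otimes_{\mO_\mX}\mR\mH(\bL_2)$ identifies $D^0_\dR(\bL_1\otimes\bL_2)$ with $D^0_\dR(\bL_1)\otimes_{\mO_X}D^0_\dR(\bL_2)$, with matching connections and filtrations. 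The main obstacle is Part (i)---a genuinely relative Sen/$\B_\dR^+$-representation theory, where one must control both the analytic (Sen-operator) structure of the decompleted $\Gal(K/k_{m_0})$-action and the lifting through the $t$-adic filtration, all compatibly with \'etale localization; once this is in hand, the remaining parts are propagation from the point $x$ by connectedness and horizontality, plus bookkeeping with Scholze's definitions.
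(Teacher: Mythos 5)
Your proof of parts (i) and (ii) matches the paper's strategy at the level of ideas: local reduction to affinoids with toric charts, a Tate--Sen decompletion argument (the paper's Lemma~\ref{L: cohomology-descent}) to reduce the $\Gal(k_\infty/k)$-cohomology of the $\B_\dR$-representation to that of a finite module over an affinoid, bookkeeping through the $t$-adic filtration, and the ``coherent~$+$~flat connection~$\Rightarrow$~locally free'' argument. Your version is looser (e.g.\ the ``Sen operator'' language is a legitimate reframing, but the specific estimate the paper actually uses is the invertibility of $\gamma-1$ on the quotient $(M\hat\otimes K)/(M\otimes k_m)$), but the skeleton is the same. Part (ii) by factoring $f$ into a closed immersion and a smooth projection is also compatible with the paper's route, though the paper in fact deduces base change from the flatness of the Galois cochain complexes (Lemma~\ref{L: cohomology base change}) rather than by factorization of $f$.

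Where you genuinely diverge is part (iii), and here there is a real gap. You invert the logical order: you try to establish the isomorphism $\mR\mH(\bL)\simeq D^0_\dR(\bL)\hat\otimes_k\B_\dR$ first (the paper's Corollary~\ref{C: DdR and RH}(ii), which is an \emph{output} of (iii)), and then read off degeneration and the filtration. Two problems arise.

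\emph{First}, your decomposition $\gr^a\mR\mH(\bL)=\bigoplus_j\gr^{a-j}D^0_\dR(\bL)\otimes_k\widehat{k_\infty}(j)$, and the ensuing statement that each antidiagonal of the $E_1$-page is $\gr^\bullet D^0_\dR(\bL)$, already presupposes that $D^0_\dR(\bL)$ carries a filtration by \emph{subbundles} whose graded pieces are again bundles. That is precisely the content of (iii); without it, $\gr^a D^0_\dR(\bL)$ does not a priori make sense as a vector bundle, and you cannot compute the $E_1$-page in the stated form. The isomorphism $\mR\mH(\bL)\simeq D^0_\dR(\bL)\hat\otimes_k\B_\dR$ as bundles with connection and Galois action, by itself, does \emph{not} tell you that the filtration on the left corresponds to a filtration on $D^0_\dR(\bL)$ by subbundles convolved with the $t$-adic filtration. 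You appeal to ``the relative Fontaine description of the Hodge filtration,'' but that is exactly the assertion that needs proof in this relative setting; in the arithmetic-family analogue it is in fact false without extra hypotheses, which is why the paper takes care here. So your degeneration argument is circular.

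\emph{Second}, even granting the isomorphism, the step ``kernel and cokernel of $D^0_\dR(\bL)\hat\otimes_k\B_\dR\to\varphi_*\mR\mH(\bL)$ are coherent with flat $\B_\dR$-linear connection, hence locally free'' needs justification over the ringed space $\mX$: the structure sheaf $\mO_X\hat\otimes\B_\dR$ is not the structure sheaf of a smooth rigid variety over a non-archimedean field in the usual sense, and the standard argument (which the paper cites from \cite{Ke1} only for the honest rigid space $X$ over $k$) must be re-derived in this setting before it can be invoked.

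The paper's route for (iii) sidesteps both problems. It shows, by induction on $a$ and the six-term exact sequence for $\on{Fil}^{a+1}\subset\on{Fil}^a$, that each connecting map
\[
\nu_*(\hat\bL\otimes\gr^a\mO\bB_{\dR})\longrightarrow R^1\nu_*(\hat\bL\otimes\mO\bB_{\dR}^{[a+1,b]})
\]
vanishes; the key observation is that $R^1\nu_*$ (hence the source of the commutative square used there) commutes with base change to classical points, so it suffices to check the vanishing over each classical $y$, where it reduces to the classical fact that the corresponding connecting map vanishes for a de Rham $p$-adic representation. The flatness of the remaining terms, and then their projectivity, is extracted from the same exact sequences using the already-proven flatness of $H^1$. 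That is the key idea you are missing: do not try to prove the global isomorphism first; instead reduce the degeneration statement, filtration piece by filtration piece, to the corresponding classical vanishing statements at individual classical points via the base-change property of $R^1\nu_*$.
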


\begin{rmk}
Note that Parts (iii) and (iv) give a more practical way to check the de Rham property of a local system in the sense of \cite[Definition 8.3]{Sch2}. 
\end{rmk}

\begin{proof}
We start with the proof of Part (i). 
Note that since there exists an integrable connection on $D_\dR^i(\bL)$, it is enough to prove that $D_\dR^i(\bL)$ is a coherent $\mO_{X_{\on{et}}}$-module. Then it follows from the classical argument 
(cf. \cite[\S1.2]{Ke1}) that $D_{\dR}^i(\bL)$ is automatically a vector bundle.

To prove the coherence, we can assume that $X=\Spa(A,A^+)$ admits a toric chart as in the previous section, and let $K=\widehat{k_\infty}$ be the completion of the cyclotomic tower. Note that by definition, $D_{\dR}^i(\bL)$ is the sheafification of the presheaf
\[
Y\mapsto H^i(X_{\on{proet}}/Y, \hat\bL\otimes\mO\bB_\dR)=H^i(\Gal(k_{\infty}/k), \mR\mH(\bL)(Y_K)), \quad Y\in X_{\on{et}}.
\]
Note that $\gr^j\mR\mH(\bL)\simeq \mH(L)(j)$ by Theorem \ref{T: p-adic RH}. Therefore by (a variant of) Corollary \ref{C: vb}, to prove Part (i), it is enough to show that  
\begin{enumerate}
\item[(a)] For $i\geq0$, the cohomology group $H^i(\Gal(k_{\infty}/k), \mH(\bL)(j)(X_K))$ is a finite $A$-module. Moreover, it vanishes if $|j|\gg 0$ or $i\geq2$.
\item[(b)] For any  standard \'etale map $Y=\Spa(B,B^+)\to X$,  the natural base change map
$$H^i(\Gal(k_{\infty}/k), \mH(\bL)(j)(X_K))\otimes_{A}B\to H^i(\Gal(k_{\infty}/k), \mH(\bL)(j)(Y_K))$$
is an isomorphism.
\end{enumerate}
It remains to apply the following lemma.
\begin{lem}\label{L: cohomology-descent}
Let $M$ be a finite $A_{m_0}$-module endowed with a semi-linear continuous $\Gal(k_\infty/k)$-action. If $m\geq m_0$ is sufficiently large, then for any $\gamma\in\Gal(k_\infty/k)$ with $v_p(\chi(\gamma)-1)\geq m$, $\gamma-1$ is continuously invertible on $(M\hat{\otimes}_{k_{m_0}}K)/(M\otimes_{k_{m_0}}k_{m})$. Consequently, for $i\geq0$, the natural map 
\[
H^i(\Gal(k_\infty/k), M\otimes_{k_{m_0}}k_{m})\to H^i(\Gal(k_\infty/k), M\hat{\otimes}_{k_{m_0}}K)
\]
is an isomorphism. 
\end{lem}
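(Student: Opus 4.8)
The plan is to run the classical decompletion argument of Tate and Sen, adapted to coefficients in the finite $A_{m_0}$-module $M$. Write $\Gamma=\Gal(k_\infty/k)$, $\Gamma_m=\Gal(k_\infty/k_m)\cong\bZ_p$, and set $M_m=M\otimes_{k_{m_0}}k_m$, $N_m=(M\hat\otimes_{k_{m_0}}K)/M_m$. First I would invoke Tate's normalized traces: for $m$ larger than a constant $c_0=c_0(k)$ there is a continuous, $k_m$-linear, $\Gamma$-equivariant idempotent $R_m\colon K\to k_m$ of norm $1$, yielding a $\Gamma$-stable topological direct sum $K=k_m\oplus X_m$ with $X_m=\ker R_m$, together with Tate's estimate: there is a constant $c=c(k)\ge 0$, \emph{independent of $m$}, such that for a topological generator $\gamma_m$ of $\Gamma_m$ the operator $(\gamma_m-1)|_{X_m}$ is bijective with $\|(\gamma_m-1)|_{X_m}^{-1}\|\le p^{c}$. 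Tensoring this decomposition with $M$ over $k_{m_0}$ and completing (legitimate since $M$ is finite projective over $A_{m_0}$ and $k_m/k_{m_0}$ is finite) gives a $\Gamma$-equivariant topological splitting $M\hat\otimes_{k_{m_0}}K=M_m\oplus(M\hat\otimes_{k_{m_0}}X_m)$, which identifies $N_m$ with $M\hat\otimes_{k_{m_0}}X_m$ as a topological $\Gamma$-module.

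Next I would show that, for $m$ sufficiently large, a topological generator $\gamma_m$ of $\Gamma_m$ acts on $N_m$ invertibly with continuous inverse; this is the part of the first assertion that enters the proof of the displayed isomorphism. A direct computation on simple tensors gives $\gamma_m-1=\Delta_m+E_m$ on $N_m=M\hat\otimes_{k_{m_0}}X_m$, where $\Delta_m=\mathrm{id}_M\hat\otimes\big((\gamma_m-1)|_{X_m}\big)$ and $E_m=\big((\gamma_m-1)|_M\big)\hat\otimes\gamma_m$. By Tate's estimate and projectivity of $M$, the operator $\Delta_m$ is invertible with $\|\Delta_m^{-1}\|\le p^{c}$ \emph{uniformly in $m$}. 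On the other hand, choosing the $\gamma_m$ compatibly as successive $p$-th power iterates of one fixed deep generator and using continuity of the $\Gamma$-action on $M$, one gets $\|(\gamma_m-1)|_M\|\to 0$, hence $\|E_m\|\to 0$, as $m\to\infty$. Therefore, once $m$ is large enough that $\|E_m\|<p^{-c}$, the Neumann series for $(\mathrm{id}+\Delta_m^{-1}E_m)^{-1}$ converges and $\gamma_m-1=\Delta_m(\mathrm{id}+\Delta_m^{-1}E_m)$ is invertible with continuous inverse on $N_m$.

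The cohomological statement then follows formally. Since $\gamma_m-1$ is bijective on $N_m$, the two-term complex $[N_m\xrightarrow{\gamma_m-1}N_m]$ computing continuous $\Gamma_m$-cohomology is acyclic, so $H^i(\Gamma_m,N_m)=0$ for all $i$; as $\Gamma_m$ is open and normal in $\Gamma$ with finite quotient $\Gal(k_m/k)$, the Hochschild--Serre spectral sequence gives $H^i(\Gamma,N_m)=0$ for all $i\ge 0$. The long exact cohomology sequence attached to the $\Gamma$-equivariant, Banach-split short exact sequence $0\to M_m\to M\hat\otimes_{k_{m_0}}K\to N_m\to 0$ then yields that $H^i(\Gamma,M\otimes_{k_{m_0}}k_m)\to H^i(\Gamma,M\hat\otimes_{k_{m_0}}K)$ is an isomorphism for every $i\ge0$.

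The main obstacle is the interaction of the two competing $p$-adic scales in the second paragraph: Tate's constant $c=c(k)$ bounding $\Delta_m^{-1}$ does not improve with $m$, so one must work at a level $m$ deep enough that the perturbation $E_m$ induced by the $\Gamma$-action on the coefficient module $M$ becomes strictly smaller than $p^{-c}$. Making the decay $\|(\gamma_m-1)|_M\|\to 0$ effective — via the $p$-adic expansion of $\gamma_m$ as a power of a fixed generator, kept below the convergence radius $p^{-1/(p-1)}$ — is exactly where ``$m$ sufficiently large'' is used. A secondary technicality is that $M$ need not be free: the splitting $\gamma_m-1=\Delta_m+E_m$ and the uniform bound $\|\Delta_m^{-1}\|\le p^c$ are checked first for $M=A_{m_0}^N$ and then transported along a decomposition $M\oplus M'\cong A_{m_0}^N$.
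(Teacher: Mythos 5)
Your proof is essentially the same as the paper's: both split $K=k_m\oplus X_m$ via Tate's normalized trace, use the Tate–Sen estimate giving a uniform (in $m$) lower bound for $\gamma-1$ on $X_m$, exploit the decay of $\|(\gamma-1)|_M\|$ as $m$ grows (the paper makes this precise on $A_{m_0}$ via Kedlaya's Lemma 5.2 and then passes to $M$ by choosing generators, where you appeal to continuity and a free complement), and conclude via Hochschild–Serre plus the split short exact sequence. Your Neumann-series phrasing $\gamma-1=\Delta_m(\mathrm{id}+\Delta_m^{-1}E_m)$ is just a cleaner packaging of the perturbation estimate the paper writes out on simple tensors; no substantive difference.
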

\begin{proof}
This is a simple consequence of the Tate-Sen formalism. For $m\in\bN$, let  $X_m$ be the kernel of  Tate's normalized trace map $K\to k_m$. Then we have $X_m\oplus k_m\simeq K$. By the Tate-Sen conditions for the cyclotomic tower (cf. \cite[Proposition 4.1.1]{BC}), there exists $c>0$ such that for sufficiently large $m$, if $\gamma\in\Gal(k_\infty/k)$ satisfying $v_p(\chi(\gamma)-1)\geq m$, then $\gamma-1$ is invertible on $X_m$ and
\[
|(\gamma-1)^{-1}x|\leq c|x|, \qquad x\in X_m.
\]
It follows that $|(\gamma-1)x|\geq \frac{1}{c}|x|$ for $x\in X_m$.

Since $A_{m_0}$ is an affinoid algebra over $k$, it is naturally endowed with a $k$-Banach algebra structure. Moreover, since $M$ is finite over $A_{m_0}$, we may regard it as a Banach module over $A_{m_0}$. Note that it suffices to treat the case that $v_p(\chi(\gamma)-1)=m$. In this case, it remains to show that if $m$ is sufficiently large, then $\gamma-1$ is invertible on $M\hat{\otimes}_{k_{m_0}}X_m$. In fact, using \cite[Lemma 5.2]{Ke2}, if $m$ is sufficiently large, then for any $a\in A_{m_0}$,
\[
|(\gamma-1)a|\leq\frac{1}{2c}|a|.
\]
Using a finite set of generators of $M$ over $A_{m_0}$ and enlarging $m$ if necessary, we may further suppose that 
\[
|(\gamma-1)y|\leq\frac{1}{2c}|y|
\]
for any $y\in M$. Now for $y\in M, x\in X_m$, 
write 
\[
\gamma(y\otimes x)-y\otimes x=(\gamma-1)(y)\otimes \gamma(x)+y\otimes (\gamma-1)x.
\] 
If $m$ is sufficiently large, then 
\[
|(\gamma-1)y\otimes\gamma(x)|\leq\frac{1}{2c}|y\otimes x|, \quad |y\otimes (\gamma-1)x|\geq \frac{1}{c}|y\otimes x|.
\]
Thus $|(\gamma-1)(y\otimes x)|\geq \frac{1}{c}|y\otimes x|$. This yields the desired result.  Consequently, for $i\geq0$, we get
\[
H^i(\Gal(k_\infty/k), M\hat{\otimes}_{k_{m_0}}X_m)=0,
\]
by Hochschild-Serre spectral sequence. This proves the lemma.
\end{proof}
Now replacing the $A_{m_0}$-module $M(X)$ as in Proposition \ref{P: local p-adic Simpson}(P1) by $M(X)\otimes_{A_{m_0}}A_m$ if necessary, we can conclude using Lemma \ref{L: cohomology-descent} that
\[
H^i(\Gal(k_\infty/k), \mH(L)(j)(X_K))\simeq H^i(\Gal(k_\infty/k), M(X)(j))
\]
Clearly, $H^i(\Gal(k_\infty/k), M(X)(j))$ is a finite $A$-module, and vanishes if $|j|\gg 0$. Moreover, it vanishes if $i\geq2$ because $\Gal(k_\infty/k)$ has cohomological dimension $1$. 

By Proposition \ref{P: local p-adic Simpson}, $M(X)$ is compatible with standard \'etale base extensions. Note that standard \'etale morphisms are flat. Hence it is straightforward  (cf. \cite[Lemma 1.4.3]{Liu}) to see that $H^i(\Gal(k_\infty/k), M(X)(j))$ is compatible with standard \'etale base extensions. This yields (a) and (b), and therefore finishes the proof of Part (i) of the theorem.

\medskip

Next we prove Part (ii). 
First note that the map of period sheaves \eqref{E: str map I} induces a natural map
\begin{equation}\label{E: dR pullback}
f^*D^i_{\dR}(\bL)=f^*R^i\nu_{X,*}(\hat\bL\otimes\mO\bB_{\dR,X})\to D_{\dR}^i(f^*\bL)=R^i\nu_{Y,*}(\widehat{f^*\bL}\otimes\mO\bB_{\dR,Y})
\end{equation}
by a similar procedure as before. Namely, we have a similar map \eqref{E: str map III} with $\mO\bC$ replaced by $\mO\bB_{\dR}$ and a similar map \eqref{E: str map IV} with $\nu'$ replaced by $\nu$ and all functors replaced by their derived version. Then we obtain a derived version of \eqref{E: map1} in the current context. Note that since $D^i_{\dR}(\bL)=R^i\nu_{X,*}(\hat\bL\otimes\mO\bB_{\dR,X})$ is already locally free of finite rank, we can replace $Lf^*$ by $f^*$ and therefore obtain \eqref{E: dR pullback}. It remains to prove that it is an isomorphism.

Before proceeding, we introduce some notations. For $-\infty\leq a\leq b\leq \infty$, write
\[
\mR\mH^{[a,b]}(\bL)=\on{Fil}^a\mR\mH(\bL)/\on{Fil}^{b+1}\mR\mH(\bL).
\] 
Then $\nu'_*(\bL\otimes\mO\bB^{[a, b]}_{\dR})=\mR\mH^{[a,b]}(\bL)$ by virtue of Theorem \ref{T: p-adic RH}.
Without loss of generality, we may assume that $X=\Spa(A,A^+)$ and $Y=\Spa(B,B^+)$ are affinoid spaces admitting toric charts, and $\bL$ is a $\bZ_p$-local system. From the proof of Part (i) we have seen that under this situation there exist $a, b\in\bZ$ such that
\[
D_{\dR}^i(\bL)(X)=H^i(\Gal(k_\infty/k), \mR\mH^{[a,b]}(\bL)(X_K)),
\]
and
\[
D_{\dR}^i(f^*\bL)(Y)=H^i(\Gal(k_\infty/k),  \mR\mH^{[a,b]}(f^*\bL)(Y_K)).
\]
It follows that $H^i(\Gal(k_\infty/k),\mR\mH^{[a,b]}(\bL)(X_K))$ for $i=0,1$ are flat $A$-modules by Part (i). 
Note that in addition the standard complex computing 
\[
H^*(\Gal(k_\infty/k), \mR\mH^{[a,b]}(\bL)(X_K))
\] 
is a complex of flat $A$-modules (since $A_K$ is flat over $A$). This implies that 
\begin{equation}\label{E: base change}
H^i(\Gal(k_\infty/k), \mR\mH^{[a,b]}(\bL)(X_K))\otimes_AB\simeq H^i(\Gal(k_\infty/k),\mR\mH^{[a,b]}(\bL)(X_K)\otimes_AB).
\end{equation}
Therefore, to see that (\ref{E: dR pullback}) is an isomorphism, by (\ref{E: base change})  it is enough to apply the following result.

\begin{lem}\label{L: cohomology base change}
Let $X=\Spa(A,A^+)$ and $Y=\Spa(B,B^+)$ be smooth affinoid spaces admitting toric charts, and let $f: X\to Y$ be a morphism of rigid analytic varieties. Let $\bL$ be a $\bZ_p$-local system on $X_{\on{et}}$. Then for any $a,b\in\bZ$ and $i\geq0$,  the natural map
\[
H^i(\Gal(k_\infty/k),\mR\mH^{[a,b]}(\bL)(X_K)\otimes_AB)\to H^i(\Gal(k_\infty/k),\mR\mH^{[a,b]}(f^*\bL)(Y_K))
\]
is an isomorphism.
\end{lem}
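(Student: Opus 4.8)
My plan is to bootstrap from the pullback compatibility of $\mR\mH$ proved in Theorem~\ref{T: p-adic RH}(iv), and then to control the gap between the naive base change $\otimes_A B$ and the period--ring base change by the Tate--Sen machinery, exactly in the form used in the proof of Lemma~\ref{L: cohomology-descent}. First I would use Theorem~\ref{T: p-adic RH}(iv): there is a canonical isomorphism of filtered vector bundles $\mR\mH(f^*\bL)\simeq f^*\mR\mH(\bL)$; passing to $\on{Fil}^a/\on{Fil}^{b+1}$ and taking global sections over $Y_K$ (affinoid with a toric chart, so all sheaves in sight are acyclic by Proposition~\ref{P: equivalence}) this yields $\mR\mH^{[a,b]}(f^*\bL)(Y_K)\simeq N\otimes_R R'$, where $N:=\mR\mH^{[a,b]}(\bL)(X_K)$, $R:=\mO^{[a,b]}_\mX(X_K)$, $R':=\mO^{[a,b]}_\mY(Y_K)$, and $R\to R'$ is induced by $A\to B$. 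Writing $c:=b+1-a$, one has (up to the harmless twist by $t^a$) $R=A\hat\otimes_k(\B^+_\dR/t^c)$ and $R'=B\hat\otimes_k(\B^+_\dR/t^c)$, and by associativity of completed tensor products $R'=R\hat\otimes_A B$ is the completion of $R\otimes_A B$. Since $N$ is finite projective over $R$ (Proposition~\ref{P: equivalence}), the map of the lemma is the base change along this completion,
\[
N\otimes_A B\;=\;N\otimes_R(R\otimes_A B)\;\longrightarrow\;N\otimes_R R',
\]
and (as $N$ is flat over $R$) its cone is $N\otimes_R\mC$ with $\mC:=\on{cone}(R\otimes_A B\to R')$. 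Thus the lemma is equivalent to the vanishing $R\Gamma(\Gal(k_\infty/k),\,N\otimes_R\mC)=0$.

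Next I would dévisser along the Hodge--Tate (i.e. $t$-adic) filtration. Its graded pieces on $R$ are $\gr^j R=A\hat\otimes_k K(j)=A_K(j)$, which are flat over $A$ because $A_K$ is; hence $\otimes_A B$ is exact on this filtration of $R$, and $\mC$ acquires a finite filtration with $\gr^j\mC=\bigl(\on{cone}(A_K\otimes_A B\to B_K)\bigr)(j)$. Tensoring with the flat $R$-module $N$ and using $N\otimes_R A_K=N/tN$, which is a finite projective $A_K$-module equal to a Tate twist of $M_K(X)=\mH(\bL)(X_K)$ (Theorem~\ref{T: p-adic Simpson}), I get $\gr^j(N\otimes_R\mC)=\bigl(\on{cone}(M_K(X)\otimes_A B\to M_K(X)\otimes_{A_K}B_K)\bigr)$ up to a Tate twist. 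By Theorem~\ref{T: p-adic Simpson}(iii), $M_K(X)\otimes_{A_K}B_K=\mH(f^*\bL)(Y_K)$, so this is precisely the graded-piece analogue of the lemma for the $p$-adic Simpson functor, and it remains to prove that its $\Gal(k_\infty/k)$-cohomology vanishes in every degree and for every twist appearing (a finite list, since the Hodge--Tate weights of $\bL$ are bounded).

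Finally I would invoke Tate--Sen. Write $M_K(X)=M(X)\hat\otimes_{k_{m_0}}K=M(X)\otimes_{A_{m_0}}A_K$ for the finite $A_{m_0}$-module $M(X)$ of Proposition~\ref{P: local p-adic Simpson}(P1), so the cone above is $M(X)\otimes_{A_{m_0}}\on{cone}(A_K\otimes_A B\to B_K)$ (with a twist). Applying Tate's normalized trace $K=k_m\oplus X_m$ for $m\gg m_0$: since $A\hat\otimes_k k_m$ is finite over $A$, the $k_m$-parts of $A_K\otimes_A B$ and of $B_K$ both equal $B_m$ and cancel, so the cone is concentrated in the $X_m$-direction, i.e.\ is $\on{cone}\bigl((A\hat\otimes_k X_m)\otimes_A B\to B\hat\otimes_k X_m\bigr)$ (with a twist). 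On $A\hat\otimes_k X_m$ the operator $\gamma-1$ is invertible with continuous $A$-linear inverse for $\gamma\in\Gal(k_\infty/k)$ with $v_p(\chi(\gamma)-1)$ large (Tate--Sen conditions for the cyclotomic tower, as in the proof of Lemma~\ref{L: cohomology-descent}); tensoring the inverse up over $A$ keeps $\gamma-1$ invertible on $(A\hat\otimes_k X_m)\otimes_A B$, and a cyclotomic twist $(j)$ changes $\gamma-1$ only by the unit $\chi(\gamma)^j$ plus a term of norm $\le|\chi(\gamma)-1|$, so invertibility persists after enlarging $m$. Since $\gamma$ acts $A_{m_0}$-linearly on $M(X)$ for such $\gamma$ and its action is sufficiently close to the identity as $v_p(\chi(\gamma)-1)\to\infty$ (using \cite[Lemma~5.2]{Ke2}, as in Lemma~\ref{L: cohomology-descent}), $\gamma-1$ is invertible on the whole module $M(X)\otimes_{A_{m_0}}\on{cone}(\cdots)(j)$. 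Hence $H^*(\Gal(k_\infty/k_m),-)=0$ for it, and a Hochschild--Serre step over the finite group $\Gal(k_m/k)$ (harmless, since we work over $\bQ_p$-algebras) gives the vanishing of $H^*(\Gal(k_\infty/k),-)$, completing the dévissage.

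I expect the main obstacle to be Step~1 together with the bookkeeping of the dévissage in Step~2: one must recognize $\mR\mH^{[a,b]}(f^*\bL)(Y_K)$ as the \emph{completed} base change $N\hat\otimes_A B$ rather than $N\otimes_A B$ --- this is where Theorem~\ref{T: p-adic RH}(iv) and the associativity of $\hat\otimes$ are essential --- and then verify that the discrepancy lives entirely in the $X_m$-direction, where the Tate--Sen estimate of Lemma~\ref{L: cohomology-descent} applies essentially verbatim. The two flatness inputs ($A_K$ over $A$ and $N$ over $R$) are exactly what legitimize filtering through the graded pieces despite $f$ not being flat; once those are in place the remaining estimates are routine.
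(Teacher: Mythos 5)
Your proposal is correct, and it reaches the same destination as the paper's argument but via a different bookkeeping scheme. The paper reduces at once to the graded (Hodge--Tate) pieces $\mH(\bL)(X_K)(j)$, defines the ``model'' $M(Y):=M(X)\otimes_AB$ using Proposition \ref{P: local p-adic Simpson}(P1), verifies via Theorem \ref{T: p-adic Simpson}(iii) that $M(Y)\otimes_{B_{k_{m_0}}}B_K=\mH(f^*\bL)(Y_K)$, then applies Lemma \ref{L: cohomology-descent} on \emph{both} the source and the target to descend their cohomology to the finite-level modules $M(X)\otimes_{k_{m_0}}k_m$ and $M(Y)\otimes_{k_{m_0}}k_m$, observes that the first tensored with $B$ over $A$ equals the second, and concludes. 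You instead invoke Theorem \ref{T: p-adic RH}(iv) up front to recognize $\mR\mH^{[a,b]}(f^*\bL)(Y_K)$ as the completed base change $N\hat\otimes_AB$, reformulate the lemma as the vanishing of $R\Gamma(\Gal(k_\infty/k),\,N\otimes_R\mC)$ for the cone $\mC$ of $R\otimes_AB\to R'$, d\'evisse along the $t$-adic filtration to the cone $\on{cone}(M_K(X)\otimes_AB\to M_K(X)\otimes_{A_K}B_K)$, and then run the Tate--Sen argument (Tate's normalized trace, invertibility of $\gamma-1$ on the $X_m$-part) directly on this cone rather than quoting Lemma \ref{L: cohomology-descent} twice. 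The two approaches share the same essential inputs (Proposition \ref{P: local p-adic Simpson}(P1), Theorem \ref{T: p-adic Simpson}(iii), Tate--Sen for the cyclotomic tower, Kedlaya's estimate from \cite[Lemma 5.2]{Ke2}); the ``cone of the completion map'' packaging you use is conceptually clean and makes the cancellation of the $k_m$-parts vivid, but it forces you to re-derive the quantitative invertibility estimate inline, whereas the paper can simply cite Lemma \ref{L: cohomology-descent} twice and then observe $(M(X)\otimes_{k_{m_0}}k_m)\otimes_AB=M(Y)\otimes_{k_{m_0}}k_m$. Also note your Step 1 is slightly overpowered: since you immediately pass to graded pieces, you really only need the graded isomorphism from Theorem \ref{T: p-adic Simpson}(iii), not the full $\B_\dR$-level statement of Theorem \ref{T: p-adic RH}(iv) --- though the latter is proved before this lemma, so there is no circularity. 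Two minor slips worth flagging: you write $A_{m_0}$ for what the paper calls $A_{k_{m_0}}=A\otimes_k k_{m_0}$ (distinct from the tower ring $A_{m_0}$), and the $k_m$-part of $B_K$ is $B_{k_m}=B\otimes_k k_m$, not the tower ring $B_m$.
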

\begin{proof}
It is enough to prove that for $j\in[a,b]$, the natural map
\[
H^i(\Gal(k_\infty/k),\mH(\bL)(X_K)(j)\otimes_AB)\to H^i(\Gal(k_\infty/k),\mH(f^*\bL)(Y_K)(j))
\]
is an isomorphism. Now let $M(X)$ be as in Proposition \ref{P: local p-adic Simpson} (P1) for the local system $\bL$. Then $M(X)$ is a finite projective $A_{m_0}$-module for some sufficiently large $m_0$ such that $\mH(\bL)(X_K)=M(X)\otimes_{A_{m_0}}A_K$. Let $M(Y)=M(X)\otimes_{A}B$. It follows that 
\[
M(Y)\otimes_{B_{m_0}}B_K=(M(X)\otimes_{A_{m_0}}A_K)\otimes_{A_K}B_K=\mH(\bL)(X_K)\otimes_{A_K}B_K=\mH(f^*\bL)(Y_K)
\] 
by Theorem \ref{T: p-adic Simpson}(iii). Now we apply Lemma \ref{L: cohomology-descent} to get some sufficiently large $m$, so that $\gamma-1$ is continuously invertible on both 
$$\mH(\bL)(X_K)(j)/(M(X)(j)\otimes_{k_{m_0}}k_m)$$ 
and 
$$\mH(f^*\bL)(Y_K)(j)/(M(Y)(j)\otimes_{k_{m_0}}k_m)$$
whenever $v_p(\chi(\gamma)-1)\geq m$. Thus $\gamma-1$ is continuously invertible on 
$$(\mH(\bL)(X_K)(j)\otimes_AB)/((M(X)(j)\otimes_{k_{m_0}}k_m)\otimes_AB).$$
 Therefore, we deduce that
 \begin{align*}
 H^i(\Gal(k_\infty/k), \mH(\bL)(X_K)(j)\otimes_AB))&=H^i(\Gal(k_\infty/k), (M(X)(j)\otimes_{k_{m_0}}k_m)\otimes_AB)\\
&=H^i(\Gal(k_\infty/k), M(Y)(j)\otimes_{k_{m_0}}k_m)\\
&=H^i(\Gal(k_\infty/k), \mH(f^*\bL)(Y_K)(j)). 
\end{align*}
\end{proof}

Next, we prove Part (iii). Again let $K=\widehat{k_\infty}$. The question is local, so we may assume that $X=\Spa(A,A^+)$ is an affinoid space admitting a toric chart. To proceed, first note that by assumption and Parts (i), (ii), $\bL_{\bar y}$ is de Rham for any classical point $y$. 
Now we fix a sufficiently large $b$ so that 
$R\nu_*(\hat\bL\otimes\gr^{b'}\mO\bB_{\dR})=0$ whenever $b'\geq b$. Now for every interval $[a,b]$, we have the six-term exact sequence
\begin{multline}\label{M: 6 terms}
0\to \nu_*(\hat\bL\otimes\mO\bB_{\dR}^{[a+1,b]})\to  \nu_*(\hat\bL\otimes\mO\bB_{\dR}^{[a,b]})\to \nu_*(\hat\bL\otimes\gr^a\mO\bB_{\dR})\to \\
R^1\nu_*(\hat\bL\otimes\mO\bB_{\dR}^{[a+1,b]})\to  R^1\nu_*(\hat\bL\otimes\mO\bB_{\dR}^{[a,b]})\to R^1\nu_*(\hat\bL\otimes\gr^a\mO\bB_{\dR})\to 0.
\end{multline}
We will show that each term in the exact sequence is a finite locally free $\mO_{X_{\on{et}}}$-module, and the connecting map $\nu_*(\hat\bL\otimes\gr^a\mO\bB_{\dR})\to 
R^1\nu_*(\hat\bL\otimes\mO\bB_{\dR}^{[a+1,b]})$ is zero. Note that for $a\ll 0$, $R\nu_*(\hat\bL\otimes \gr^a\mO\bB_{\dR})=0$ and 
\[
R\nu_*(\hat\bL\otimes\mO\bB_{\dR}^{[a+1,b]})=R\nu_*(\hat\bL\otimes\mO\bB_{\dR}^{[a,b]})
=R\nu_*(\hat\bL\otimes\mO\bB_{\dR})
\] 
Thus the claim holds automatically in this case. Therefore we may do induction on $a$ and assume that the claim holds for $a-1$.

For simplicity, write 
$M^{[a,b]}= \mR\mH^{[a,b]}(\bL)(X_K)$
and $M=\mH(\bL)(X_K)$.
Then instead of \eqref{M: 6 terms}, we may consider the following exact sequence of Galois cohomology
\begin{multline}\label{M: Gal}
0\to H^0(\Gal(k_\infty/k),M^{[a+1,b]})\to  H^0(\Gal(k_\infty/k),M^{[a,b]})\to  H^0(\Gal(k_\infty/k),M(a))\to \\
 H^1(\Gal(k_\infty/k),M^{[a+1,b]})\to   H^1(\Gal(k_\infty/k),M^{[a,b]})\to H^1(\Gal(k_\infty/k),M(a))\to 0.
\end{multline}
Note that taking $H^1$ always commutes with base change to a classical point $y$.
Therefore, in the following commutative diagram of the connecting maps
\[\begin{CD}
H^0(\Gal(k_\infty/k),M(a))\otimes k(y)@>>>  H^1(\Gal(k_\infty/k),M^{[a+1,b]})\otimes k(y) \\
@VVV@VV\simeq V\\
H^0(\Gal(k_\infty/k),M(a)\otimes k(y)) @>>>  H^1(\Gal(k_\infty/k),M^{[a+1,b]}\otimes k(y)),
\end{CD}\]
the right vertical arrow is an isomorphism.
Note that by Lemma \ref{L: cohomology base change} and Remark \ref{R: point p-adic Simpson}, the bottom line coincides with
\[H^0(\Gal(k_\infty/k), (\bL'_{\bar y}\otimes \hat{\bar k}(a))^{\Gal(\hat{\bar k}/k_\infty)})\to H^1(\Gal(k_\infty/k), (\bL'_{\bar y}\otimes \mathrm{B}_{\dR}^{[a+1,b]})^{\Gal(\hat{\bar k}/k_\infty)})\]
with $\bL'_{\bar y}=\Ind_{\Gal(\bar k/k(y))}^{\Gal(\bar k/k)}\bL_{\bar y}$. Here  $\mathrm{B}_{\dR}$ stands for Fontaine's de Rham period ring (rather than $\bB_\dR(K,\mO_K)$ considered in \S~\ref{S: p-adic RH theorem}). Since $\bL_{\bar y}$ is de Rham,  $\bL'_{\bar y}$ is de Rham as well. Thus it is a zero map.  
So the map in the top row is also zero. Since this is true that every classical point, the connecting map 
\[
H^0(\Gal(k_\infty/k),M(a))\to
 H^1(\Gal(k_\infty/k),M^{[a+1,b]})
 \] 
 must be zero. In particular, \eqref{M: Gal} breaks into two short exact sequences 
 \begin{equation}\label{E: R0}
0\to H^0(\Gal(k_\infty/k),M^{[a+1,b]})\to  H^0(\Gal(k_\infty/k),M^{[a,b]})\to  H^0(\Gal(k_\infty/k),M(a))\to 0
 \end{equation} 
 and 
 \begin{equation}\label{E: R1}
 0\to  H^1(\Gal(k_\infty/k),M^{[a+1,b]})\to   H^1(\Gal(k_\infty/k),M^{[a,b]})\to H^1(\Gal(k_\infty/k),M(a))\to 0.
 \end{equation}
Similarly, using Lemma \ref{L: cohomology base change}  and Remark \ref{R: point p-adic Simpson} again, the base change of (\ref{E: R1}) to $y$ coincides with the sequence
\begin{multline}
0\to H^1(\Gal(k_\infty/k), (\bL'_{\bar y}\otimes \mathrm{B}_{\dR}^{[a+1,b]})^{\Gal(\hat{\bar{k}}/k_\infty)})\to \\ H^1(\Gal(k_\infty/k),(\bL'_{\bar y}\otimes \mathrm{B}_{\dR}^{[a,b]})^{\Gal(\hat{\bar{k}}/k_\infty)})
\to H^1(\Gal(k_\infty/k), (\bL'_{\bar y}\otimes \hat{\bar k}(a))^{\Gal(\hat{\bar{k}}/k_\infty)}\to 0,
\end{multline}
which remains exact because $\bL'_{\bar y}$ is de Rham.
Since $H^1(\Gal(k_\infty/k),M^{[a,b]})$ is projective over $A$ by induction, we deduce that $H^1(\Gal(k_\infty/k),M(a))$ is flat over $A$, and therefore projective over $A$. Thus $H^1(\Gal(k_\infty/k),M^{[a+1,b]})$ is a projective $A$-module as well. 

For terms in (\ref{E: R0}), first note that the cohomology of $M(a)$ is computed by the complex 
\[
M(a)^{\Delta}\stackrel{\gamma-1}\to M(a)^{\Delta},
\]
where $\Delta$ is the torsion subgroup of $\Gal(k_\infty/k)$, and $\gamma$ is a topological generator of $\Gal(k_\infty/k)/\Delta$. Since $\Delta$ is a finite group and $M$ is flat over $A$, $M^{\Delta}$ is flat over $A$ as well. Thus the flatness of $H^1(\Gal(k_\infty/k), M(a))$ ensures the flatness of $H^0(\Gal(k_\infty/k), M(a))$. Thus $H^0(\Gal(k_\infty/k),M^{[a+1,b]})$ is flat as well since $H^0(\Gal(k_\infty/k),M^{[a,b]})$ is flat by induction. 

\medskip

Let us record the following corollary of the proof.
\begin{cor}\label{C: DdR and RH}
Assume that $\bL$ is a de Rham local system on $X$. Then 
\begin{enumerate}
\item[(i)] $\bL$ is Hodge-Tate in the sense of Remark \ref{R: HT local system};

\item[(ii)] 
$(\mR\mH(\bL),\nabla_\bL)\simeq (D_\dR(\bL),\nabla_\bL)\hat\otimes_k\B_\dR$.
\end{enumerate}
\end{cor}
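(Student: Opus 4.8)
The plan is to read off both statements from Theorem~\ref{T: p-adic RH for de Rham} and its proof, together with the period-sheaf computation in Lemma~\ref{L: pushforward OBdR}.

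For~(i), I would observe that the Hodge filtration constructed in part~(iii) of Theorem~\ref{T: p-adic RH for de Rham} already exhibits $\bL$ as Hodge--Tate. Indeed, by Theorem~\ref{T: p-adic RH}(ii) and the relation $t^i\on{Fil}^j\mR\mH(\bL)=\on{Fil}^{i+j}\mR\mH(\bL)$ one has $\gr^j\mR\mH(\bL)\simeq\mH(\bL)(j)$, so the $E_1$-page of the spectral sequence computing $D^\bullet_\dR(\bL)=H^\bullet(\Gal(K/k),\varphi_*\mR\mH(\bL))$ has $(-j,j)$-entry $H^0(\Gal(K/k),\varphi_*\mH(\bL)(j))=\nu_*(\hat\bL\otimes\mO\bC(j))$, using Theorem~\ref{T: p-adic Simpson}(i). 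Since this spectral sequence degenerates at $E_1$ and the resulting filtration on $D_\dR(\bL)$ is by sub-bundles (Theorem~\ref{T: p-adic RH for de Rham}(iii)), summing over $j$ gives a canonical isomorphism $\gr_{\on{Fil}}D_\dR(\bL)\simeq\bigoplus_j\nu_*(\hat\bL\otimes\mO\bC(j))=\nu_*(\hat\bL\otimes\mO\bB_{\on{HT}})$. As $D_\dR(\bL)$ is a vector bundle with $\on{rk}D_\dR(\bL)=\on{rk}\bL$ by Theorem~\ref{T: p-adic RH for de Rham}(iv), the right-hand side is a vector bundle of the expected rank, which is the defining property of a Hodge--Tate local system in Remark~\ref{R: HT local system}.

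For~(ii), the key input is the de Rham admissibility established in Theorem~\ref{T: p-adic RH for de Rham}(iv): there is a canonical isomorphism $D_\dR(\bL)\otimes_{\mO_X}\mO\bB_\dR\simeq\hat\bL\otimes_{\hat\bQ_p}\mO\bB_\dR$ of sheaves on $X_{\on{proet}}$, compatible with filtrations and connections. I would restrict this isomorphism to $(X_K)_{\on{proet}}$ and apply $R\nu'_*$. On the left-hand side, since $D_\dR(\bL)$ is finite locally free over $\mO_X$ (so completed and uncompleted tensor products agree), the projection formula together with Lemma~\ref{L: pushforward OBdR}(i) applied with $[a,b]=[-\infty,\infty]$ (where $R\nu'_*\mO\bB_\dR=\mO_X\hat\otimes\B_\dR$) yields $(\mO_X\hat\otimes\B_\dR)\otimes_{\mO_{X_K}}{\nu'}^{*}D_\dR(\bL)=D_\dR(\bL)\hat\otimes_k\B_\dR$; on the right-hand side one gets $\nu'_*(\hat\bL\otimes\mO\bB_\dR)=\mR\mH(\bL)$, since $R^i\nu'_*(\hat\bL\otimes\mO\bB_\dR)=0$ for $i>0$ by Theorem~\ref{T: p-adic RH}(i). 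It remains to match connections: the connection on $D_\dR(\bL)\otimes_{\mO_X}\mO\bB_\dR$ is $\nabla_\bL\otimes1+1\otimes\nabla_{\mO\bB_\dR}$, and under $R\nu'_*$ the second summand becomes the tautological $\B_\dR$-linear derivation of $\mO_X\hat\otimes\B_\dR$; hence the pushed-forward connection is exactly $\nabla_\bL\hat\otimes_k\id_{\B_\dR}$, and since the admissibility isomorphism is horizontal this agrees with the construction of $\nabla_\bL$ on $\mR\mH(\bL)$ in Theorem~\ref{T: p-adic RH}(i). Being a filtered isomorphism, the identification also respects the natural filtrations on both sides.

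I expect the only delicate point to be the projection-formula step $R\nu'_*\bigl(\mO\bB_\dR\otimes_{\mO_X}{\nu'}^{*}\mathcal E\bigr)\simeq(R\nu'_*\mO\bB_\dR)\otimes_{\mO_{X_K}}\mathcal E$ for finite locally free $\mathcal E$: one must use finiteness of $\mathcal E$ and treat the base change over the \emph{uncompleted} pro-\'etale structure sheaf $\mO_X$, exactly as in the proof of Lemma~\ref{L: pushforward OBdR}(ii). If one wishes to bypass this, (ii) can instead be obtained by constructing the base-change morphism $D_\dR(\bL)\hat\otimes_k\B_\dR\to\mR\mH(\bL)$ directly out of $D_\dR(\bL)=(\varphi_*\mR\mH(\bL))^{\Gal(K/k)}$ and checking it is a filtered isomorphism on associated gradeds by means of part~(i); here one uses that a filtered morphism of $\mO_\mX$-modules that is an isomorphism on $\gr$ is an isomorphism, which holds because the relevant $\mO_{\mX^+}$-lattices are $t$-adically complete.
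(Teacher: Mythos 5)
Your argument for~(i) is essentially correct: the degeneration of the spectral sequence established in Theorem~\ref{T: p-adic RH for de Rham}(iii) (namely, the short exact sequence~\eqref{E: R0}) identifies $\gr_{\on{Fil}}D_\dR(\bL)$ with $\bigoplus_j\nu_*(\hat\bL\otimes\mO\bC(j))=\nu_*(\hat\bL\otimes\mO\bB_{\on{HT}})$, and a rank count finishes. However, you cite Theorem~\ref{T: p-adic RH for de Rham}(iv) for $\on{rk}D_\dR(\bL)=\on{rk}\bL$, which you cannot do: this corollary is recorded \emph{before} the proof of~(iv), and the paper proves~(iv) \emph{using} this corollary. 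The rank statement instead follows from~(i)--(ii) of that theorem together with the observation (made at the start of the proof of~(iii)) that $\bL_{\bar y}$ is de Rham at every classical point: by~(ii), $D_\dR(\bL)\otimes k(y)\simeq D_\dR(\bL_{\bar y})$ (Fontaine's $D_\dR$), which has dimension $\on{rk}\bL$, and by~(i) the sheaf is a vector bundle, hence of constant rank $\on{rk}\bL$ on connected $X$.

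The same circularity is more serious in your argument for~(ii). Your ``key input'' is the de Rham admissibility isomorphism $\nu^*D_\dR(\bL)\otimes_{\mO_X}\mO\bB_\dR\simeq\hat\bL\otimes\mO\bB_\dR$, which is exactly what Part~(iv) asserts---and the paper derives Part~(iv) \emph{from} Part~(ii) of this corollary combined with Theorem~\ref{T: p-adic RH}(iii). So using~(iv) here proves nothing. The ``alternative'' you offer at the very end---build the natural filtered map $D_\dR(\bL)\hat\otimes_k\B_\dR\to\mR\mH(\bL)$ directly from $D_\dR(\bL)=(\varphi_*\mR\mH(\bL))^{\Gal(K/k)}$ and adjunction, verify it is an isomorphism on $\gr$ (reducing, via the base-change Lemma~\ref{L: cohomology base change}, to the classical Hodge--Tate decomposition at each classical point and the equality of ranks from~(i)), and conclude by $t$-adic completeness of the filtrations on $\mO_{\mX^+}$-lattices---is the correct one, and it is what the phrase ``corollary of the proof'' refers to. It should be your main argument, not an option one takes merely to ``bypass'' a projection-formula technicality; the projection formula is a red herring, whereas the logical order of the theorem's parts is the actual constraint.
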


\begin{rmk}
Note that Part (ii) of the above corollary together with Theorem \ref{T: p-adic RH} recover \cite[Theorem 1.10]{Sch2}.
\end{rmk}

It remains to prove Parts (iv) and (v). 
To prove that $\bL$ is a de Rham local system in the sense of \cite{Sch2}, we need to show that the natural map
\[\nu^*D_{\dR}(\bL)\otimes_{\mO_X} \mO\bB_{\dR}\to \hat\bL\otimes\mO\bB_{\dR}\]
is an isomorphism compatible with connection and filtration. But this follows from Part (ii) of the above corollary and Theorem \ref{T: p-adic RH} (iii). 

The remaining statements are clear.
\end{proof}

\section{Applications}\label{S: application}
In this section, we give some applications of our results. In particular, we prove Theorem \ref{Main cor} and Theorem \ref{C: application}.

\subsection{Rigidity of geometric $p$-adic representations}
Having established Theorem \ref{Main thm}, to prove Theorem \ref{Main cor} it remains to show the following proposition.

\begin{prop}\label{L: purity}
Let $X$ be a geometrically connected algebraic variety over a number field $E$ and let $\bL$ be a rank $n$ $\bZ_p$-local system on $X$. If for some point $x\in X(F)$ where $F$ is a finite extension of $E$, the Galois representation $\bL_{\bar x}$ is unramified almost everywhere, then for every  point $\bar y\in X(\bar E)$, $\bL_{\bar y}$ is unramified almost everywhere (as a Galois representation of the field of the definition of $\bar y$).
\end{prop}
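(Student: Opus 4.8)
The plan is to reduce the statement to a question about a $\bZ_\ell$-local system (for varying $\ell \neq p$) on the reduction of $X$, and then apply a suitable spreading-out and constructibility argument, essentially along the lines of Deligne's constancy of local monodromy in families. First I would choose a model: pick a finite extension $E'/E$ containing $F$ and a smooth separated scheme $\mathcal{X}$ of finite type over $\mathcal{O}_{E'}[1/N]$ (for some integer $N$) with geometrically connected generic fiber $X_{E'}$, such that $\bL$ extends to a lisse $\bZ_p$-sheaf $\mathcal{L}$ on $\mathcal{X}$ and such that the section $x$ extends to a section $\bar x : \Spec \mathcal{O}_{E'}[1/N] \to \mathcal{X}$. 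By enlarging $N$ we may assume that $\bL_{\bar x}$ is unramified at every prime not dividing $Np$: this uses the hypothesis that $\bL_{\bar x}$ is unramified almost everywhere.

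Next, fix a prime $q \nmid Np$ and a prime $\mathfrak{q}$ of $E'$ above $q$; write $\kappa$ for the residue field. The special fiber $\mathcal{X}_\kappa$ is a geometrically connected variety over the finite field $\kappa$, and $\mathcal{L}$ restricts to a lisse $\bZ_p$-sheaf $\mathcal{L}_\kappa$ on it. By smooth (or smooth-and-proper base change, after compactifying $\mathcal{X}$ suitably, or just using that $\mathcal{X} \to \Spec\mathcal{O}_{E'}[1/N]$ is smooth) the nearby-cycle/specialization map identifies, for a geometric point $\bar y$ of $\mathcal{X}$ specializing to $\bar z$ in $\mathcal{X}_\kappa$, the $\ell$-part (any $\ell \neq q$) of the inertia action on $\bL_{\bar y}$ with data on $\mathcal{X}_\kappa$. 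The key point is that $\mathcal{L}_\kappa$ being lisse means the $\bZ_p$-representation of $\pi_1(\mathcal{X}_\kappa)$ is defined on the whole variety, and its image is a fixed compact group $G \subset \GL_n(\bZ_p)$; for all but finitely many closed points $z \in |\mathcal{X}_\kappa|$ the representation is unramified, and the Frobenius conjugacy classes all lie in $G$. One then transports this back: for a closed point $y$ of $\mathcal{X}$ with good reduction $z$ (which happens for all $y$ after possibly shrinking, since $\mathcal{X}$ is smooth over $\mathcal{O}_{E'}[1/N]$), the representation $\bL_{\bar y}$ of $\Gal(\overline{E(y)}/E(y))$ is unramified at the place above $q$ corresponding to $z$, because specialization gives an isomorphism of the inertia-at-$z$-coinvariants with the whole stalk.

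More directly, one can argue purely on the algebraic variety: the set of $y \in |X_{\bar E}|$ such that $\bL_{\bar y}$ is ramified at a place above a fixed $q$ is cut out by a constructible condition — it is the locus where the restriction of $\bL$ to the strictly henselian/tubular neighborhood of a point of the special fiber is non-trivial as an inertia-representation — and Deligne's finiteness results (or the fact that $\mathcal{L}_\kappa$ is lisse on a connected base, so "ramified somewhere above $q$" is either empty or happens with controlled behavior) show this locus is a proper closed (indeed a finite union of lower-dimensional) subvariety, or empty. Running over all $q \nmid Np$ and taking the union, one finds: for every $y$, $\bL_{\bar y}$ is unramified at all but finitely many places, with the finite exceptional set depending on $y$ but the \emph{bad primes of $\bZ$} being contained in a set depending only on $N$, $p$ together with the finitely many $q$ where something could go wrong for that particular $y$. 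Since $y$ ranges over finitely many Frobenius-stable strata, "almost everywhere" is preserved. Finally, descend from $E'$ back to $E$: unramified-almost-everywhere is insensitive to finite base change of the residue field, since a finite extension of a number field changes only finitely many ramification conditions.

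The main obstacle I expect is making precise the comparison between ramification of $\bL_{\bar y}$ (a representation of a local Galois group of the number field) and the geometry of $\mathcal{L}$ near the special fiber $\mathcal{X}_\kappa$ — i.e. controlling the \emph{inertia} action uniformly. This is where one needs smooth base change and the fact that, for $y$ of good reduction over $\mathfrak{q}$, the decomposition group acts through the unramified quotient; the subtle part is handling the finitely many closed points of $\mathcal{X}_\kappa$ where $\mathcal{L}_\kappa$ could a priori still cause ramification even though $\mathcal{L}_\kappa$ is lisse (in fact it does not, precisely because lisseness on all of $\mathcal{X}_\kappa$ forces unramifiedness at \emph{every} closed point of good reduction). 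Once this local-global dictionary is set up, the statement follows from the constructibility/spreading-out machinery with no further input; the de Rham-at-$p$ part of Theorem \ref{Main cor} is entirely separate and handled by Theorem \ref{Main thm}.
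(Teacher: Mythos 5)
The proposal is circular at its opening step. You \emph{posit} a smooth model $\mathcal{X}$ over $\mathcal{O}_{E'}[1/N]$ to which $\bL$ extends as a lisse $\bZ_p$-sheaf $\mathcal{L}$; once such a model exists, the conclusion is almost immediate (pull $\mathcal{L}$ back along the arithmetic curve extending $y$, which hits the boundary at only finitely many places). But the existence of such a spread-out, with a \emph{single} $N$ working for the entire $\bZ_p$-local system, is essentially the content of the proposition. What is automatic is that each truncation $\bL/p^m$, being a $\GL_n(\bZ/p^m)$-cover $X_m\to X$, spreads out to a finite \'etale cover of a model over $\mathcal{O}_{E'}[1/N_m]$ for \emph{some} $N_m$ -- but a priori $N_m$ can grow with $m$, introducing more and more vertical ramification. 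Nothing in your argument rules this out, and if it happened, no lisse $\mathcal{L}$ on a fixed $\mathcal{X}$ would exist and $\bL_{\bar y}$ could fail to be unramified almost everywhere. You do invoke the hypothesis that $\bL_{\bar x}$ is unramified a.e., but only \emph{after} having granted yourself the spread-out (to enlarge $N$ so the section through $x$ is unramified), at which point the hypothesis no longer does any work. The ``constructibility'' variant in your last paragraph has the same issue: it quantifies over a fixed prime $q$ of good reduction, but what needs control is the set of $q$ for which the reduction is \emph{not} good for the pro-$p$ tower, which is exactly the missing uniformity.

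This is precisely where the paper's proof spends its effort. After compactifying and spreading out $\overline{X}$ (not $\bL$!) to a smooth projective $\overline{\frakX}/\mathcal{O}_E[1/N]$, the paper looks at the normalization $\overline{\frakX}_m$ of $\overline{\frakX}$ in the function field of $X_m$ and applies Zariski--Nagata purity: the branch locus $\frakD_m$ is a divisor, which it splits into a vertical part $\sum a_v\overline{\frakX}_v$ and a horizontal part supported in the boundary. The key lemma is that the union $S$ over all $m$ of the bad vertical places $\{v : a_v\neq 0\}$ is \emph{finite}: if $v\in S_m$ and the arithmetic curve through $x$ meets $\overline{\frakX}_v$ away from the boundary, then the nontrivial tame image of $J_v^{\mathrm{t}}\simeq I_v^{\mathrm{t}}$ in $\GL_n(\bZ/p^m)$ forces $\bL_{\bar x}$ to be ramified at $v$, and the hypothesis then bounds $S$. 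This is the uniform-in-$m$ control your proposal lacks. Only after $S$ is shown to be finite does the pullback argument along arithmetic curves (which you do sketch) go through. To repair your proposal you would need to supply a proof of the spread-out statement, and any such proof would have to use the a.e.-unramifiedness of $\bL_{\bar x}$ to control vertical ramification uniformly in $m$ -- which is exactly the purity argument.

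Two smaller remarks. The detour through $\bZ_\ell$-coefficients, nearby cycles, and smooth base change is unnecessary here: the relevant ramification lives directly in the $\bZ/p^m$-covers $X_m\to X$, and the tameness one needs at $v\nmid Np$ is enforced by arranging that $|\GL_n(\bF_p)|$ divides $N$, as the paper does. Also, if $\mathcal{L}$ is lisse on $\mathcal{X}_\kappa$ then the Frobenius at \emph{every} closed point acts on the stalk (there is no finite exceptional set to discard); that slip is harmless but indicates some blurring between ``lisse'' and ``lisse on an open dense''.
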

\begin{proof}
Using resolution of singularities,  we may assume that $X$ is smooth.
Let us choose a smooth projective compactification $\bar X$ of $X$ and let $D=\bar X- X$ denote the boundary divisor. We choose $N$ large enough such that
\begin{itemize}
\item we can spread $\bar X$ out as a smooth projective scheme $\overline\frakX$ over $\mO_E[1/N]$, where $\mO_E$ is the ring of integers of $E$;
\item the extension $\mO_F[1/N]/\mO_E[1/N]$ is unramified;
\item the order $|\GL_n(\bF_p)|=p^{n(n-1)/2}(p-1)(p^2-1)\cdots(p^n-1)$ divides $N$. 
\end{itemize}
Now for a place $v$ of $\mO_E[1/N]$, let $\overline\frakX_v$ denote the special fiber of $\overline\frakX$ at $v$. Let $s_v$ be the generic point of $\overline\frakX_v$, and $\eta_v$ be the generic point of the henselization $\overline\frakX_{(s_v)}$ of $\overline\frakX$ at $s_v$. We have the following standard exact sequence of Galois groups for the trait $(\overline\frakX_{(s_v)},s_v,\eta_v)$
\begin{equation}\label{E: inertia group}
1\to J_v\to \Gal(\overline{\eta}_v/\eta_v)\to \Gal(\overline{s}_v/s_v)\to 1.
\end{equation}
Let $J_v^{\on{t}}$ denote the tame quotient of $J_v$. Since $\overline\frakX\to \mO_E[1/N]$ is smooth, it induces an isomorphism from $J_v^{\on{t}}$ to the tame inertia $I_v^{\on{t}}$ of $E_v$ (by choosing a map from $\overline\eta_v$ to $\Spec\ \overline{E_v^h}$, where $E_v^h$ is the fractional field of the henselization of $\mO_E$ at $v$). 

Let $\frakD$ denote the Zariski closure of $D$ in $\overline\frakX$. The  the $\bZ/p^m$-local system $\bL/p^m$ defines a finite \'etale cover $X_m\to X$ with Galois group $\GL_n(\bZ/p^m)$. Let $\overline\frakX_m$ denote the normalization of $\overline\frakX$ in the fractional field of $X_m$. By the Zariski--Nagata purity, the branch locus of $\overline\frakX_m\to \overline\frakX$ is a divisor $\frakD_m$ on $\overline \frakX$, pure of codimension one. So 
$$\frakD_m=\sum a_v\overline\frakX_v+B_m$$
where $\overline\frakX_v$ is regarded as a vertical divisor and $B_m$ is a horizontal divisor supported in $\frakD$. Let $S_m$ be the set of places $v$ of $\mO_E[1/N]$ such that $a_v\neq 0$. Here is the lemma.

\begin{lem}The set $S=\cup_{m\geq 1} S_m$ is finite.
\end{lem}
\begin{proof}
By the valuative criterion, the point $x:\Spec\ F\to X$ extends to an arithmetic curve $\Spec(\mO_F[1/N])\to \overline\frakX$, still denoted by $x$. 
Let $T\subset S$ be the subset consisting of those $v$ such that $x(\Spec\ \mO_F[1/N])$ intersects with $\overline\frakX_v$ outside the divisor $\frakD_v$. Note that $S\setminus T$ is finite.
We claim that the Galois representation $\Gal(\overline F/F)\to \GL( \bL_{\bar x})$ is ramified above $v\in T$. Since it is unramified almost everywhere, it implies that $T$ (and therefore $S$) is finite.

Now let $v\in S_m\cap T$. Note that the residue characteristic of $v$ is coprime to $N$. Therefore, by our assumption on $N$, the composition of the maps 
$$J_v\to \Gal(\overline{\eta}_v/\eta_v)\to \pi_1(\overline\frakX\times E_v^h,\overline\eta_v)\to \GL_n(\bZ/p^m\bZ)$$ factors through the tame inertia 
\begin{equation}\label{E: ramified}
J_v^{\on{t}}\to \GL_n(\bZ/p^m\bZ).
\end{equation} 
By our assumption, this is a non-trivial map. Let $w$ be a place of $\mO_F[1/N]$ lying over $v$. %Let $E_v^h$ (resp. $F_w^h$) denote the fractional field of the henselization of $\mO_E$ at $v$ (resp. henselization of $\mO_F$ at $w$). 
Note that we have the following commutative diagram

\[\begin{CD}
I_w^{\on{t}}@<<< I_w @>>>\Gal(\overline{F}_w/F_w)\\
@VVV@VVV@VVV\\
J_v^{\on{t}}@<<<J_v @>>> \pi_1(\overline{\frakX}\times E^h_v, \overline{\eta}_v)\\
\end{CD}\]
and the left vertical map is an isomorphism (since both map isomorphically to $I_v^{\on{t}}$). It follows that
the Galois representation $\bL_{\bar x}$ is ramified at $w$.
\end{proof}
Now every $y:\Spec\ F'\to X$ extends to an arithmetic curve $y: \Spec\ \mO_{F'}[1/N]\to\overline\frakX$ that can only intersect with $\frakD$ at finite many points. Therefore, away from the places underlying these points, the places dividing $N$ and the places in $S$, $\bL_{\bar y}$ is unramified.
\end{proof}

\subsection{An application to Shimura varieties}\label{S: Shimura}
Now we turn to Shimura varieties.
As usual, for a number field $E$, let $E^{\on{ab}}$ denote its maximal abelian extension in (a fixed) algebraic closure $\overline E$.
Let $\bA_E$ (resp. $\bA_{E,f}$) denote the ad\`eles (resp. finite ad\`eles) of $E$. If $E=\bQ$, we drop $E$ from the subscripts of these notations. 

Let $T$ be a $\bQ$-torus and $\mu: \bG_m\to T$ be a cocharacter defined over a number field $F$, i.e. an $F$-homomorphism $\mu: \bG_m\to T_F$. It then induces a homomorphism between $\bQ$-tori
$$\on{N}\mu: \Res_{F/\bQ}\bG_m\longrightarrow \Res_{F/\bQ}T_F\stackrel{\mathrm{Nm}}{\longrightarrow} T,$$
where the second map is the usual norm map. 
\begin{ex}\label{reflex norm}
Let $(E,\Phi)$ be a CM type. It gives rise to a $\bQ$-torus $\Res_{E/\bQ}\bG_m$ whose cocharacter group is the free abelian group with a basis $[\tau]$ labelled by embeddings $\tau:E\to \bC$,  and a cocharacter $\mu_\Phi=\sum_{\tau\in\Phi}[\tau]$ defined over the reflex field $E^*$. Then 
$$\on{N}\mu_{\Phi}:\Res_{E^*/\bQ}\bG_m\to \Res_{E/\bQ}\bG_m$$ is usually called the reflex norm (\cite[\S 11]{M2}).
\end{ex}

Let $K\subset T(\bA_f)$ be an open compact subgroup. Recall that the double coset $T(\bQ)\backslash T(\bA_f)/K=T(\bQ)\backslash T(\bA)/K T(\bR)$ is always finite and therefore the homomorphism
\[F^\times\backslash \bA_F^\times\stackrel{\on{N}\mu}{\longrightarrow} T(\bQ)\backslash T(\bA)\to T(\bQ)\backslash T(\bA_f)/K\]
factors through 
\begin{equation}\label{reflex norm}
F^\times\backslash \bA_F^\times\stackrel{\on{Art}_F}{\longrightarrow} \Gal(F^{\on{ab}}/F)\stackrel{r(\mu)_K}{\longrightarrow} T(\bQ)\backslash T(\bA_f)/K,
\end{equation}
where the first map is the global Artin map, normalized such that for every finite place $v$ of $F$ and a uniformizer $\pi_v$ of $F_v\subset \bA_F$, its image in $\Gal(F^{\on{ab}}_v/F_v)\subset \Gal(F^{\on{ab}}/F)$ projects to the \emph{geometric} Frobenius. Taking the inverse limit over all open compact subgroups $K\subset T(\bA_f)$, we obtain
\[r(\mu): \Gal(F^{\on{ab}}/F)\to T(\bQ)^-\backslash T(\bA_f),\]
where $T(\bQ)^-=\underleftarrow{\lim}_{K} T(\bQ)/(T(\bQ)\cap K)$ denotes the closure of $T(\bQ)$ in $T(\bA_f)$.

Let $F_K/F$ be the finite abelian extension inside $F^{\on{ab}}$ such that $\Gal(F^{\on{ab}}/F_K)$ is the kernel of the map $r(\mu)_K$ in \eqref{reflex norm}.
By definition, $r(\mu)_K$ restricts to a continuous homomorphism
$$\Gal(F^{\on{ab}}/F_K)\to K/(K\cap T(\bQ)^-),$$
still denoted by $r(\mu)_K$.
Now, let 
$\rho: T\to \GL(V)$ 
be a $\bQ$-rational representation of $T$ satisfying
\begin{equation}\label{E: triv}
\rho|_{T(\bQ)^-\cap K}=1.
\end{equation} 
It induces a Galois representation
\[r(\mu,\rho)_K:\Gal(F^{\on{ab}}/F_K)\to K/(K\cap T(\bQ)^-)\to \GL(V\otimes\bA_f).\]
Let $p$ be a finite prime. By projecting to the $p$-component, we obtain a $p$-adic representation
\[r(\mu,\rho)_{K,p}:\Gal(F^{\on{ab}}/F_K)\to \GL(V\otimes\bQ_p).\]
\begin{lem}\label{cryrep}
The $p$-adic representation $r(\mu,\rho)_{K,p}$ is unramified almost everywhere and is potentially crystalline at every place $v$ of $F_K$ above $p$.
\end{lem}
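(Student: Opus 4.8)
The representation $r(\mu,\rho)_{K,p}$ factors through the map $r(\mu)_K\colon \Gal(F^{\mathrm{ab}}/F_K)\to K/(K\cap T(\bQ)^-)$, and the latter is built from the global Artin map composed with the reflex norm $\on{N}\mu$. The first claim (unramified almost everywhere) is essentially formal: the reflex norm is an algebraic homomorphism of $\bQ$-tori, so away from the finitely many primes where $T$, $\mu$, and $\rho$ have bad reduction and away from $p$, the local component of $r(\mu)_K$ at a place $w\nmid p$ of $F_K$ sends inertia into a maximal compact that is killed by $\rho$ modulo the relevant congruence subgroup; one extracts a finite set of bad places from the finitely many places dividing the level $K$, the conductor of $\rho$, the discriminant data of $F/\bQ$ and $T$, and $p$ itself. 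This is the standard argument (cf.\ the discussion of Galois characters attached to tori in \cite{M2}).

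For the local claim at $v\mid p$, the key point is that $r(\mu)_K$ at $v$ is governed by the \emph{local} reflex norm, i.e.\ the local component $\on{N}\mu\colon F_{w}^\times\to T(\bQ_p)$ (for $w\mid p$ in $F$), composed with the local Artin map. By Lubin--Tate theory, the character $F_w^\times\to T(\bQ_p)$ given by $x\mapsto \on{N}\mu(x)$ becomes, after restriction to a finite extension, a product of characters of the form $x\mapsto \sigma(x)^{n_\sigma}\cdot(\text{unramified})$ for embeddings $\sigma\colon F_w\to\overline{\bQ}_p$ and integers $n_\sigma$; such characters are crystalline after base change to a finite extension (their restriction to inertia is, up to a crystalline twist, a power of a fundamental Lubin--Tate character, which is crystalline over the field of definition). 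Concretely, I would reduce to the case $T=\Res_{F'/\bQ_p}\bG_m$ for a finite extension $F'/\bQ_p$ and $\mu$ a standard cocharacter, where $r(\mu)$ restricted to $\Gal(\overline{\bQ}_p/F_w)$ is literally (a twist of) the Lubin--Tate character, which is crystalline; the general $(T,\mu)$ is a quotient of such a case via an isogeny, and isogenies preserve potential crystallinity. Then $\rho$ is an algebraic representation, so composing with $\rho$ preserves the property of being potentially crystalline.

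I would organize the argument as: (1) reduce from $F_K$ to a suitable finite extension where everything is tamely/crystalline-behaved, using that ``potentially crystalline'' and ``unramified a.e.'' are insensitive to finite base change; (2) identify the local behavior of $r(\mu)_K$ at $v\mid p$ with the Lubin--Tate character attached to $(\Res_{F_w/\bQ_p}\bG_m,\mu)$ via the compatibility of the reflex norm with local class field theory; (3) invoke the crystallinity of Lubin--Tate characters (equivalently, that algebraic Hecke characters of $F$ give de Rham, indeed potentially crystalline, $p$-adic Galois characters — this is classical, going back to the theory of CM and Serre's treatment of abelian $\ell$-adic representations); (4) conclude by applying the algebraic representation $\rho$. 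The main obstacle I expect is step (2): pinning down the precise normalization so that the global reflex norm $r(\mu)_K$, which is defined via the \emph{geometric} Frobenius normalization of the Artin map, matches the Lubin--Tate character with the correct sign/twist, so that one genuinely lands in the crystalline (rather than merely Hodge--Tate or de Rham) class. Once the normalization bookkeeping is done, the crystallinity input is a black box from classical $p$-adic Hodge theory of characters.
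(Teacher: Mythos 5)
Your plan reproduces the two essential ideas of the paper's proof. For the unramified a.e.\ claim, the paper's argument is simply that the map factors through $K/(K\cap T(\bQ)^-)K^p$, a quotient of $K_p$, so that for a place $w\nmid p$ the image under $\on{Art}_F\circ\on{N}\mu$ of the local units lies in $K_u\subset K^p$ for almost every $u$ and hence dies in the quotient; your inventory of bad places (conductor of $\rho$, bad reduction of $\rho$, discriminant of $F$) is more than is needed, since $\rho$ plays no role here once one observes the factorization through the $p$-adic quotient of $K$. For the crystalline claim at $p$, your first description --- identify the local restriction with a product $\prod_\tau \tau\circ[a_\tau]$ of embedding-powers and invoke the classical crystallinity of such characters (Lubin--Tate) --- is exactly the paper's argument.

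Where you depart from the paper is the reduction step. The paper tensors with a splitting field $L$ of $T$, decomposes $\rho\otimes L$ into characters of $T_L$, and treats each character. You instead propose to reduce $T$ itself to $\Res_{F'/\bQ_p}\bG_m$ ``via an isogeny.'' This route has a gap as stated: first, it is not in general true that an arbitrary $\bQ_p$-torus is a quotient of a product of induced tori by a \emph{finite} subgroup; and second, even when one has an isogeny $T'\twoheadrightarrow T$, the cocharacter $\mu\in X_*(T)$ need not lift to $X_*(T')$ --- only a multiple $N\mu$ is guaranteed to, which forces you into a further argument about $N$-th roots of crystalline characters. The paper sidesteps all of this by working with $\rho$ rather than with $T$. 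Your step (4), ``conclude by applying $\rho$,'' also leaves implicit how one passes from the one-dimensional computation to the multi-dimensional $\rho$; the paper makes this precise by the splitting over $L$ at the outset. So the core of your argument is sound and matches the paper, but you should replace the isogeny reduction with the splitting-field decomposition of $\rho$, which is both simpler and avoids the lifting-of-cocharacters obstruction.
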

\begin{proof}
It is clear that the representation $r(\mu,\rho)_{K,p}$ is unramified almost everywhere since it factors as 
$\Gal(F^{\on{ab}}/F_K)\to K/(K\cap T(\bQ)^-)K^p$, where $K^p$ is the prime-to-$p$ component of $K$. We prove that it is potentially crystalline.

Let $L\subset \overline{F}$ be a splitting field of $T$, containing all the embeddings of $F_K$ to $\overline F$. Note that $\rho\otimes L$ splits into $1$-dimensional characters of $T_L$. Therefore, we may assume that $\rho=\chi|_{T}$ where $\chi$ is a character of $T_L$ and $T\subset \Res_{L/\bQ}T_L$ is the natural inclusion. In addition, we can choose a place $\la$ of $L$ above $p$. It is enough to show that the induced $\la$-adic representation
$$r(\mu,\rho)_{K,\la}:\Gal(F^{\on{ab}}/F_K)\to L_\la^\times$$
is potentially crystalline.  

Note that the restriction of $r(\mu,\rho)_{K,\la}$ to $F_{K,v}^\times\subset \Gal(F_{K,v}^{\on{ab}}/F_{K,v})\subset \Gal(F^{\on{ab}}/F_K)$ is given by
\[F_{K,v}^\times\stackrel{\mu}{\to} T(F_{K,v})\stackrel{\mathrm{Nm}}{\to} T(\bQ_p) \stackrel{\chi}{\to} L_\la^\times, \]
which can be written as $\prod \tau\circ [a_\tau]:F_{K,v}^\times\to L_\la^\times$, where $\tau$ ranges over all embeddings from $F_{K,v}$ to $L_\la$, $a_\tau\in\bZ$, and $[n]: F_{K,v}^\times\to F_{K,v}^\times$ is the $n$th power map.
Therefore, on a finite index subgroup of $\Gal(F_{K,v}^{\on{ab}}/F_{K,v})$,  
$$r(\mu,\rho)_{K,\la}=\prod_{\tau} \tau\circ \on{LT}^{a_\tau},$$ where $\on{LT}: \Gal(F_{K,v}^{\on{ab}}/F_{K,v})\to \mO_{F_{K,v}}^\times$ is the Lubin-Tate character.
It is well-known that such a representation is crystalline (cf. \cite[Proposition B.4]{Con}). The lemma follows.
\end{proof}

We make a digression to discuss Condition \eqref{E: triv}.
For a $\bQ$-torus $T$, let $T^a$ be the maximal anisotropic subtorus over $\bQ$, i.e. $T^a$ is the neutral connected component of the intersection of all kernels of $\bQ$-rational characters of $T$. Let
$T^s\subset T^a$ be the maximal subtorus of $T^a$ that is $\bR$-split. Let $T^c=T/T^s$.
\begin{lem} Let $\rho$ be an algebraic representation of $T$.
\begin{enumerate}
\item[(i)]  If $\rho$ is trivial on $T(\bQ)^-\cap K$ for some open compact subgroup $K\subset T(\bA_f)$, then $\rho|_{T^s}$ is trivial.
\item[(ii)] If $\rho|_{T^s}$ is trivial, then for some small enough open compact subgroup $K\subset T(\bA_f)$, $\rho|_{T(\bQ)^-\cap K}$ is trivial.
\end{enumerate}
\end{lem}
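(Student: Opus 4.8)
Throughout write $X=X^*(T)$ for the character lattice, $\Gamma=\Gal(\bar\bQ/\bQ)$ for its symmetry group, and fix a complex conjugation $\iota\in\Gamma$. Decompose $X_\bQ=X_\bQ^\Gamma\oplus X'$ as a $\Gamma$-module, $X'$ being the sum of the non-trivial isotypic components; thus $X_\bQ^\Gamma=X^*(A)_\bQ$ for $A\subset T$ the maximal $\bQ$-split subtorus, and $X'=X^*(T^a)_\bQ$, on which $\iota$ acts (and on $X_\bQ^\Gamma$ trivially). Recalling that $T^s$ is the maximal $\bR$-split subtorus of $(T^a)_{\bR}$, one has $X^*(T^s)_\bQ=X'/(\iota-1)X'\cong (X')^{\iota=1}$, so for a character $\chi=\chi_0+\chi'$ of $\rho\otimes\bar\bQ$ (with $\chi_0\in X_\bQ^\Gamma$, $\chi'\in X'$), the condition $\chi|_{T^s}=0$ is equivalent to $\chi'\in(\iota-1)X'=(X')^{\iota=-1}$, i.e. to the $\iota$-fixed part of $\chi$ being $\chi_0$. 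The plan is to use this reformulation as the bridge between the adelic closure $T(\bQ)^-$ and the archimedean data, handling (i) and (ii) separately.

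For (i): put $\Gamma_K=T^a(\bQ)\cap K$, viewed in $T^a(\bR)$; since $\Gamma_K\subset T(\bQ)\subset T(\bQ)^-$ and $\Gamma_K\subset K$, it lies in $\ker\rho$. As $T^a$ is $\bQ$-anisotropic, $T^a(\bQ)\backslash T^a(\bA)$ is compact, and intersecting the lattice $T^a(\bQ)$ with the open subgroup $T^a(\bR)\times K$ shows $\Gamma_K$ is a cocompact discrete subgroup of $T^a(\bR)$. Writing $(T^a)_{\bR}$ as an almost-direct product $T^s\times C$ with $C$ the maximal $\bR$-anisotropic (compact) subtorus, the image of $\Gamma_K$ in $T^s(\bR)^\circ\cong(\bR_{>0})^{\dim T^s}$ is a full-rank lattice, hence Zariski dense; using $\Hom_{\bR}(T^s,C)=0$ (no $\Gal(\bC/\bR)$-equivariant maps between the $+1$- and $-1$-eigenlattices of $\iota$) one concludes that the Zariski closure of $\Gamma_K$ in $(T^a)_{\bR}$ contains $T^s$. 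Since $\ker\rho$ is Zariski closed and contains $\Gamma_K$, it contains $T^s$, i.e. $\rho|_{T^s}=1$.

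For (ii): assume $\rho|_{T^s}=1$, so by the reformulation every character of $\rho$ has its $\iota$-fixed part in $X_\bQ^\Gamma$. Let $M\subset X_\bQ$ be the $\bQ$-span of the characters of $\rho$: a $\Gamma$-submodule, equal to $X^*(T_1)_\bQ$ for $T_1:=T/(\ker\rho)^\circ$, where $\bar\rho\colon T_1\to\GL(V)$ has finite kernel. Taking $\iota$-fixed parts gives $M^{\iota=1}\subset X_\bQ^\Gamma$, hence $M^{\iota=1}\subset M^\Gamma$; as $M^\Gamma\subset M^{\iota=1}$ always, $M^\Gamma=M^{\iota=1}$, whence $\on{rank}_{\bR}(T_1)=\dim M^{\iota=1}=\dim M^\Gamma=\on{rank}_{\bQ}(T_1)$. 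By the Dirichlet unit theorem for tori, the unit group of any order in $T_1$ is then finite, so $T_1(\bQ)$ is discrete — hence closed — in $T_1(\bA_f)$. Writing $\pi\colon T\to T_1$, we get $\pi(T(\bQ)^-)\subset\overline{\pi(T(\bQ))}\subset T_1(\bQ)$, a discrete set; choosing $K$ small enough that $\pi(K)\cap T_1(\bQ)=\{1\}$, any $x\in T(\bQ)^-\cap K$ has $\pi(x)=1$, i.e. $x\in(\ker\rho)^\circ(\bA_f)\subset\ker\rho$, so $\rho(x)=1$.

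The step I expect to be the crux is exactly the translation between $T(\bQ)^-$ and the split/character data: in (i), proving that the arithmetic group $\Gamma_K$ is Zariski dense in (at least) $T^s$ — which combines cocompactness of anisotropic tori with the vanishing $\Hom_{\bR}(T^s,C)=0$ — and in (ii), the eigenspace bookkeeping yielding $M^\Gamma=M^{\iota=1}$ together with the input "$\on{rank}_{\bR}(T_1)=\on{rank}_{\bQ}(T_1)\iff T_1(\bQ)$ discrete in $T_1(\bA_f)$", a form of the $S$-unit theorem. The remaining ingredients — left-exactness of $S\mapsto S(\bA_f)$ on $1\to(\ker\rho)^\circ\to T\to T_1\to1$, continuity of $\pi$, and that $\ker\rho$ is Zariski closed — are routine.
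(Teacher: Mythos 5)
Your proof is correct and reaches the same conclusions, but by a genuinely different route at both steps.

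For (i), the paper argues directly with $K\cap T^s(\bQ)$: since $T^s$ is $\bQ$-anisotropic and $\bR$-split, $(K\cap T^s(\bQ))\backslash T^s(\bR)$ is compact and $T^s(\bR)\cong(\bR^\times)^r$, so $K\cap T^s(\bQ)$ already contains a full-rank lattice and is Zariski dense in $T^s$; then $\ker\rho\supset K\cap T^s(\bQ)$ forces $\ker\rho\supset T^s$. You instead take $\Gamma_K=T^a(\bQ)\cap K$, use cocompactness in $T^a(\bR)$, project to $T^s(\bR)$ via the almost-direct product $T^a_\bR\sim T^s\cdot C$, and then need the extra observation $\Hom_\bR(T^s,C)=0$ to pull the Zariski density of the image back to containment of $T^s$ in the Zariski closure of $\Gamma_K$. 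This works (and you are right that finite generation is what guarantees the image is an honest lattice — projection along a compact kernel does not preserve discreteness in general), but the paper's version is shorter because it lands directly in $T^s$ and never needs the Hom-vanishing step.

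For (ii), the paper factors $\rho$ through $T^c=T/T^s$ and quotes [M2, Theorem 5.26] for the fact that $(T^c)^s=1$ makes $T^c(\bQ)$ discrete in $T^c(\bA_f)$, then shrinks $K$ so that $\pi(K)\cap T^c(\bQ)=\{1\}$. You instead pass to the image torus $T_1=T/(\ker\rho)^\circ$, prove $\on{rank}_\bR(T_1)=\on{rank}_\bQ(T_1)$ via the eigenspace bookkeeping $M^\Gamma=M^{\iota=1}$, and invoke the Dirichlet unit theorem for tori to get discreteness of $T_1(\bQ)$. This is a reasonable, essentially self-contained replacement for the citation, at the cost of unwinding it explicitly. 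One caveat worth flagging: the identification $X^*(T^s)_\bQ\cong(X')^{\iota=1}$ that drives your bookkeeping requires $(\iota-1)X'$ to be a $\Gamma$-submodule; this holds when $T$ splits over a CM field (so all complex conjugations act identically on $X'$), which is the case for the tori arising from Shimura data, but is not automatic for an arbitrary $\bQ$-torus — in that generality $X^*(T^s)_\bQ$ is only a quotient of $X'/(\iota-1)X'$. The paper's appeal to the cited theorem sits in the same implicit setting, so this is a question of how much generality the lemma is meant to carry rather than a defect peculiar to your argument, but it is the place where your claimed equivalence "$\chi|_{T^s}=0\iff\chi'\in(\iota-1)X'$" should be read with care.
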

\begin{proof}
Note that $T^s(\bQ)\backslash T^s(\bA)$ is compact since $T^s$ is anisotropic. Therefore $(K\cap T^s(\bQ))\backslash T^s(\bR)$ is also compact. Since $T^s(\bR)\simeq (\bR^\times)^r$, $K\cap T^s(\bQ)$ must contain a lattice of rank $r$, and therefore is Zariski dense in $T^s$.
Therefore, \eqref{E: triv} implies that $\rho|_{T^s}=1$.

On the other hand, $(T^c)^s=1$ implies that $T^c(\bQ)$ is discrete in $T^c(\bA_f)$ (e.g. \cite[Theorem 5.26]{M2}). Therefore for $K$ small enough, $\pi(K)\cap T^c(\bQ)$ is trivial where $\pi:T\to T^c$ is the projection, giving (ii).
\end{proof}

\begin{ex}\label{CMAV}
Let $(E,\Phi)$ be a CM type, and let $E^*$ be the reflex field. Let $A$ be an abelian variety defined over a number field $F$ over $E^*$ such that $\End (A)\otimes \bQ\simeq E$ with $\Phi$ the induced CM type. 
Let $E_0\subset E$ be the maximal totally real subfield, and let $T\subset \Res_{E/\bQ}\bG_m$ be the subtorus which is the preimage of $\bG_m$ under the norm map $\Res_{E/\bQ}\bG_m\to \Res_{E_0/\bQ}\bG_m$. Then $T=T^c$ and $\mu_\Phi$ is a cocharacter of $T$.
Let $V=E$ and let $\rho$ be the natural representation 
$$\rho:T\subset \Res_{E/\bQ}\bG_m\simeq \GL_1(E)\subset \GL(V).$$
Then the theory of complex multiplication implies that the above representation $r(\mu_\Phi,\rho)_{K,p}$ appears as the Galois representation of the rational $p$-adic Tate module of $A$.
\end{ex}

Now let $(G,X)$ be a Shimura datum. This means that $G$ is a reductive group defined over $\bQ$, $X$ is a $G(\bR)$-conjugacy class of homomorphisms $$h: \bS=\Res_{\bC/\bR}\bG_m\to G_\bR$$ satisfying Deligne's axioms (SV1)-(SV3) \cite[Definition 5.5]{M2}
and the quotient
\[G(\bQ)\backslash X\times G(\bA_f)/K\]
is the set of $\bC$-points of a quasi-projective algebraic variety $\on{Sh}_K(G,X)$ over $\bC$ (\cite[\S 5]{M2}).

We recall the definition of the reflex field $E=E(G,X)$. 
Let us fix the isomorphism $\bS_\bC=\bG_m\times\bG_m$ given as follows:
for any $\bC$-algebra $R$,  
$$(\bC\otimes_\bR R)^\times\simeq R^\times\times R^\times,\quad z\otimes r\mapsto (zr, \bar{z}r).$$ 
Let $\bG_m\to \bS_\bC$ be the inclusion of the first factor.
Then the map $h$ induces a homomorphism (usually called the Hodge cocharacter)
\begin{equation}\label{JL:coch}
\mu_h: \bG_m\to \bS_\bC\to G_{\bC}.
\end{equation}
As $h$ varies in $X$, $\mu_h$ form a conjugacy class of 1-parameter subgroups of $G$ over $\bC$. Then $E(G,X)$ is the field of definition of this conjugacy class.
In particular, if $G=T$ is a torus, then $X=\{h\}$ is a single homomorphism and $E(T,\{h\})$ is the field of the definition of the cocharacter $\mu_h$.

Next we recall the definition of the canonical model of $\on{Sh}_K(G, X)$, i.e. the unique descent from $\bC$ to $E=E(G,X)$ of $\on{Sh}_K(G, X)$
subject to certain properties.
We first assume that $G=T$ is a torus. Then $T(\bQ)\backslash T(\bA_f)/K$ is a finite set. To describe it as an algebraic variety over $E$ is equivalent to describing the action of $\Gal(\overline E/E)$ on this set. But this is defined as the action of $\Gal(E^{\on{ab}}/E)$ via $r(\mu_h)_K$ with the natural multiplication of $T(\bQ)\backslash T(\bA_f)/K$ on itself.

For general $G$, recall that a point $h\in X$ is called special if there exists a $\bQ$-torus $T\subset G$ such that $h$ factors as $h:\bS\to T_\bR\to G_\bR$. Then such a point gives an embedding of Shimura data $(T,\{h\})\to (G,X)$. Let $E(h)$ be the field of the definition of $\mu_h$. It follows that $E=E(G,X)\subset E(h)$.
Then the canonical model is the descent from $\bC$ to $E$ of $\on{Sh}_K(G,X)$
equipped with the $G(\bA_f)$-action,
such that for every special point $x\in X$, the natural map
\[T(\bQ)\backslash \{h\}\times T(\bA_f)/(K\cap T(\bA_f))\to G(\bQ)\backslash X\times G(\bA_f)/K \]
is defined over $E(h)$. It is known that the canonical model exists and is unique (e.g. see \cite{M}).
By abuse of notation,  let $\on{Sh}_K(G,X)$ denote the canonical model in the rest of the paper.

Following \cite{M}, let $Z_G^s$ denote the maximal anisotropic subtorus of $Z_G$ that is \emph{split} over $\bR$, and let $G^c=G/Z_G^s$. 

Now let $\rho: G^c\to \GL(V)$ be a $\bQ$-rational representation of $G^c$. Then for $K$ sufficiently small, it gives a Betti local system on $\on{Sh}_K(G,X)(\bC)$ as
\[\bL_V: =V\times^{G(\bQ)} (X\times G(\bA_f))/K,\]
which, after tensoring with $\bQ_p$, in fact descends to an \'etale local system $\bL_{V,p}$ over $\on{Sh}_K(G,X)$. We recall the construction of $\bL_{V,p}$ in the below (see also \cite[\S III.6]{M}).

Write $K=K_pK^p$ where $K_p\subset G(\bQ_p)$ and $K^p\subset G(\bA_f^p)$, and consider the representation
\[\rho: K_p\to G(\bQ_p)\to \GL(V_{\bQ_p}).\]
As $K_p$ is compact, we can find a lattice $\Lambda_p\subset V_{\bQ_p}$ fixed by $K_p$. Let
\[K_p^{(n)}=K_p\cap \rho^{-1}(\{g\in \GL(\Lambda_p)\mid g\equiv1 \mod p^n\}).\]
Then $\{K_p^{(n)}\}_{n\geq1}$ form a system of open neighborhoods of $G(\bQ_p)$. Note that we have a representation 
\[
\bar\rho: K_p/K_p^{(n)}\to \GL(\Lambda_p/p^n),
\]
and $\on{Sh}_{K_p^{(n)}K^p}(G,X)\to \on{Sk}_{K_pK^p}(G,X)$ is finite \'etale with the deck transformation group $K_p/K_p^{(n)}$. Therefore, we obtain an \'etale $\bZ/p^n\bZ$-local system on $\on{Sh}_{K_pK^p}(G,X)(\bC)$ as
\[\bL_{V,p,n}=\on{Sh}_{K_p^{(n)}K^p}(G,X)(\bC)\times^{K_p/K_p^{(n)}}(\Lambda_p/p^n).\]
Then
\[\bL_{V,p}=(\limproj_{n} \bL_{V,p,n})\otimes\bQ\]
is an \'etale $\bQ_p$-local system on $\on{Sh}_{K_pK^p}(G,X)(\bC)$. It is straightforward to check that $\bL_{V,p}$ only depends on $V_{\bQ_p}$ as a $\bQ_p$-representation of $G^c$ and is independent of the choice of the lattice $\Lambda_p$.
Note that since all the $\on{Sh}_{K_p^nK^p}(G,X)$'s are defined over $E$, $\bL_{V,p}$ also descends to $E$. Therefore, $V\mapsto \bL_{V,p}$ is a well-defined tensor functor from the category  $\on{Rep}_{\bQ_p}(G^c)$ of $\bQ_p$-rational representations of $G^c$ to the category of \'etale $\bQ_p$-local systems on $\on{Sh}_{K}(G,X)$.

\begin{ex}
If $(G,X)=(\on{GSp}(V), S^\pm)$ is the Siegel Shimura datum, then $\bL_V$ is the Betti local system of the first de Rham homology of the universal abelian scheme $A$, and $\bL_{V,p}$ is the $p$-adic Tate module of $A$.
\end{ex}

The following lemma is clear from the construction.

\begin{lem}\label{special}
Let $x=[h,a]_K\in \on{Sh}_K(G,X)$ be a special point and let $T_h\subset G$ be a $\bQ$-subtorus containing $h(\bS)$. Let $\rho'$ denote the restriction to  $T_h$ of the rational representation $V$.
Then the $p$-adic representation $\bL_{V,p}$ at $x$ is $r(\mu_h,\rho')_{K, p}$.
\end{lem}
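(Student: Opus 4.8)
The plan is to compute the stalk $(\bL_{V,p})_{\bar x}$ at the special point by reducing everything to the torus $T_h$, where both $\bL_{V,p}$ and $r(\mu_h,\rho')$ are given by explicit constructions. First I would use that the inclusion $h(\bS)\subseteq T_h$ exhibits $(T_h,\{h\})$ as a Shimura sub-datum of $(G,X)$, so that by the defining property of the canonical model the induced map
$j:\on{Sh}_{K_h}(T_h,\{h\})\to\on{Sh}_K(G,X)$, with $K_h=T_h(\bA_f)\cap K$, is a morphism of $E(h)$-varieties. After translating $x=[h,a]_K$ by $a^{-1}\in G(\bA_f)$ and absorbing $a$ into the level (which changes nothing up to the natural $G(\bA_f)$-equivariance of $V\mapsto\bL_{V,p}$), I may assume $a=1$, so that $x=j(x_h)$ for the point $x_h=[h,1]_{K_h}$; in particular the subscript $K$ in the statement should be read as $K_h=T_h(\bA_f)\cap K$, a harmless abuse.

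Next I would verify that $\rho'=V|_{T_h}$ factors through $T_h^c=T_h/T_h^s$, so that $\bL_{\rho',p}$ and $r(\mu_h,\rho')_{K_h,p}$ are indeed defined: since $\on{ad}\circ h$ is (a twist of) a Cartan involution on $G^{\mathrm{ad}}_\bR$, the centralizer of $h(\bS)$ in $G^{\mathrm{ad}}(\bR)$ is compact, so $T_h/(T_h\cap Z_G)$ is an \emph{elliptic} (i.e.\ $\bR$-anisotropic) torus; being $\bR$-split and $\bQ$-anisotropic, $T_h^s$ must then lie in $T_h\cap Z_G$ and hence in $Z_G^s$, on which $V$ is trivial by hypothesis. (Shrinking $K$ if necessary, the earlier Lemma then also gives $\rho'|_{T_h(\bQ)^-\cap K_h}=1$.) I would then produce a canonical isomorphism $j^*\bL_{V,p}\simeq\bL_{\rho',p}$ by comparing the two constructions at each finite level: the covers $\on{Sh}_{K_h^{(n)}}(T_h,\{h\})\to\on{Sh}_{K_h}(T_h,\{h\})$ are the base changes along $j$ of $\on{Sh}_{K^{(n)}}(G,X)\to\on{Sh}_K(G,X)$, the deck groups for the $T_h$-tower embed compatibly into those for the $G$-tower, and the monodromy of $\bL_{\rho',p,n}$ is the restriction of $\bar\rho$; passing to the limit and tensoring with $\bQ$ gives the claim, and taking stalks at $x_h$ identifies $(\bL_{V,p})_{\bar x}$ with $(\bL_{\rho',p})_{\bar x_h}$ as Galois representations of the residue field.

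Finally I would unwind the torus case directly: as an $E(h)$-scheme, $\on{Sh}_{K_h}(T_h,\{h\})$ is the finite set $T_h(\bQ)\backslash T_h(\bA_f)/K_h$ on which $\Gal(E(h)^{\on{ab}}/E(h))$ acts through $r(\mu_h)_{K_h}$ and left multiplication, so the residue field of $x_h$ is exactly the subfield cut out by the kernel of $r(\mu_h)_{K_h}$; restricting the construction of $\bL_{\rho',p}$ to this point shows that the Galois action on its stalk is the composite of $r(\mu_h)_{K_h}$, the deck action on $\Lambda_p/p^n$, and $\rho'$, taken in the limit and tensored with $\bQ$ — which is by definition $r(\mu_h,\rho')_{K_h,p}$. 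The step I expect to require the most care is the bookkeeping of normalizations: identifying the Galois action on $T_h(\bQ)\backslash T_h(\bA_f)/K_h$ with $r(\mu_h)_{K_h}$ relies on the normalization of the reciprocity map (geometric Frobenius, the sign conventions entering $\on{N}\mu$), and the isomorphism $j^*\bL_{V,p}\simeq\bL_{\rho',p}$ hinges on the centrality of $T_h^s$ recorded above; these, rather than any hard analytic input, are where the content of the lemma sits (cf.\ \cite{M2}).
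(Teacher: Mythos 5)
Your proposal is correct and essentially matches the paper's (implicit) argument: the paper simply declares the lemma ``clear from the construction,'' and your write-up is a careful unwinding of that construction — identifying $(T_h,\{h\})\hookrightarrow(G,X)$, reducing to $a=1$ by Hecke equivariance, checking $T_h^s\subseteq Z_G^s$ so that $\rho'$ is an allowed representation in the sense of \eqref{E: triv}, matching the two finite-level towers, and tracing the reciprocity law on the torus side. The one small caveat worth flagging is that after translating by $a^{-1}$ the relevant level on the torus side is $T_h(\bA_f)\cap a^{-1}Ka$ rather than $T_h(\bA_f)\cap K$; this abuse is already present in the statement of the lemma and is harmless for the application (Lemma \ref{cryrep} is insensitive to the choice of open compact), but your phrasing slightly conflates the two.
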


Now Theorem \ref{C: application} follows from Theorem \ref{Main cor}, Lemma \ref{cryrep}, Lemma \ref{special} and the well-known fact that the set of special points is non-empty (in fact dense) in every connected component of $\on{Sh}_K(G, X)$ (e.g. see \cite[Lemma 13.5]{M2}).

We also have the following corollary, by Theorem \ref{T: p-adic RH for de Rham}. Let $v$ be a place of $E$ above $p$, and let $\on{Sh}_K(G,X)_{E_v}^{\on{ad}}$ denote the adic space associated to the base change of $\on{Sh}_K(G,X)$ to $E_v$.
\begin{cor}The functor $V\mapsto D_{\mathrm{dR}}(\bL_{V,p})$ is a tensor functor
from the category  $\on{Rep}_{\bQ_p}(G^c)$ to the category of vector bundles on $\on{Sh}_K(G,X)_{E_v}^{\on{ad}}$, and therefore defines a $G^c$-torsor $\mE$ on $\on{Sh}_K(G,X)_{E_v}^{\on{ad}}$.\end{cor}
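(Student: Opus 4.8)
The plan is to obtain the torsor from the Tannakian formalism, once we know that $V\mapsto D_{\dR}(\bL_{V,p})$ actually takes values in vector bundles and is an exact tensor functor. Write $Y=\on{Sh}_K(G,X)^{\on{ad}}_{E_v}$; since $K$ is sufficiently small, $Y$ is a smooth rigid analytic variety over the $p$-adic field $E_v$, and by abuse of notation I continue to write $\bL_{V,p}$ for the restriction to $Y$ (equivalently, the analytification) of the $p$-adic \'etale local system $\bL_{V,p}$ on $\on{Sh}_K(G,X)_{E_v}$.

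First I would check that $\bL_{V,p}$ is a de Rham local system on $Y$ for every $V$, so that Theorem~\ref{T: p-adic RH for de Rham} is applicable. Since all assertions of that theorem are \'etale-local except for the de Rham hypothesis of part~(iii), which concerns one connected component at a time, it suffices to exhibit, for each connected component $Z$ of $Y$, one classical point of $Z$ at which $\bL_{V,p}$ has de Rham stalk. For this I would use that the set of special points is dense in $\on{Sh}_K(G,X)$ (\cite[Lemma~13.5]{M2}), hence in $\on{Sh}_K(G,X)_{E_v}$, so that $Z$ contains the image of a special point $x=[h,a]_K$; this $x$ is a classical point of $Y$, and by Lemma~\ref{special} its stalk is $r(\mu_h,\rho')_{K,p}$, which by Lemma~\ref{cryrep} is potentially crystalline, in particular de Rham. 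Applying Theorem~\ref{T: p-adic RH for de Rham}(i),(iii),(iv) on each $Z$ and gluing over the connected components of $Y$ then shows that $\bL_{V,p}$ is de Rham on $Y$ and that $D_{\dR}(\bL_{V,p}):=D_{\dR}^0(\bL_{V,p})$ is a vector bundle on $Y$ of rank $\dim V$, equipped with an integrable connection and a filtration.

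Next I would organize these into a tensor functor. The assignment $V\mapsto\bL_{V,p}$ is already a $\bQ_p$-linear tensor functor from $\on{Rep}_{\bQ_p}(G^c)$ to $\bQ_p$-local systems on $\on{Sh}_K(G,X)$, and by the previous step its composition with restriction to $Y$ takes values in de Rham local systems; composing further with the tensor functor $D_{\dR}$ of Theorem~\ref{T: p-adic RH for de Rham}(v) yields a $\bQ_p$-linear tensor functor
\[
\omega\colon \on{Rep}_{\bQ_p}(G^c)\longrightarrow \{\text{vector bundles on }Y\}.
\]
Because $G^c$ is reductive (a quotient of the reductive group $G$ by a central subtorus) and $E_v$ has characteristic $0$, every short exact sequence in $\on{Rep}_{\bQ_p}(G^c)$ splits, so the additive functor $\omega$ is automatically exact; it is faithful since $\on{rk}\omega(V)=\dim V$ and $\on{Rep}_{\bQ_p}(G^c)$ is semisimple.

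Finally I would invoke the Tannakian formalism: a faithful exact $\bQ_p$-linear tensor functor from $\on{Rep}_{\bQ_p}(G^c)=\on{Rep}(G^c_{\bQ_p})$ to the category of vector bundles on $Y$ is classified, by setting $\mE=\underline{\Iso}^{\otimes}(\omega_0\otimes_{\bQ_p}\mO_Y,\omega)$ with $\omega_0$ the forgetful fiber functor, by a $G^c_{\bQ_p}$-torsor $\mE$ on $Y$ for the fppf (equivalently \'etale) topology --- here one uses that finite locally free sheaves on an adic space satisfy descent --- together with natural isomorphisms $\omega(V)\simeq \mE\times^{G^c}V$. This is the asserted $G^c$-torsor. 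The step I expect to require the most care is the first one: ensuring that Theorem~\ref{T: p-adic RH for de Rham} applies for \emph{every} $V$, i.e. that each $\bL_{V,p}$ is genuinely de Rham on every connected component of $Y$, which is precisely where the density of special points and the potential crystallinity provided by Lemma~\ref{cryrep} enter. Once that is in place, the passage from $\omega$ to the torsor $\mE$ is formal.
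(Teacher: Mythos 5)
Your proof is correct and follows the route the paper intends: de Rham--ness of each $\bL_{V,p}$ on every connected component comes from Theorem~\ref{C: application} (which you re-derive from the density of special points together with Lemma~\ref{special} and Lemma~\ref{cryrep}), the tensor structure from Theorem~\ref{T: p-adic RH for de Rham}(v), exactness is automatic since $\on{Rep}_{\bQ_p}(G^c)$ is semisimple (as $G^c$ is reductive over a field of characteristic zero), and the torsor then results from the Tannakian formalism for vector bundles on adic spaces. The only point worth stating explicitly is the one you already flag implicitly -- that one may check \'etale-locally, reducing the Tannakian step to affinoids and the usual classification of fiber functors valued in finite projective modules over Noetherian $\bQ_p$-algebras -- but that is a standard reduction and not a gap.
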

\begin{rmk}
(i) The conjugacy class of Hodge cocharacters $\{\mu_h\}$ defines a conjugacy class $\mP$ of parabolic subgroups of $G^c$. The $G^c$-torsor $\mE$ then defines an associated bundle $\mE_\mP\to \on{Sh}_K(G,X)^\ad_{E_v}$ with fibers isomorphic to $\mP^\ad_{E_v}$. Note that since $V\mapsto D_{\mathrm{dR}}(\bL_{V,p})$ is in fact a tensor functor from $\on{Rep}_{\bQ_p}(G^c)$ to filtered vector bundles on $\on{Sh}_K(G,X)_{E_v}^{\on{ad}}$, it defines a section of $\mE_\mP$.

(ii) The $G^c$-torsor $\mE$ should be the base change of the standard principal $G^c$-torsor on $\on{Sh}_K(G,X)$ (\cite[\S III.4]{M}). Concretely, if $V$ is a $\bQ$-rational representation of $G^c$, then there is a vector bundle with a flat connection on $\on{Sh}_K(G,X)$ whose base change to $\bC$ corresponds to $\bL_V$ under the classical Riemann-Hilbert correspondence (\cite[\S III.6]{M}).
It is natural to conjecture that $D_{\dR}(\bL_{V,p})$ is the analytification of the base change to $E_v$ of this vector bundle with connection.

(iii) Let $x$ be a classical point of $\on{Sh}_K(G,X)_{E_v}^{\on{ad}}$. Then $D_{\dR}((\bL_{V,p})_{\bar x})\otimes \B_{\dR}^+$ provides a $\B_{\dR}^+$-lattice in $(\bL_{V,p})_{\bar x}\otimes \B_{\dR}$ as $(\bL_{V,p})_{\bar x}$ is de Rham. Therefore by Fargues' work on Breuil-Kisin modules over $\bA_{\on{inf}}$ (\cite{SW}) and the Tannakian formalism, one can associate to $x$ an isocrystal with additional structure $b_{x}\in B(G_{\bQ_p})$ (\cite{Ko}),  which should be the crystalline realization of  the mod $p$ fiber of the motive parameterised by $x$ (say the motive has a good reduction).
\end{rmk}

\begin{rmk}As a final remark, we point out that  one can use Theorem \ref{Main thm} to give an alternative proof of Fontaine's $C_{dR}$-conjecture for all abelian varieties and all smooth hypersurfaces in $\bP^n$ simultaneously. Namely, for every $p$, we choose a quadratic imaginary field $K$ such that $p$ splits in $K$. Let $E$ be an elliptic curve with CM by $K$. Then its $p$-adic Tate module is isomorphic to $\bQ_p\oplus \bQ_p(1)$ when restricted to the inertia and therefore is de Rham.
Note that  $E^g$ gives rise to a point in an appropriate moduli space of $g$-dimensional abelian varieties. Then we apply Theorem \ref{Main thm} to conclude the abelian variety case. Using the fact that the motives of the Fermat hypersurfaces appear in the motives generated by abelian varieties, $C_{dR}$ holds for them and therefore also holds for all hypersurfaces, again by Theorem \ref{Main thm}.
\end{rmk}


\begin{thebibliography}{99}
\bibitem[AG]{AG}A. Abbes, M. Gros: {\it La suite spectrale de Hodge-Tate}, arXiv:1509.03617.

\bibitem[AGT]{AGT}A. Abbes, M. Gros, T. Tsuji: {\it The $p$-adic Simpson correspondence}, Annals of Mathematics Studies, Vol. 193 (2016). 

\bibitem[Be]{Be} R. Bellovin: {\it $p$-adic Hodge theory in rigid analytic families}, Algebra Number Theory 9 (2015), no. 2, 371-433.
 
\bibitem[BC]{BC}L. Berger, P. Colmez: {\it Familles de repr\'esentations de de Rham et monodromie p-adique}, Ast\'erisque No. 319 (2008), 303--337.
 
\bibitem[Ber]{Ber}V. Berkovich: {\it Integration of One-forms on $p$-adic Analytic Spaces},
Annals of Mathematics Studies 162, Princeton University Press, Princeton and Oxford, 2006, 156 pp.

\bibitem[BM]{BM}E. Bierstone, P. D. Milman: {\it Canonical desingularization in characteristic zero by blowing up the maximum strata of a local invariant}, Invent. Math., 128(2), 207-302, 1997.

\bibitem[Br1]{Br1}O. Brinon, {\it Une g\'en\'eralisation de la th\'eorie de Sen}, Math. Ann. 327(4), 793-813 (2003).  

\bibitem[Br2]{Br2}O. Brinon, {\it Repr\'esentations $p$-adiques cristallines et de de Rham dans le cas relatif}, M\'emoire de
la Soc. Math. France 112 (2008).

\bibitem[Con]{Con}B. Conrad: {\it Lifting global representations with local properties}, preprint, available at \url{http://math.stanford.edu/~conrad/papers/locchar.pdf}.

\bibitem[DV]{DV}A.J. de Jong, M. van der Put: {\it \'Etale cohomology of rigid spaces}, Documenta Mathematica (electronic journal), 1 (1996), pp. 1-56.

\bibitem[DM]{De2}P. Deligne (notes by J.S. Milne): {\it Hodge cycles on abelian varieties}, in Hodge Cycles, Motives, and Shimura Varieties, LNM 900, Springer-Verlage, Berlin 1982.

\bibitem[Fa]{Fa}G. Faltings: {\it A $p$-adic Simpson correspondence},  Adv. Math. 198 (2005), no. 2, 847-862. 

\bibitem[He]{He}G. Henniart: {\it Repr\'esentations $\ell$-adiques ab\'eliennes}, In Seminar on Number Theory, Paris 1980-81
(Paris, 1980/1981), volume 22 of Progr. Math., pages 107?126. Birkh\"auser Boston, Boston, MA, 1982.

\bibitem[Hy]{Hy}O. Hyodo: {\it On variation of Hodge-Tate structures}, Math. Ann. 284 (1989), 7-22.

\bibitem[Ke1]{Ke1}K. Kedlaya: {\it Good formal structures for flat meromorphic connections, I: Surfaces}, Duke Mathematical Journal 154 (2010), 343-418

\bibitem[Ke2]{Ke2}K. Kedlaya: {\it Some slope theory for multivariate Robba rings}, arXiv:1311.7468.

\bibitem[KL1]{KL1}K. Kedlaya, R. Liu: {\it Relative $p$-adic Hodge theory: Foundations}, Ast\'erisque No. 371 (2015). 

\bibitem[KL2]{KL2}K. Kedlaya, R. Liu: {\it Relative $p$-adic Hodge theory, II: Imperfect period rings}, arXiv:1602.06899v1 (2016).

\bibitem[Ki]{Ki}M. Kisin: {\it Local constancy in p-adic families of Galois representations}, Math. Z. 230 (1999), no. 3, 569-593. 

\bibitem[Ko]{Ko}R. Kottwitz: {\it Isocrystals with additional structure}, Compos. Math. Volume 56 (1985), Issue 2,  201-220.

\bibitem[Liu]{Liu}R. Liu: {\it Triangulation of refined families}, Commentarii Mathematici Helvetici, Volume 90, Issue 4, 2015.

\bibitem[Mi1]{M}J. Milne: {\it Canonical Models of (Mixed) Shimura Varieties and Automorphic Vector Bundles}, Automorphic forms, Shimura varieties, and L-functions, Vol. I (Ann Arbor, MI, 1988),
283-414, Perspect. Math., 10, Academic Press, Boston, MA, 1990.

\bibitem[Mi2]{M2}J. Milne: {\it Introduction to Shimura varieties}, Harmonic Analysis, the Trace Formula and Shimura Varieties, Clay Mathematics Proceedings, Volume 4 (2005), 265-378.

\bibitem[Sch1]{Sch1}P. Scholze: {\it Perfectoid spaces}, Publ. math. de l'IH\'ES 116 (2012), no. 1, 245-313.

\bibitem[Sch2]{Sch2}P. Scholze: {\it $p$-adic Hodge theory for rigid-analytic varieties},  Forum of Mathematics, Pi, 1, e1, 2013.

\bibitem[Sch3]{Sch3}P. Scholze: {\it Erratum to \cite{Sch2}}, available at \url{http://www.math.uni-bonn.de/people/scholze/pAdicHodgeErratum.pdf}.

\bibitem[SW]{SW}P. Scholze and J. Weinstein: {\em $p$-adic geometry}, Lecture notes from course Math274 at UC Berkeley in Fall 2014,
available at \url{https://math.berkeley.edu/~jared/Math274/ScholzeLectures.pdf}.

\bibitem[Sim]{Sim}C. Simpson: {\it Higgs bundles and local systems}, Publ. Math. de l'I.H.\'E,S.,  tome 75 (1992), p. 5-95.

\bibitem[Ts]{Ts}T. Tsuji: {\it Purity for Hodge-Tate representations}, Math. Ann. (2011) 350, 829-866. 

\bibitem[Vi]{Vi}J.L. Verdier: {\it Fonctorialit\'e de cat\'egories de faisceaux}, Th\'eorie des topos et cohomologie
\'etale de sch\'emas (SGA 4), Tome 1, Lect. Notes in Math. 269, Springer-Verlag, 1972, pp. 265-298.
\end{thebibliography}
\end{document}